\setlist[itemize]{leftmargin=0.25in}
\newtheorem*{rep@theorem}{\rep@title}
\newcommand{\newreptheorem}[2]{%
\newenvironment{rep#1}[1]{%
 \def\rep@title{#2 \ref{##1}}%
 \begin{rep@theorem}}%
 {\end{rep@theorem}}}
\newtheorem{theorem}{Theorem}[section]
\newtheorem{lemma}[theorem]{Lemma}
\newtheorem{proposition}[theorem]{Proposition}
\newtheorem{corollary}[theorem]{Corollary}
\theoremstyle{definition}
\newtheorem{remark}[theorem]{Remark}
\newtheorem{definition}[theorem]{Definition}
\newtheorem{definitions}[theorem]{Definitions}
\newtheorem{example}[theorem]{Example}
\newtheorem{exercise}[theorem]{Exercise}
\newtheorem{notation}[theorem]{Notation}
\def\Q{\mathbb{Q}}
\def\R{\mathbb{R}}
\def\C{\mathbb{C}}
\def\Z{\mathbb{Z}}
\def\V{\mathcal{V}}
\def\incl{\hookrightarrow}
\def\to{\rightarrow}
\def\id{\mathrm{id}}
\def\x{\times}
\def\d{\partial}
\def\phi{\varphi}
\def\Aut{\mathrm{Aut}}
\def\Emb{\mathrm{Emb}}
\def\Diff{\mathrm{Diff}}
\def\Isom{\mathrm{Isom}}
\def\Conf{\mathrm{Conf}}
\def\B{\mathcal{B}}
\def\PB{\mathcal{PB}}
\def\K{\mathcal{K}}
\def\tK{\widetilde{\mathcal{K}}}
\def\L{\mathcal{L}}
\def\tL{\widetilde{\mathcal{L}}}
\def\T{\mathcal{T}}
\def\im{\mathrm{im}}
\def\Wh{\mathrm{Wh}}
\def\tgamma{\widetilde{\gamma}}
    \title[Spaces of knots in the solid torus, knots in the thickened torus, and links in the 3-sphere]{Spaces of knots in the solid torus, knots in the thickened torus, and links in the 3-sphere}
    \author{Andrew Havens}
    \author{Robin Koytcheff}
    \email{havens@math.umass.edu}
    \email{koytcheff@louisiana.edu}
    \address{Department of Mathematics \& Statistics, University of Massachusetts Amherst, Amherst, MA, USA 01003}
    \address{Department of Mathematics, University of Louisiana at Lafayette, Lafayette, LA, USA 70504}
    \keywords{spaces of knots, spaces of links, links in the 3-sphere, knots in a solid torus, knots in a thickened torus, diffeomorphisms, splicing, satellite decomposition, JSJ decomposition, the Gramain loop, link symmetries}
\subjclass[2020]{Primary: 57K10, 57K12, 57R40.  Secondary: 57K35, 57R50}
\begin{document}

\begin{abstract}
%In this paper a problem of Arnol'd is solv'd.
We recursively determine the homotopy type of the space of any irreducible framed link in the 3-sphere, modulo rotations.  This leads us to the homotopy type of the space of any knot in the solid torus, thus answering a question posed by Arnold.  We similarly study spaces of unframed links in the 3-sphere, modulo rotations, and spaces of knots in the thickened torus.   The subgroup of meridional rotations splits as a direct factor of the fundamental group of the space of any framed link except the unknot.  Its generators can be viewed as generalizations of the Gramain loop in the space of long knots.  Taking the quotient by certain such rotations relates the spaces we study.  All of our results generalize previous work of Hatcher and Budney.  We provide many examples and explicitly describe generators of fundamental groups.
\end{abstract}

\maketitle

\vspace{-1pc}

\tableofcontents

\section{Introduction}
This paper concerns spaces of embeddings of 1-manifolds into certain 3-dimensional submanifolds of the 3-sphere.  This generalizes the study of knot types from path components to spaces.  Hatcher described the topology of spaces of knots using his resolution of the Smale conjecture.  He established results for torus and hyperbolic knots, and Budney later addressed all knot types by using the satellite decomposition of knots.  
Arnold \cite[Problem 1970-14]{ArnoldsProblems} posed the following problem: 
\begin{quote}
Evaluate the fundamental group of the space of embeddings of a circle into a solid torus (the answer is a knot invariant!).
\end{quote}
We solve this problem in detail, by adapting the methods of Hatcher and Budney to study spaces of links in $S^3$.  In turn, that allows us to describe spaces of knots in a thickened torus, a special case of knots in a thickened surface and thus virtual knot theory.  
We have tried to make our account as self-contained as possible.  We include many examples, especially of links corresponding to knots in the solid torus, with pictures and descriptions of generators of the fundamental groups of their embedding spaces.

\subsection{Context: previous results of Hatcher and Budney}
Before outlining our results, we review results on the space $\Emb(S^1, S^3)$ of (closed) knots in $S^3$ and the space $\Emb(\R, \R^3)$ of long knots in $\R^3$.
We use a subscript $f$ to denote the component of a knot $f:S^1 \incl S^3$ or a long knot $f:\R \to \R^3$ in these spaces.
Hatcher proved the following \cite{HatcherKnotSpacesArxiv, HatcherKnotSpacesWebsite}:

\begin{itemize}
\item If $f$ is the unknot, $\Emb_f(S^1, S^3) \simeq SO_4/SO_2$, while $\Emb_f(\R, \R^3) \simeq \ast$.
\item If $f$ is a torus knot, $\Emb_f(S^1, S^3) \simeq SO_4$, while $\Emb_f(\R, \R^3) \simeq S^1$.
\item If $f$ is a hyperbolic knot, $\Emb_f(S^1, S^3) \simeq SO_4 \x S^1$, while $\Emb_f(\R, \R^3) \simeq S^1 \x S^1$.
\end{itemize}

Consider $\pi_1(\Emb_f(S^1, S^3))$ for $f$ as above.
For any knot in $S^3$, there are two loops that are canonical up to homotopy: (1) a loop of rotations representing a generator in $\pi_1(SO_4)\cong \Z/2$ and (2) a loop of reparametrizations of the knot.  
For an unknot and a torus knot, the loops (1) and (2) are homotopic.  For an unknot, they are homotopic to a rotation of a tubular neighborhood that fixes the parametrized unknot itself, suggesting the quotient by $SO_2$.  
The above result says that there are no further generators or relations in $\pi_1$.  
For a hyperbolic knot, its proof shows that the loops (1) and (2) are independent.  In $\pi_1(SO_4 \x S^1) \cong \Z/2 \x \Z$, the $\Z/2$ factor is generated by the loop of rotations.  The loop  of reparametrizations is, up to a loop of rotations, a multiple of the generator of the $\Z$ factor.  In many cases, it is a proper multiple because of nontrivial hyperbolic isometries of the complement.  So a generator of the $\Z$ factor is given by a diffeotopy to an isometry followed by a fractional reparametrization.

A similar analysis applies to the spaces $\Emb_f(\R,\R^3)$ above, which are completely determined by their fundamental groups.  
(In general $\Emb_f(S^1, S^3) \simeq SO_4 \x_{SO_2} \Emb_f(\R, \R^3)$ \cite[Proposition 4.1]{Budney-Cohen}.)  
In this setting, the two canonical loops above correspond to (1) the Gramain loop of rotations about the long axis \cite{Gramain1977}, shown in Figure \ref{F:Gramain},
and 
\begin{figure}[h!]
\includegraphics[scale=0.3]{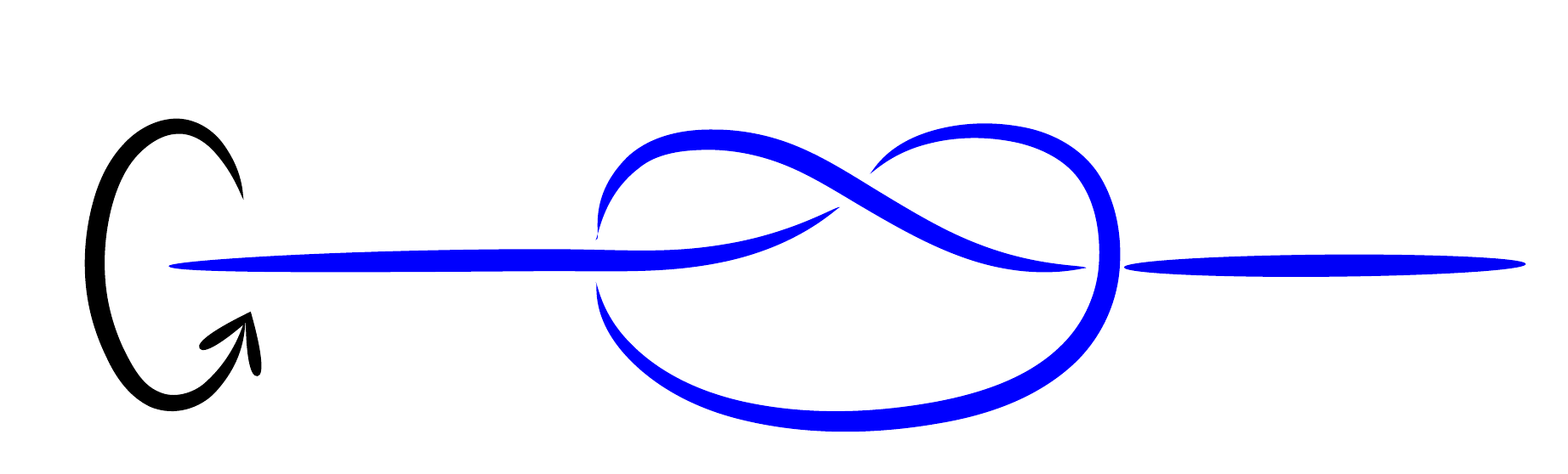}
\caption{The Gramain loop in $\Emb_f(\R, \R^3)$ where $f$ is a long right-handed trefoil.}
\label{F:Gramain}
\end{figure}
(2) the Fox--Hatcher loop, given by taking the one-point compactification $\overline{f}: S^1 \incl S^3$ of $f$, moving $\infty$ along the image of $\overline{f}$, and stereographically projecting to get a loop of long knots; see \cite[p.~3]{HatcherKnotSpacesWebsite} for a picture.
The latter loop will again generally be a multiple of a generator of $\pi_1$ for a hyperbolic long knot.  
These results can be summarized by saying that there are no exotic loops in these spaces, much like the Smale conjecture says that diffeomorphisms of $S^3$ are equivalent to rotations.  

Budney's results on satellite knots \cite{BudneyTop} further illustrate this phenomenon, though iterating satellite operations produces quite a rich topology.  His results roughly say that $\Emb_f(\R, \R^3)$ is a twisted product of (i.e.~fiber bundle involving) the spaces of the torus knots,  hyperbolic knots, and knot-generating links appearing in the satellite decomposition of $f$.  (Although satellite operations \cite{Schubert:KnotenVollringe} predate the aforementioned question of Arnold, the uniqueness of the satellite decomposition does not \cite{Jaco-Shalen-MemAMS, Johannson}.)  
The satellite operations are cables, connected sums, and hyperbolic splices.
Cabling a knot introduces a factor of $S^1$, while a connected sum of $n$ knots introduces the topology of 
configurations in the plane, and thus the $n$-strand braid group at the level of $\pi_1$.  A hyperbolic splice introduces two factors of $S^1$, but again, the product may be twisted.

\subsection{Main results}
We find that the above phenomenon extends from spaces of long knots and knots in $S^3$ to spaces $\tL_F$ of framed links $F$ in $S^3$ and ultimately spaces of knots in $S^1\x D^2$.
While there are many more isotopy classes of knots in $S^1 \x D^2$ than knots in the 3-sphere,\footnote{Up to 6 crossings, the number increases from 7 to 526 \cite{Gabrovsek-Mroczkowski}.  However, many of the examples we consider fall outside this range of crossing numbers, especially those involving satellite operations.} 
the homotopy types of their embedding spaces are just as tractable via various results in 3-manifold theory and the methods of Hatcher and Budney.  

The key is our computation of $\tL_F/SO_4$, using the fact that it is equivalent to the classifying space for the group of diffeomorphisms of the complement of $F$.
This result also generalizes early work of Goldsmith on spaces of torus links in $S^3$ \cite{Goldsmith:Math-Scand}, though she worked modulo reparametrization and relabeling of components rather than modulo rotations.  
While taking the quotient by rotations arguably removes some geometric meaning, $\tL_F$ is an $SO_4$-bundle over $\tL_F/SO_4$, and for certain links $F$ the latter spaces are homotopy equivalent to spaces of framed knots in $S^1 \x D^2$.  They thus lead to calculations of spaces $\mathcal{T}_f$ of (parametrized) knots $f$ in $S^1 \x D^2$, where we do not work modulo rotations.  
Since $\Diff^+(S^1) \simeq S^1$, one can visualize parametrized knots as knots with a bead representing a basepoint in $S^1$ and an arrow indicating the orientation.
Although we do not consider spaces of links that are unlabeled or unparametrized in this paper, this fact allows one to obtain some information on spaces of unparametrized knots in $S^1 \x D^2$ by using our results on $\mathcal{T}_f$.
We also obtain results on spaces $\L_f/SO_4$ of unframed (parametrized) links $f$ modulo rotations, which for certain links $f$ are homotopy equivalent to spaces $\V_f$ of (parametrized) knots in $S^1 \x S^1 \x I$.  
Throughout, we study these four types of embedding spaces:
\begin{equation}
\label{Eq:EmbSpacesWeConsider}
\tL_F/SO_4, \ \  \L_f/SO_4, \ \  \T_f, \ \text{ and } \  \V_f.
\end{equation}

To precisely state our results, we need some notation.
Let $f=(f_0,\dots, f_m)$ be an $(m+1)$-component link in $S^3$.  To $f$ we can associate a framed link $F=(F_0,\dots,F_m)$.
Let $\L$ be the space of links in $S^3$, let $\tL$ be the space of framed links in $S^3$, and let $\L_f$ and $\tL_F$ be the components of  $f$ and $F$ respectively.  

We write $T_{p,q}$ for the $(p,q)$-torus link; $S_{p,q}$ for the $(p,q)$-Seifert link;  $R_{p,q}$ for the union of $S_{p,q}$ with a core circle $C_1$ of the torus on which it lies; and $KC_{n+r}$ for the $(n+r+1)$-component keychain link.

For irreducible $F$, the JSJ tree of the complement $C_F$ has vertices labeled by submanifolds of $C_F$.  
Each submanifold labeling a vertex is the complement of a Seifert-fibered or hyperbolic link, and relabeling the vertices by these links gives the companionship tree $\mathbb{G}_F$ of $F$.  
The link $F$ is then built out of these links, via the splicing operation, denoted $\bowtie$.  
Below we use this symbol only when it corresponds to a sub-decomposition of $C_F$ (see Remark \ref{SplicingRemark}).
In our conventions for $\mathbb{G}_F$, each component of such a link corresponds to a half-edge on its vertex, and each component of $F$ corresponds to a half-edge in $\mathbb{G}_F$ that is not part of a whole edge joining a pair of vertices.  
Below, each of $J, J_1, \dots, J_r$ is an arbitrary irreducible link with a distinguished component.
The notation $(\varnothing, \dots, \varnothing, J_1,\dots, J_r) \bowtie L$ with $L=(L_0,\dots,L_{n+r})$ means graphically that one attaches each tree $\mathbb{G}_{J_i}$ to the vertex $\mathbb{G}_L$, joining the half-edge of the distinguished component of $J_i$ to the half-edge of $\mathbb{G}_L$ labeled $n+i$; 
see Figure \ref{F:TreeF=JbowtieL} in Section \ref{S:Splicing}.

We write $F \rtimes B$ to denote a fiber bundle with base $B$ and fiber $F$.  

For more details, see Section \ref{S:BasicDefs} for basic conventions, 
Section \ref{S:SeifertPrelims} for Seifert-fibered links, and Section \ref{S:JSJ} for JSJ trees and the splicing operation $\bowtie$.

\subsubsection*{\textbf{Spaces of framed links in the 3-sphere (modulo rotations)}}
The spaces  $\tL_F/SO_4$ lead to results on the other three types of spaces in \eqref{Eq:EmbSpacesWeConsider}, so we consider this our Main Theorem:

\begin{theorem}
\label{MainT:FramedLinks}
(A)
If $F$ is a nontrivial irreducible framed link, then $\tL_F/SO_4$ is aspherical, and generators of $\pi_1(\tL_F/SO_4)$ roughly correspond to loops of rotations of incompressible tori in $C_F$, as well as possibly braids coming from motions of incompressible tori in Seifert-fibered submanifolds of $C_F$.  

More precisely, we can recursively compute $\pi_1(\tL_F/SO_4)$ from the companionship tree $\mathbb{G}_F$ of $F$, as follows.  Designate some vertex corresponding to a link component as the root, and apply the isomorphisms below to reduce the calculation to smaller subtrees of $\mathbb{G}_F$, where the first two cases are precisely those where $\mathbb{G}_F$ has only one vertex:
\begin{itemize}
[leftmargin=0.4in]
\item[(i)]
If $F$ is an $(n+1)$-component Seifert-fibered link whose complement has $r$ singular fibers ($0 \leq r \leq 2$), then $\pi_1(\tL_F/SO_4) \cong \Z^{2n} \x \PB_{n+r}$, where $\PB_{n+r}$ is the $(n+r)$-strand pure braid group.
\item[(ii)]
If $F$ is an $(n+1)$-component hyperbolic link, then $\pi_1(\tL_F/SO_4) \cong \Z^{2(n+1)}$.
\item[(iii)]
If $T_{p,q}$ is an $(n+r+1)$-component torus link 
(i.e.~$\gcd(p,q)=n+r+1$) and \\
$F = (\varnothing, \dots, \varnothing, J_1,\dots, J_r) \bowtie T_{p,q}$,
then $\pi_1(\tL_F/SO_4) \cong \Z^{2n} \x \PB_{n+r+2} \x \prod_{i=1}^r  \pi_1(\tL_{J_i}/SO_4)$.
\item[(iv)]
If $S_{p,q}$ is an $(n+r+1)$-component Seifert link (i.e.~$\gcd(p,q)=n+r$) and \\
$F = (\varnothing, \dots, \varnothing, J_1,\dots, J_r) \bowtie S_{p,q}$,
then $\pi_1(\tL_F/SO_4) \cong \Z^{2n} \x \PB_{n+r+1} \x \prod_{i=1}^r  \pi_1(\tL_{J_i}/SO_4)$.
\item[(v)]
If $R_{p,q}:=S_{p,q} \cup C_1$ 
has $(n+r+1)$ components (i.e.~$\gcd(p,q)=n+r-1$) and \\
$F = (\varnothing, \dots, \varnothing, J_1,\dots, J_r) \bowtie R_{p,q}$,
then $\pi_1(\tL_F/SO_4) \cong \Z^{2n} \x \PB_{n+r} \x \prod_{i=1}^r  \pi_1(\tL_{J_i}/SO_4)$.
\item[(vi)]
%If $KC_{n+r}$ is the $(n+r+1)$-component keychain link and
If $F = (\varnothing, \dots, \varnothing, J_1,\dots, J_r) \bowtie KC_{n+r}$, then $\pi_1(\tL_F/SO_4) \cong \Z^{2n} \x \left(\B_{F} \ltimes \prod_{i=1}^r  \pi_1(\tL_{J_i}/SO_4) \right)$, where  $\B_F$ is a subgroup of the 
%$(n+r)$-strand 
braid group $\B_{n+r}$ determined by the symmetries of $\{J_1, \dots, J_r\}$.
\item[(vii)]
If $F = (\varnothing, \dots, \varnothing, J_1,\dots, J_r) \bowtie L$ with $L=(L_0, \dots, L_{n+r})$ hyperbolic, then 
$\pi_1(\tL_F/SO_4) \cong$ \\ $\Z^{2n+1} \x \left(\Z \ltimes \prod_{i=1}^r  \pi_1(\tL_{J_i}/SO_4) \right)$ 
where the semi-direct product is determined by the action of hyperbolic isometries of $L$ on the $J_i$.
\end{itemize}

(B)
If $F$ is an arbitrary framed link consisting of irreducible sublinks $F_1, \dots, F_k$, 
then roughly the homotopy type of $\tL_F/SO_4$ is at least as big as that of $\tL_{F_1}/SO_4\x \dots \x \tL_{F_k}/SO_4$.

More precisely, suppose $F_1, \dots, F_k$ are contained in disjoint 3-balls $B_1, \dots, B_k$, and let $Q_k:=S^3 - \coprod_{i=1}^k B_i$.
Let $\Emb_0(Q_k, C_F)$ be the subspace of embeddings of $Q_k$ into $C_F$ which extend to diffeomorphisms in $\Diff(C_F; \d C_F)$.  
Let $\Emb_{F_i}\left(\coprod S^1 \x  D^2, D^3\right)$ be the component of a framed link in $D^3$ corresponding to $F_i$ in the space of framed links in $D^3$.
Then up to homotopy, there is a fibration 
\begin{equation}
\label{Eq:SplitLinksFibn1}
\Emb_0 \left(Q_k, C_F\right) \to \prod_{i=1}^k \Emb_{F_i}\left(\coprod S^1 \x  D^2, D^3\right)  \to \tL_F/SO_4.
\end{equation}
For $i=1,\dots,k$, we have $\Emb_{F_i}\left(\coprod S^1 \x  D^2, D^3\right)\simeq SO_3 \x (C_{F_i} \rtimes (\tL_{F_i}/SO_4))$.
Restricting to the sublinks $F_1, \dots, F_n$ induces a surjection in $\pi_1$ from $\tL_F/SO_4$ to (the aspherical, connected space) $\prod_{i=1}^k\tL_{F_i}/SO_4$.
\end{theorem}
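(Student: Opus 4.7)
The plan is to exploit the homotopy equivalence $\tL_F/SO_4 \simeq B\Diff(C_F; \d C_F)$ stated in the introduction, reducing every computation to an analysis of the mapping class group $\pi_0 \Diff(C_F; \d C_F)$ together with the homotopy type of its identity component. Part (A) proceeds by induction on the number of vertices of the companionship tree $\mathbb{G}_F$, with base cases (i) Seifert-fibered and (ii) hyperbolic. Part (B) reduces the general (possibly reducible) case to Part (A) via a standard restriction-to-complement-of-balls construction.

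For base case (ii), with $C_F$ hyperbolic, I would invoke Mostow--Prasad rigidity together with Hatcher's theorem that $\Diff_0(C_F; \d C_F)$ is contractible: the mapping class group rel boundary is then generated by independent Dehn twists along the meridian and longitude of each of the $n+1$ boundary tori, giving $\Z^{2(n+1)}$. For case (i), $C_F$ is Seifert-fibered over a 2-orbifold, and I would analyze $\Diff(C_F; \d C_F)$ via the fibered structure, again using Hatcher's Smale-conjecture-type results for Seifert-fibered 3-manifolds. The $\PB_{n+r}$ factor should arise as the pure mapping class group of the base orbifold --- a disk with $n+r$ marked points corresponding to the regular and singular fibers --- while the $\Z^{2n}$ factor comes from Dehn twists on $n$ of the $n+1$ boundary tori; the ``missing'' boundary twist is the globally defined fiber rotation, which ties together the rotations in the fiber direction across the boundary components.

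For the inductive cases (iii)--(vii), the splicing decomposition $F = (\varnothing,\dots,\varnothing, J_1,\dots,J_r) \bowtie L$ expresses $C_F$ as a gluing of $C_L$ with $C_{J_1},\dots,C_{J_r}$ along incompressible tori. Since the JSJ decomposition of $C_F$ is canonical up to isotopy, any self-diffeomorphism of $C_F$ must permute the pieces according to a symmetry of the tree $\mathbb{G}_F$. I would use the restriction-to-pieces map, which fits into a homotopy fibration of topological groups whose fiber controls collar twists along the JSJ tori, and pass to classifying spaces to plug in the recursive factor $\prod_i \pi_1(\tL_{J_i}/SO_4)$. Whether this factor contributes as a direct product or as a semidirect product depends on whether the symmetries of $L$ fix or permute the $J_i$: in (iii)--(v) the companion link $L$ is ``rigid enough'' that the action is trivial and only braid twists from motions of the JSJ tori appear; in (vi) the keychain admits a key-permuting symmetry whose subgroup preserving the isotopy types of the $J_i$ is exactly $\B_F \subset \B_{n+r}$; and in (vii) the finite group of hyperbolic isometries of $L$ can permute the $J_i$, giving the twisted $\Z \ltimes (\cdots)$ after combining with the canonical $\Z$ coming from a Dehn twist on a JSJ torus.

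For Part (B), restricting an embedding of $F$ to the complement $Q_k$ of the balls $B_i$ yields the fibration \eqref{Eq:SplitLinksFibn1}; the local identification $\Emb_{F_i}(\coprod S^1 \x D^2, D^3) \simeq SO_3 \x (C_{F_i} \rtimes (\tL_{F_i}/SO_4))$ follows by comparing framed embeddings into a 3-ball with framed links in $S^3$ modulo rotations, together with the evident $SO_3$ of the ball. Surjectivity in $\pi_1$ onto $\prod_i \tL_{F_i}/SO_4$ then falls out of the long exact sequence of the fibration once one knows $\Emb_0(Q_k, C_F)$ is path-connected. The main obstacle throughout is the inductive step of Part (A): identifying the collar-twist fiber and controlling the semidirect-product actions in cases (vi) and (vii) requires a careful analysis of how mapping classes of the pieces interact through the JSJ tori, and a precise description of when an ambient diffeomorphism of $C_F$ restricts to a product of piece-diffeomorphisms versus one that permutes the $J_i$.
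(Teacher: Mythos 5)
Your overall strategy is the paper's own: identify $\tL_F/SO_4$ with $B\Diff(C_F;\d C_F)$, induct on the companionship tree with Seifert-fibered and hyperbolic base cases, and handle the inductive step via a restriction fibration to the root piece $C_L$. However, there are concrete gaps. In base case (ii), your claim that $\pi_0\Diff(C_F;\d C_F)$ is \emph{generated} by the meridional and longitudinal Dehn twists on the boundary tori is false in general: those twists generate only a finite-index subgroup, and the full group is an extension of a finite cyclic group of hyperbolic isometries by $\Z^{2(n+1)}$ (this is exactly why generators such as $\frac{1}{2}(\lambda_0+\lambda_1)$ appear in Examples \ref{Ex:HypLinkFromFig8}--\ref{Ex:HypLinkFrom8-18}). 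To conclude the group is still free abelian of rank $2(n+1)$ you need the Hatcher--McCullough argument embedding $\pi_0\Diff(C_F;\d C_F)$ into $\R^{2(n+1)}$ via lifts of boundary restrictions of isometries, as in Proposition \ref{HMProp}. Similarly, in base case (i) the Johannson/Waldhausen sequence a priori gives only a semidirect product $\Z^n \rtimes \mathrm{PMod}(\Sigma_{0,r}^{n+1})$; the direct-product conclusion requires an explicit splitting (the paper constructs a retraction via winding numbers of arcs under the projection to the $S^1$ fiber), and your sketch does not address this.

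The larger issue is that you explicitly defer the inductive step --- ``identifying the collar-twist fiber and controlling the semidirect-product actions'' --- which is the technical heart of the theorem (Lemma \ref{SpliceSES}). Carrying it out requires: (1) showing $\Diff(C_F;\d C_F)\simeq \Diff(C_F,C_L;\d C_F)$ using uniqueness of JSJ tori together with Hatcher's theorem on spaces of incompressible surfaces; (2) verifying $\pi_1\Diff(C_L;T_0\cup\dots\cup T_n)$ vanishes so the $\pi_0$-sequence is exact on the left; and (3) characterizing the image of the right-hand map via the equivalence relation on the $J_i$, which uses the Smith conjecture and the observation that only knot companions can be permuted. In cases (iii)--(v) the reason the action is trivial is not an abstract ``rigidity'' of $L$ but the fact that all pairwise linking numbers of $T_{p,q}$, $S_{p,q}$, $R_{p,q}$ are nonzero, so at most one $J_i$ is a knot and no nontrivial permutation or orientation reversal is possible. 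Finally, in part (B) your route to $\pi_1$-surjectivity via connectivity of $\Emb_0(Q_k,C_F)$ establishes surjectivity of the wrong map (out of the total space of \eqref{Eq:SplitLinksFibn1} rather than out of $\tL_F/SO_4$); the correct argument uses the commutative triangle comparing \eqref{Eq:SplitLinksFibn1} with the product of the fibrations $C_{F_i}\x SO_3 \to \Emb_{F_i}(\coprod S^1\x D^2,D^3)\to \tL_{F_i}/SO_4$, whose fibers are connected.
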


Theorem \ref{MainT:FramedLinks} comprises a handful of statements in the main body.  The asphericity in part (A) is Proposition \ref{FramedLinksKpi1}, part (b).  Statements (A) (i) - (vii) are Corollaries \ref{SeifertFramedLinks} and \ref{DiffHyp} and Propositions  
\ref{P:Cable}, 
\ref{P:SpliceIntoRpq}, 
%(a), (b), 
\ref{P:ConnectSum}, and 
\ref{P:HypSplice}.  
Part (B) on split (i.e.~reducible) links is Proposition \ref{P:SplitLinks}.  
Statements (A) (i) - (vii) are given at the space level, i.e., after applying the classifying space functor to the discrete groups above.  This converts iterated semi-direct products into iterated fiber bundles, where the spaces involved are products of circles and configuration spaces of points in $\R^2$.  
Throughout, we alternately work in terms of fundamental groups (semi-direct products) and homotopy types of spaces (fiber bundles)
% a.k.a.~twisted products).
As a byproduct, $\pi_1(\tL_F/SO_4)$ is torsion-free for irreducible $F$, as the fundamental group of a $K(\pi,1)$ space that is homotopy equivalent to a finite-dimensional manifold.

In Theorem \ref{MainT:FramedLinks}, part (B), we study neither the space $\Emb(Q_k, C_F)$ nor the long exact sequence in homotopy  from the fibration \eqref{Eq:SplitLinksFibn1} in detail.  
If $F$ has just two irreducible summands $F_1$ and $F_2$, Proposition \ref{P:SplitLinksTwoSummands} gives a sharper statement:
% than Theorem \ref{MainT:FramedLinks}, part (B) (i.e. Proposition \ref{P:SplitLinks}):
\[
\tL_F/SO_4 \simeq  SO_3 \x ((C_{F_1} \x C_{F_2}) \rtimes (\tL_{F_1}/SO_4 \x \tL_{F_2}/SO_4)).
\]
Below, we similarly give sharper statements on spaces of knots in $S^1 \x D^2$ and $S^1 \x I \x I$ which correspond to split links than on spaces of split links in general.  
A more detailed study of spaces of arbitrary split links could be an interesting direction for future work.

The next main result generalizes the fact that the Gramain subgroup splits as a direct factor of the fundamental group of the space of a long knot:

\begin{reptheorem}{T:MeridiansFactor}
If $F$ is an $(m+1)$-component framed link that is not a framed unknot, then the loops of meridional rotations $\mu_0, \dots, \mu_m$ generate a subgroup $\Z^{m+1}$ that is a direct factor of $\pi_1(\tL_F / SO_4)$.
\end{reptheorem}

\subsubsection*{\textbf{Spaces of unframed links in the 3-sphere (modulo rotations)}}

Proposition \ref{LfModSO4isKpi1} says that 
\[
\pi_1(\L_f/SO_4) \cong \pi_1(\tL_F/SO_4) / \langle \mu_0, \dots, \mu_m\rangle
\]
and all the higher homotopy groups of $\L_f/SO_4$ and $\tL_F/SO_4$ agree.  In particular, if $f$ is irreducible, then 
$\L_f/SO_4$ is also a $K(\pi,1)$ space.
Its fundamental group is explicitly described for various irreducible links in Sections \ref{S:Seifert}, \ref{S:Hyperbolic}, and \ref{S:Splicing} as well as in the proof of Theorem \ref{T:MeridiansFactor}, which draws upon those previous sections.  
Corollary \ref{SeifertSpacesOfLinks} specifies $\pi_1(\L_f/SO_4)$ for all Seifert-fibered $f$.  
If $f$ is an $(m+1)$-component hyperbolic link, then $\pi_1(\L_f/SO_4) \cong \Z^{m+1}$ (Corollary \ref{nCompHypLink}).
In Section \ref{S:Splicing}, we describe $\pi_1(\L_f/SO_4)$ for examples of nontrivial splices $f$.

\subsubsection*{\textbf{Spaces of knots in a solid torus}}

Let $\T_f$ be the component of a knot in a solid torus.  We view $f$ as a 2-component link $f=(f_0,f_1)$ where $f_1$ is an unknot, by Proposition \ref{TfandLinksWithUnknots}.  By Proposition \ref{TfisKpi1}, 
if $F$ is the associated framed link, $\pi_i(\T_f) \cong \pi_i(\tL_F/SO_4)$ for all $i \geq 2$, and
\[
\pi_1(\T_f) \cong \pi_1(\tL_F/SO_4) / \langle \mu_0 \rangle.
\]  
If $f$ is irreducible, $\T_f$ is a $K(\pi,1)$ space, and we can use Theorem \ref{MainT:FramedLinks} together with the proof of Theorem \ref{T:MeridiansFactor} to identify the subgroup $\langle \mu_0 \rangle$ and completely determine $\T_f$.  

If $f$ is split, we get a more precise description of $\T_f$ by applying Corollary \ref{TfSplit}  than by applying Theorem \ref{MainT:FramedLinks}, part (B).  Namely, $\T_f \simeq S^1 \x \Emb_f(S^1, D^3)$.  In turn, $\Emb_f(S^1, D^3)$ can be determined from $\Emb_{\underline{f}}(\R, \R^3)$ where $\underline{f}$ is a long knot whose closure is $f$.  Finally, $\Emb_{\underline{f}}(\R, \R^3)$ can be determined by \cite[Theorem 1.1]{BudneyTop}, which (by Proposition \ref{FramedKnotsInS3VsLongKnots}) our Theorem \ref{MainT:FramedLinks} generalizes.

From Theorem \ref{MainT:FramedLinks}, part (A), Corollary \ref{TfSplit} and an analysis of the cases in Theorem  \ref{T:MeridiansFactor}, we obtain the following direct factors in $\pi_1(\T_f)$.  As in Theorem \ref{T:MeridiansFactor}, these can be regarded as analogues of the Gramain subgroup.
We denote the meridional and longitudinal rotations of the solid torus  $\lambda_1$ and $\mu_1$ respectively because they correspond to rotations of a complementary solid torus where the roles of meridian and longitude are reversed. 

\begin{reptheorem}{T:FactorsInTf}
Let $f$ be an irreducible 2-component link corresponding to a knot in the solid torus.  Then $\mu_1$
%, the longitudinal rotation of the solid torus, 
generates a copy of $\Z$ that is a direct factor of $\pi_1(\T_f)$.
If $f$ is irreducible and not the Hopf link, then $\lambda_1$
%, the meridional rotation of the solid torus, 
generates a further direct factor of $\Z$ in $\pi_1(\T_f)$.  
If $f$ is a split link, then $\lambda_1$ generates a central subgroup isomorphic to $\Z/2$.
\end{reptheorem}
\noindent
The meridional rotation of the solid torus $\lambda_1$ coincides with the meridional rotation of $F_0$ (essentially the Gramain loop of $F_0$) in some but not all cases, including cases where $f$ is irreducible.
Theorem \ref{T:FactorsInTf} implies that $\T_f \simeq S^1 \x \L_f/SO_4$ (Corollary \ref{TfisS1xLf}).
Thus for a 2-component link $f=(f_0,f_1)$ with $f_1$ an unknot, and for $F$ a framed link which maps to $f$, we have the series of quotients
\[
\tL_F/SO_4 \twoheadrightarrow \T_f \twoheadrightarrow \L_f/SO_4.
\]
The second map is the quotient by a direct factor of $S^1$, and if $f$ is irreducible, so is the first map.

One can pass to spaces $\overline{\T}_f^+$ of oriented, unparametrized knots in $S^1 \x D^2$ by using the fact that $\Diff^+(S^1)$ acts freely on $\mathcal{T}_f$.  For irreducible $f$, the relevant bundle involves only $K(\pi,1)$ spaces, and $\pi_1(\overline{\T}_f^+)$ is the quotient of $\pi_1(\T_f)$ by the loop $\lambda_0$ of reparametrizations, which can be identified by Theorem \ref{MainT:FramedLinks}, part (A).  Then $\pi_1$ of the space $\overline{\T}_f$ of unoriented, unparametrized knots in $S^1 \x D^2$ is an extension of $\Z/2$ by $\pi_1(\overline{\T}_f^+)$.  
It seems that for irreducible $f$, both $\pi_1(\overline{\T}_f^+)$ and $\pi_1(\overline{\T}_f)$ could always be torsion-free, just as $\pi_1(\T_f)$ is.

Below are some special cases of irreducible links $f$ corresponding to knots in $S^1 \x D^2$, some of which are shown in Figure \ref{F:ExKnotsInSolidTorus}:
\begin{itemize}[leftmargin=0.25in]
\item[(a)] If $f$ is the Hopf link, $\L_f/SO_4 \simeq \ast$, and $\pi_1(\T_f) \cong \Z$, generated by a longitudinal rotation of $S^1 \x D^2$ or a reparametrization.  
So the space of unparametrized such knots is contractible.
(The fact that $\T_f$ has the simplest homotopy type for this $f$ mirrors the fact that the longitudinal vector field has minimal energy among vector fields on $S^1 \x D^2$ induced by boundary-fixing Hamiltonian isotopies of $D^2$ \cite[Section 5]{Gambaudo-Lagrange}; the periodic orbits of such vector fields include many knots in $S^1 \x D^2$.)
\item[(b)] If $f$ is a torus knot $T_{p,q}$ in its standard position, then $\pi_1(\T_f) \cong \Z^2$, generated by the two rotations $\lambda_1$ and $\mu_1$ of $S^1 \x D^2$ (Proposition \ref{TorusKnotInTorus}).  A generator of the quotient by reparametrization, $\Z$, can be identified with a motion of a barber pole that can be viewed as either a rotation or a translation, since modulo a loop of reparametrizations of a torus knot, $\lambda_1 \sim \mu_1$.\footnote{The second author credits Rob Kusner for essentially pointing out this connection to him in 2012.}
\item[(c)] If $f$ is hyperbolic, $\pi_1(\T_f) \cong \Z^3$ (Corollary \ref{HypKnotInTorus}).  In addition to the two rotations, there is a generator coming from a hyperbolic isometry and reparametrization, like for knots in $S^3$.  
\item[(d)] If $f$ is the connected sum of $r$ copies of the same knot $j$, embedded in $S^1 \x D^2$ to have winding number 1, then $\pi_1(\T_f) \cong \Z \x \B_{1,r} \ltimes (\pi_1(\tL_J/SO_4))^{r}$, where $\B_{1,r}$ is the annular braid group on $r$ strands (see Example \ref{Ex:MoreGeneralConnectSum}).  
\item[(e)] Exercise \ref{Ex:WhiteheadDoubleInSolidTorus}, parts (a) and (b), 
provides two links corresponding to embeddings of a Whitehead double into $S^1 \x D^2$ with different results for $\pi_1(\T_f)$.
\end{itemize}

\begin{figure}[h!]
(a)\includegraphics[scale=0.23]{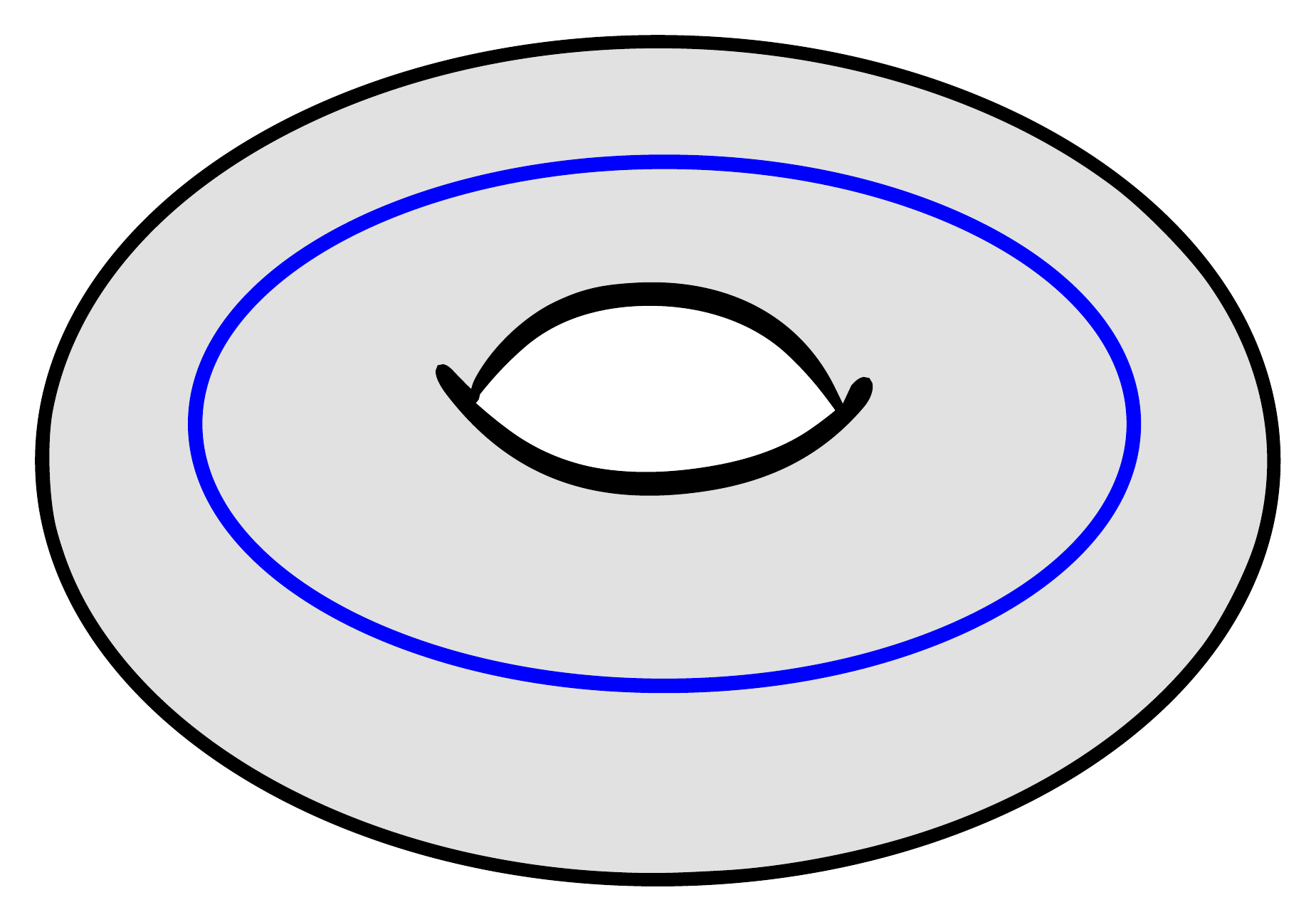} \quad
(b)\raisebox{0pc}{\includegraphics[scale=0.3]{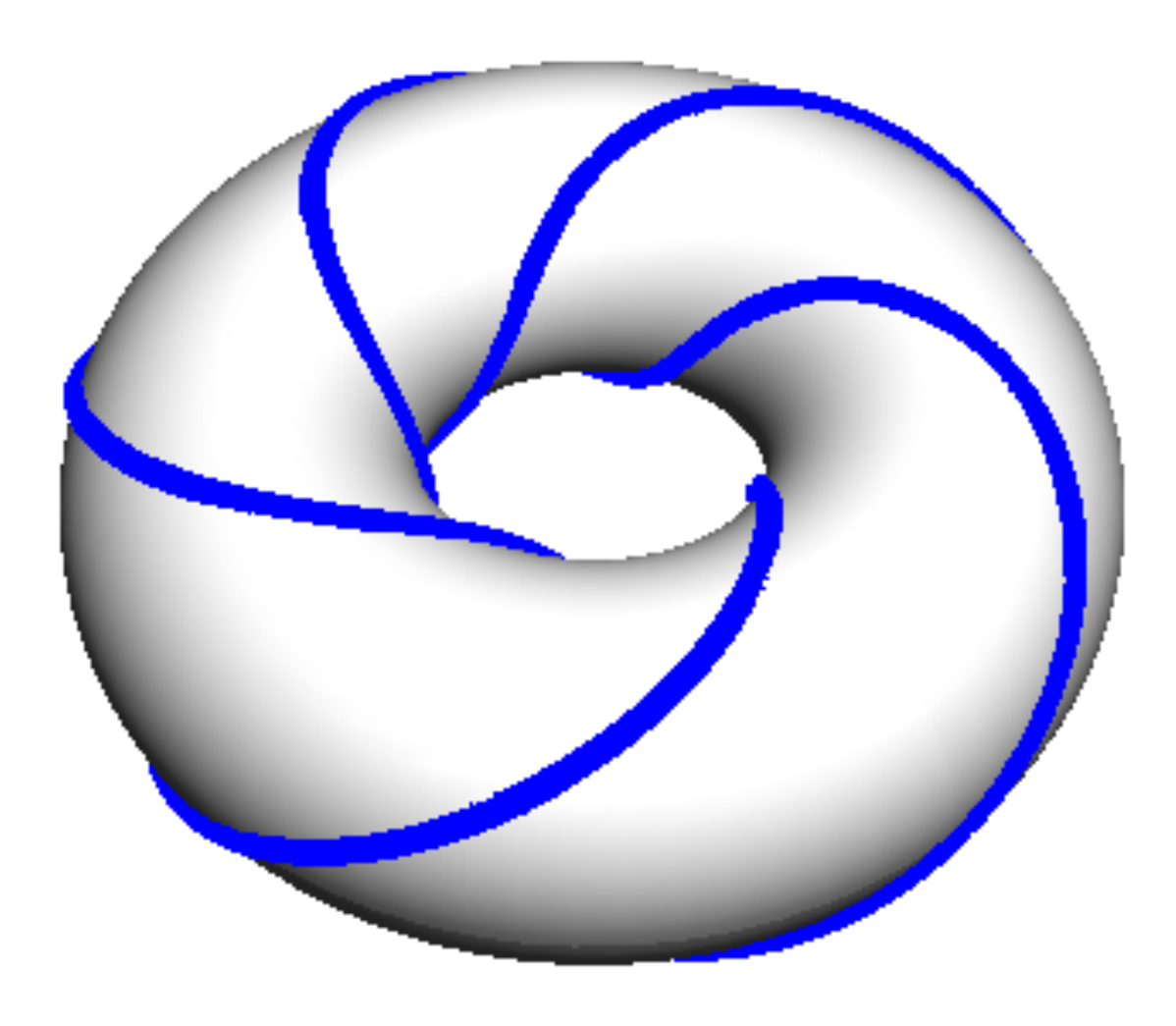}} \quad
(c)\includegraphics[scale=0.23]{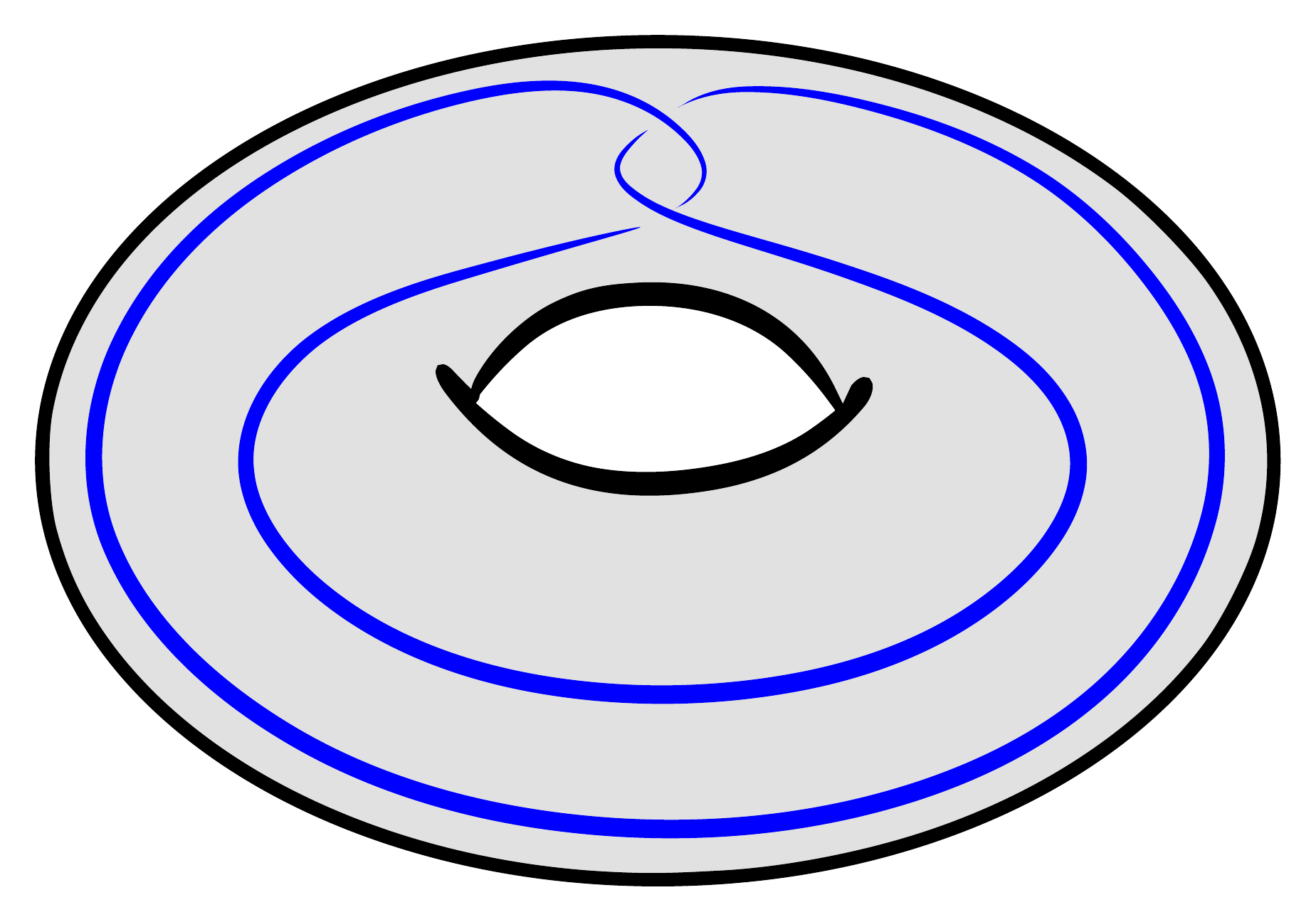}
\caption{Depictions of knots in the solid torus associated to (a) the Hopf link, (b) the $(3,5)$-Seifert link, and (c) the Whitehead link, which is hyperbolic.}
\label{F:ExKnotsInSolidTorus}
\end{figure}

Section \ref{S:Splicing} has calculations of $\pi_1(\L_f/SO_4)$ in many examples of nontrivial splices $f$, with explicit generators.  These results immediately transfer to $\T_f$ via the equivalence  $\T_f \simeq S^1 \x \L_f/SO_4$.  Most of them are summarized in Table \ref{T:ExamplesSolidTorus} at the end of the paper.

Again, there are many more knots in the solid torus than in the 3-sphere, and accordingly, remembering only the knot $f_0: S^1 \incl S^3$ associated to $f: S^1 \incl S^1 \x D^2$ (by embedding $S^1 \x D^2 \incl S^3$) may omit much information.
For example, there are many non-isotopic knots $f$ for which $f_0$ is the unknot, including multiple knots with winding number 0 (e.g.~a component of the Whitehead link or of a Bing double, as in Example \ref{Ex:BorrPartialSplice}), knots with Seifert-fibered complements (a component of the Hopf link or of $S_{1,n}$), knots with hyperbolic complements (e.g.~again a component of the Whitehead link), and knots whose complements have nontrivial JSJ trees (e.g.~again a Bing double).  

Similarly, knottedness of $f_0$ imposes few restrictions on $f$.  
There are knots $f$ with $f_0$ nontrivial where the complement of $f$ is hyperbolic (e.g.~Examples \ref{Ex:HypLinkFromTrefoil} and \ref{Ex:HypLinkFrom8-18}), is Seifert-fibered (the knotted component of $S_{p,q}$), or has a nontrivial JSJ tree (e.g.~Whitehead doubles, as in Exercise \ref{Ex:WhiteheadDoubleInSolidTorus}).
The latter examples are moreover different knots $f$ with the same $f_0$.
In addition, there are cases where the complement of $f_0$ has a trivial JSJ tree, while the complement of $f$ does not.  
For example, if $f_0$ is a Seifert-fibered or hyperbolic link and $f_1$ is a meridian of $f_0$, then the complement of $f$ has a nontrivial JSJ tree.

\subsubsection*{\textbf{Spaces of knots in a thickened torus}}
Let $\V_f$ be the component of a knot in the thickened torus $S^1 \x S^1 \x I$, where the notation is chosen to suggest a connection to virtual knot theory.  
We view $f$ as a 3-component link $(f_0, f_1,f_2)$ where $(f_1,f_2)$ is the Hopf link, by Proposition \ref{VfandLinksWithHopf}.  Then
\[
\V_f \simeq \L_f/SO_4
\]  
by Proposition \ref{VfIsLfModSO4}.
As with $\T_f$, the homotopy type of $\V_f$ can be determined by using Theorem \ref{MainT:FramedLinks}, part (A) and Theorem \ref{T:MeridiansFactor} (and its proof) if $f$ is irreducible.  
For Seifert-fibered $f$, $\V_f \simeq S^1 \x S^1$ (Proposition \ref{P:SeifertVf}), while for hyperbolic $f$, $\V_f \simeq (S^1)^3$ (Corollary \ref{HypKnotVf}).
If $f$ is split, Corollary \ref{VfSplit} gives the most precise description, namely $\V_f \simeq S^1 \x S^1 \x \Emb_f(S^1, D^3)$, and the rightmost factor is in turn calculated as described above for $\T_f$.
For any knot $f$ in a thickened torus, the two rotations of $S^1 \x S^1 \x I$ generate a factor $\Z^2$ in $\pi_1(\V_f)$ (Corollary \ref{FactorsInVf}), thus providing an analogue of a Gramain subgroup of $\pi_1(\V_f)$, just as Theorems \ref{T:MeridiansFactor} and \ref{T:FactorsInTf} do for $\pi_1(\L_f/SO_4)$ and $\pi_1(\T_f)$.  
Examples of knots in $S^1 \x S^1 \x I$ given throughout the paper are summarized in Table \ref{T:ExamplesThickenedTorus}.

As in the case of knots in $S^1 \x D^2$, one can understand spaces of oriented (or unoriented) unparametrized knots in $S^1 \x S^1 \x I$ by taking the appropriate quotient of $\pi_1(\V_f)$ (and an extension of $\Z/2$ by the resulting group).

\subsubsection*{\textbf{Miscellaneous remarks}}
Apropos of virtual knot theory, one might consider spaces of knots in more general thickened surfaces.  Unlike knots in a thickened torus, these do not correspond to links in $S^3$.  So we do not consider them here, though they may well be tractable via their JSJ decompositions and the results of \cite{HatcherMcCullough}.  We expect smaller homotopy types for higher genus, since $\Diff(S_{g})$ has contractible components for $g \geq 2$.  More specifically, we expect that the fundamental group has no Gramain-type subgroup of rotations for $g \geq 2$.  
Similar methods could be fruitful for studying spaces of knots in more general 3-manifolds.

Knots in the solid torus are of interest in other contexts in addition to their relationship to links in the 3-sphere.
These include settings mentioned in the book of Arnold's problems, such as contact geometry \cite[Problem 1994-21]{ArnoldsProblems} \cite{Chmutov-Goryunov-Murakami} and dynamical systems \cite[Problem 1973-23]{ArnoldsProblems} \cite{Gambaudo-Ghys:1997, Gambaudo-Lagrange}, as well as related applications such as tokamaks in nuclear fusion \cite{Moffatt:PNAS}.

\subsection{Organization of the paper}  
In Section \ref{S:Prelims}, we set notation and terminology and review results on spaces of diffeomorphisms and diffeomorphisms of 3-manifolds.
We relate knots in $S^1 \x D^2$ and $S^1 \x S^1 \x I$ to links in $S^3$, thus allowing us to write both $\L_f$ and either $\T_f$ or $\V_f$ for certain links $f$.
We then review Seifert-fibered submanifolds of $S^3$ and the JSJ decomposition.

In Section \ref{S:Asphericity}, we establish relationships between spaces of long knots and spaces $\tL_F/SO_4$. 
We then show that $\tL_F/SO_4$, $\L_f/SO_4$, $\T_f$, and $\V_f$ are all equivalent to certain classifying spaces, which are $K(\pi,1)$ spaces if $F$ (or $f$) is irreducible.

In Sections \ref{S:Seifert} and \ref{S:Hyperbolic}, we completely determine the homotopy types of these spaces in the Seifert-fibered and hyperbolic cases respectively, generalizing work of Hatcher.  

In Section \ref{S:Splicing}, we describe spaces of links obtained by satellite operations, namely generalizations of cables, connected sums, and hyperbolic splices, generalizing work of Budney.  These steps form the bulk of the proof of Theorem \ref{MainT:FramedLinks}.  We provide many examples.

In Section \ref{S:SplitLinksTorus}, we prove statements about spaces of split framed links, including Theorem \ref{MainT:FramedLinks}, part (B).  We also describe $\T_f$ and $\V_f$ for knots $f$ associated to split links.  These results do not rely on any material after Section \ref{S:Asphericity}.  

In Section \ref{S:Splittings} we give splittings of certain subgroups of $\pi_1(\tL_F/SO_4)$, $\pi_1(\T_f)$, and $\pi_1(\V_f)$.  

Section \ref{S:Tables} contains two tables which summarize the homotopy types and fundamental groups of $\T_f$ and $\V_f$ in our examples.   Many of these are spaces $\T_f$ where $f$ is a nontrivial splice, from Section \ref{S:Splicing}.

\subsection{Acknowledgments}  
The first author thanks R.~Inan\c{c} Baykur for support and encouragement in participating in this project.
The second author thanks Ryan Budney for conservations about approaches to this question, Sam Nariman for a conversation about reducible 3-manifolds, and Rafa\l\ Komendarczyk for a brief discussion of connections to other problems.  
Both authors thank Nikolay Buskin and Richard Buckman for bringing to our attention the problem of Arnold, for useful early conversations, and for inspiration to explore various examples.  
We thank the referee for useful comments which lead to some strengthening of our results.  
We acknowledge the use of KLO \cite{KLO} and especially SnapPy \cite{SnapPy} for verifying the symmetries of various links, and the use of Inkscape and Grapher for producing graphics.
The second author was supported by the Louisiana Board of Regents Support Fund, contract number LEQSF(2019-22)-RD-A-22.

\section{Preliminaries}
\label{S:Prelims}
We begin in Section \ref{S:BasicDefs} by setting notation and definitions.  In Section \ref{S:DiffResults}, we review some long-known results on diffeomorphisms of surfaces and 3-manifolds, providing proofs of two relatively short results.  In Section \ref{S:KnotsVia2CompLinks}, we relate knots in a solid torus and a thickened torus to links in $S^3$.  
We then review results specialized to links in $S^3$: 
Section \ref{S:SeifertPrelims} covers Seifert-fibered submanifolds of $S^3$ or equivalently Seifert-fibered links, 
while Section \ref{S:JSJ} covers JSJ decompositions of submanifolds of $S^3$, companionship trees, and satellite operations (i.e.~splicing).

\subsection{Notation and basic definitions}
\label{S:BasicDefs}
\ \\
\noindent
Manifolds and diffeomorphisms:
\begin{itemize}[leftmargin=0.25in]
\item 
We write $D^n$ for the closed $n$-dimensional disk and $I$ for $[0,1]$.  We often view $D^2 \subset \C$.  
\item 
We say "manifold" to mean a manifold with possibly nonempty boundary. 
\item 
We write $\Diff(M)$ for the space of diffeomorphisms of a manifold $M$.
\item 
Given a submanifold $S \subset M$, let $\Diff(M; S)$ be the subspace of diffeomorphisms of $M$ which restrict to the inclusion on $S$. We sometimes abbreviate $\Diff(M; \d M)$ as $\Diff(M; \d)$.   
\item 
For submanifolds $S_1$ and $S_2$ of $M$, let $\Diff(M, S_1; \, S_2)$ be the subspace of diffeomorphisms of $M$ which preserve $S_1$ setwise and fix $S_2$ pointwise.
\item 
If $M$ is orientable, we write $\Diff^+$ to indicate diffeomorphisms of $M$ which are orientation-preserving, both in the absolute and relative cases.  
The same convention applies to the group of isometries $\Isom(M)$ of a hyperbolic 3-manifold $M$.
A diffeomorphism which pointwise fixes a boundary component of a connected manifold $M$ must be orientation-preserving.
\item 
Let $\Diff_0(M)$ denote the component of the identity in $\Diff(M)$.  Define $\Diff_0(M; S)$ similarly.
\item
An element of $\pi_0$ of a space of diffeomorphisms is called a \emph{mapping class}.
If $\Sigma_g^b$ is a surface of genus $g$ with $b$ boundary components, then $\mathrm{PMod}(\Sigma_{g,n}^b):=\pi_0\Diff(\Sigma_g^b; \ \d \Sigma_g^b \cup \{x_1,\dots, x_n\})$, the \emph{pure mapping class group} of $\Sigma_g^b$ with $n$ marked points or punctures.
If $b=0$ or $n=0$ we omit the superscript or subscript entirely.
Special cases include the $n$-strand pure braid group $\PB_n \cong \mathrm{PMod}(\Sigma_{0,n}^1)$, the group of framed $n$-strand pure braids $\mathrm{PMod}(\Sigma_{0}^{n+1})$, and the spherical $n$-strand pure braid group $\mathrm{PMod}(\Sigma_{0,n})$.
\item 
Define $\Conf(n,M):=\{(x_1, \dots, x_n) \in M^n : x_i \neq x_j  \ \forall \, i \neq j\}$, the space of ordered configurations in a manifold $M$.  We consider mainly $\Conf(n,\R^2)$, which is a $K(G,1)$ space for $G=\PB_n$.
\end{itemize}
Surfaces and 3-manifolds:
\begin{itemize}[leftmargin=0.25in]
\item 
A 3-manifold $M$ is \emph{irreducible} if every embedded 2-sphere in $M$ bounds a 3-ball.  By the Sphere Theorem, $M$ is irreducible if and only if $\pi_2(M)=0$.
\item 
If $S$ is a compact, connected, orientable surface in a 3-manifold with boundary $M$ and $S$ is properly embedded (i.e.~$S \cap \d M = \d S$), we say $S$ is \emph{incompressible} if the inclusion $S\incl M$ is injective on $\pi_1$.
\item 
Suppose that $T=\d \nu$, where $\nu$ is a solid torus $S^1 \x D^2$.
A \emph{meridian} $m$ is a simple closed curve on $T$ which represents a generator of $\pi_1(T)$ but bounds a disk in $\nu$.  A \emph{longitude} $\ell$ is a simple closed curve on $T$ which has zero linking number with $S^1 \x \{0\} \subset \nu$ and intersection number $\pm 1$ with $\ell$.  
A knot $f:S^1 \incl S^3$ gives rise to such a solid torus $\nu$ via a tubular neighborhood, and an orientation of $S^1$ orients $\ell$, from which we can canonically orient $m$ by demanding that $m \cdot \ell=+1$, using an orientation of $S^3$.  Equivalently (orienting $T$ outward normal last), $m$ has linking number $+1$ with $f$.
\end{itemize}
Links, knots, and long knots:
\begin{itemize}[leftmargin=0.25in]
\item 
Let $\Emb(M,N)$ be the space of smooth embeddings of a manifold $M$ into a manifold $N$, with the $\mathcal{C}^\infty$ Whitney topology.  
Given a fixed embedding $f: S \incl N$ of a submanifold $S \subset M$, we write $\Emb(M,N; S)$ or $\Emb(M,N;f)$ for the space of smooth embeddings of $M$ into $N$ which restrict to $f$ on $S$. 
A special case is where $M$ is a submanifold of $N$ and $f$ is the inclusion.
\item 
A \emph{link} is a smooth embedding $f = (f_1,\dots, f_m): \coprod^m S^1 \incl S^3$.  When there is no risk of misinterpretation, we may blur the distinction between $f$ and its image.  Sometimes we index the components of $f$ starting at 0 rather than 1.  If the name of a link already has a subscript, e.g.~$j_i$, we include a second subscript, e.g.~$j_{i,k}$, to specify a component.  
Links which have been tabulated but which lack colloquial names will be identified by their names in the tables of Thistlethwaite \cite{ThistlethwaiteTable} and Rolfsen \cite{Rolfsen}.
We sometimes write for example $\coprod S^1$ for a disjoint union of an unspecified finite number of copies of $S^1$, so as to suppress the number of link components from the notation.
These remarks also apply to framed links, defined below.
\item
A \emph{knot} is a link $S^1 \incl S^3$ with one component.  We use the term \emph{closed knot} synonymously, to distinguish from long knots, defined below.
\item 
An $n$-component link $L$ is \emph{irreducible} if there exists an embedded $S^2$ that separates components of $L$.  We say that $L$ is \emph{split} if it is \emph{not} irreducible.  A knot is thus always irreducible.
\item 
Fix orientations on $S^1 \x D^2$ and $S^3$.
A \emph{framed link} is a smooth, orientation-preserving embedding $F=(F_1,\dots, F_m): \coprod^m S^1 \x D^2 \incl S^3$.
The \emph{$i$-th framing number} of $F$ is the linking number $\mathrm{lk}(F|_{(S^1)_i \x \{0\}}, \ F|_{(S^1)_i \x \{1\}})$, where $(S^1)_i$ is the $i$-th summand of $S^1$.  That is, it is the linking number of the $i$-th ``core'' with the $i$-th longitude.
The restriction of a framed link $F$ to the cores $\coprod^m S^1 \x \{0\}$ gives a link $f$.  
A \emph{framing} of a link $f$ is a class of framed link $F$ up to isotopy fixing $f$, and it is given precisely by the $m$ framing numbers.
Thus restriction to the core induces a bijection to isotopy classes of links from isotopy classes of links with \emph{$\vec{0}$-framing} 
a.k.a.~the \emph{homological framing}. 
A planar projection of a link gives rise to its \emph{blackboard framing} given by the writhe (i.e.~the signed sum of crossings) of each component.
We will use the uppercase and lowercase of the same letter to denote a framed link and the corresponding link, e.g.~$F \leftrightarrow f$.
\item 
For a link $f$, we use $\nu(f)$ to denote a tubular neighborhood of $f$.  The \emph{exterior of $f$} is defined $S^3 - \nu(f)$, which is diffeomorphic to the \emph{complement} $S^3 - \im(F)$ of the framed link $F$.  We thus denote it by either $C_f$ or $C_F$.
\item 
A \emph{long knot} is an embedding $f:I \incl I \x D^2$ whose values and derivatives on $\d I$ agree with the embedding induced by the diffeomorphism $I \overset{\cong}{\to} I \x \{0\}$.  A \emph{framed long knot} is an embedding $F:I \x D^2 \incl I \x D^2$ such that $F$ and its derivative are the identity on $\d I \x D^2$. 
\end{itemize}
Spaces of links and long knots and maps between them:
\begin{itemize}[leftmargin=0.25in]
\item For each integer $m\geq 1$, let $\L(m) := \Emb(\coprod^m S^1, S^3)$ be the space of $m$-component links.  Let $\L := \coprod_{m \geq 1} \L(m)$ be the space of all links.  For a link $f$, let $\L_f$ be the component of $f$ in $\L$.
\item Let $\tL$ be the space of all framed links, and let $\tL_F$ be the component in $\tL$ of the framed link $F$.  
{The space $\tL$ is homotopy equivalent to the space of links with a normal vector at each point.}
\item
Let $\K$ be the space of long knots.  
{It is homeomorphic to the space $\Emb_c(\R, \R \x D^2)$  of embeddings $\R \incl \R \x D^2$ given by the inclusion of the $x$-axis outside a compact set, say $[-1,1]$.}
\item
Let $\tK$ be the space $\Emb_c(\R \x D^2, \R \x D^2)$ of framed long knots.\footnote{Beware that the space of closed knots (respectively framed closed knots) is a subspace of $\L$ (respectively $\tL$), not $\K$ (respectively $\tK$).
Our notation is not overloaded because we make no use of long links.} 
\item
Equipping a link or long knot with the $\vec{0}$-framing and restricting to the core yield well defined maps $\pi_0 \L \to \pi_0 \tL \to \pi_0 \L$ and $\pi_0 \K \to \pi_0 \tK \to \pi_0 \K$ where each composition is the identity.
\item
There is a closure operation $\K \to \L(1)$ (or $\tK \to \tL(1)$) that sends a (framed) long knot to a (framed) closed knot, as described in Section \ref{S:KnotsVia2CompLinks} below.  
We will denote a (framed) long knot and its closure by either $(F, \overline{F})$ or $(\underline{F}, F)$.
The exterior of a long knot $f$ in $I \x D^2$ is homeomorphic to the exterior of the closure $\overline{f}$ in $S^3$.
\end{itemize}
Loops of links:
\begin{itemize}[leftmargin=0.25in]
\item 
For a framed link $F=(F_0, \dots, F_m)$, and  $i=0,1,\dots, m$, we write 
 $\lambda_i$ to denote a loop of longitudinal rotations (or reparametrizations) of the $i$-th component and
$\mu_i$ to denote a loop of meridional rotations of the $i$-th component.  
We view these as their classes in $\pi_1(\tL_F)$ or more often $\pi_1(\tL_F/SO_4)$.  
If the 0-th component of $F=(F_0,F_1)$ corresponds to a knot in $S^1 \x D^2$ (so $F_1$ is the unknot), then $\lambda_1$ and $\mu_1$ correspond modulo $SO_4$ to loops which fix $F_1$ and rotate $F_0$ in some way.
They correspond respectively to meridional and longitudinal rotations $\mu$ and $\lambda$ of the solid torus, not vice-versa.  
In this case, we also write $\rho$ for the loop $\lambda_0$ that reparametrizes the knotted component.  We also use $\lambda, \mu,$ and $\rho$ to denote similar loops of a knot in $S^1 \x S^1 \x I$.
\end{itemize}
Groups:
\begin{itemize}[leftmargin=0.25in]
\item 
We write ``+'' and ``0'' for the group operation and identity element in abelian groups, but juxtaposition and ``1'' in groups that are not (necessarily) abelian.  We use $e$ for the identity element in contexts possibly involving both types of groups, e.g., certain short exact sequences.  
\item 
We use exponentiation to denote conjugation, i.e., $g^h := h^{-1} g h$.
\item 
We write $\Z\langle a_1, \dots, a_n \rangle$ to denote the free abelian group on generators $a_1, \dots, a_n$.
\item 
We write $G\cong K \rtimes H$ or $G \cong H \ltimes K$ to indicate that $G$ is a semi-direct product of $H$ and $K$, where $K$ is the normal subgroup, hence the kernel of a map $G \to H$.   
Indeed, equivalently, there is a short exact sequence
\[\{e\} \to K \to G \to H \to \{e\}\]
 with a splitting $H \to G$.  A sequence as above with a splitting $G \to K$ is equivalent to the stronger condition that $G\cong H\x K$.  In the latter case, we call $H$ and $K$ \emph{direct factors}.  
\item 
We write $\mathfrak{S}_n$ for the symmetric group on $n$ letters and $\mathfrak{S}_n \wr K$ for the group $K^n \rtimes \mathfrak{S}_n$ where $\mathfrak{S}_n$ acts by permuting the $n$ factors of $K$.
 \item 
 Write $\mathfrak{S}_r^\pm$ for the signed symmetric group $\mathfrak{S}_r \wr \Z/2 := \mathfrak{S}_r \ltimes (\Z/2)^r$.  An element $\sigma$ in $\mathfrak{S}_r^\pm$ can be viewed as a permutation of $\{-r, \dots, r\}$ such that $\sigma(-i)= -\sigma(i)$.  It can also be viewed as a permutation of $\{1,\dots,r\}$ together with the additional data of a sign $+$ or $-$ for each value $\sigma(i)$.  
\item 
Write $\B_n$ and $\PB_n := \ker(\B_n \to \mathfrak{S}_n)$ for the $n$-strand braid (respectively pure braid) group.
\end{itemize}

\subsection{Background on homotopy types of spaces of diffeomorphisms}
\label{S:DiffResults}
We now review theorems related to homotopy types of spaces of diffeomorphisms and spaces of embeddings.
%, many of which are well known.

By work of Palais \cite{Palais-LocalTriv} or Lima \cite{Lima}, restricting diffeomorphisms of a manifold $M$ to a submanifold $S$ gives a locally trivial fiber bundle
\begin{equation}
\label{DiffToEmbFibn}
\Diff(M; S) \to \Diff(M) \to \Emb(S, M).
\end{equation}
This result generalizes the isotopy extension theorem \cite[Theorem 8.1.3]{Hirsch}, which by itself will also useful here.
One can similarly restrict embeddings of $M$ into another manifold $N$ to obtain a fibration 
\[
\Emb(M, N; S) \to \Emb(M,N) \to \Emb(S, N).
\]
An important special case of a slight variant of \eqref{DiffToEmbFibn} is when $M=D^{n+1}$, $S=\{(1,0,\dots, 0)\}$, and we take 
linear isometric maps instead of smooth maps to get the fiber bundle
\[
SO_n \to SO_{n+1} \to S^n.
\]
Recall that $SO_2 \cong S^1$, while $SO_3 \cong \R P^3 \cong S^3/\{\pm 1\}$, and $SO_4 \cong S^3 \x SO_3$ (as spaces, but not as groups).  For $n=2$, the bundle above is doubly covered (in the fibers) by the Hopf fibration.  For $n=3$, it is a trivial bundle.

We will extensively use the long exact sequence in homotopy groups for a fibration $F \to E \to B$, which ends at $\dots \to \pi_0(F) \to \pi_0(E) \to \pi_0(B)$.  Generally, the $\pi_0$ terms are just sets, but we will often consider fibrations of topological groups, in which case the maps on $\pi_0$ are homomorphisms and any path-component is homeomorphic to the component of the identity.  
We will also consider the situation where there is a free action of a group $G$ on the total space $E$ of a fiber bundle which restricts to a free $G$-action on the base $B$.  Taking the quotient by the action gives the fiber bundle $F \to E/G \to B/G$.

By a theorem of Whitehead, a map of CW-complexes inducing isomorphisms on all homotopy groups is a homotopy equivalence.  
We apply this fact to spaces of $C^\infty$-diffeomorphisms and $C^\infty$-embeddings of compact manifolds.  
Such a space can be given the structure of an infinite-dimensional manifold, modeled on a locally convex topological vector space, in fact a Fr\'echet space \cite[Section 3.1]{Brylinski}.
These spaces are thus dominated by CW-complexes \cite[Theorem 13]{Palais-HtpyThy}, which is sufficient for Whitehead's theorem to apply; see \cite[Lemma 6.6]{Palais-HtpyThy} or \cite[Proposition A.11]{HatcherAT}.

For a group $G$, we will use the classifying space $BG$, which can be obtained as the quotient of a contractible space $EG$ by a free action of $G$.  
%The space $BG$ classifies principal $G$-bundles in that homotopy classes of maps from $X$ to $BG$ are in bijection with principal $G$-bundles over $X$.
One can regard $B(-)$ as a functor from groups to based spaces, via a certain construction of $EG$ and $BG$.  
The based loop space functor $\Omega(-)$ is a one-sided inverse to $B(-)$ up to homotopy in the sense that $\Omega B G \simeq G$.  
So $\pi_i(BG) \cong \pi_{i-1}(G)$ for all $i\geq 1$, as one already sees from the long exact sequence in homotopy of the fibration $G \to EG \to BG$.
We will often consider classifying spaces $BG$ for discrete $G$, in which case $EG \to BG$ is a covering space with deck transformation group $G$, and  $BG$ is aspherical, i.e.~an Eilenberg--MacLane space $K(G,1)$.  
The functor $B(-)$ takes a short exact sequence of discrete groups to a fibration of $K(G,1)$ spaces.
%So if $G$ is discrete, $\Omega B G \cong G$.  
Recall that if a $K(G,1)$ space is a finite-dimensional CW complex, then $G$ must be torsion-free.

We list further relevant results, with some more useful terminology and notation:
\begin{itemize}
[leftmargin=0.25in]
\item 
(Alexander \cite[Theorem 1.1]{Hatcher3Mfds}): Every smoothly embedded $S^2$ in $S^3$ bounds a ball $D^3$.
Every smoothly embedded torus in $S^3$ bounds a solid torus on at least one side.  (Note also that a torus in $S^3$ is unknotted if and only if it bounds a solid torus on both sides.)
\item 
(Smale \cite{Smale:DiffD2}, 1959): $\Diff^+(S^2) \simeq SO_3$ or equivalently $\Diff(D^2; \d) \simeq *$.
\item 
(Earle--Eells \cite{Earle-Eells}, 1969; Earle--Schatz \cite{Earle-Schatz}, 1970; Gramain \cite{GramainMCG}, 1973):  
\begin{itemize}[leftmargin=*]
\item $\Diff(S^1 \x I; \d) \simeq \Omega \Diff^+(S^1) \simeq \Omega S^1 \simeq \Z$.  (A diffeomorphism representing a generator of $\pi_0 \Diff(S^1 \x I; \d)$ is called a \emph{Dehn twist}.  For a simple closed curve $C$ on any surface $S$, it gives rise to a diffeomorphism of $S$ supported in a neighborhood of $C$, called a \emph{Dehn twist along $C$}.)
\item $\Diff_0(S^1 \x S^1) \simeq S^1 \x S^1$. 
\item For a compact orientable surface $S$ of genus $\geq 2$, possibly with boundary, $\Diff_0(S; \d S) \simeq \ast$.
\end{itemize}
\item 
Let $P_n := D^2 - (\coprod_{i=1}^n \mathrm{int} D_i)$, where each $D_i$ is a disk in the interior of $D^2$.  Then $P_n \cong \Sigma_0^{n+1}$.  Call $\d D^2$ the \emph{outer boundary component} and the $\d D_i$ the \emph{inner boundary components}.
We have $\mathrm{PMod}(\Sigma_{0,r}^{n+1}) \cong \Z^n \x \PB_{n+r}$.   Indeed, the map to $\PB_{n+r}\cong \mathrm{PMod}(\Sigma_{0,n+r})$ is given by  \emph{capping} the $n$ inner boundary components, i.e.~gluing punctured disks to them.  The map to $\Z^n$ is obtained by $n$ maps $\phi_i$ that ``fill in'' all the punctures $x_1, \dots, x_r$ and all the $n$ removed disks except the $i$-th one (and using that $\Diff(S^1 \x I; \d) \simeq \Z$).
This group mutually generalizes the pure braid group $\PB_r$ ($n=0$) and the group of framed $n$-strand pure braids ($r=0$).
\item 
The center $Z(\PB_n)$ is isomorphic to $\Z$, generated by a Dehn twist along the boundary of $D^2$.  For $n\geq 3$, $\PB_n/Z(\PB_n) \cong \mathrm{PMod}(\Sigma_{0,n+1})$
The center splits off as a direct factor, i.e.~for $n\geq3$, $\PB_n \cong \Z \x  \mathrm{PMod}(\Sigma_{0,n+1})$.  See \cite[Section 9.3]{Farb-Margalit} for details.
\item 
(Hatcher \cite{HatcherIncomprSurfs}, 1976; Ivanov \cite{Ivanov, Ivanov2}, 1976--79):
\begin{itemize}[leftmargin=*]
\item
For an orientable, connected, compact, irreducible 3-manifold $M$, and an incompressible surface $S$ in $M$,  $\Emb(S, M; \d S)$ has contractible components, provided $S$ is neither a torus nor a fiber of a bundle structure on $M$.  If $S$ is a torus that is not the fiber of a bundle structure on $M$, then each component of $\Emb(S, M; \d S)$ is equivalent to $\Diff_0(S^1 \x S^1) \simeq S^1 \x S^1$.
\item
For a \emph{Haken} 3-manifold $M$, $\Diff(M; \d M)$ has contractible components.  
\end{itemize}

It suffices to recall that for orientable 3-manifolds, $M$ is Haken if and only if it is compact and irreducible and contains an incompressible surface.  The exterior of a nontrivial, irreducible link is a Haken manifold, since the boundary of a tubular neighborhood of any component is an incompressible surface.  

The stated results about $\Emb(S, M; \d S)$ also hold when $S=\coprod_{i=1}^n S_i$ is a disjoint union of incompressible surfaces $S_i$.  Indeed, this follows by induction on $n$ from the fibration 
\[
\Emb \left(S_n, M - \coprod_{i=1}^{n-1} S_i \right) \to \Emb \left(\coprod_{i=1}^n S_i, M \right) \to 
\Emb \left(\coprod_{i=1}^{n-1} S_i, M \right)
\]
and its long exact sequence in homotopy.
\item (Hatcher \cite{HatcherSmaleConj}, 1983):  $\Diff(D^3; \d) \simeq *$ or equivalently $\Diff^+(S^3) \simeq SO_4$.
\end{itemize}

With the above results in hand, the proofs of the last two results in this subsection (Lemma \ref{DiffSolidTorus} and Lemma \ref{DiffExteriorOfHopfLink}) are not long, so we include them.  We need Lemma \ref{DiffSolidTorus} only for $g=1$, but the proof for arbitrary $g$ is just as easy.

\begin{lemma}
\label{DiffSolidTorus}
For a handlebody $M$ of any genus $g\geq 0$, $\Diff(M; \d) \simeq *$.  In particular, 
$\Diff(S^1 \x D^2; \d) \simeq *$.
\end{lemma}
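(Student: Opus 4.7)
The plan is to prove this by induction on the genus $g$. The base case $g=0$ is the statement that $\Diff(D^3;\d)\simeq *$, which is exactly the Smale conjecture of Hatcher, already recorded in Section \ref{S:DiffResults}. So the content lies in reducing the genus $g\geq 1$ case to the genus $g-1$ case; for the main application in the rest of the paper, only the single step from $g=1$ to $g=0$ is needed.

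For the inductive step, I would choose a non-separating properly embedded essential meridional disk $D\subset M$ (for $g=1$ and $M=S^1\x D^2$, take $D=\{1\}\x D^2$). The restriction fibration of Palais--Lima specialized to the pair $(M,D)$ reads
\[
\Diff(M;\d M\cup D) \ \to\ \Diff(M;\d M) \ \to\ \Emb(D,M;\d D),
\]
where the base is the space of properly embedded disks in $M$ that agree on $\d D$ with the inclusion. Let $\Emb_D$ denote the component of the inclusion. The long exact sequence in homotopy combined with the two claims (i) $\Emb_D\simeq *$ and (ii) $\Diff(M;\d M\cup D)\simeq \Diff(M';\d M')$ for $M'$ a genus $g-1$ handlebody obtained by cutting $M$ along $D$, together with the inductive hypothesis, would give $\Diff(M;\d M)\simeq *$ on the component of the identity. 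To upgrade to all components, one observes that the surjection $\pi_0\Diff(M;\d M) \to \pi_0\Emb_D$ is trivial (the target is a point) and the fiber is connected by induction, so $\Diff(M;\d M)$ is itself connected.

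Claim (ii) is the easy step: cutting $M$ along $D$ identifies diffeomorphisms of $M$ which are the identity on $\d M\cup D$ with diffeomorphisms of $M'$ which are the identity on $\d M'$, and for a non-separating meridional disk the complement $M'$ is a handlebody of genus $g-1$ (in the base application, $g=1$ yields $M'\cong D^3$, and Hatcher's Smale conjecture applies directly). Claim (i) is the main obstacle: the version of Hatcher--Ivanov's theorem recalled in Section \ref{S:DiffResults} is stated for incompressible surfaces other than tori and bundle fibers, and does not explicitly cover the disk case. However, a properly embedded essential disk in an irreducible 3-manifold is the easiest case of that result; its contractibility is established by a parameterized Alexander-type isotopy argument, using that $M$ is irreducible so that any two embeddings of $D$ with the same boundary and lying in the same component are ambient isotopic rel $\d D$, and that the space of such ambient isotopies is itself contractible. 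This is the step that would require the most care in a formal write-up, and in a self-contained treatment one would cite the appropriate disk version of Hatcher's theorem from \cite{HatcherIncomprSurfs}.
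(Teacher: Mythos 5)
Your overall strategy --- induction on genus, cutting along a non-separating properly embedded disk $D$, and running the long exact sequence of the restriction fibration $\Diff(M;\d M\cup D)\to\Diff(M;\d M)\to\Emb(D,M;\d D)$ --- is exactly the paper's proof, and your claim (ii) is handled there in the same way. Your worry about whether the quoted Hatcher--Ivanov statement covers the disk case is reasonable but not an obstacle here: with the paper's definition of incompressibility ($\pi_1$-injectivity of a properly embedded surface), a disk qualifies, and the paper simply invokes that theorem for the contractibility of the components of the base.

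The genuine gap is in your passage ``to upgrade to all components.'' You argue that the map $\pi_0\Diff(M;\d M)\to\pi_0\Emb_D$ is trivial ``because the target is a point,'' but $\Emb_D$ has trivial $\pi_0$ by definition (it is a single component), whereas the base of the fibration is the full space $\Emb(D,M;\d D)$. Exactness of $\pi_0(\mathrm{fiber})\to\pi_0(\mathrm{total})\to\pi_0(\mathrm{base})$ only controls those mapping classes in $\Diff(M;\d M)$ whose restriction to $D$ lies in the component of the inclusion; a priori a diffeomorphism fixing $\d M$ could carry $D$ to a disk in a \emph{different} component of $\Emb(D,M;\d D)$, and nothing in your argument rules this out. (Your claim (i) as phrased --- that two embeddings ``lying in the same component'' are ambient isotopic --- is tautological and does not supply the missing connectivity.) What is needed, and what the paper proves, is that $\Emb(D,M;\d D)$ itself is connected: given two properly embedded disks $e(D)$ and $e'(D)$ with the same boundary, an innermost-circle argument removes the circles of $e(D)\cap e'(D)$ by isotopy, after which $e(D)\cup e'(D)$ is an embedded $2$-sphere; embedding $M$ in $\R^3$ and applying Alexander's theorem, this sphere bounds a ball, whence the two disks are isotopic rel $\d D$. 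With that step supplied, your proof coincides with the paper's.
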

\begin{proof}
We proceed by induction on the genus $g$, where the basis case $g=0$ holds by Hatcher's work on the Smale Conjecture.
Let $M$ be a handlebody of genus $g$, and let $S$ be a properly embedded non-separating $D^2$ in $M$.  
Restricting a diffeomorphism to $S$ gives a fibration
\begin{equation}
\label{SurfaceInM}
\Diff(M'; \d M') \to \Diff(M; \d M) \to \Emb(S,M; \d S)
\end{equation}
where $M'$ is the genus-$(g-1)$ handlebody obtained by cutting $M$ along $S$.  By the induction hypothesis, the fiber is contractible, and by the theorem of Hatcher and Ivanov, the base has contractible components.  To see that the base is connected, consider two embeddings $e, e'$of $S$ in $M$ which agree with the inclusion on $\d S$.  By considering innermost circles in $e(S) \cap e'(S)$, we can find an isotopy of $e'$ to reduce the number of such circles.  We ultimately conclude that $e(S) \cup e'(S)$ is a 2-sphere in  $M$.  By embedding $M$ in $\R^3$, we view this 2-sphere as a subspace of $\R^3$.  By the theorem of Alexander, it bounds a 3-ball, and $e$ and $e'$ are isotopic.  Hence the base space is connected, and the total space is contractible.
\end{proof}

Lemma \ref{DiffExteriorOfHopfLink} is a special case of Proposition \ref{DiffSeifertFramed}, but it will help to first understand this case directly.
Understanding diffeomorphisms of $S^1 \x S^1 \x I$ will ultimately give us the homotopy type of the space of the Hopf link in $S^3$.
Considering only those that fix the boundary pointwise below rules out the rotations of each $S^1$ factor.  This is reflected by the fact that $\Diff(S^1 \x S^1; \d)$ is homotopy-discrete, whereas each component of $\Diff(S^1 \x S^1)$ is equivalent to $S^1 \x S^1$.

\begin{lemma}
\label{DiffExteriorOfHopfLink}
$
\Diff(S^1 \x S^1 \x I; \d)  \simeq \Omega(\Diff_0(S^1 \x S^1)) \simeq \Omega (S^1 \x S^1) \simeq  \Z \x \Z 
$.
\end{lemma}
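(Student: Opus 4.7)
The plan is to set up a restriction fibration whose base is the identity component $\Diff_0(T)$, where $T := S^1 \x S^1$, and whose fiber is the space we want to compute. Specifically, restricting a diffeomorphism of $T \x I$ fixing $T \x \{0\}$ pointwise to the other boundary torus $T \x \{1\}$ gives, via Palais-Lima and the isotopy extension theorem, a locally trivial fiber bundle
$$\Diff(T \x I; \d) \to \Diff(T \x I; T \x \{0\}) \to \Diff_0(T).$$
The image lands in $\Diff_0(T)$ (rather than all of $\Diff(T)$): given $\phi$ in the middle space, the continuous family of maps $t \mapsto \pi_T \circ \phi|_{T \x \{t\}}$ is a homotopy from $\mathrm{id}_T$ to the composition of $\phi|_{T \x \{1\}}$ with projection, so the latter is homotopic (hence, by Baer-Epstein for surfaces, isotopic) to the identity.

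The main step is to show that the middle space $\Diff(T \x I; T \x \{0\})$ is contractible. The map sending a based path $\gamma\co (I, 0) \to (\Diff_0(T), \mathrm{id})$ to the level-preserving diffeomorphism $(x, t) \mapsto (\gamma(t)(x), t)$ identifies the based path space $P_{\mathrm{id}} \Diff_0(T)$ with the subspace of level-preserving diffeomorphisms fixing $T \x \{0\}$. Since $P_{\mathrm{id}} \Diff_0(T)$ is contractible, it suffices to exhibit a deformation retraction of $\Diff(T \x I; T \x \{0\})$ onto this subspace. One does so by a straightening argument: for any $\phi$ in the middle space, the images $\phi(T \x \{t\})$ form a smooth family of incompressible tori properly isotopic to horizontal levels, and one uses Hatcher-Ivanov to canonically isotope $\phi$ through $\Diff(T \x I; T \x \{0\})$ to a level-preserving diffeomorphism.

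Given the contractibility of the middle space, the long exact sequence in homotopy for the fibration immediately yields $\Diff(T \x I; \d) \simeq \Omega \Diff_0(T)$. By the Earle-Eells theorem cited above, $\Diff_0(T) \simeq T = S^1 \x S^1$, so $\Omega \Diff_0(T) \simeq \Omega (S^1 \x S^1)$, which is homotopy equivalent to $\Z \x \Z$ since $S^1 \x S^1$ is a $K(\Z^2, 1)$.

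The principal obstacle is the straightening step: one must verify that the deformation retraction onto the level-preserving subspace can be performed smoothly and continuously on all of $\Diff(T \x I; T \x \{0\})$ at once, not just pointwise. An alternative (perhaps cleaner) route is to sidestep this entirely by observing that $T \x I$ is Haken — it is compact, orientable, irreducible (since $\pi_2(T \x I) = 0$), and contains the incompressible torus $T \x \{1/2\}$ — so by Hatcher-Ivanov the components of $\Diff(T \x I; \d)$ are contractible, and one need only count components (generated as $\Z^2$ by Dehn twists along the two standard curves in a horizontal torus) and match them with $\pi_1 \Diff_0(T) \cong \pi_1(T) \cong \Z^2$ via the connecting homomorphism of the same fibration above.
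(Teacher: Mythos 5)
Your setup is the natural one---writing $T:=S^1 \x S^1$, the restriction fibration $\Diff(T \x I; \d) \to \Diff(T \x I; T \x \{0\}) \to \Diff_0(T)$ with Earle--Eells identifying the base with $T$---but both of your routes stop short of the actual content of the lemma. In the first route everything hinges on the contractibility of the middle space $\Diff(T \x I; T \x \{0\})$, and the ``straightening'' of an arbitrary such diffeomorphism onto the level-preserving subspace, carried out continuously in families, is not something Hatcher--Ivanov hands you: their theorem concerns the space of embeddings of a single incompressible surface, not a parametrized straightening of the whole one-parameter family of tori $\phi(T \x \{t\})$. Moreover, via the long exact sequence of your own fibration, the weak contractibility of the middle space is \emph{equivalent} to the statement that the connecting map $\Z^2 \cong \pi_1(\Diff_0(T)) \to \pi_0(\Diff(T \x I; \d))$ is an isomorphism, i.e.\ to the lemma itself; so this route is circular unless that contractibility is established independently.

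The second route has the same problem in different clothing: after invoking Hatcher--Ivanov to get contractible components (which the paper also does), you assert that $\pi_0(\Diff(T \x I; \d))$ is free abelian of rank $2$ on the two Dehn twists and that the connecting homomorphism matches this with $\pi_1(\Diff_0(T))$. But the two nontrivial facts---(i) every diffeomorphism rel $\d$ is isotopic to a product of the two twists, and (ii) the twists satisfy no relations---are exactly what must be proved, and the connecting homomorphism cannot certify them without already knowing $\pi_0$ and $\pi_1$ of the middle space. The paper's proof supplies precisely these two facts by constructing the homological invariant $\Phi \co \pi_0(\Diff(T \x I; \d)) \to \Z \x \Z$ from the class of the arc $\{0\} \x \{0\} \x I \cup -f(\{0\} \x \{0\} \x I)$: surjectivity comes from the two explicit lifts of the annular Dehn twist, and injectivity uses the universal cover, Waldhausen's homotopy-implies-isotopy theorem for the incompressible annulus $A = S^1 \x \{0\} \x I$, and cutting along $A$ to reduce to the connectivity of $\Diff(S^1 \x I \x I; \d)$ from Lemma \ref{DiffSolidTorus}. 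Some argument of this kind is needed to close your gap.
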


\begin{proof}
By the theorem of Hatcher and Ivanov, it suffices to consider $\pi_0$ of this space, since $S^1 \x S^1 \x I$ is Haken.  
Define a map $\Phi: \pi_0(\Diff(S^1 \x S^1 \x I)) \to \Z \x \Z$ as follows.
Write $S^1=[0,2\pi]/\sim$ with $0$ as its basepoint, and for $f \in \Diff(S^1 \x S^1 \x I)$, consider the union of segments $\{0\} \x \{0\} \x I \cup -f(\{0\} \x \{0\} \x I)$ as a class in $H_1(S^1 \x S^1 \x I) \cong \Z \x \Z$.  
This map descends to a map $\Phi$ on $\pi_0$.  
The surjectivity of $\Phi$ is established by considering the two lifts of the Dehn twist that generate $\pi_0\Diff(S^1 \x I; \d)$ using the two possible projections  $S^1 \x S^1 \x I \to S^1 \x I$.  Explicitly, these generating diffeomorphisms are given by $(\theta, \phi, t) \mapsto (\theta + 2\pi t, \phi, t)$ and $(\theta, \phi, t) \mapsto (\theta, \phi+ 2\pi t, t)$.

For injectivity, suppose $\Phi(f)=(0,0)$.  
Let $A$ be the annulus $S^1 \x \{0\} \x I$.
Consider the lift of $f$ to a diffeomorphism $\widetilde{f}$ of the universal cover $\R \x \R \x I$, and let $\widetilde{A}$ be the subspace corresponding to the universal cover of $A$.  
Then since $\Phi(f)=(0,0)$, the restriction of $\widetilde{f}$ to $\d \widetilde{A}$ is the identity.  By a straight-line homotopy, $\widetilde{f}|_{\widetilde{A}}$ is homotopic to the inclusion, and this homotopy descends to a homotopy from $f|_A$ to the inclusion of $A$.
A theorem of Waldhausen \cite[Corollary 5.5]{Waldhausen:Annals1968} says that a homotopy of an incompressible surface constant on the boundary gives rise to an isotopy with the same endpoints, so there is an isotopy from $f|_A$ to the inclusion of $A$.  By isotopy extension, we get a diffeotopy of the whole space $S^1 \x S^1 \x I$ which restricts to the identity on both its boundary and the annulus $A$.  
By restricting diffeomorphisms to $A$, we essentially cut along $A$ to get a solid torus:
\[\pi_0\Diff(S^1 \x I \x I; \d) \to \ker \Phi \to \pi_0\Diff(A; \d)\]
By Lemma \ref{DiffSolidTorus}, $\pi_0\Diff(S^1 \x I \x I; \d)$ is trivial\footnote{Note that we are only using the connectedness of this space of diffeomorphisms, which can be deduced from Cerf's result $\pi_0\Diff^+(S^3)=\{e\}$, obtained well before the proof of the Smale conjecture.}, so the second map above is injective.  But any element of $\ker \Phi$ maps to $0 \in \Diff(A; \d)$ because the first component of $\Phi(f)$ is zero.  So $\ker \Phi$ is trivial.
\end{proof}

\begin{definition}
In the context of other 3-manifolds, we will use the two generators of this group $\Z^2$, i.e.~the lifts of the annular Dehn twists to $S^1 \x S^1 \x I$ in the proof of surjectivity above.  Specifically, if $T$ is a torus boundary component of a submanifold $M \subset S^3$, they extend to diffeomorphisms of $M$ supported in a collar neighborhood of $T$.  
By Alexander's theorem, $T$ bounds a solid torus on at least one side.  Given such a solid torus, we can identify one factor of $S^1$ in $S^1 \x S^1 \x I$ as a meridian and the other as a longitude, and we will refer to these diffeomorphisms of $M$ as a \emph{meridional Dehn twist} and a \emph{longitudinal Dehn twist} according to the factor which appears in the support.  Beware that these do \emph{not} correspond to the identically named Dehn twists on the torus: in each case, the diffeomorphism is the identity on one of the $S^1$ factors, not on the $I$ factor.
\end{definition}

\subsection{Spaces of knots in certain 3-manifolds via spaces of links}
\label{S:KnotsVia2CompLinks}
To relate knots in $S^1 \x D^2$ and $S^1 \x S^1\x I$ to links in $S^3$, first write the 3-sphere as the unit sphere in $\C^2$:
\[
S^3 := \{ (z_1,z_2) \in \C^2 : |z_1|^2 + |z_2|^2 = 1\}.
\]
There is a solid torus $U \subset S^3$ given by
\[
U:=\left\{(z_1,z_2) \in \C^2 : |z_1|^2 + |z_2|^2 = 1,\ |z_2|^2 \leq \frac{1}{{2}} \right\}
\]
with boundary the torus 
\[
T:=\left\{(z_1,z_2) \in \C^2 : |z_1|^2= \frac{1}{2}=|z_2|^2 \right\}.
\]
The closure of the complement of $U$ in $S^3$ is the solid torus
\[
U':= \left\{(z_1,z_2) \in \C^2 : |z_1|^2 + |z_2|^2 = 1,\ |z_2|^2 \geq \frac{1}{{2}} \right\}.
\]
Then $S^3 = U \cup U'$ is a Heegaard decomposition of $S^3$ into two solid tori.  Writing $S^3 = \R^3 \cup \{\infty\}$, we view the solid torus $U$ as a subset of $\R^3$, so $\infty \in U'$.  The longitudinal and meridional rotations $\lambda$ and $\mu$ of $U$ extend to rotations of $S^3$ which respectively fix and do not fix $\infty$.  The solid tori $U'$ and $U$ are respectively neighborhoods of the components $C_1$ and $C_2$ of the Hopf link, shown in Figure \ref{F:SomeSeifertFiberedLinks} after a rotation that puts it in $\R^3$.  So $\lambda$ is a rotation along $C_1$ and $\mu$ is a rotation along $C_2$.

This decomposition helps describe the closure of a long knot.  Given a framed long knot $F \in \tK$, we obtain a framed closed knot $\overline{F} \in \tL$ by gluing together the ends of the solid cylinder $I \x D^2$ in the domain and codomain of $F$.  
We view the result in the domain as $S^1 \x D^2$, identifying the basepoint of $I / \d I$ with $1 \in S^1$ and the forward orientation of $I$ with the counter-clockwise orientation of $S^1$.   
We view the codomain as $U'$, identifying the basepoint of $I/ \d I$ with $\infty \in S^3$.  
Thus $\overline{F}$ is a closed framed knot in $S^3$ sending $(1,0)$ to $\infty$ and with prescribed values and derivatives on all of $\{1\} \x D^2$.
A similar construction with $I$ instead of $I \x D^2$ yields a closed knot $\overline{f} \in \L$ from a long knot $f\in \K$.

Given a two-component link $f=(f_0,f_1)$ in $S^3$, we may view $f$ as the knot $f_0$ in the exterior of $f_1$.  See Figure \ref{F:ExamplesOfKnotsInSolidTorus}.

\begin{figure}[h!]
\includegraphics[scale=0.23]{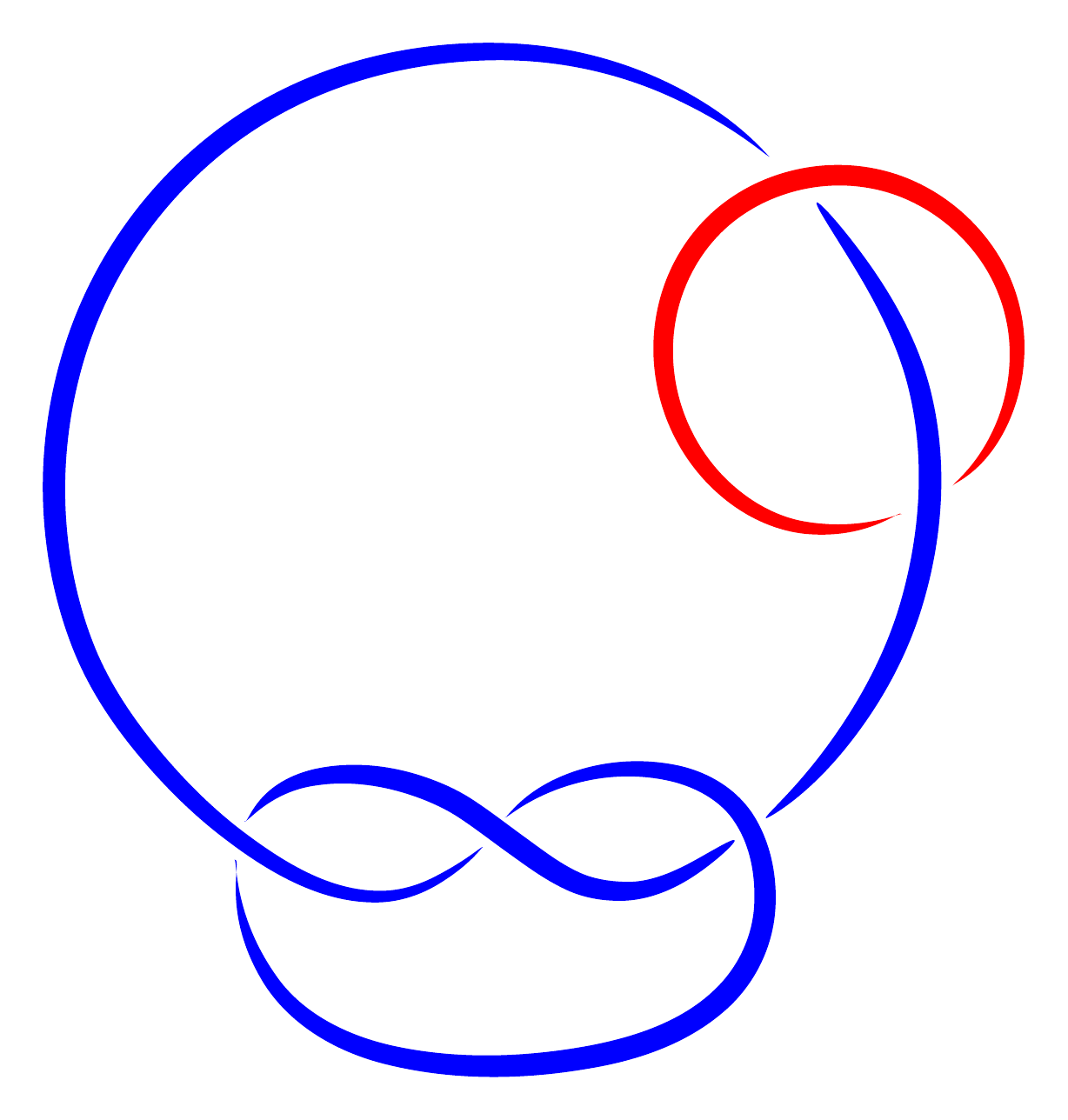}
\qquad \qquad
\includegraphics[scale=0.23]{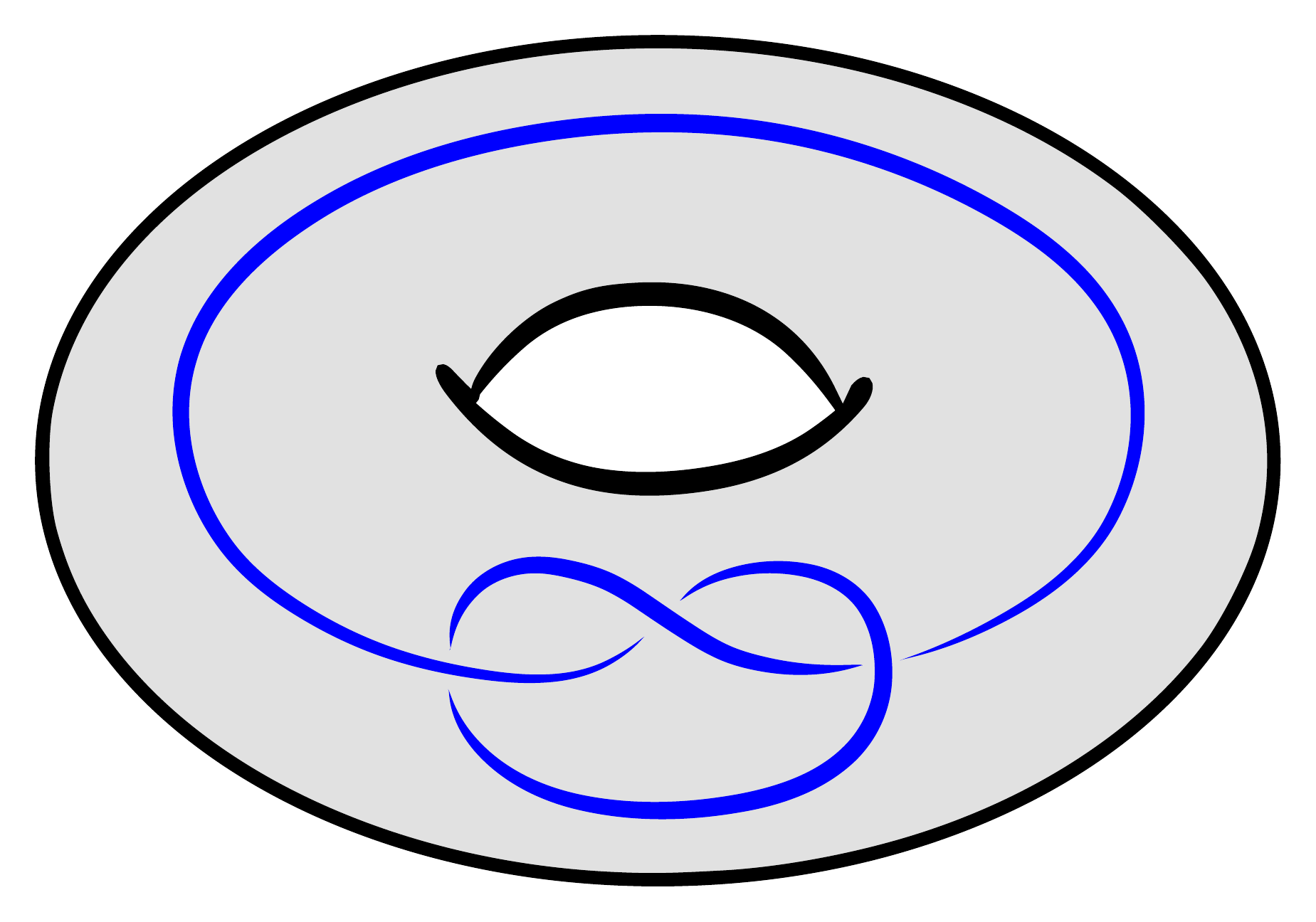}\\
\vspace{2pc}
\includegraphics[scale=0.18]{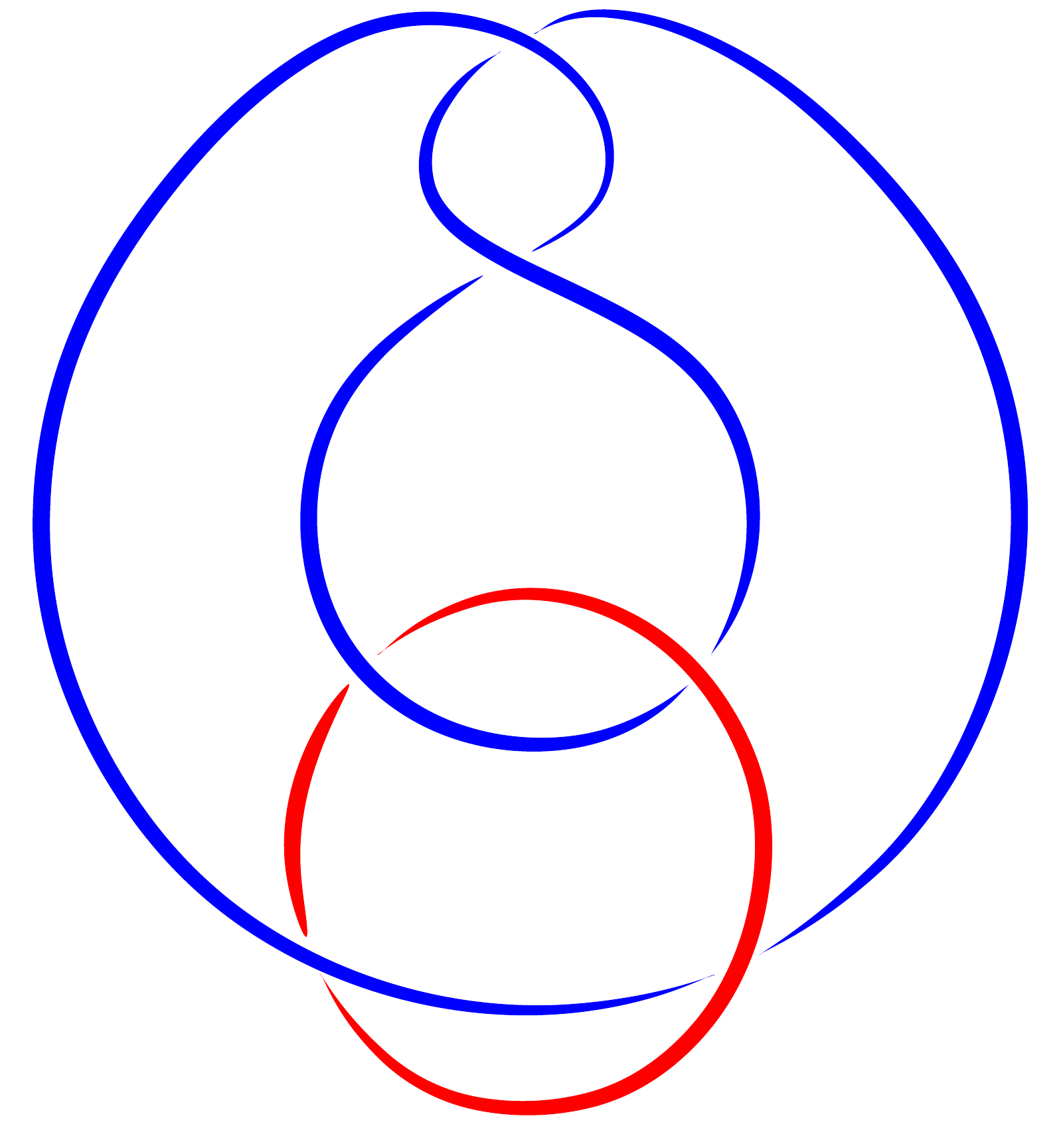}
\qquad \qquad
\includegraphics[scale=0.23]{whitehead-in-solid-torus.pdf}
\caption{Two examples of 2-component links $f=(f_0,f_1)$ with $f_1$ unknotted (left), together with their respective associated knots in the solid torus (right).}
\label{F:ExamplesOfKnotsInSolidTorus}
\end{figure}

\begin{proposition}
\label{TfandLinksWithUnknots}
Let $f=(f_0,f_1)$ be a 2-component link in $S^3$.
Then the pair $(S^3 - \nu(f_1), f_0)$ is diffeomorphic to the pair $(U, g)$ for some knot $g$ in the solid torus $U$ if and only if $f_1$ is unknotted.
%corresponds to a knot in $U=S^1 \x D^2$ if and only if (at least) one component is unknotted.  
 In this case, an isotopy of $f$ gives rise to an isotopy of $g$ and vice-versa.  Thus isotopy classes of knots in $S^1 \x D^2$ are in bijection with isotopy classes of 2-component links in $S^3$ where the second component is unknotted.
\end{proposition}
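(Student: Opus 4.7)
The plan is to prove the iff characterization first, then the correspondence of isotopies, and finally to deduce the bijection of isotopy classes as a consequence.

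For the forward direction of the iff, if $(S^3 - \nu(f_1), f_0) \cong (U, g)$, then in particular the exterior $S^3 - \nu(f_1)$ is diffeomorphic to a solid torus. The classical characterization of the unknot via its complement (equivalently, the essential uniqueness of the genus-one Heegaard splitting of $S^3$, due to Alexander) forces $f_1$ to be the unknot. For the converse, if $f_1$ is unknotted then there is a diffeomorphism $\Psi \in \Diff^+(S^3)$ carrying $f_1$ onto the core $C_1$ of $U'$, and, after a further isotopy supported in a neighborhood of $\Psi(f_1)$, carrying $\nu(f_1)$ onto $U'$. Restricting $\Psi$ to $S^3 - \nu(f_1)$ then gives the desired diffeomorphism onto $U = S^3 - U'$, carrying $f_0$ to $g := \Psi(f_0) \subset U$.

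For the isotopy correspondence, given an isotopy $f_t = (f_{0,t}, f_{1,t})$ of 2-component links with $f_1$ (hence each $f_{1,t}$) unknotted, the key tool is the isotopy extension theorem. Applied to the sub-isotopy $f_{1,t}$, it produces an ambient isotopy $\Phi_t \in \Diff^+(S^3)$ with $\Phi_0 = \id$, $\Phi_t \circ f_{1,0} = f_{1,t}$, and $\Phi_t(\nu(f_{1,0})) = \nu(f_{1,t})$. Composing with the identification $\Psi$ from the preceding paragraph gives $g_t := \Psi \circ \Phi_t^{-1}(f_{0,t})$, a continuous path of knots in $U$, as required. The reverse direction is simpler: an isotopy $g_t$ of knots in $U \incl S^3$ extends trivially to an isotopy $(g_t, C_1)$ of 2-component links in $S^3$ with unknotted second component.

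The main obstacle I foresee is establishing the bijection on isotopy classes---specifically, the injectivity of the assignment $[g] \mapsto [(g, C_1)]$. The construction above shows that if $(g_0, C_1) \sim (g_1, C_1)$ in $S^3$, then $g_0$ is isotopic in $U$ to $\Phi_1^{-1}(g_1)$ for some $\Phi_1 \in \Diff^+(S^3)$ fixing $C_1$ pointwise. It remains to verify that such a $\Phi_1$ acts by isotopy on knots in $U$. I would deduce this from the Palais--Lima fibration $\Diff^+(S^3; C_1) \to \Diff^+(S^3) \to \Emb_{\mathrm{unknot}}(S^1, S^3)$: Hatcher's Smale conjecture gives $\Diff^+(S^3) \simeq SO_4$, while $\Emb_{\mathrm{unknot}}(S^1, S^3) \simeq SO_4/SO_2$ is simply-connected (since $\pi_1(SO_2) \to \pi_1(SO_4)$ is surjective), so the long exact sequence forces $\pi_0 \Diff^+(S^3; C_1) = 0$. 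Together with a controlled choice of isotopy supported away from a collar of $C_1$ (so as to preserve $U$ setwise), this yields the needed isotopy from $\Phi_1^{-1}(g_1)$ to $g_1$ inside $U$.
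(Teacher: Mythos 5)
Your proof is correct and follows essentially the same route as the paper: the unknot is detected by its exterior being a solid torus, and the isotopy correspondence in both directions comes from the isotopy extension theorem. The one place you go beyond the paper is in verifying injectivity of $[g]\mapsto[(g,C_1)]$ via $\pi_0\Diff^+(S^3;C_1)=0$ and a compression into $U$; this step is sound and makes explicit a detail that the paper elides when it asserts that the bijection "follows from the first two statements."
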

\begin{proof}
For the first statement, $S^3 - \nu(f_1)$ is diffeomorphic to a solid torus if and only if $f_1$ is the unknot, since the complement of any nontrivial knot has a $\Z^2$ subgroup in its fundamental group \cite[p.~104]{Rolfsen}.  
In this case, if $h:S^3 - \nu(f_1) \to U$ is such a diffeomorphism, then $g=h\circ f_0$.  
For the second statement, suppose that $f_1$ is an unknot and that $h : S^3 - \nu(f_1) \to U$ is a diffeomorphism.  Let $f_t$ be an isotopy of $f$ in $S^3$ and $(f_i)_t$ the restriction to the $i$-th component.  Let $F_t$ be the extension to a diffeotopy of $S^3$, guaranteed by the isotopy extension theorem \cite[Theorem 8.1.3]{Hirsch}.  
Then $t \mapsto h \circ (F_t)^{-1} \circ (f_0)_t$ is an isotopy of the knot $g$ in $U$.  
For the reverse direction, an isotopy $g_t$ of the knot $g$ in $U$ extends to an diffeotopy $G_t$ of $U$ which the identity on $\d U$.  In turn, $h^{-1} \circ G_t$ extends by the identity to a diffeotopy of $S^3$ (which fixes $f_0$).  
The third statement follows from the first two.
\end{proof}

Proposition \ref{TfandLinksWithUnknots} concerns only isotopy classes, i.e.~path components in embedding spaces.  
(The topology of those components is addressed in Proposition \ref{TfisKpi1}.) 
Nonetheless, it suffices for the purposes of associating (framed) 2-component links to knots $f$ in the solid torus and vice-versa.

Now define
\[
V:= \left\{(z_1,z_2) \in \C^2 : |z_1|^2 + |z_2|^2 = 1,\ \frac{1}{3} \leq |z_1|^2, |z_2|^2 \leq \frac{2}{3} \right\}.
\]
Thus $V$ is the closure of a neighborhood of the torus $T$, so $V \cong S^1 \x S^1 \x I$.  The meridional and longitudinal rotations of $U$ mentioned above extend to rotations of $V$.
A similar argument as in the proof of Proposition \ref{TfandLinksWithUnknots} establishes an analogous result for knots in $V$:

\begin{proposition}
\label{VfandLinksWithHopf}
Let  $f=(f_0,f_1,f_2)$ be a 3-component link in $S^3$.  
The pair $(S^3 - \nu(f_1 \cup f_2), f_0)$ is diffeomorphic to $(V, g)$ for some knot $g:S^1 \incl U$ if and only if $(f_1, f_2)$ is a Hopf link.  An isotopy of $f$ gives rise to an isotopy of $g$ and vice-versa.  Thus isotopy classes of knots in $S^1 \x S^1 \x I$ are in bijection with isotopy classes of 3-component links in $S^3$ where the last two components are a Hopf link.
\qed
\end{proposition}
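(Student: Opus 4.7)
The plan is to mimic the proof of Proposition \ref{TfandLinksWithUnknots}, replacing the characterization of the unknot by its complement with a characterization of the Hopf link by its complement. The ``only if'' direction of the first claim is immediate: if $(S^3 - \nu(f_1 \cup f_2), f_0) \cong (V, g)$, then $S^3 - \nu(f_1 \cup f_2) \cong V \cong S^1 \x S^1 \x I$, so it suffices to show that any 2-component link in $S^3$ whose complement is a thickened torus is a Hopf link. This is the main obstacle in the proof.

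For this step, let $T_i := \d \nu(f_i)$. The meridian of $\nu(f_2)$ is an essential simple closed curve on $T_2$ and therefore represents a primitive class in $\pi_1(T_2) \cong \Z^2$. A standard coordinate computation in $S^1_a \x S^1_b \x I$ shows that attaching a solid torus to one boundary of $S^1 \x S^1 \x I$ along such a primitive class yields a solid torus. Thus $M := S^3 - \mathrm{int}(\nu(f_1)) = (S^3 - \nu(f_1 \cup f_2)) \cup \nu(f_2)$ is a solid torus, so $f_1$ is an unknot by the same $\pi_1$ argument used in Proposition \ref{TfandLinksWithUnknots}. Moreover, $f_2$ is isotopic inside $M$ to a core of $M$, hence to a longitude of $f_1$, giving $\mathrm{lk}(f_1, f_2) = \pm 1$. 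By the symmetric argument $f_2$ is also an unknot, and two unknots in $S^3$ with linking number $\pm 1$ form a Hopf link.

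For the ``if'' direction, if $(f_1, f_2)$ is a Hopf link, then by definition it is isotopic to the standard cores $(C_1, C_2)$ of $U'$ and $U$. By isotopy extension this extends to a diffeotopy of $S^3$, giving a diffeomorphism $h \co S^3 \to S^3$ with $h \circ (f_1, f_2) = (C_1, C_2)$; the restriction of $h$ is a diffeomorphism $S^3 - \nu(f_1 \cup f_2) \to V$, and one sets $g := h \circ f_0$. For the isotopy correspondence, the argument is identical to that of Proposition \ref{TfandLinksWithUnknots}: an isotopy $f_t$ of $f$ extends by isotopy extension to a diffeotopy $F_t$ of $S^3$, and then $t \mapsto h \circ F_t^{-1} \circ (f_0)_t$ is an isotopy of $g$ in $V$; conversely, an isotopy of $g$ in $V$ extends to a diffeotopy of $V$ fixing $\d V$ pointwise, which extends by the identity to a diffeotopy of $S^3$ inducing a corresponding isotopy of $f$. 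The bijection on isotopy classes then follows formally from these two constructions.
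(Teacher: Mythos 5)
Your overall strategy is exactly the one the paper intends (it only says the proof is "a similar argument" to Proposition \ref{TfandLinksWithUnknots}), and the key new ingredient you supply --- that Dehn filling one boundary torus of $S^1 \x S^1 \x I$ along any essential slope yields a solid torus, so that $M = S^3 - \mathrm{int}(\nu(f_1))$ is a solid torus and $\nu(f_2)$ sits in it as a concentric solid torus --- is correct and is the right way to characterize the Hopf link by its exterior. The "if" direction and the isotopy correspondence are also fine and mirror the earlier proof.

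However, your final step of the "only if" direction invokes a false statement: it is \emph{not} true that two unknots in $S^3$ with linking number $\pm 1$ form a Hopf link. For a counterexample, take $f_1$ a standard unknot and $f_2$ a winding-number-one satellite pattern in the complementary solid torus that is unknotted in $S^3$ but not isotopic to the core within that solid torus (e.g.\ the Mazur pattern); both components are unknots and the linking number is $1$, but the exterior is hyperbolic, so this is not the Hopf link. Linking number and the knot types of the components do not determine a two-component link. The good news is that you had already proved the statement you actually need: since $M \setminus \mathrm{int}(\nu(f_2)) \cong S^1 \x S^1 \x I$ is a product region between $\d M$ and $\d \nu(f_2)$, the solid torus $\nu(f_2)$ is isotopic in $M$ to $M$ itself, so $f_2$ is isotopic \emph{in the exterior of $f_1$} to the core of $M$. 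Combined with an ambient isotopy carrying the unknot $f_1$ to $C_1$, this carries $(f_1, f_2)$ to $(C_1, \text{core of } U) = (C_1, C_2)$, which is the Hopf link. You should conclude directly from this isotopy rel $f_1$ and delete the linking-number argument (note also that the core of $M$ is isotopic in $M$ to a \emph{meridian}, not a longitude, of $f_1$).
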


\begin{notation}
Let $\T$ be the space of knots in a solid torus, that is, smooth embeddings $S^1 \incl S^1 \x D^2$.  
By Proposition \ref{TfandLinksWithUnknots}, a 2-component link $f$ determines a path-component $\T_f$ in $\pi_0(\T)$.  Similarly, a knot  $f$ in the solid torus determines a path-component $\L_f$ in $\pi_0(\L)$.

Let $\V$ be the space of knots in thickened torus, that is, smooth embeddings $S^1 \incl S^1 \x S^1 \x I$.  
By Proposition \ref{VfandLinksWithHopf}, a 3-component link $f$ determines a path-component  $\V_f$ in $\pi_0(\V)$.  Similarly, a knot  $f$ in the thickened torus determines a path-component $\L_f$ in $\pi_0(\L)$.

In either of these two cases, we will often use the same letter $f$ to refer to both the knot and the corresponding link.
\end{notation}

\subsection{Preliminaries on Seifert-fibered links}
\label{S:SeifertPrelims}
Seifert-fibered manifolds form one of the two classes of submanifolds in the JSJ decomposition.  Following \cite{Hatcher3Mfds} and \cite{BudneyJSJ}, we review their definition and the classification of Seifert-fibered link complements, which are precisely all Seifert-fibered submanifolds of $S^3$.

A Seifert fibering of a 3-manifold $M$ consists of a map $M\to B$ to a surface $B$ with genus $g$ and $n$ boundary components.  On the complement of finitely many points $x_1, \dots, x_r$, the map is an $S^1$-bundle.  
We call the preimages of the $x_i$ \emph{singular fibers}.
For a Seifert-fibered submanifold of $S^3$, $B$ must be orientable.  In this case, one can construct $M$ by starting with $B\x S^1$ and removing a solid torus neighborhood of each $\{x_i\} \x S^1$.  For each $i$, one then glues back a solid torus with its meridian attached along a curve of slope $a_i/b_i \in \Q$, where $a_i$ and $b_i$ correspond to the fiber and base direction respectively.
For $B$ orientable, this data determines $M$, and we write $M=M(g, n; a_1, b_1, \dots, a_r, b_r)$.  Assuming $n>0$, another such Seifert-fibered manifold $M(g, n; a'_1, b'_1, \dots, a'_r, b'_r)$ is diffeomorphic to this one if and only if $a_i/b_i \equiv a'_i/b'_i \mod 1$ for all $i$.  Thus integer slopes $a_i/b_i$ may be removed from the list.

Define the following subsets of $S^3$:
\begin{itemize}
\item
$T_{p,q}:=\{ (z_1,z_2) \in S^3  : z_1^q = z_2^p \} = \{(e^{2\pi p it}/\sqrt{2},\  e^{2 \pi q it}/\sqrt{2}): t \in \R\}$
where $p,q \in \Z-\{0\}$.
\item
$C_1:=\{(z_1,0) \in S^3 \}=\{(e^{2\pi it},0): t \in \R\}$
\item
$C_2:=\{(0,z_2) \in S^3 \} = \{(0,e^{2\pi it}): t \in \R\}$
\item 
$S_{p,q}:=T_{p,q} \cup C_2$
\item
$R_{p,q}:=T_{p,q} \cup C_1 \cup C_2$
\item
$KC_n := C_1 \cup \bigcup_{k=1}^n \{(e^{2\pi i k/n}/\sqrt{2}, \ e^{2\pi i t}/\sqrt{2}) : t \in \R \}$
\end{itemize}

\begin{figure}[h!]
\includegraphics[scale=0.21]{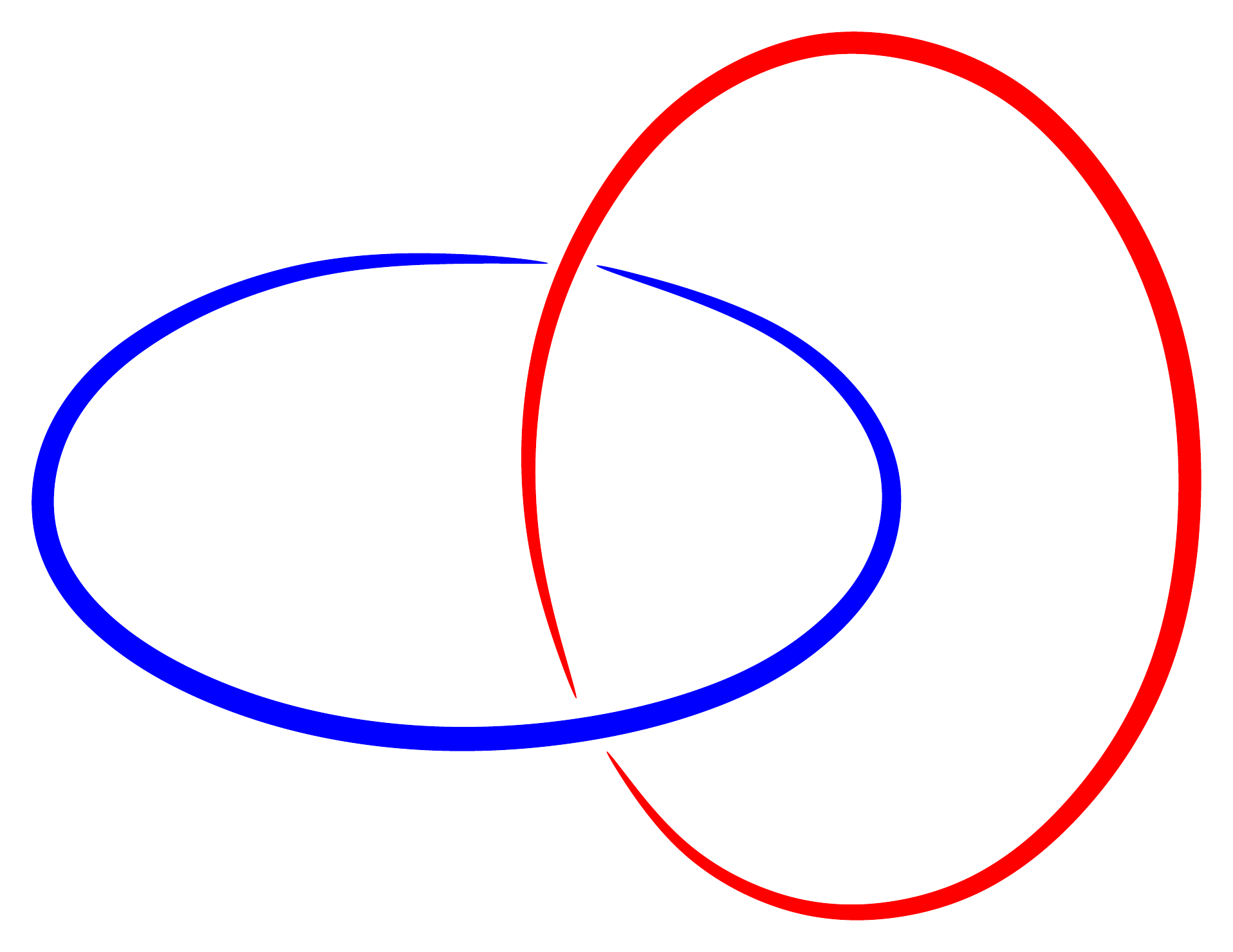} \qquad \qquad
\includegraphics[scale=0.25]{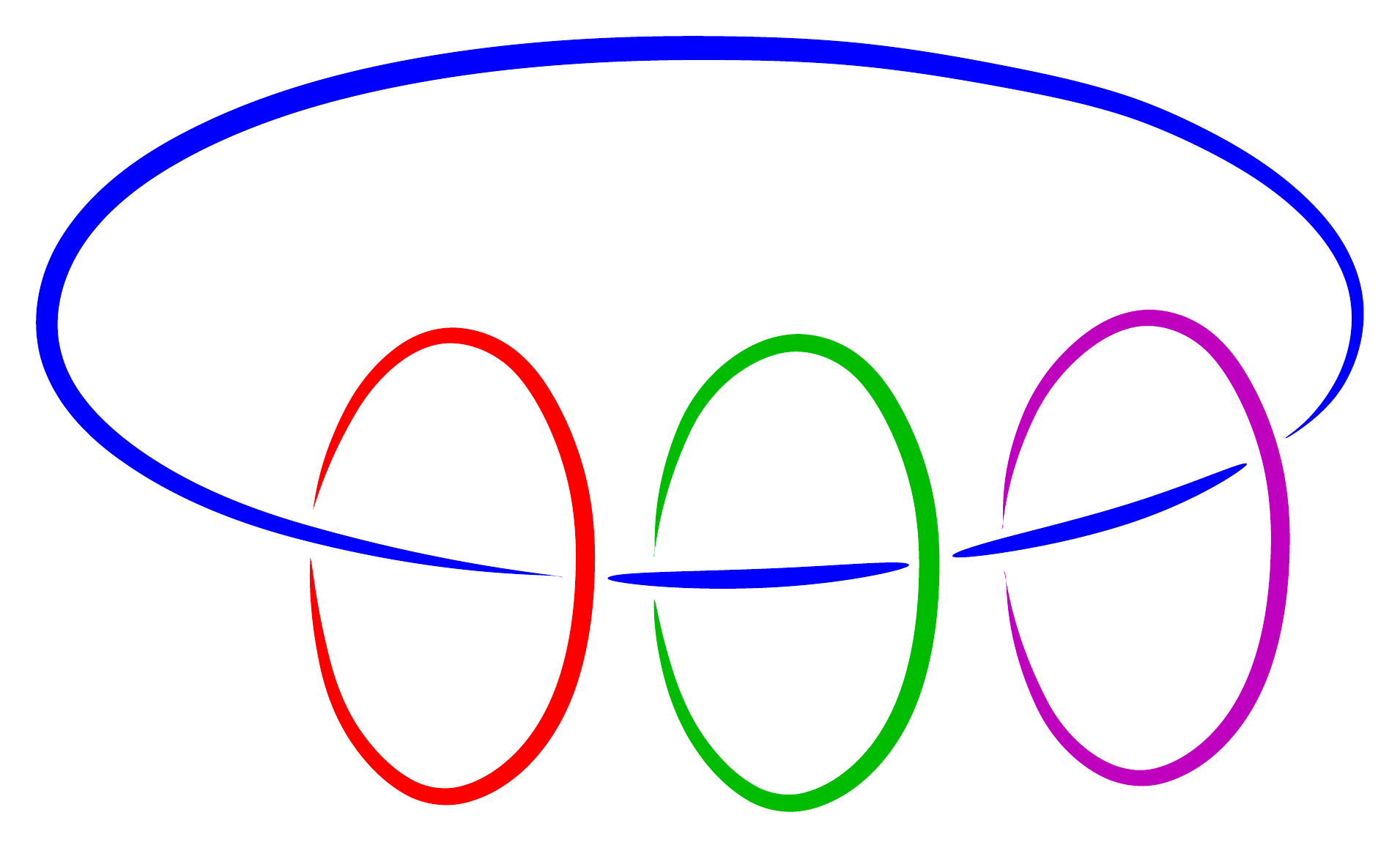}
\caption{The Hopf link $(\sim C_1 \cup C_2 \sim KC_1)$ and the keychain link $KC_3$.}
\label{F:SomeSeifertFiberedLinks}
\end{figure}

We visualize $C_1$ in the $xy$-plane and $C_2$ tangent to the $z$-axis at $0\in \R^3$.
Beware that for some authors, the roles of $p$ and $q$ in $T_{p,q}$ may be reversed, 
 the definitions or depictions of $C_1$ and $C_2$ may be reversed, and
 $S_{p,q}$ may include $C_1$ instead of $C_2$.

If $\gcd(p,q)=1$ and $p \neq 1 \neq q$, then $T_{p,q}$ has one component, is not the unknot, and is called the $(p,q)$-\emph{torus knot}.  
It winds $p$ times around $C_2$ and $q$ times around $C_1$.
The second condition excludes $T_{1,q}$ and $T_{p,1}$, which are precisely those $T_{p,q}$ which are unknots.

In general, the link $T_{p,q}$ is called the $(p,q)$-\emph{torus link}.  
The link $T_{p,q}$ is isotopic to $T_{q,p}$.  Negating either $p$ or $q$ reverses the orientation on all the components, but this link is isotopic to the original one, since torus knots are invertible.
In general, $T_{p,q}$ is a link with $\gcd(p,q)$ components.  Each component is a copy of the torus knot $T_{p',q'}$ where $p'=p/\gcd(p,q)$ and $q'=q/\gcd(p,q)$.  
This can be seen by viewing $T_{p,q}$ as a union of parallel lines of slope $q'/p'$ on $\R^2 / \Z^2$.
The linking number between a pair of components is $p'q'$.\footnote{Indeed, the linking number $Lk$ of $T_{p',q'}$ with a normal perturbation of $T_{p',q'}$ on the torus satisfies $Lk=Tw+Wr$, where $Tw$ is twist and $Wr$ is writhe \cite{Calugareanu:Gauss, Pohl:SelfLinking}.  By writing $T_{p',q'}$ as the closure of the appropriate $p'$-strand braid, $Wr=q'(p'-1)$, the number of (positive) crossings.   The twist $Tw$ is the linking number with $C_1$, which is $q'$.}  For example $T_{2,4}$ is a 2-component link with unknotted components and linking number 2, $T_{4,6}$ consists of 2 trefoils with linking number 6, and $T_{n,n}$ is a union of fibers in the Hopf fibration, any pair of which has linking number 1.

\begin{figure}[h!]
(a)  \includegraphics[scale=0.24]{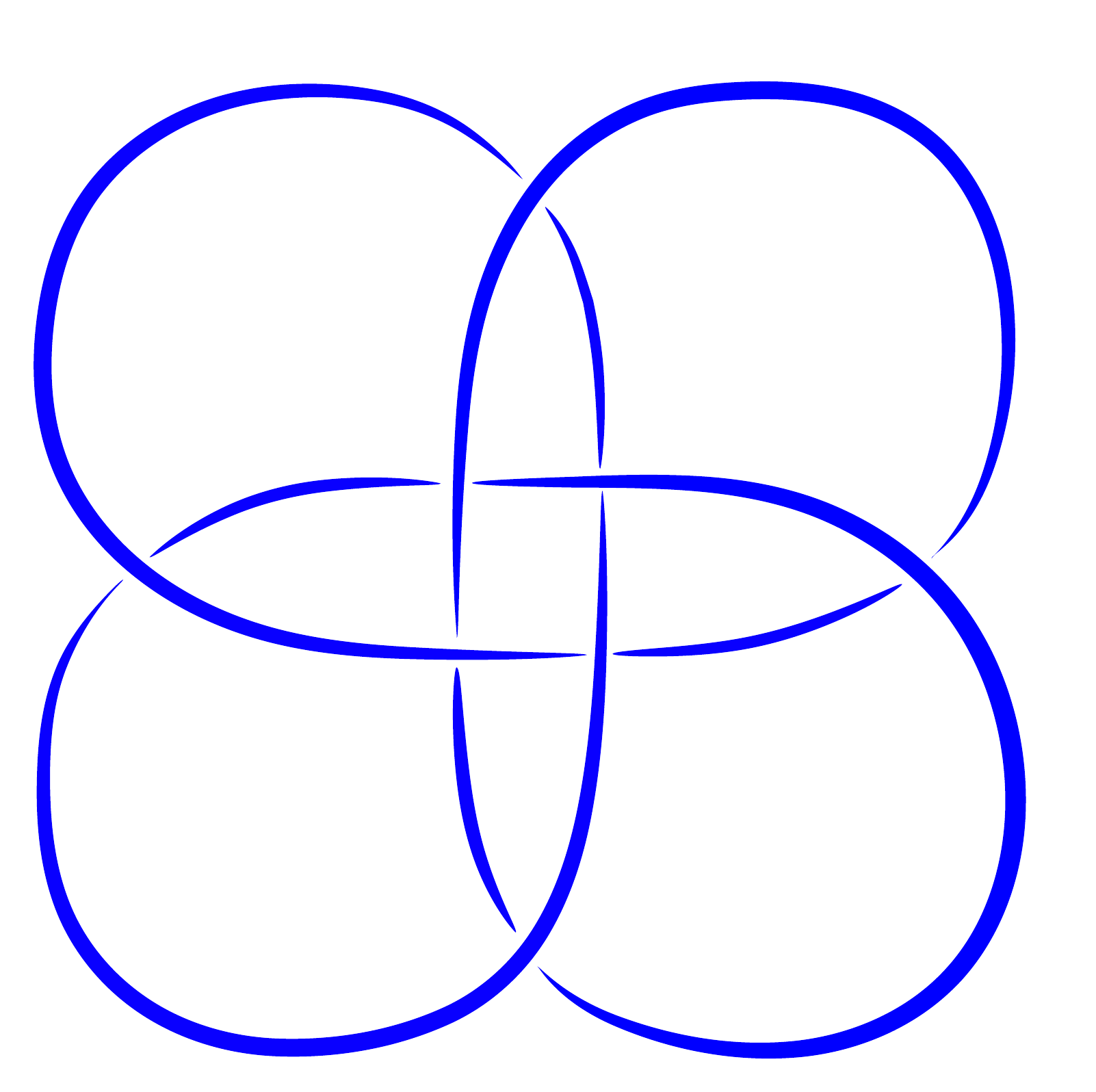} \qquad
(b) \includegraphics[scale=0.24]{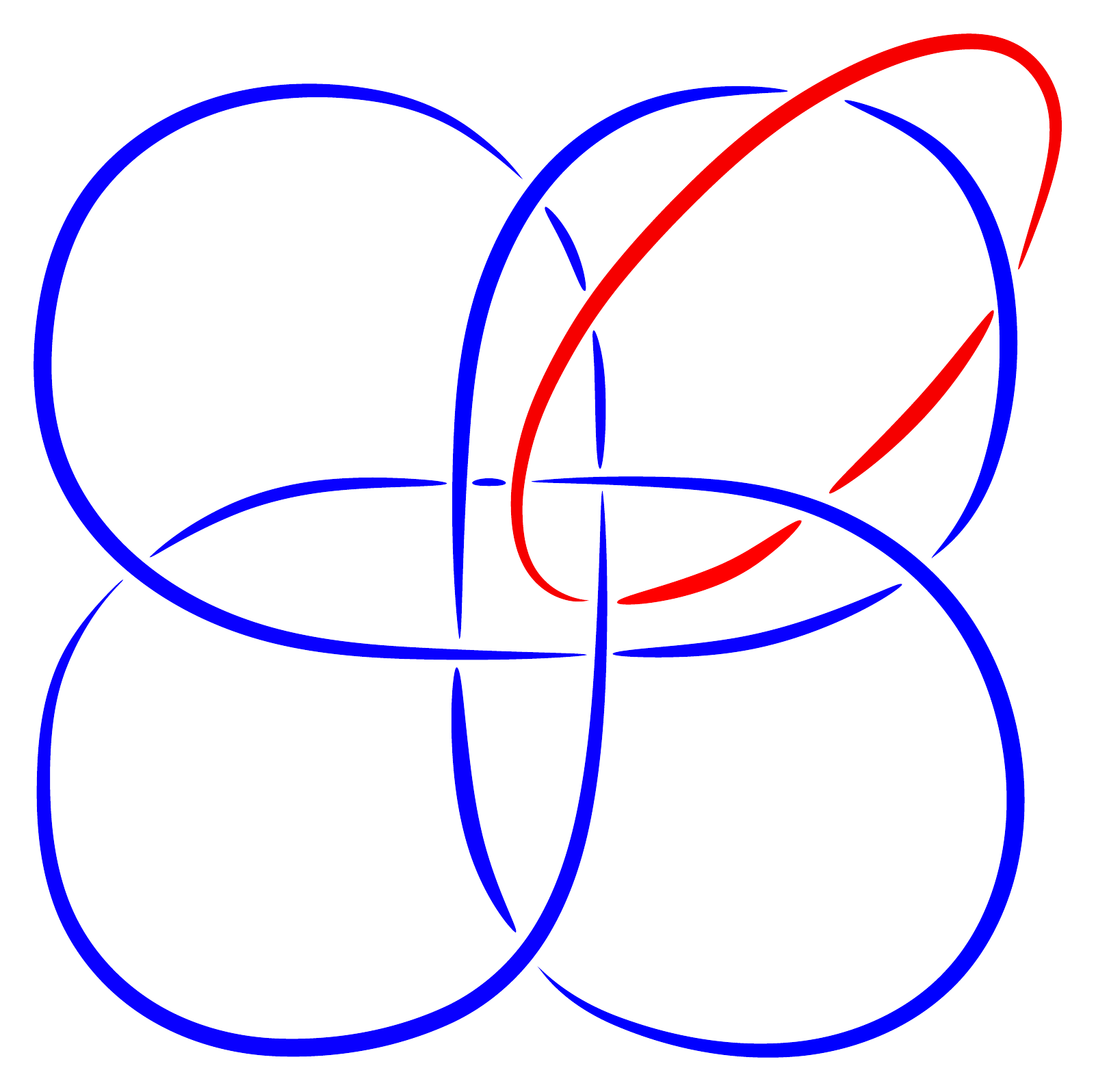} \qquad
(c)\includegraphics[scale=0.24]{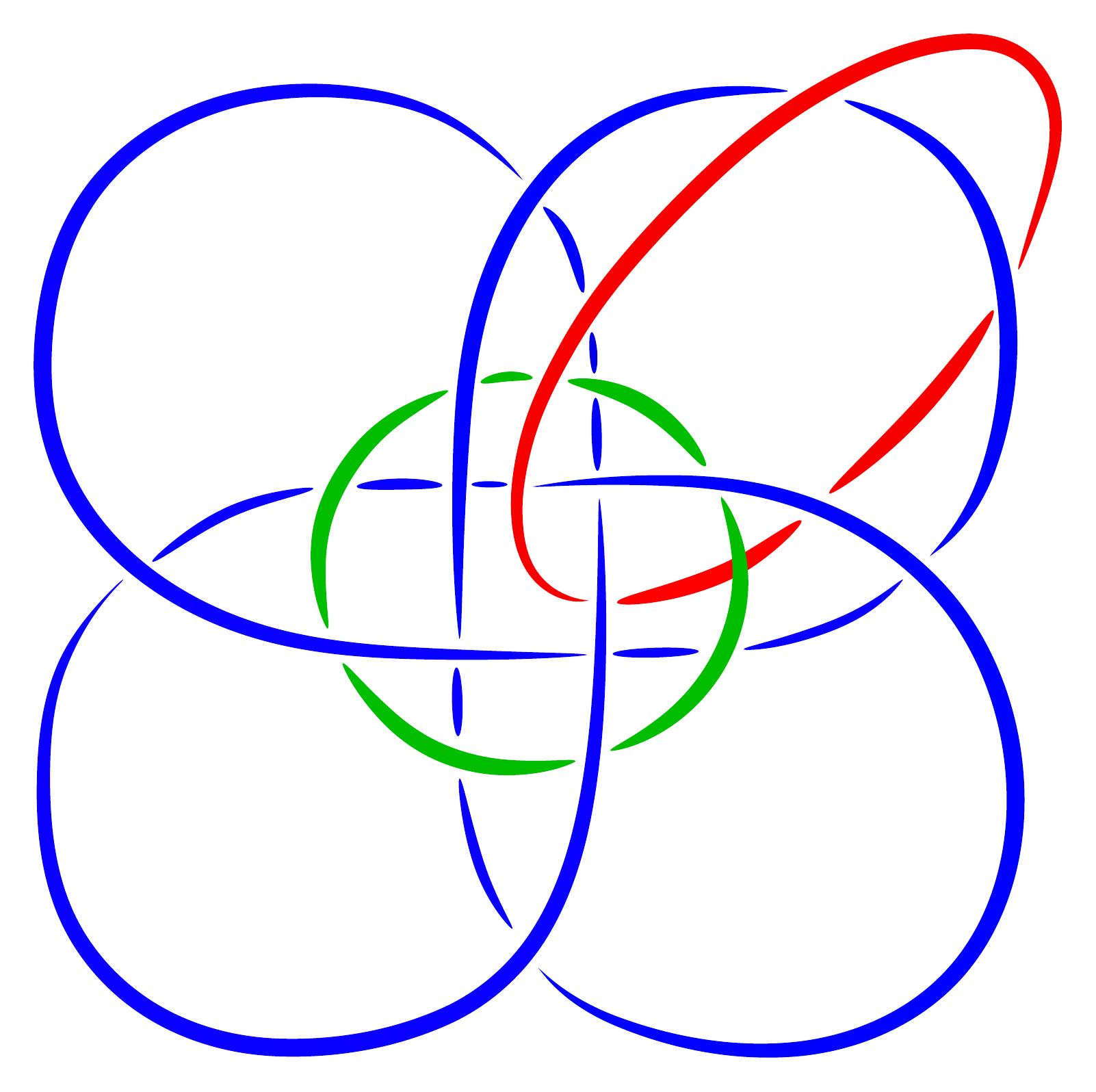} \\
(d) \includegraphics[scale=0.30]{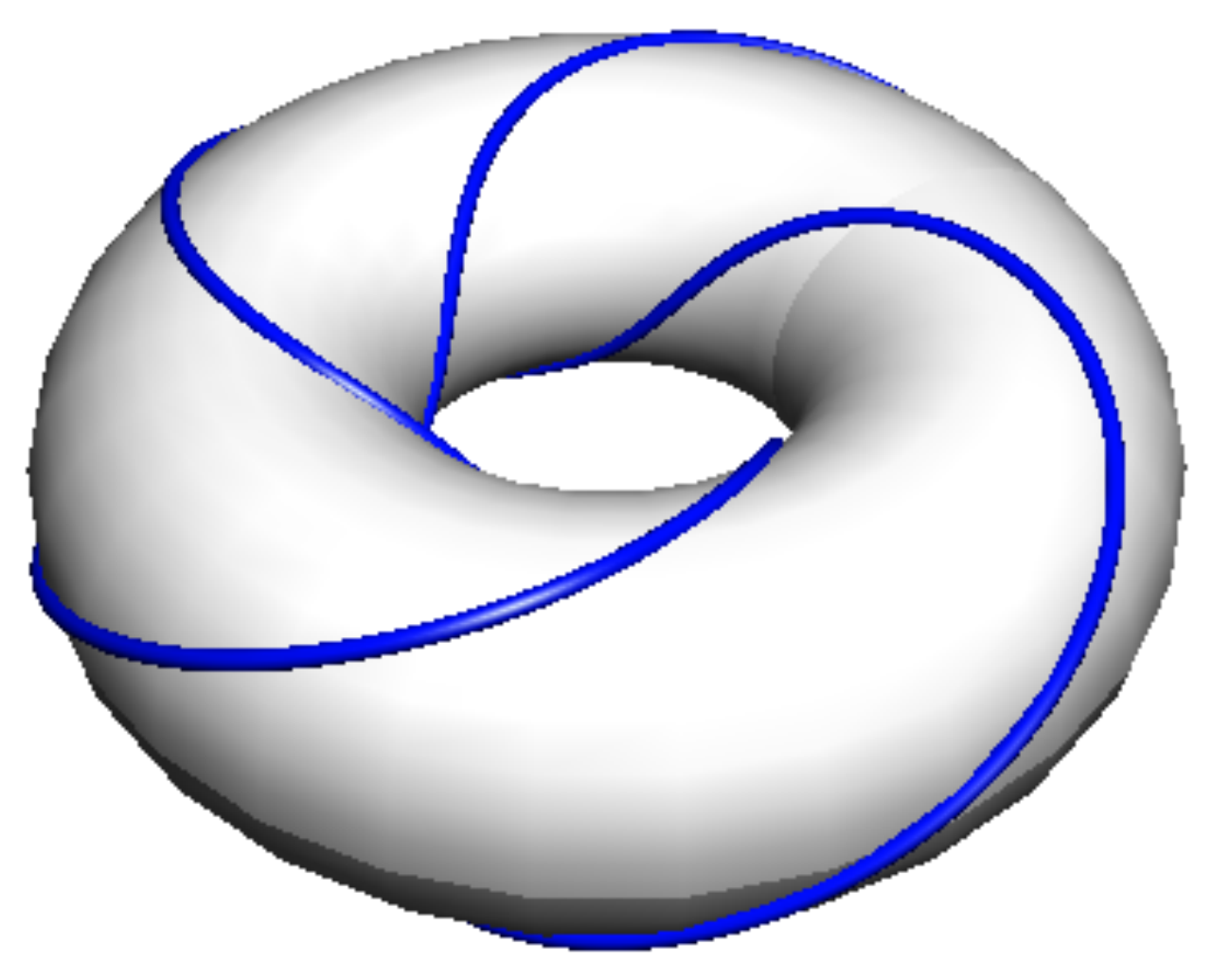} \
(e) \includegraphics[scale=0.30]{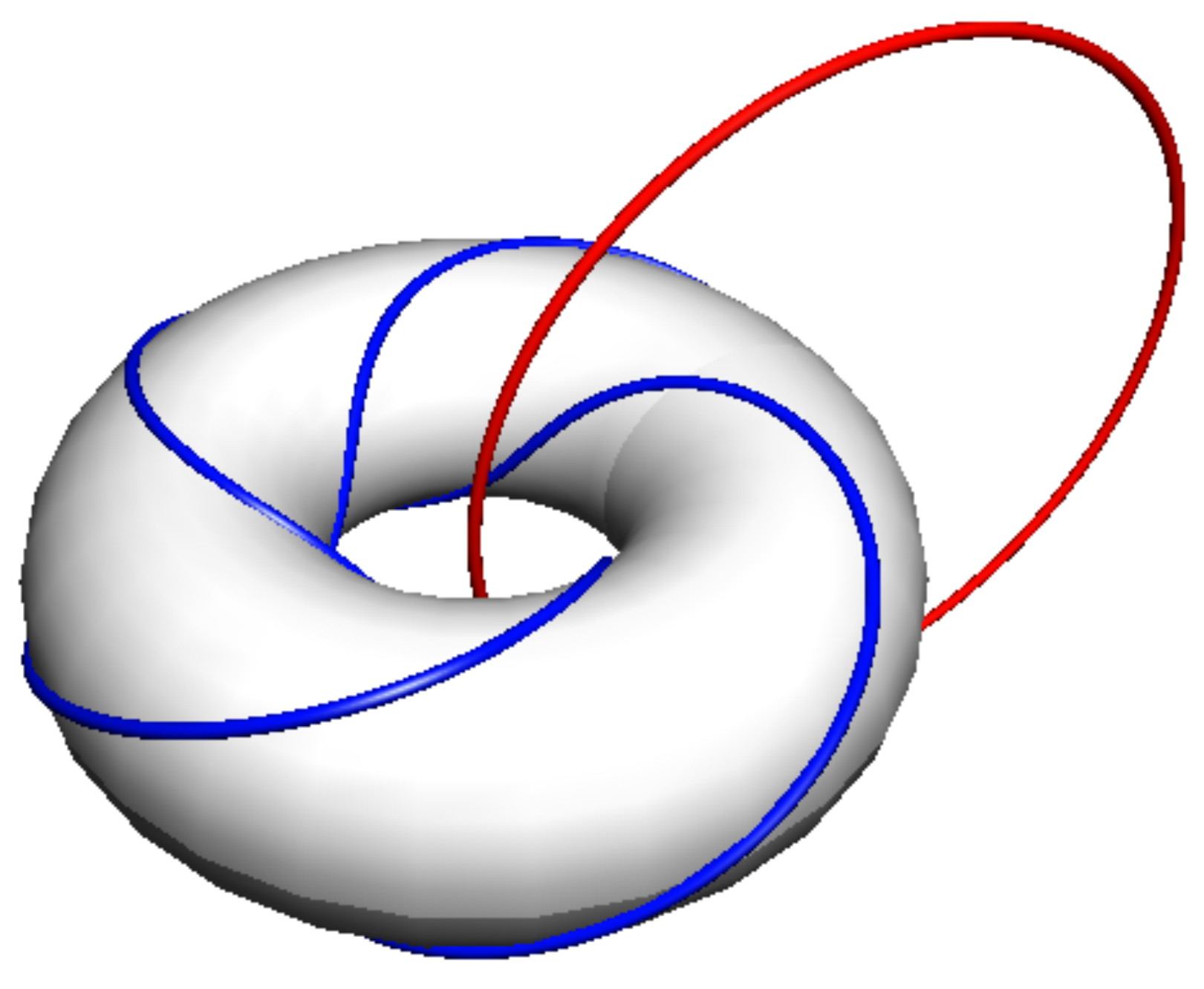} \
(f) \includegraphics[scale=0.30]{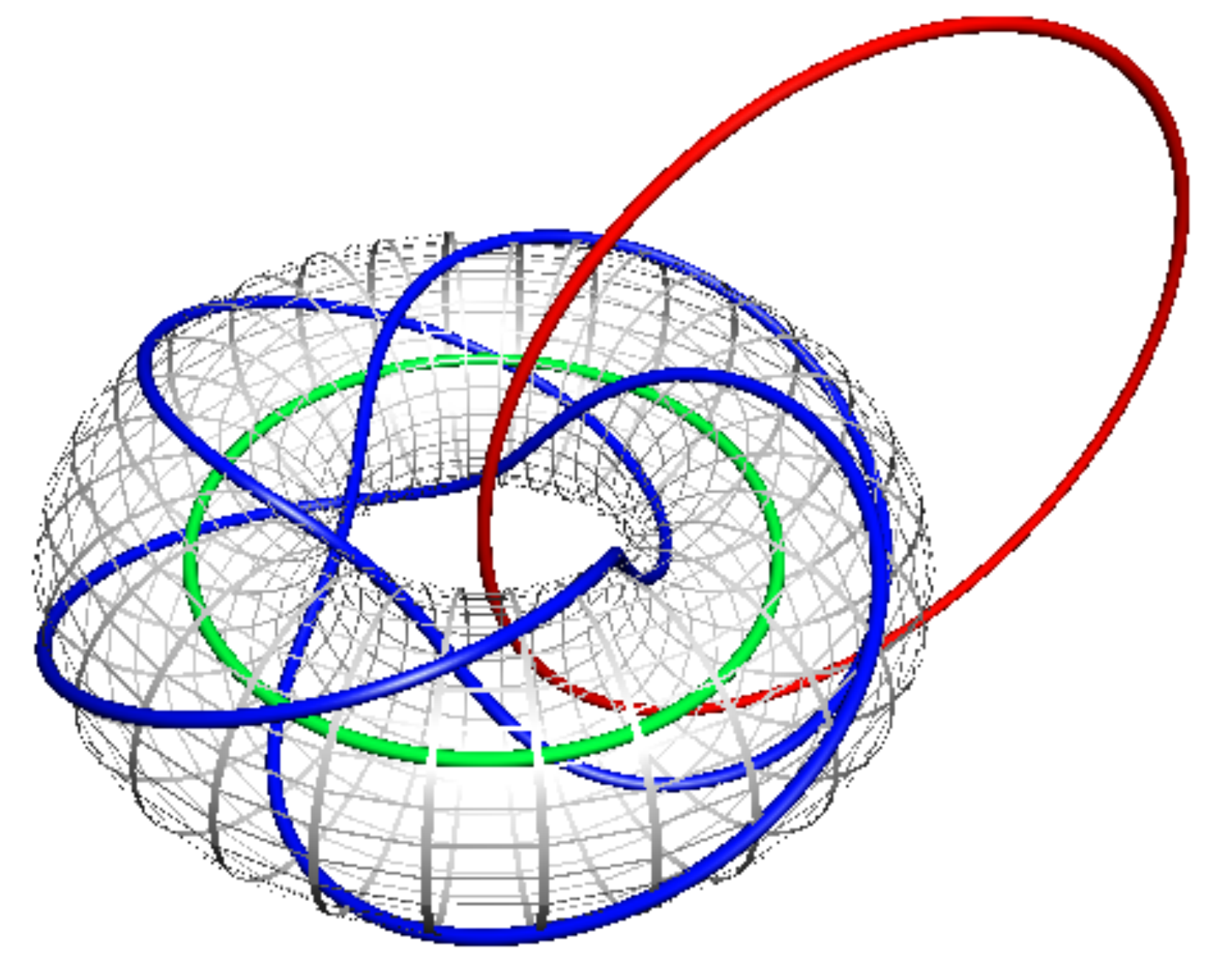} 
\caption{The (3,4)-torus knot $T_{3,4}$ in (a) and (d), the (3,4)-Seifert link $S_{3,4} = T_{3,4} \cup C_2$ in (b) and (e), and the link $R_{3,4} = T_{3,4} \cup C_1 \cup C_2$ in (c) and (f).
We can associate a knot in $S^1 \x D^2$ to $S_{3,4}$ and a knot in $S^1 \x S^1 \x I$ to $R_{3,4}$.}  
\label{F:3-4-SeifertLink}
\end{figure}

If $\gcd(p,q)=1$ and $p \neq \pm 1$, then $S_{p,q}$ is called the $(p,q)$-\emph{Seifert link}.  
The link $S_{p,q}=T_{p,q} \cup C_2$ is isotopic to $T_{q,p} \cup C_1$.  
As far as we know, the link $R_{p,q}$ does not have a name, and our notation for it is non-standard.
See Figure \ref{F:3-4-SeifertLink} for pictures of $S_{p,q}$ and $R_{p,q}$.  

The link $KC_n$ is called the $(n+1)$-component \emph{keychain link}.  There are symmetries which arbitrarily permute all the components except $C_1$.  We thus call $C_1$ the \emph{special component}.  It links nontrivially with all the other components.  
  
The \emph{Hopf link} (with $+1$ linking number) is $C_1 \cup C_2$.  It can also be described as $KC_1$, \ $T_{2,2}$, \ $S_{1,1} =T_{1,1} \cup C_2$, \ $T_{1,1}\cup C_1$, \  $S_{1,q}=T_{1, q} \cup C_2$, or $T_{p, 1} \cup C_1$.

Of course, each of these named links refers to any link in the same isotopy class.  When a choice of orientation is important, we take the one determined by the parametrizations by $t\in \R$ above.

A \emph{Seifert-fibered link} is any link whose exterior is diffeomorphic to a Seifert-fibered manifold.
Beware that $(p,q)$-Seifert links are a proper subset of the similarly named Seifert-fibered links.  

\begin{proposition}[\cite{BurdeMurasugi, BudneyJSJ}]
\label{SeifertFiberedLinks}
Let $p$ and $q$ be nonzero integers.  
Define $p' := p/\gcd(p,q)$ and $q':= q/\gcd(p,q)$, and let $r,s \in \Z$ such that $rp' - sq' =1$.
A Seifert-fibered link has (at least) one of the five forms below:
\begin{itemize}
\item $T_{p,q}$, with exterior $M(0, \gcd(p,q); r/q' , s/p' )$,
\item $T_{p,q} \cup C_2 \  (=S_{p,q})$, with exterior $M(0, 1+\gcd(p,q); s/p' )$,
\item  $T_{p,q} \cup C_1 \ (\sim S_{q,p})$, with exterior $M(0, 1+\gcd(p,q); r/q' )$,
\item  $T_{p,q} \cup C_1 \cup C_2 \ (=R_{p,q})$, with exterior $M(0, 2+\gcd(p,q);\ )$, or
\item $KC_n$, the $(n+1)$-component keychain link, with exterior $M(0,n+1; \ )$.
\qed
\end{itemize}
\end{proposition}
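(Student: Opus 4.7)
The plan is to analyze a Seifert fibering $p \co C_L \to B$ of the exterior of $L$ and split the argument according to whether the fiber slope $\phi_i$ on each boundary torus $T_i = \partial \nu(L_i)$ coincides with the meridian $\mu_i$ of $L_i$.

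First I would treat the case where $\phi_i \neq \mu_i$ for every $i$. Then Dehn filling each $T_i$ along $\mu_i$ extends the Seifert fibering canonically over the filled solid tori, with each new core $L_i$ becoming a regular or exceptional fiber whose Seifert invariants are determined by the intersection numbers of $\mu_i$ with $\phi_i$ and a chosen section. The result is a global Seifert fibering of $S^3$, and I would invoke the classical classification of such fiberings: they all have base $S^2$, at most two exceptional fibers, and regular fiber a $(p',q')$-torus knot with $\gcd(p',q') = 1$, the exceptional fibers (when present) being the unknotted axes $C_1$ and $C_2$. The original link $L$ is therefore a union of fibers of such a Seifert fibering of $S^3$, and depending on whether $L$ contains neither, one, or both of $C_1, C_2$, one obtains the four families $T_{p,q}$, \ $T_{p,q} \cup C_1$ (isotopic to $S_{q,p}$), \ $S_{p,q}$, and $R_{p,q}$. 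The explicit Seifert invariants $r/q'$ and $s/p'$ in the statement drop out of a direct computation expressing the meridians of the axes in a fiber--section basis, using B\'ezout's identity $rp' - sq' = 1$.

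Next I would treat the remaining case, where $\phi_i = \mu_i$ for some $i$. Here the fibering does not extend to $S^3$ as a Seifert structure along $T_i$, since the fibers on $T_i$ bound meridian disks in $\nu(L_i)$. Instead, extending the circle fibration naively over $S^3 - \nu(L_i)$ produces an $S^1$-bundle structure on $S^3 - \nu(L_i)$, which forces $L_i$ to be unknotted and this bundle to be trivial over a disk. The other components of $L$ then appear as fibers in this bundle, i.e., as disjoint meridional circles of $L_i$, and no exceptional fibers arise. With $n$ such extra components, the base of the fibering on $C_L$ is $\Sigma_0^{n+1}$, and the embedding in $S^3$ is rigid enough to identify $L$ with the keychain link $KC_n$. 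I would also argue that for a fixed fibering at most one component can be axial, since the fiber slope on each $T_i$ is determined by the single circle fibration of the interior of $C_L$.

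The main obstacle is this second case: extending the circle fibration over $S^3 - \nu(L_i)$ and ruling out a positive-genus base, extra exceptional fibers, or additional axial components requires a careful analysis of how saturated incompressible tori and annuli can sit in a $3$-sphere embedding of $C_L$, together with the uniqueness (up to the two standard structures) of Seifert fiberings of a solid torus. By contrast, Case 1 is essentially bookkeeping once the classification of Seifert fiberings of $S^3$ is granted. Finally, one checks by inspection that the five families exhaust all Seifert-fibered links with the expected overlaps, for instance $KC_1$ being isotopic to $T_{1,1} \cup C_2 = S_{1,1}$.
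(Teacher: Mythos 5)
The paper does not prove this proposition at all: it is stated with a \qed and attributed to Burde--Murasugi and to Budney's account of JSJ decompositions of link complements, so there is no in-paper argument to compare against. What you have written is, in outline, exactly the classical proof from those references, and the dichotomy you organize it around is the right one: for a Seifert fibering of $C_L$, either the fiber slope differs from the meridian on every boundary torus, in which case meridional filling extends the fibering to one of the known Seifert fiberings of $S^3$ (base $S^2$, at most two exceptional fibers which form a Hopf link $C_1 \cup C_2$, regular fibers $(p',q')$-torus knots) and $L$ is a union of fibers, giving the families $T_{p,q}$, $S_{p,q}$, $T_{p,q}\cup C_1$, $R_{p,q}$; or the fiber equals the meridian on some component, which forces the keychain links. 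Two places in your sketch deserve to be made precise. First, ``at most one component can be axial'' has a clean proof you do not quite give: a regular fiber is homologous in $C_L$ to the fiber slope on each boundary torus, and the meridians $\mu_0,\dots,\mu_n$ form a basis of $H_1(C_L)\cong\Z^{n+1}$, so $[h]=[\mu_i]=[\mu_j]$ forces $i=j$. Second, in the axial case the efficient route is: fill the non-axial components to get a Seifert fibering of $S^3-\nu(L_i)$ whose fiber is central in $\pi_1$ and equal to $\mu_i$ on the boundary; since $\mu_i$ normally generates $\pi_1(S^3-\nu(L_i))$, that group is cyclic, so $L_i$ is unknotted, $S^3-\nu(L_i)$ is a solid torus, and the unique Seifert fibering of a solid torus with boundary fiber the longitude is the product fibering $P_0\x S^1$ --- which simultaneously rules out positive genus, exceptional fibers, and a second axial component, and identifies the remaining components as parallel cores, i.e.\ $L=KC_n$. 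With those two points filled in, your argument is a complete and correct proof of the statement; the only thing the paper's citation buys over it is the explicit computation of the unnormalized Seifert invariants $r/q'$ and $s/p'$, which, as you say, is bookkeeping with the B\'ezout relation $rp'-sq'=1$ (and is essentially reproduced in the paper's discussion following the proposition).
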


In particular, a Seifert-fibered link exterior is always fibered over a genus-0 surface $B$ with at most two singular fibers.  
Note that the effect of adding a component $C_i$ to the link is to drill out a neighborhood of a singular fiber, leaving one more boundary component in $B$.

It will be useful to understand the relationship between a Seifert-fibered link and the fibering on its exterior.  We describe this in the case of a nontrivial torus knot $T_{p,q}$ and leave the general case to the reader.  The space $S^3 - (C_1 \cup C_2)$ is homeomorphic to $S^1 \x S^1 \x (0,\infty)$.  Foliating the torus by copies of $T_{p,q}$ gives a trivial $S^1$-bundle structure on this space.  We thus see $S^3$ as a Seifert fibered manifold over $S^2$ with $C_1$ and $C_2$ as singular fibers.  
With $r$ and $s$ as in Proposition \ref{SeifertFiberedLinks} with $p'=p$ and $q'=q$, one can see that at least up to the choice of orientation, the fiber slopes are $r/q$ and $s/p$, since
$ \left(\begin{array}{cc} p & s \\ q & r \end{array} \right)^{-1} = 
\left(\begin{array}{cc} r & -s \\ -q & p \end{array} \right)$.

\subsection{JSJ decompositions of 3-manifolds and splicing of knots and links}
\label{S:JSJ}
We now review the JSJ decomposition of an irreducible 3-manifold $M$, due to Jaco, Shalen \cite{Jaco-Shalen-Symp, Jaco-Shalen-MemAMS}, and Johannson \cite{Johannson}, mainly when $M$ is the exterior of a link in $S^3$.  The relevant version is the decomposition of $M$ along a collection $T$ of incompressible, non-boundary-parallel tori such that each piece is Seifert-fibered or atoroidal \cite[Theorem 1.9]{Hatcher3Mfds}.  
By Thurston's work, each non-Seifert-fibered piece can be given the structure of a hyperbolic manifold of finite volume \cite{Thurston:BullAMS1982}.  
In general, the JSJ decomposition is represented by the \emph{JSJ graph}, where a vertex is labeled by the closure of a component of $M-T$ and an edge represents a torus in $T$.  
The Jordan curve theorem implies that if $M$ is a submanifold of $S^3$, the JSJ graph is a tree.  

In more detail, suppose $M=C_F$ is the complement of a $\vec{0}$-framed link $F$, or equivalently the exterior of the link $f$ associated to $F$.  
Each vertex $v$ in the JSJ graph $G_F$ of $C_F$ represents a submanifold $M_v \subset S^3$ with $\d M_v$ a union of tori.  We also write $G_F(v):=M_v$.  The vertex $v$ such that $\d C_F \subset M_v$ is designated as the root of $G_F$.  
Each $M_v$ is diffeomorphic to the complement of a nontrivial, irreducible ($\vec{0}$-framed) $k$-component link $L_v$ in $S^3$ \cite[Proposition 2.4]{BudneyJSJ}.  
We call $L_v$  a \emph{Seifert-fibered link} or \emph{hyperbolic link} according as $M_v$ is Seifert-fibered or hyperbolic.  
Call either such type of link \emph{simple}.
Replacing the label $M_v$ of each vertex $v$ in the JSJ tree of $M$ by the label $L_v$ yields the \emph{companionship tree} of the link $f$, or equivalently of the framed link $F$.  We denote this tree by $\mathbb{G}_f$ or $\mathbb{G}_F$, and we also write $\mathbb{G}_F(v):=L_v$.  
The unframed isotopy classes of the links $L_v$ are uniquely determined. (If $C_L$ and $C_{L'}$ are glued along boundary tori corresponding to link components $L_i$ and $L'_j$, the framings of $L_i$ and $L_j'$ may be simultaneously altered to yield the same result.)
This  decomposition of $F$ into the $L_v$, called the \emph{satellite decomposition}, goes back to Schubert \cite{Schubert:KnotenVollringe}, though its uniqueness is due to Jaco, Shalen, and Johannson.
See Figure \ref{F:Borr-Borr-Borr}, as well as Section \ref{S:Splicing}, for $\mathbb{G}_f$  in various examples of links $f$.

\begin{figure}[h!]
\begin{tabular}{l}
(a) \includegraphics[scale=0.14]{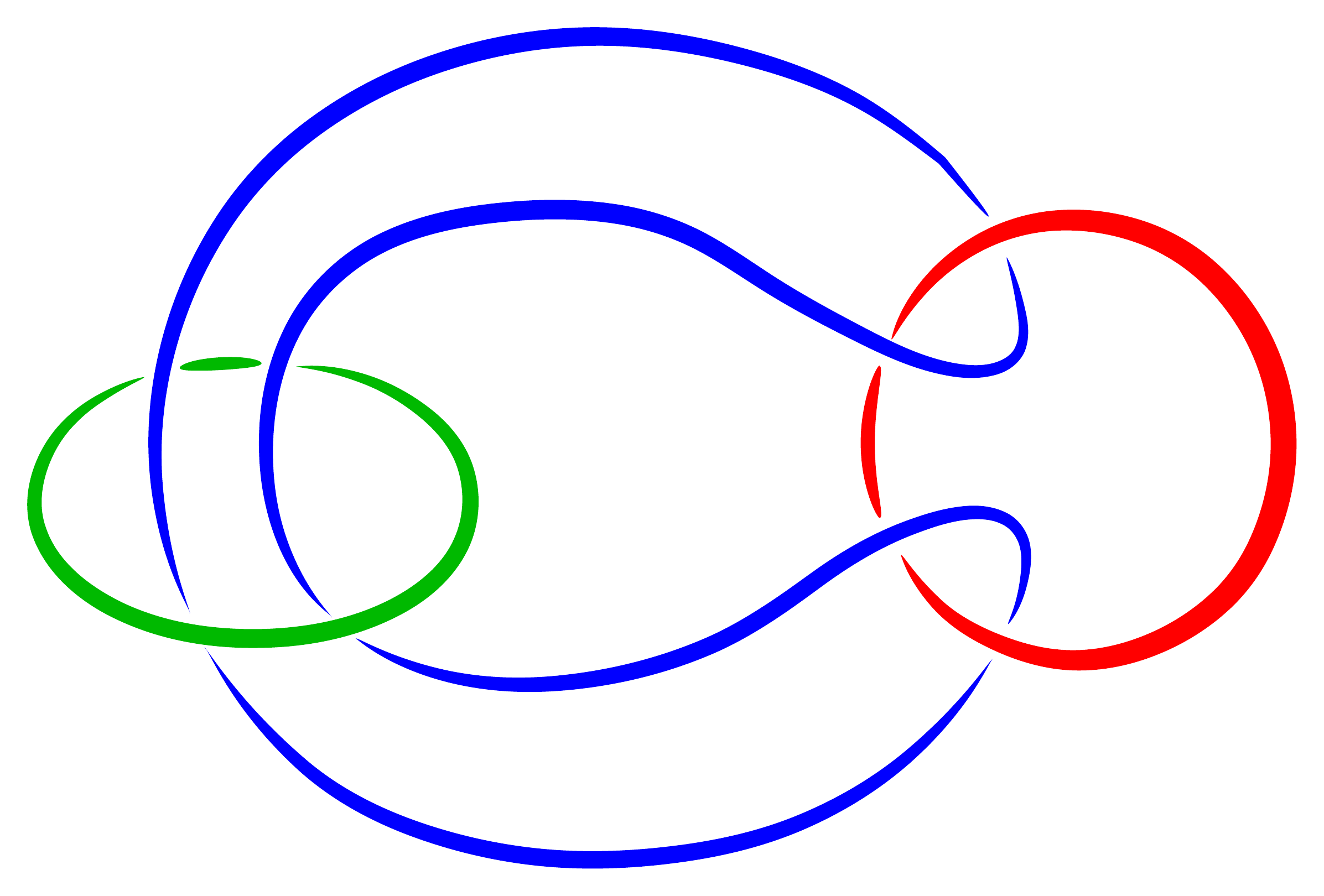} \\
(b)  
\raisebox{-1pc}{
\includegraphics[scale=0.14]{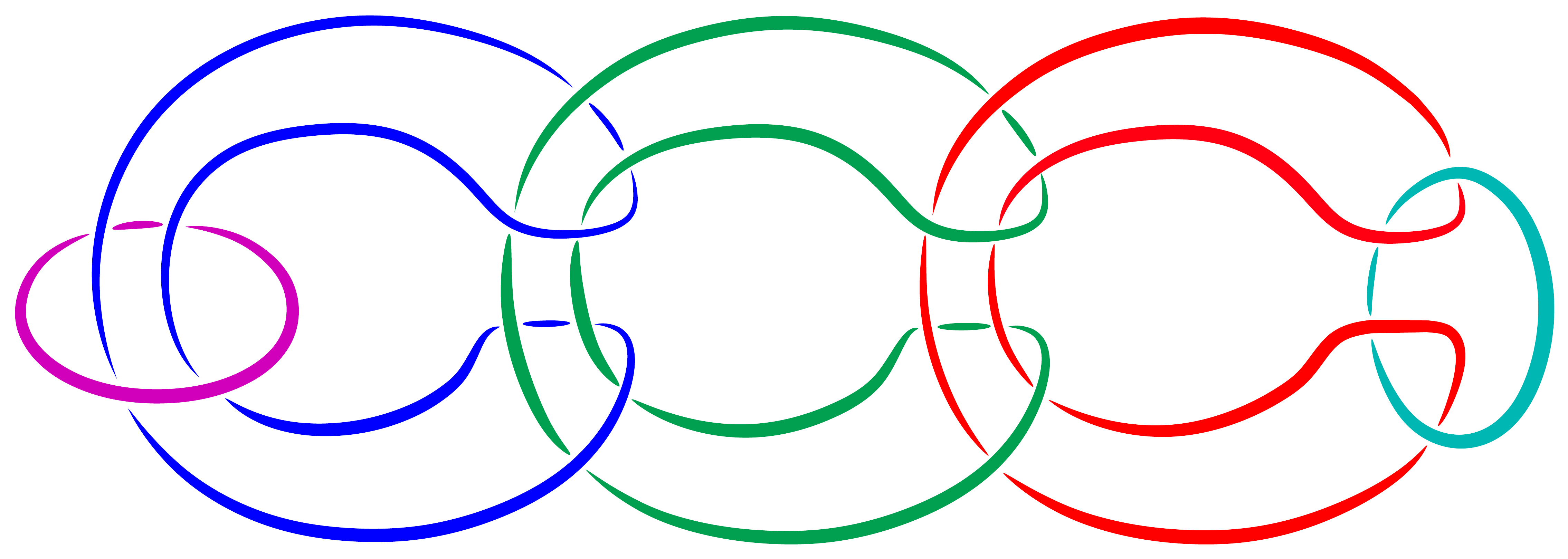}}
\end{tabular}
 \qquad
(c) \raisebox{-6pc}{
\begin{tikzpicture}
[grow=north, level distance=35pt]
\node[] {}[sibling angle=20]
child {node[draw,circle] {Borr} 
child {node[draw,circle] {Borr} 
child{node[draw,circle] {Borr} child{} child{}} child{}} child{}};
\end{tikzpicture}
}
\caption{(a) The Borromean rings $\mathrm{Borr}$, a hyperbolic KGL.  (b) Milnor's $n$-component Brunnian link $f$ for $n=5$.  (c)  The companionship tree $\mathbb{G}_f$ of $f$, which is a linear tree on $3\, (=n-2)$ vertices.}
\label{F:Borr-Borr-Borr}
\end{figure}

To a certain extent, the gluing together of link complements $G_F(v)$  can be reinterpreted in terms of composition of embeddings related to $\mathbb{G}_F(v)$.
Suppose that $F=(F_0,\dots, F_n)$ and
 $M=C_F$ has $\d M$ contained in one piece $M_v$ of its JSJ decomposition.
Let $L=(L_0, \dots, L_{n+r})$ be the link whose complement $C_L$ is $M_v$.  
Then $M$ is obtained from $C_L$ by gluing knot complements $M_1, \dots, M_r$ to some of the boundary tori  $T_1, \dots, T_{n+r}$ of $C_L$.
After relabeling, we may assume that $M_i$ is glued to $C_L$ along $T_{n+i}$.  
Each $M_i = C_{K_i}$ for some nontrivial framed knot $K_i$, and (essentially by Alexander's theorem on tori in $S^3$) a longitude of $K_i$ is glued to a meridian of $T_{n+i}$, while a meridian of $K_i$ is glued to a longitude of $T_{n+i}$.  
Each $M_i$ may itself have a nontrivial JSJ decomposition.  

Then for $i=1,\dots, n$, we can identify $F_i$ with $L_i $, and the component $F_0$ is given by 
\begin{equation}
\label{L0FormulaSplicing}
F_0 = 
(H_{r} \circ \underline{K}_r \circ H_{r}^{-1}) 
\circ \dots \circ
(H_{1} \circ \underline{K}_1 \circ H_{1}^{-1}) 
\circ L_0
\end{equation}
where $\underline{K}_i \in \tK$ is a framed long knot whose closure is $K_i \in \tL$, and where $H_{i}$ is an embedding of $I \x D^2$ such that $H_i|_{\{0\} \x \d D^2} = L_{n+i}$.
By disjointness of the supports of the $H_i$, the order of composition of the parenthesized terms above is inconsequential.  We say that $F$ is the result of \emph{splicing} the framed knots $(K_1, \dots, K_r)$ into the link $L$ along the components $L_{n+1}, \dots, L_{n+r}$, or the result of a \emph{satellite operation}.
We write $F =  (\varnothing, \dots, \varnothing, K_1, \dots, K_r) \bowtie L $.
This is a variation of Budney's notation \cite[Definition 4.8]{BudneyJSJ} which for $n=0$ agrees with his.  
(Beware that some authors may use the term \emph{satellite operation} for just the special case where $n=0$ and $r=1$.)
One may think of the knots $K_1, \dots, K_r$ as inputs for the link $L$ (so the notation is read in the reverse order from the commonly used function notation $f(x)$).  Graphically, it says that the result of removing the vertex labeled $L$ from $\mathbb{G}_F$ is the disjoint union of the trees $\mathbb{G}_{K_i}$. 

For links $J_1, \dots, J_r$ each with a distinguished component $J_{i,0}$, we write $F =  (\varnothing, \dots, \varnothing, J_1, \dots, J_r) \bowtie L $ in the analogous situation where $C_{K_i}$ replaced by $C_{J_i}$ and with the gluing done along the boundary torus corresponding to $J_{i,0}$.  The embedding $F_0$ is given by \eqref{L0FormulaSplicing}, with $\underline{K}_i$ replaced by $\underline{J}_{i,0}$.
Again, the component of $\d C_L$ corresponding to $L_0$ is the component of $\d C_F$ corresponding to $F_0$.  

In establishing the induction step in our Main Theorem, we will use not just $\mathbb{G}_F$ but an arbitrary choice of a distinguished component $F_0$, which makes $\mathbb{G}_F$ rooted.  
As in Figure \ref{F:Borr-Borr-Borr}, we have modified the conventions for ${G}_F$ (and thus $\mathbb{G}_F$) in \cite{BudneyJSJ} by adding half-edges to each vertex $v$, joined only to $v$, to represent the tori in $\d M_v$ that also lie in $\d C_F$.  Call such half-edges \emph{leaf half-edges}, and call the one corresponding to the distinguished component the \emph{root half-edge}.
So if $F$ is a knot, the only leaf half-edge is the root half-edge.  We also omit the edge-orientations used in \cite{BudneyJSJ}  (which indicate which side of a torus, if any, is a knot complement rather than a solid torus).

\begin{remark}[Assumptions on $F=(\varnothing, \dots, \varnothing, J_1, \dots, J_r) \bowtie L$]
\label{SplicingRemark}  
The symbol $F =  (\varnothing, \dots, \varnothing, J_1, \dots, J_r) \bowtie L$ could be interpreted for arbitrary links $L,J_1, \dots, J_r$. In some cases it will produce split links or links in 3-manifolds other than $S^3$, and in some cases it will not correspond to a sub-decomposition of $C_F$.  (For example, for a knot $J$, $F=(J, (J,J) \bowtie \mathrm{KC}_2) \bowtie KC_2$ could be interpreted to mean $F=(J,J,J) \bowtie KC_3$, but only the latter expression could correspond to part of the JSJ decomposition of $C_F$.)
To guarantee the validity of our theorem statements, we will write this expression to mean that $F$ is an irreducible link in $S^3$ and $L$ labels a vertex of $\mathbb{G}_F$.  
So for example, no $J_i$ may be an unknot or Hopf link or split link, and the operation $F=(J, (J,J) \bowtie \mathrm{KC}_2) \bowtie KC_2$ will not be considered.

A complete characterization of the possible link-labelings of companionship trees for links in $S^3$ is given in \cite[Proposition 4.20, Proposition 4.29]{BudneyJSJ}.  
For us, it suffices to note that if $n=0$ and $( J_1, \dots, J_r) \bowtie L$ is a knot in $S^3$, then $L$ must be a (simple) \emph{knot generating link} or \emph{KGL}, meaning an $(n+1)$-component link $(L_0, L_1, \dots, L_r)$ such that the $n$-component sublink $(L_1, \dots, L_r)$ is the unlink; see \cite[Proposition 2.2, Definition 4.4]{BudneyJSJ}.  
One allows $r=0$: a simple (nontrivial) KGL is either a torus knot or a hyperbolic knot.  For a simple KGL $(L_0, L_1, \dots, L_r)$ with $r>0$, $L_0$ may or may not be the unknot.
More generally, if $(\varnothing, \dots, \varnothing, J_1, \dots, J_r) \bowtie L$ is a link in $S^3$ and the $J_i$ are all knots, then the sublink $(L_0, L_{n+1}, \dots, L_{n+r})$ must be a KGL.
\end{remark}

\section{Relationships among various embedding and diffeomorphism spaces}
\label{S:Asphericity}
In this Section, we provide general descriptions of and relationships between various spaces of embeddings.  
Recall that 
\begin{align*}
\L &:= \coprod_{m\geq 1} \L(m):=\coprod_{m\geq 1} \Emb \left(\coprod^m S^1, S^3\right), & 
\tL &:=\coprod_{m\geq 1} \tL(m):= \coprod_{m\geq 1} \Emb \left(\coprod^m S^1 \x D^2, S^3\right), \\
\K &:= \Emb_c(\R, \R \x D^2),  &
 \tK &:= \Emb_c(\R \x D^2, \R \x D^2), \\
\T &:= \Emb(S^1, S^1 \x D^2), \qquad \text{ and} &
\V &:= \Emb(S^1, S^1 \x S^1 \x I).
\end{align*}
A subscript $f$ indicates the component of such a space in which $f$ lies.  In particular, if $F$ is a framed knot $S^1 \x D^2 \incl S^3$, then $\tL_F \subset \Emb(S^1 \x D^2, S^3)$.

In Section \ref{S:Gramain}, we relate $\tL(1)$ to $\tK$ and $\K$ (Proposition \ref{FramedKnotsInS3VsLongKnots}).  As a result, the meridional rotations appearing in the remaining results of this section can be viewed as generalizations of Gramain loops of long knots.
In Section \ref{S:LFmodSO4=BDiff(CF)}, we first describe $\tL_F/SO_4$ as the classifying space of the group of diffeomorphisms of the complement $C_F$ (Proposition \ref{FramedLinksKpi1}).
We then relate $\tL_F/SO_4$ to $\L_f/SO_4$, $\T_f$, and $\V_f$, in Proposition \ref{LfModSO4isKpi1}, Proposition \ref{TfisKpi1}, and Corollary \ref{VfIsLfModSO4} respectively.
% In doing so, we also describe the relationships between $\tL_F/SO_4$ and the other three spaces.  At the level of $\pi_1$, each of these three is the quotient by loops of meridional rotations along certain components.  

\subsection{Closed knots in the 3-sphere, long knots, and the Gramain loop}
\label{S:Gramain}
To relate the space $\tL$ of framed links in $S^3$, the space $\tK$ of framed long knots, and the space $\K$ of (unframed) long knots,  
%Recall that we view framed embeddings as thickened embeddings.
we will use the closure map $\tK \to \tL,\  F \mapsto \overline{F}$, as well as the fibration $\tK \to \K, \ F \mapsto F|_{I \x \{0\}}$ given by restricting to the core.
In Proposition \ref{FramedKnotsInS3VsLongKnots} below, part (a) is related to \cite[p.~8]{HatcherKnotSpacesWebsite} and \cite[Proposition 4.1]{Budney-Cohen}.  Part (b) is essentially how Gramain established the nontriviality of his eponymous loop \cite{Gramain1977}.  
Part (c) is implicit when comparing Hatcher's results on closed knots in $\L(1)$ and Budney's results on long knots in $\K$.

\begin{proposition}
\label{FramedKnotsInS3VsLongKnots}
Let $F$ be a framed long knot (with any framing number).
\begin{itemize}
\item[(a)] 
The maps 
\begin{equation}
\label{2MapsFromFramedLongKnots}
\tL_{\overline{F}}/SO_4  \leftarrow \tK_F \to \K_f
\end{equation}
given by closure and restriction to the core are both homotopy equivalences.
\item[(b)] 
The loop $\mu_0$ of meridional rotations in $\tL_{\overline{F}}$ corresponds to the Gramain loop $g_f$ given by spinning a long knot around the long axis, via the isomorphism $\pi_1(\tL_{\overline{F}}/SO_4) \cong \pi_1(\K_f)$ induced by \eqref{2MapsFromFramedLongKnots}.
\item[(c)]
The loop $\lambda_0$ of reparametrizations in $\tL_{\overline{F}}$ corresponds to the Fox--Hatcher loop in $\K_f$. 
\end{itemize}
\end{proposition}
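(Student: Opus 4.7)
For part (a), I would establish each of the two homotopy equivalences separately. The restriction map $\tK_F \to \K_f$ is a fiber bundle by Palais--Lima; its fiber over $f$ is a single component of the space $\Omega SO_2$ of framings of $f$ relative to the standard boundary condition, hence contractible. For the closure map $\tK_F \to \tL_{\overline{F}}/SO_4$, I would first observe that $SO_4$ acts freely on $\tL$, since a framed link with a chosen parameter point yields a 3-frame in $S^3$, and $SO_4$ acts simply transitively on 3-frames. Thus $\tL_{\overline{F}} \to \tL_{\overline{F}}/SO_4$ is a principal $SO_4$-bundle. The closure $\overline{H}$ of $H \in \tK_F$ sends $(1,0)$ to $\infty$ with standard framing there and has prescribed values on all of $\{1\} \x D^2$; denote this standard disk embedding by $\phi_0$. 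The honest slice $\tL_{\overline{F}}^{\mathrm{sl}}$ of framed knots $G$ with $G(1,0) = \infty$ and $DG|_{(1,0)}$ standard maps homeomorphically onto $\tL_{\overline{F}}/SO_4$, and the image of closure is a deformation retract of this slice via a parametrized smoothing of $G|_{\{1\} \x D^2}$ to match $\phi_0$, yielding the claimed equivalence.

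For part (b), I would analyze the Gramain loop $g_f$, given by $t \mapsto R_t \circ f$ where $R_t$ rotates $\R^3$ around the long axis by $2\pi t$. The naive framed lift $R_t \circ F$ fails the identity boundary condition of $\tK$, so a lift to $\tK_F$ takes the form $\widetilde{G}_t = R_t \circ F \circ \psi_t$, where $\psi_t$ is a reparametrization of $I \x D^2$ supported near $\d I \x D^2$ that counter-rotates the $D^2$ factor by $-2\pi t$. Applying closure gives $\overline{\widetilde{G}}_t = \widetilde{R}_t \circ \overline{F} \circ \overline{\psi}_t$, with $\widetilde{R}_t \in SO_4$ the extension of $R_t$ and $\overline{\psi}_t$ a diffeomorphism of $S^1 \x D^2$ supported near $\{1\} \x D^2$ acting as a meridional rotation there. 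Projecting to $\tL_{\overline{F}}/SO_4$ kills the $\widetilde{R}_t$-factor, leaving the class of $\overline{F}$ composed with a full meridional rotation of its domain, which represents $\mu_0^{\pm 1}$.

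For part (c), I would represent $\lambda_0$ by $t \mapsto \overline{F} \circ s_t$ with $s_t(\theta,z) = (\theta + 2\pi t, z)$. The basepoint $(1, 0)$ is sent to $\overline{F}(e^{2\pi i t}, 0)$, which traces the core of $\overline{F}$. Projecting to the slice requires a family $S_t \in SO_4$ carrying $\overline{F}(e^{2\pi i t}, 0)$ back to $\infty$ with standard frame; after restriction to $\K_f$, this realizes the procedure of moving $\infty$ along $\overline{F}$ and stereographically projecting to $\R^3$, which is by definition the Fox--Hatcher loop.

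The main technical obstacle will be the slice argument in part (a): formalizing that the image of the closure map is homotopy equivalent to the honest $SO_4$-slice via a parametrized smoothing near $\{1\} \x D^2$. Once this is set up, the loop identifications in (b) and (c) reduce to careful tracking of basepoint and framing corrections under the $SO_4$-quotient.
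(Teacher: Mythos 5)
Your proposal is correct and follows essentially the same route as the paper: for (a) you normalize the value and derivative at the basepoint via the free $SO_4$-action and then homotope the whole disk $\{1\}\times D^2$ to a standard position (the paper phrases this as constructing a homotopy inverse rather than a slice, and makes explicit the Gram--Schmidt step needed since $DG|_{(1,0)}$ is only in $GL_3^+$, not $SO_3$), while the fiber of $\tK_F\to\K_f$ is a contractible component of $\Omega SO_2$; for (b) and (c) your framing-corrected lift of the Gramain loop and your $SO_4$-family $S_t$ tracking the basepoint along the core are exactly the paper's decomposition of $g_F$ into an $SO_4$-loop times a meridional rotation and its identification of $\lambda_0$ with the Fox--Hatcher loop.
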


\begin{proof}
We first prove (a).  Given $G \in \tL_{\overline{F}}$, we can apply an element in $SO_4$ to give $G$ a prescribed value and tangent vector to the core at $(1,0) \in S^1 \x D^2$.  By Gram--Schmidt, we can perform a homotopy of $G$ so that $dG_{(1,0)} = \mathrm{id} \in SO_3$.  Finally, we can perform a further homotopy of $G$ to have prescribed values and derivatives on all of $\{1\} \x D^2$.  This allows us to construct a homotopy inverse to $\tK_F \to \tL_{\overline{F}}/SO_4$, so the left-hand map is an equivalence.  
Next, we have $\tK \simeq \Omega SO_2 \simeq \Z\x \K$, where $\Z$ gives the framing number and where the right-hand map in \eqref{2MapsFromFramedLongKnots} is the projection to $\K$.  Thus $\tK_F \simeq \{n\} \x \K_f$ for some $n\in \Z$, so $\tK_F \simeq\K_f$.

We now prove (b).  Since $\tK_F \to \K_f$ is a fibration inducing an isomorphism on $\pi_1$, the loop $g_f$ on $\K_f$ given by spinning around the long axis lifts to a loop $g_F$ on $\tK_F$.  We cannot realize this lift by merely ``spinning'' the thickened knot (for framed long knots are the identity on $\d I \x D^2$), but we can combine this motion with a simultaneous meridional rotation in the opposite direction to realize $g_F$.\footnote{The reader may find it amusing to realize $g_F$ using a knotted belt with anchored ends.}
The image of  $g_F$ in $\tL_F$ is then given by the product, in any order, of a loop of rotations in $SO_4$ (spinning the thickened knot) and a loop of meridional rotations.   See Figure \ref{F:GramainVsMeridRot}.

\begin{figure}[h!]
\includegraphics[scale=0.24]{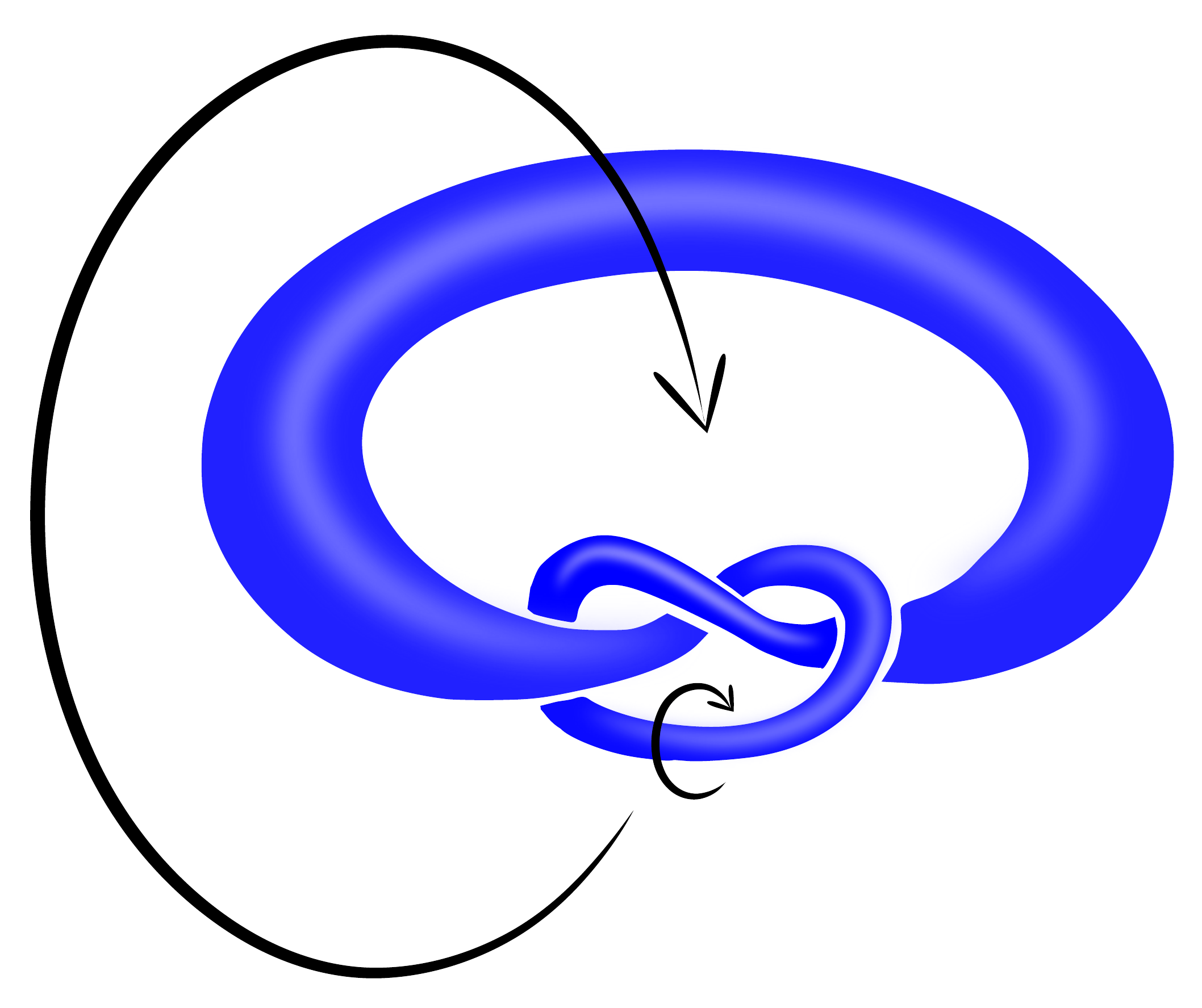}
\caption{A framed knot $\overline{F}$ obtained as the closure of a framed long knot $F$. The loop of rotations in $SO_4$ along the circle $C_2 \subset S^3$ (the large arrow) is the sum of a meridional rotation of $\overline{F}$ (the small arrow) and the image $g_F$ of the Gramain loop in $\tL_{\overline{F}}$.  The support of $g_F$ lies in a copy of $I \x D^2$ inside  $S^1 \x D^2 = \mathrm{domain}(\overline{F})$.}
\label{F:GramainVsMeridRot}
\end{figure}

For part (c), let $G \in \tL_{\overline{F}}$.  First by a homotopy we may assume that $dG_{(t,z)} \in SO_3$ for all $(t,z)$ in the core $S^1 \x \{0\}$, where we now parametrize $S^1$ by $t\in [0,1]$.  
Then multiplying the loop $\lambda_0$ pointwise by a loop $\alpha_t \in SO_4$, $t\in [0,1]$, such that $\alpha_t$ sends $(G(t,0), dG_{t,0})$ to $(\infty, \mathrm{id})$ gives a loop of embeddings with fixed values and derivatives at the basepoint (though their images move in $S^3$).
Since the derivatives at the basepoint are fixed, we may apply a further homotopy so that all of $\{1\} \x D^2$ is fixed.  
This allows us to continuously realize the homotopy inverse $\tL_{\overline{F}}/SO_4 \to \tK_F$ over all times in $\lambda_0$ and see that upon the projection to $\K_f$ we obtain the Fox--Hatcher loop.
\end{proof}

We will say \emph{Gramain loop} to refer to not only the loop of long knots, but also the associated loops of framed long knots and framed closed knots.  We will write e.g.~$g_f$, $g_F$, and $g_{\overline{F}}$ for these loops.

\begin{remark}
\label{R:GramainSplitting}
In \cite[Section 6]{BudneyTop}, Budney essentially showed that $\Z\langle g_f \rangle$ splits as a direct factor of $\pi_1(\K_f)$.  
(He constructed a canonical map $\K_f \to \Z$ such that $\Z\langle g_f \rangle \to \K_f \to \Z$ is $\pm n$ for some $n>0$.  
The construction inducts on the height of $\mathbb{G}_f$, and the step producing $n>1$ occurs when $f$ is an $n$-fold connected sum.  Then $\Z \langle g_f \rangle$ is the diagonal in a factor $\Z^n$.  Since a generator of the diagonal is primitive in $\Z^n$, $\Z \langle g_f \rangle$ splits as a direct factor, even though the splitting is not canonical.)
We will generalize this splitting to multiple components in Theorem \ref{T:MeridiansFactor}.
\end{remark}

\subsection{Spaces of links, classifying spaces, and meridional rotations}
\label{S:LFmodSO4=BDiff(CF)}
We now continue with results on the homotopy types of 
$\tL_F/SO_4$, $\L_f/SO_4$, $\T_f$, and $\V_f$.
We will see that each of these spaces is equivalent to the classifying space of a certain space of diffeomorphisms.
If $F$ is an irreducible link, then this classifying space is a $K(\pi,1)$ in all four cases.
For any link $F$, each of the last three of these four spaces differs from $\tL_F/SO_4$ only at $\pi_1$ by a quotient by certain meridional rotations.

The next proposition is a generalization of a result of Hatcher \cite{HatcherKnotSpacesArxiv} from knots to links.

\begin{proposition}
\label{FramedLinksKpi1}
Let $F$ be an $n$-component framed link in $S^3$, and let $C_F:= S^3 - \im F$.
\begin{itemize}
\item[(a)]
There is a homotopy equivalence $\tL_F/SO_4 \simeq B \Diff(C_F; \d C_F)$.
\item[(b)]
If $F$ is irreducible, then $\tL_F/SO_4$ is a $K(\pi,1)$ space for the group  $\pi_0\Diff(C_F; \d C_F)$.
\end{itemize}
\end{proposition}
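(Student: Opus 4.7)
For (a), the plan is to exhibit $\tL_F/SO_4$ as the base of a fibration whose total space is contractible and whose fiber is $\Diff(C_F;\d C_F)$. I begin with the Palais--Lima restriction fibration
\[
\Diff(S^3;\im F) \to \Diff^+(S^3) \to \tL_F,
\]
sending $\phi \mapsto \phi \circ F$; the right-hand map hits all of $\tL_F$ by the isotopy extension theorem. Since $\im F$ is a union of framed solid tori $\nu(f)$, a diffeomorphism of $S^3$ fixing $\im F$ pointwise is equivalent, by restriction and extension by the identity, to an element of $\Diff(C_F;\d C_F)$, identifying the fiber. The $SO_4$-action by post-composition is compatible with this fibration, and the Smale conjecture gives $\Diff^+(S^3) \simeq SO_4$, hence $\Diff^+(S^3)/SO_4 \simeq \ast$. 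The resulting sequence
\[
\Diff(C_F;\d C_F) \to \ast \to \tL_F/SO_4
\]
yields $\tL_F/SO_4 \simeq B\Diff(C_F;\d C_F)$.

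For (b), by (a) it suffices to show $\Diff(C_F;\d C_F)$ has contractible components. If $F$ is a nontrivial irreducible link, then $C_F$ is a Haken manifold, as recorded in Section \ref{S:DiffResults} (irreducibility of $F$ implies irreducibility of $C_F$, and the boundary tori are incompressible), so the Hatcher--Ivanov theorem gives contractible components of $\Diff(C_F;\d C_F)$. If $F$ is the unknot, $C_F$ is a solid torus, and Lemma \ref{DiffSolidTorus} gives $\Diff(C_F;\d C_F) \simeq \ast$ directly. In either case $B\Diff(C_F;\d C_F)$ is a $K(\pi,1)$ with $\pi = \pi_0\Diff(C_F;\d C_F)$.

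The main technical point I expect to handle carefully is that the $SO_4$-action on $\tL_F$ need not be free: the stabilizer of $F$ in $SO_4$ coincides with $SO_4 \cap \Diff(C_F;\d C_F)$, which is visibly nontrivial for the unknot with its rotational symmetry. I would address this by passing to the Borel construction $ESO_4 \x_{SO_4} \tL_F$ in place of the strict quotient wherever the freeness of the action is in doubt, so that the fibration argument in (a) remains valid up to homotopy. A secondary, more routine point is verifying the identification of the fiber $\Diff(S^3;\im F)$ with $\Diff(C_F;\d C_F)$, which requires that the extension-by-identity map preserves smoothness; this is standard once one works with framed embeddings so that the derivative on $\d C_F$ is prescribed.
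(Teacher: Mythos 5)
Your main line of argument coincides with the paper's: the Palais restriction fibration $\Diff(C_F;\d C_F) \to \Diff^+(S^3) \to \tL_F$, passage to the quotient by $SO_4$, the Smale conjecture to contract the total space, and Hatcher--Ivanov for part (b). That part is correct, and your separate treatment of the unknot via Lemma \ref{DiffSolidTorus} is a sensible extra precaution, since the solid torus is not Haken in the usual sense and so is not literally covered by the Hatcher--Ivanov statement.

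The one thing you get wrong is precisely the ``technical point'' you single out. The $SO_4$-action on $\tL_F$ \emph{is} free, and the reason is exactly that $F$ is framed: the stabilizer of the point $F \in \tL_F$ is $\{A \in SO_4 : A \circ F = F\}$, i.e.\ the set of rotations fixing the codimension-zero submanifold $\im F$ pointwise, and a nontrivial element of $SO_4$ fixes at most a great circle of $S^3$. Your identification of this stabilizer with ``$SO_4 \cap \Diff(C_F;\d C_F)$'' conflates fixing the parametrized framed embedding $F$ with preserving the unparametrized unknot setwise: the rotation $\mathrm{diag}(1,e^{i\theta})$ fixes the core circle $C_1$ pointwise (which is why $\Emb_f(S^1,S^3)\simeq SO_4/SO_2$ for the unframed unknot) but moves every other point of the tubular neighborhood, so it does not stabilize $F$ in $\tL_F$. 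This matters because the proposition concerns the strict quotient $\tL_F/SO_4$; if the action genuinely failed to be free, substituting the Borel construction would prove a statement about a different space, not the one asserted. Since the action is free, the fibration argument applies to the strict quotient as written and no such substitution is needed.
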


\begin{proof}
Restricting diffeomorphisms of $S^3$ to the image of the framed link $F$, we have the fibration 
\begin{equation}
\label{DiffToEmb}
\Diff(C_F; \d F) \to \Diff^+(S^3) \to \tL_F
\end{equation}
where a diffeomorphism of $S^3$ fixing a codimension-0 submanifold must be orientation-preserving.
Now $SO_4$ acts on $\Diff^+(S^3)$ by composition.  This action and the resulting action on $\tL_F$ are free (even if $F$ is the unknot, because $F$ is framed).
Taking the quotient by this action yields the bundle 
\begin{equation}
\label{DiffToEmbModSO4}
\Diff(C_F; \d C_F) \to \Diff^+(S^3)/SO_4 \to \tL_F/SO_4.
\end{equation}
Part (a) follows from the fact that the total space is contractible \cite{HatcherSmaleConj}.
For part (b), if $F$ is irreducible, then the fiber has contractible components \cite{HatcherIncomprSurfs, Ivanov, Ivanov2}.  Thus the only nontrivial homotopy group of the base is $\pi_1 (\tL_F/ SO_4)\cong \pi_0(\Diff(C_F; \d C_F))$.
\end{proof}

The next result about unframed closed links in $S^3$ requires the classification of Seifert-fibered link exteriors in Proposition \ref{SeifertFiberedLinks}.   
We closely follow arguments from Hatcher's work on the case of knots \cite{HatcherKnotSpacesArxiv, HatcherKnotSpacesWebsite}.
Because of Proposition \ref{FramedKnotsInS3VsLongKnots} (b), we may view the kernel in $\pi_1(\tL_F/SO_4)$ below as an analogue of the Gramain subgroup in fundamental groups of spaces of long knots.

\begin{proposition}
\label{LfModSO4isKpi1} 
Let $f$ be a nontrivial link with $n$ components.
Let $F$ be any framed link corresponding to the unframed link $f$.
Then $\pi_i(\L_f / SO_4) \cong \pi_i(\tL_{F} / SO_4)$ for all $i\geq 2$, and
there is a short exact sequence 
\begin{equation}
\label{Pi1ExactSequenceLF}
\{e\} \to \Z^n \to \pi_1(\tL_{F} / SO_4) \to \pi_1(\L_f / SO_4) \to \{e\}
\end{equation}
where the first map sends the $i$-th standard generator to a loop of rotations along a meridian of the $i$-th component of $F$.
Thus $\L_f/SO_4 \simeq B\Diff^+(S^3; f)$, and if $f$ is irreducible, then $\L_f / SO_4$ is a $K(\pi,1)$ space for the group $\pi=\pi_0(\Diff^+(S^3; f))$.
\end{proposition}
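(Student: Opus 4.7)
The plan is to realize both spaces as quotients of contractible spaces and to relate them via the forgetful map from framed to unframed links.

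For the classifying space description, I would argue as in the proof of Proposition~\ref{FramedLinksKpi1}. Restricting diffeomorphisms to $f$ gives the fibration
\[
\Diff^+(S^3;f) \to \Diff^+(S^3) \to \L_f.
\]
The action of $SO_4$ on $\L_f$ by composition is free, since a non-identity rotation of $S^3$ has fixed set at most a great circle and no nontrivial link can be contained in one great circle. Passing to the $SO_4$-quotient yields a fibration whose total space $\Diff^+(S^3)/SO_4$ is contractible by the Smale conjecture, giving $\L_f/SO_4 \simeq B\Diff^+(S^3;f)$.

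Next, I would use the forgetful fibration $p\co \tL_F \to \L_f$ obtained by restricting a framed link to its core (a Palais restriction fibration). The fiber of $p$ over any $f' \in \L_f$ is the component in the space of framings of $f'$ whose framing numbers agree with those of $F$; this is the identity component of $\Map(\coprod^n S^1, SO_2)$, which is homotopy equivalent to $(S^1)^n$. The map $p$ admits a continuous section, obtained by equipping $f'$ with its homological (Seifert) framing twisted by the framing numbers of $F$. This section is $SO_4$-equivariant because linking numbers are preserved by orientation-preserving diffeomorphisms, so passing to the $SO_4$-quotient yields a fiber bundle
\[
(S^1)^n \to \tL_F/SO_4 \to \L_f/SO_4
\]
that admits a section.

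From the long exact sequence in homotopy together with the section, all connecting homomorphisms vanish. Since $\pi_i((S^1)^n) = 0$ for $i \geq 2$, this gives the isomorphisms $\pi_i(\tL_F/SO_4) \cong \pi_i(\L_f/SO_4)$ for $i \geq 2$ and the short exact sequence \eqref{Pi1ExactSequenceLF}. The standard generators of $\pi_1((S^1)^n) = \Z^n$ are realized by the loops of meridional rotations $\mu_i$: each $\mu_i$ fixes the underlying link but rotates the framing of the $i$-th component through a full turn, so it lives in the fiber of $p$ and represents the $i$-th generator. Finally, if $f$ is irreducible then Proposition~\ref{FramedLinksKpi1}(b) gives $\pi_i(\tL_F/SO_4) = 0$ for $i \geq 2$, whence $\L_f/SO_4$ is also aspherical and hence a $K(\pi_0 \Diff^+(S^3;f), 1)$. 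The main obstacle is the construction of the canonical $SO_4$-equivariant section of $p$, for which the continuity of linking numbers in the homological framing is essential; given that section, the remainder of the argument is a routine application of the long exact sequence.
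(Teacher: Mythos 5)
Your identification of $\L_f/SO_4$ with $B\Diff^+(S^3;f)$ and your setup of the fibration $(S^1)^n \to \tL_F/SO_4 \to \L_f/SO_4$ both match the paper. The gap is the claimed section. The ``homological framing'' of a link $f'$ is only a \emph{homotopy class} of framings: the actual framings of $f'$ in that class form a torsor over $\Map(\coprod^n S^1, SO_2)_0 \simeq (S^1)^n$ with no distinguished point, so choosing one continuously over all of $\L_f$ amounts to trivializing a principal $(S^1)^n$-bundle, and you have given no construction that does this (nor addressed the genuine equivariance of an actual map, as opposed to the invariance of linking numbers). This is not a removable technicality: a section of this fibration would immediately split the short exact sequence \eqref{Pi1ExactSequenceLF} and hence prove that the meridional subgroup $\Z^n$ is a direct factor of $\pi_1(\tL_F/SO_4)$ --- which is Theorem \ref{T:MeridiansFactor}, proved in the paper only after the full classification and by a delicate case analysis, and which Remark \ref{R:GramainSplitting} notes is \emph{non-canonical} already for connected sums of knots. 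So the existence of your section is essentially equivalent to the hardest splitting result in the paper and cannot be asserted at this stage.

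What the proposition actually requires is weaker --- only that the connecting map $\pi_2(\L_f/SO_4) \to \Z^n$ vanishes, i.e., that $\Z^n \to \pi_1(\tL_F/SO_4)$ is injective --- and this is where the real content lies. The paper proves it by identifying $\pi_1(\tL_F/SO_4)$ with $\pi_0\Diff(C_F;\d C_F)$ and the generators of $\Z^n$ with annular Dehn twists along meridians, then showing that no nontrivial product of powers of these twists is isotopic to the identity: the twist along $m_i$ acts on $\pi_1(C_F, x_i)$ by conjugation by $m_i$, and one checks that no nontrivial power of a meridian is central, using Waldhausen's theorem on centers of Haken (non-Seifert-fibered) $3$-manifold groups, an explicit presentation of $\pi_1$ in the Seifert-fibered case, and the free product decomposition $\pi_1(M_1)\ast\dots\ast\pi_1(M_p)$ in the split case. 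You need an argument of this kind (or some other proof of injectivity) to close the gap.
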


\begin{proof}
We first show that for any framed link $F$, there is a fibration
\begin{equation}
\label{FramedLinkFibn}
(S^1)^n \to \tL_F/SO_4 \to \L_{f}/SO_4.
\end{equation}
Indeed, restricting a framed link from $\coprod_n S^1\x D^2$ to $\coprod_n S^1 \x 0$ is a fibration $\tL \to \L$ which passes to the quotients by $SO_4$:
\[
\Emb(\nu(f), S^3;\, {f}) \to \tL / SO_4 \to \L / SO_4
\]
The fiber $\Emb(\nu(f), S^3; \, {f})$ over ${f}$  is the space of tubular neighborhoods of $f$, which is homotopy equivalent to the space of linear automorphisms of its (trivial) normal bundle \cite[Theorem 4.5.3]{Hirsch}.  It is thus equivalent to $(\Omega SO_2 \x S^1)^n \simeq \Z^n \x (S^1)^n$: each factor $\Omega SO_2 \simeq \Z$ measures the framing on a link component, while each factor $S^1$ measures rotations by the same angle in each fiber of the normal bundle over a given component.  
Over each component $\L_f$ lie components $\tL_{F(\vec{k})}$ indexed by $\vec{k}\in \Z^n$ where $\vec{k}$ is the list of framing numbers.  The space $\tL_F$ is just one such component, so restricting to it yields the fibration \eqref{FramedLinkFibn}.
Its long exact sequence in homotopy establishes the short exact sequence \eqref{Pi1ExactSequenceLF} except for the injectivity of the map from $\Z^n$.
The claimed description of that map is however clear.  

It remains to show the injectivity of the map $\pi_1((S^1)^n) \to \pi_1(\tL_{F} / SO_4)$; this will establish that $\L_f/SO_4$ is a $K(\pi,1)$ space for irreducible $f$ and the claimed isomorphisms of higher homotopy groups in general, since $(S^1)^n$ is a $K(\pi,1)$ space.
It will be useful to view the long exact sequence in homotopy of \eqref{FramedLinkFibn} as coming from a shifted version of this fibration, which essentially comes from applying $\Omega(-)$ to the total space and base.
Let $\nu(f)$ be the tubular neighborhood $\im\, F$ of the unframed link $f$, and let $M:= S^3 - \nu(f)\, (\,= C_F)$.
The shifted version of the fibration is
\begin{equation}
\label{LinkCompVsExt}
\Diff(M; \d M) \to  \Diff^+(S^3; f) \to   \Emb(\nu(f), S^3;\, f).
\end{equation}  
Again,  $\Emb(\nu(f), S^3;\, f) \simeq \Z^n \x (S^1)^n$, but an element of $\Diff^+(S^3; f)$ takes the longitude of a component of $f$ to a null-homologous curve in $S^3 - \im(f)$, i.e.~a curve that links trivially with $f$, i.e.~a  curve representing $\vec{0} \in \Z^n$.  
Hence we may rewrite the fibration \eqref{LinkCompVsExt} as 
\begin{equation}
\label{LinkCompVsExtS1}
\Diff(M; \d M) \to \Diff^+(S^3; f) \to (S^1)^n.
\end{equation}  
Thus we want to show that the map
\begin{equation}
\label{DehnTwistsShouldInject}
\pi_1((S^1)^n) \to \pi_0(\Diff(M; \d M)) \  \left(\cong \pi_1(\tL_F/SO_4)\right)
\end{equation}
 is injective.
It is given by sending the $i$-th standard generator in $\pi_1((S^1)^n)$ to a diffeomorphism $\tau_i$ of $M$ given by an annular Dehn twist along the meridian $m_i$ and supported in a collar neighborhood of the $i$-th boundary torus. 

First suppose that $f$ is irreducible, so $M$ is a Haken 3-manifold.  
Consider the effect of $\tau_i$ on $\pi_1(M, x_j)$ where $x_j$ lies in the $j$-th boundary component.  Then $\tau_{i*} =\id$  if $i\neq j$ and $\tau_{i*}$ is conjugation by $m_i$ if $i=j$.  Considering all $j$ together, we get a map 
\begin{equation}
\label{MapToAut}
\Z^n \to \prod_{j=1}^n \Aut(\pi_1(M, x_j))
\end{equation}
which sends the $i$-th standard generator to $(\id, \dots, \id, c_i,\id, \dots,\id)$ where $c_i$ is conjugation by $m_i$.
It suffices to check that  \eqref{MapToAut} is injective, since it factors through the map $\Z^n \to \pi_0 \Diff(M; \d M)$.

Suppose $(k_1, \dots, k_n) \in \Z^n$ lies in the kernel of \eqref{MapToAut}.
Then $(c_1^{k_1}, \dots, c_n^{k_n})=(\id, \dots, \id)$.  
Thus for each $i$,  $m_i^{k_i}$ is in the center of $\pi_1(M, x_i)$.
A theorem of Waldhausen \cite{Waldhausen:Zentrum} says that if $M$ is Haken and not Seifert-fibered, then the center of $\pi_1(M,x_i)$ is trivial.  
Thus if $f$ is not Seifert-fibered, $m_i^{k_i}=e$.  
Since $M$ is a $K(\pi,1)$ space \cite[Corollary 3.9]{Hatcher3Mfds} and a finite-dimensional manifold, $\pi_1(M,x_i)$ is torsion-free.  So $k_i=0$ for all $i$, and we are done.

So suppose $M$ is Seifert-fibered.
Then the base surface of $M$ is a disk with $n$ boundary components, $r \geq 0$ singular fibers, and fiber data $a_1/b_1, \dots, a_r/b_r$.
Thus 
\[
\pi_1(M, x_i) \cong 
\left\langle \{m_j\}_{j=1}^{n-1}, \, \{q_\ell\}_{\ell=1}^r, \, h \, \right| \left. \, 
[m_j, h]=1, \, [q_\ell, h] = 1, \,
q_\ell^{b_\ell}h^{a_\ell}=1
\right\rangle
\]
where $h$ corresponds to the fiber.
Any central element in $\pi_1(M, x_i)$ must map into the center of the quotient
\[
\pi_1(M, x_i) / \langle h \rangle \cong  \left\langle \{m_j\}_{j=1}^{n-1}, \, \{q_\ell\}_{\ell=1}^r \right| \left. \, q_\ell^{b_\ell}=1
\right\rangle.
\]
which is a free product of copies of $\Z$ and $\Z/b_\ell$, generated by the $m_j$ and $q_\ell$.
Thus $m_i^{k_i}$ is trivial, so again $k_i=0$ for all $i$.  

Finally, if $f$ is a split link so that $M=M_1 \# \dots \# M_p$, then $\pi_1(M)\cong \pi_1(M_1) \ast \dots \ast \pi_1(M_p)$.  
Because each diffeomorphism $\tau_i$ is the identity on all of $\d M$, it preserves each summand $M_k$.  
Considering the inclusions $\pi_1(M_k) \incl \pi_1(M)$ and projections $\pi_1(M) \twoheadrightarrow \pi_1(M_k)$, we see that $\tau_i$ induces an automorphism of $\pi_1(M_k, x)$ for each $k$ and any $x \in \d M_k$.
We therefore can use a similar argument as in the irreducible case, just replacing $M$ in \eqref{MapToAut} by the summand $M_k$ that contains the $j$-th boundary component of $M$.
\end{proof}

We now describe the homotopy types of both $\T$ and $\V$ in terms of $\tL$ and $\L$.

\begin{proposition}
\label{TfisKpi1}
Let $f:  S^1 \incl S^1 \x D^2$ be a knot in the solid torus.
If $F$ is the corresponding 2-component framed link, then 
$\pi_i(\T_f) \cong \pi_i(\tL_F/SO_4)$ for $i\geq 2$, and there is short exact sequence of groups
\[
\{e\} \to \Z \to \pi_1(\tL_F/SO_4) \to \pi_1(\T_f) \to \{e\}
\] 
where a generator of $\Z$ is sent to the meridional rotation $\mu_0$ of the knotted component.   
If $G:=\Diff(S^1 \x D^2;\, f \cup \d)$, then $\T_f \simeq BG$ and if $F$ is irreducible, then $\T_f$ is a $K(\pi_0(G),1)$ space. 
\end{proposition}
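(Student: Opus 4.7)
The plan is to bridge $\T_f$ and $\tL_F/SO_4$ by introducing an intermediate space $\widetilde{\T}_F$ of framed knots in the solid torus. Via Proposition \ref{TfandLinksWithUnknots}, view $f$ as the knotted component $f_0$ of the 2-component link $(f_0, f_1)$ with $f_1$ unknotted, and let $V := S^3 - \nu(f_1)$, which is a solid torus. Let $N := F_0(S^1 \x D^2) \subset V$, and set $\widetilde{\T}_F := \Emb_{F_0}(S^1 \x D^2, V)$, the component of $F_0$ in the space of framed knots in $V$.

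First I would establish three homotopy equivalences via the standard restriction fibrations. Restricting a diffeomorphism of $V$ to $f_0$ gives the fibration $G \to \Diff(V; \d V) \to \T_f$, whose total space is contractible by Lemma \ref{DiffSolidTorus}, so $\T_f \simeq BG$. The analogous restriction to $N$ yields $\Diff(V; N \cup \d V) \to \Diff(V; \d V) \to \widetilde{\T}_F$, giving $\widetilde{\T}_F \simeq B\Diff(V; N \cup \d V)$. Extending by the identity on $N$ identifies $\Diff(V; N \cup \d V)$ with $\Diff(C_F; \d C_F)$, and Proposition \ref{FramedLinksKpi1} then gives $\widetilde{\T}_F \simeq B\Diff(C_F; \d C_F) \simeq \tL_F/SO_4$.

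Next, restricting a framed knot in $V$ to its core produces a fibration $\widetilde{\T}_F \to \T_f$ whose fiber is the space of normal-bundle trivializations of $f_0$ of the given framing number, which is homotopy equivalent to $S^1$. Since $\pi_i(S^1) = 0$ for $i \geq 2$, the long exact sequence in homotopy immediately yields isomorphisms $\pi_i(\widetilde{\T}_F) \cong \pi_i(\T_f)$ for $i \geq 3$, together with the six-term sequence
\[
0 \to \pi_2(\widetilde{\T}_F) \to \pi_2(\T_f) \to \Z \to \pi_1(\widetilde{\T}_F) \to \pi_1(\T_f) \to 0.
\]
The map $\Z \to \pi_1(\widetilde{\T}_F) \cong \pi_1(\tL_F/SO_4)$ sends the generator to the loop of framings that rotates once at a fixed core; under $\widetilde{\T}_F \simeq B\Diff(C_F; \d C_F)$ this unfolds to a meridional Dehn twist in a collar of $\d \nu(f_0)$, which is precisely the class $\mu_0$. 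This map is the restriction, to the first factor, of the map $\Z^2 \to \pi_1(\tL_F/SO_4)$ whose injectivity is established in the proof of Proposition \ref{LfModSO4isKpi1}; hence $\Z \to \pi_1(\tL_F/SO_4)$ is injective, so $\pi_2(\T_f) \to \Z$ vanishes, giving both the short exact sequence and $\pi_2(\T_f) \cong \pi_2(\tL_F/SO_4)$.

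Finally, if $F$ is irreducible then Proposition \ref{FramedLinksKpi1} gives $\pi_i(\tL_F/SO_4) = 0$ for $i \geq 2$, so $\pi_i(\T_f) = 0$ for $i \geq 2$ as well, and $\T_f \simeq BG$ is a $K(\pi_0(G), 1)$ space. The main subtlety is identifying the connecting loop in $\pi_1$ with the specific element $\mu_0$ so as to invoke the meridional-Dehn-twist injectivity already proved in Proposition \ref{LfModSO4isKpi1}; the other steps are routine consequences of the restriction fibrations and Lemma \ref{DiffSolidTorus}.
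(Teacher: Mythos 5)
Your proposal is correct and follows essentially the same route as the paper: introduce the space $\widetilde{\T}_F$ of framed knots in the solid torus, identify it with $B\Diff(C_F;\d C_F)\simeq \tL_F/SO_4$ using Lemma \ref{DiffSolidTorus} and the argument of Proposition \ref{FramedLinksKpi1}, fiber it over $\T_f$ with fiber $S^1$ as in Proposition \ref{LfModSO4isKpi1}, and deduce injectivity of $\Z\langle\mu_0\rangle$ by restricting the injectivity of $\Z^2\to\pi_1(\tL_F/SO_4)$ already proved there. The only difference is that you spell out the equivalence $\T_f\simeq BG$, which the paper states and leaves to the reader.
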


\begin{proof}
Let $\widetilde{\T}$ be the space of framed knots in a solid torus, i.e.~embeddings $S^1 \x D^2 \incl S^1 \x D^2$.  
The framed link $F \in \tL$ corresponds to a framed knot in $\widetilde{\T}$, which by abuse of notation we also call $F$.
Let $\widetilde{\T}_F$ be its path component.
By Lemma \ref{DiffSolidTorus}, $\Diff(S^1 \x D^2; \d)\simeq \ast$.
If $\nu(F)$ is a neighborhood of $F$ in the solid torus, then arguing as in Proposition \ref{FramedLinksKpi1}, we see that $\widetilde{\T}_F \simeq B\Diff(S^1 \x D^2 - \nu(F); \d) = B\Diff(C_F; \d C_F) \simeq \tL_F/SO_4$.   
Arguing as in Proposition \ref{LfModSO4isKpi1}, we get a fibration $S^1 \to \widetilde{\T}_F \to \T_f$, where the fiber corresponds to meridional rotations of $F$.  
It only remains to verify the injectivity of the map $\Z \to \pi_1(\tL_F/SO_4)$.
Proposition \ref{LfModSO4isKpi1} established the injectivity of a map $\Z^2 \to \pi_1(\tL_F/SO_4)$, and the map in question is its restriction to a factor of $\Z$ and hence injective.  
\end{proof}

Alternatively, $\T_f$ is the classifying space of the group $\Diff(S^1 \x D^2;\, f \cup \d)$; we leave the details to the reader.

\begin{corollary}
\label{TfVsLf}
If $f$ is any knot in the solid torus, then $\pi_i(\T_f)\cong \pi_i(\L_f/SO_4)$ for all $i\geq 2$, and there is a short exact sequence
\[
\{e\} \to \Z \to \pi_1(\T_f) \to \pi_1(\L_f/SO_4) \to \{e\}.
\]
\end{corollary}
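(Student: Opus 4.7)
The plan is to combine Propositions \ref{TfisKpi1} and \ref{LfModSO4isKpi1}. By Proposition \ref{TfandLinksWithUnknots}, we view $f$ as a 2-component link $(f_0,f_1)$ with $f_1$ an unknot, and let $F=(F_0,F_1)$ be any corresponding framed link. Write $\mu_0,\mu_1$ for the loops of meridional rotations about the two components. Proposition \ref{TfisKpi1} gives that the natural map $\tL_F/SO_4 \twoheadrightarrow \T_f$ is an isomorphism on $\pi_i$ for $i\geq 2$ and fits into a $\pi_1$ short exact sequence with kernel $\langle \mu_0\rangle \cong \Z$. Proposition \ref{LfModSO4isKpi1} gives that the map $\tL_F/SO_4 \twoheadrightarrow \L_f/SO_4$ is an isomorphism on $\pi_i$ for $i\geq 2$ and has kernel $\langle \mu_0,\mu_1\rangle \cong \Z^2$ on $\pi_1$.

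Since $\langle \mu_0\rangle \subset \langle \mu_0,\mu_1\rangle$, the second surjection factors through the first, producing a surjection $\pi_1(\T_f)\twoheadrightarrow \pi_1(\L_f/SO_4)$. Its kernel $K$ is the image in $\pi_1(\T_f)=\pi_1(\tL_F/SO_4)/\langle \mu_0\rangle$ of $\langle \mu_0,\mu_1\rangle$, i.e.~the cyclic subgroup generated by the class of $\mu_1$. The freeness portion of Proposition \ref{LfModSO4isKpi1}, namely that $\mu_0$ and $\mu_1$ generate a copy of $\Z^2$ in $\pi_1(\tL_F/SO_4)$, guarantees that no nonzero power of $\mu_1$ lies in $\langle \mu_0\rangle$, so $K\cong \Z$. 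This yields the claimed short exact sequence.

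For $i\geq 2$, the same two propositions supply isomorphisms $\pi_i(\T_f) \xleftarrow{\cong} \pi_i(\tL_F/SO_4) \xrightarrow{\cong} \pi_i(\L_f/SO_4)$, and inverting the first and composing gives the stated isomorphism. Equivalently, one can package everything at the space level: the composite $\tL_F/SO_4 \to \L_f/SO_4$ factors as $\tL_F/SO_4 \to \T_f \to \L_f/SO_4$ through a pair of principal $S^1$-bundles (with fibers generated by $\mu_0$ and $\mu_1$ respectively), and the long exact sequence in homotopy of the second bundle encodes both conclusions simultaneously. The only substantive ingredient is the injectivity $\Z^2 \hookrightarrow \pi_1(\tL_F/SO_4)$ already established in Proposition \ref{LfModSO4isKpi1}; once this is in hand the corollary is a purely formal consequence, and no new geometric argument is required.
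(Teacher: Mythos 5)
Your proposal is correct and follows exactly the paper's route: the paper proves this corollary in one line by citing Propositions \ref{LfModSO4isKpi1} and \ref{TfisKpi1}, and your argument simply spells out the factoring of the two quotients and the use of the injectivity of $\Z^2=\langle\mu_0,\mu_1\rangle\hookrightarrow\pi_1(\tL_F/SO_4)$ to identify the kernel as $\Z$. No substantive difference.
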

\begin{proof}
This follows immediately from Proposition \ref{LfModSO4isKpi1} and Proposition \ref{TfisKpi1}.
\end{proof}

Proposition \ref{TfisKpi1} and Corollary \ref{TfVsLf} roughly say that $\T_f$ lies between $\tL_F/SO_4$ and $\L_f/SO_4$.

\begin{proposition}
\label{VfIsLfModSO4}
Let $f$ be any knot in $S^1 \x S^1 \x I$.  Then $\V_f \simeq \L_f/SO_4$.
\end{proposition}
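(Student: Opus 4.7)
The plan is to identify $\V_f$ as the fiber of a fibration $\L_f/SO_4 \to \L_h/SO_4$ whose base is contractible, where $h$ denotes a Hopf link. By Proposition \ref{VfandLinksWithHopf} I view $f$ as a 3-component link $(f_0, f_1, f_2)$ with $h := (f_1, f_2)$ a Hopf link and $\V_f$ the component of $f_0$ in $\Emb(S^1, V)$. The Palais--Lima restriction map $(g_0, g_1, g_2) \mapsto (g_1, g_2)$ yields a fibration $\L_f \to \L_h$ whose fiber over $(f_1, f_2)$ is precisely $\V_f$. Since the pointwise $SO_4$-stabilizer of $h$ is trivial (an element of $SO_4$ fixing both $C_1$ and $C_2$ pointwise fixes a spanning set of $\R^4$), $SO_4$ acts freely on both $\L_f$ and $\L_h$; passing to the quotient of this $SO_4$-equivariant fibration yields a fibration
$$\V_f \longrightarrow \L_f/SO_4 \longrightarrow \L_h/SO_4,$$
and the claim will follow once $\L_h/SO_4$ is shown to be contractible.

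To show $\L_h/SO_4 \simeq \ast$, I use that by Proposition \ref{LfModSO4isKpi1} this space is already aspherical (since $h$ is irreducible), so it suffices to verify $\pi_1(\L_h/SO_4) = 0$. Pick any framing $H$ of $h$. Combining Proposition \ref{FramedLinksKpi1}(a) with Lemma \ref{DiffExteriorOfHopfLink} gives $\tL_H/SO_4 \simeq B\Diff(V; \partial V) \simeq K(\Z^2, 1)$, so $\pi_1(\tL_H/SO_4) \cong \pi_0 \Diff(V; \partial V) \cong \Z^2$ is generated by the two Dehn twists of Lemma \ref{DiffExteriorOfHopfLink}. The short exact sequence \eqref{Pi1ExactSequenceLF} from Proposition \ref{LfModSO4isKpi1} then reads
$$0 \longrightarrow \Z^2 \longrightarrow \pi_1(\tL_H/SO_4) \cong \Z^2 \longrightarrow \pi_1(\L_h/SO_4) \longrightarrow 0,$$
so the problem reduces to showing that the leftmost map, which sends the standard generators to the meridional rotation classes $\mu_0, \mu_1$, is an isomorphism.

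The main obstacle is identifying the images of $\mu_0$ and $\mu_1$ under the isomorphism $\pi_1(\tL_H/SO_4) \cong \pi_0 \Diff(V; \partial V)$. I would compute these by lifting the loops to paths in $\Diff^+(S^3)$ along the fibration $\Diff(V; \partial V) \to \Diff^+(S^3) \to \tL_H$. Using the $\C^2$-coordinates of Section \ref{S:KnotsVia2CompLinks}, the family $\phi_t(z_1, z_2) := (z_1,\, e^{2\pi i t \, \psi(|z_1|)} z_2)$, where $\psi$ is a smooth cutoff with $\psi \equiv 0$ on $\nu(C_2)$ and $\psi \equiv 1$ on $\nu(C_1)$, acts on $\nu(C_1)$ as a meridional rotation by $2\pi t$ and on $\nu(C_2)$ as the identity, and so projects to the loop $\mu_0$ in $\tL_H$. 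Its endpoint $\phi_1$ is the identity on $\nu(H)$ and on $V$ agrees (in the coordinates of Lemma \ref{DiffExteriorOfHopfLink}) with one of the two generating Dehn twists. The analogous construction with the roles of $z_1$ and $z_2$ interchanged identifies the image of $\mu_1$ with the other generator. These two classes generate $\pi_0 \Diff(V; \partial V)$, so the leftmost map above is an isomorphism, $\L_h/SO_4$ is contractible, and the fibration of the first paragraph gives $\V_f \simeq \L_f/SO_4$.
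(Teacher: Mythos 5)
Your argument is correct and follows essentially the same route as the paper: restrict to the Hopf sublink to get the fibration $\V_f \to \L_f/SO_4 \to \L_h/SO_4$ and then show the base is contractible using Lemma \ref{DiffExteriorOfHopfLink}, Proposition \ref{FramedLinksKpi1}(a), and the exact sequence of Proposition \ref{LfModSO4isKpi1}. The only difference is that you explicitly verify, via the cutoff families $\phi_t$, that the meridional rotations $\mu_0,\mu_1$ map onto the two generating Dehn twists of $\pi_0\Diff(S^1\x S^1\x I;\d)\cong\Z^2$, a point the paper asserts more briefly when it says the two $S^1$ factors of $\tL_H/SO_4$ correspond to meridional rotations of the components.
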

\begin{proof}
By Proposition \ref{VfandLinksWithHopf}, $f$ corresponds to an isotopy class of link where the last two components are the Hopf link.  Let $H$ (and $h$) denote the framed (and unframed) Hopf link, and consider the fibration 
\[
\V_f \to \L_f \to \L_h
\]
given by restricting an embedding to the last two components.  Since $f$ and $h$ are not the unknot, $SO_4$ acts freely both $\L_f$ and $\L_h$.  
By Lemma \ref{DiffExteriorOfHopfLink} and Proposition \ref{FramedLinksKpi1} part (a), the space $\L_H$ of the framed Hopf link satisfies $\L_H/SO_4 \simeq S^1 \x S^1$, where the factors of $S^1$ correspond to meridional rotations of the two components.  By Proposition \ref{LfModSO4isKpi1}, $\L_h/SO_4 \simeq \ast$.  Thus taking the quotient by $SO_4$ of the above fibration yields the fibration
\[
\V_f \to \L_f/SO_4 \to \ast
\]
which completes the proof.
\end{proof}

\section{Spaces of Seifert-fibered links}
\label{S:Seifert}

We now determine the homotopy types of spaces of Seifert-fibered links.  
The list of Seifert-fibered links and their fiberings in Proposition \ref{SeifertFiberedLinks} will be crucial here.  
Proposition \ref{DiffSeifertFramed} gives a uniform result for $\tL_F/SO_4$.  To ultimately understand the spaces $\L_f/SO_4$, we  calculate quotients by certain elements in Proposition \ref{DiffSeifertUnframed}.
We then convert this into results on $\L_f/SO_4$ for all the possible types of Seifert-fibered links $f$ in Corollary \ref{SeifertSpacesOfLinks}.
The results for $\T_f$ and $\V_f$ similarly follow by taking the appropriate quotient of the group in Proposition \ref{DiffSeifertFramed}.  In these settings, the list of possible Seifert-fibered links is much shorter.  We describe generators of $\pi_1$ in many cases.

\begin{proposition}
\label{DiffSeifertFramed}
Let $M$ be a Seifert-fibered submanifold of $S^3$ over a genus-0 surface with $n+1$ boundary components and $r$ singular fibers.  
(So $n \geq 0$ and $0 \leq r \leq 2$.)  Then
\[
\pi_0\Diff(M; \d M) \cong 
\Z^n \times \mathrm{PMod}(\Sigma_{0,r}^{n+1}) \cong
\Z^{2n} \x \PB_{n+r}.
\]
\end{proposition}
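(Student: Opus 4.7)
The plan is to compute $\pi_0\Diff(M;\d M)$ by lifting through two fibrations associated with the Seifert structure, organized as follows.

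\textbf{Step 1: Reduce to fiber-preserving diffeomorphisms.} The case $M = S^1 \x D^2$ (which occurs when $n=0$ and $r\leq 1$) is handled by Lemma~\ref{DiffSolidTorus}. Otherwise $M$ is Haken, and the uniqueness of Seifert fiberings on Haken manifolds with nonempty boundary (Waldhausen, Jaco--Shalen; see \cite{Hatcher3Mfds}) implies that every element of $\Diff(M;\d M)$ is isotopic rel $\d M$ to a fiber-preserving diffeomorphism. Thus the inclusion $\Diff^{\mathrm{fp}}(M;\d M)\incl\Diff(M;\d M)$ is a $\pi_0$-bijection.

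\textbf{Step 2: Project to the base.} Write $B=\Sigma_{0,r}^{n+1}$, with marked points $\{x_1,\ldots,x_r\}$ at the singular fibers. Restriction to the base yields a locally trivial fibration
\[
\Diff^{\mathrm{v}}(M;\d M)\to\Diff^{\mathrm{fp}}(M;\d M)\to\Diff(B;\d B,\{x_1,\ldots,x_r\})
\]
whose fiber consists of vertical (base-identity) diffeomorphisms. By Proposition~\ref{SeifertFiberedLinks}, whenever $r=2$ the singular multiplicities $p'\neq q'$ are distinct, so the right-hand space is $\Diff(B;\d B\cup\{x_1,\ldots,x_r\})$ (marked points fixed pointwise).

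\textbf{Step 3: Compute $\pi_0$ of the ends.} By Hatcher--Ivanov applied to $B$ with marked points (every case reduces to a hyperbolic-type surface with boundary, or to $D^2$), $\Diff(B;\d B\cup\{x_1,\ldots,x_r\})$ has contractible components, and its $\pi_0$ is $\mathrm{PMod}(\Sigma_{0,r}^{n+1})$ by definition. A vertical diffeomorphism is determined up to vertical isotopy by a rotation-angle function $\alpha\co B\to S^1$ vanishing on $\d B$ and $\Z_{\alpha_i}$-equivariant near each singular fiber; since radial rotations always descend through the local $\Z_{\alpha_i}$-quotient, singular fibers impose no further obstruction to $\pi_0$, and
\[
\pi_0\Diff^{\mathrm{v}}(M;\d M)\cong H^1(B,\d B;\Z)\cong H_1(B;\Z)\cong\Z^n.
\]

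\textbf{Step 4: Short exact sequence and splitting.} The long exact sequence in homotopy gives
\[
0\to\Z^n\to\pi_0\Diff(M;\d M)\to\mathrm{PMod}(\Sigma_{0,r}^{n+1})\to 1.
\]
Injectivity on the left: by Poincar\'e--Lefschetz duality the pairing $H^1(B,\d B;\Z)\otimes H_1(B;\Z)\to\Z$ is non-degenerate, so for nontrivial $[\alpha]$ the vertical rotation $\rho_\alpha$ shears the lift of some $[C]\in H_1(B;\Z)$ by a nonzero multiple of the fiber class in $H_1(M;\Z)$, hence is not isotopic to the identity. The extension is central because $\mathrm{PMod}$ fixes each boundary component of $B$ pointwise and therefore acts trivially on $H_1(B;\Z)$. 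To split the extension I define, for each inner boundary torus $T_i$, a homomorphism $\pi_0\Diff(M;\d M)\to\Z$ generalizing the map $\Phi$ from the proof of Lemma~\ref{DiffExteriorOfHopfLink}: choose a properly embedded vertical annulus $A_i$ joining $T_i$ to the outer boundary and assign to $\phi$ the class of $A_i\cup-\phi(A_i)$ in $H_1(M,\d M;\Z)$, projected to the fiber summand. The resulting $n$ homomorphisms together give a retraction onto the $\Z^n$ subgroup, producing the splitting. Finally, combining with the preliminary identification $\mathrm{PMod}(\Sigma_{0,r}^{n+1})\cong\Z^n\x\PB_{n+r}$ recorded in Section~\ref{S:DiffResults} yields the second isomorphism.

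\textbf{Main obstacle.} The crux is Step 4: the splitting construction needs careful bookkeeping with the vertical annuli $A_i$ in the presence of singular fibers, and the injectivity argument via $H_1$-shearing, while transparent in the trivial-bundle case $r=0$, must be extended to the general Seifert case using the analogous non-degenerate pairing on $H_1(M;\Z)$ coming from the singular-fiber data in Proposition~\ref{SeifertFiberedLinks}.
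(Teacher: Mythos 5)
Your overall architecture---reduce to fiber-preserving diffeomorphisms, fiber over $\Diff(B;\d B\cup\{x_1,\dots,x_r\})$, identify the vertical part with $H^1(B,\d B;\Z)\cong\Z^n$, then prove injectivity, centrality, and a splitting---is essentially the route the paper takes, except that the paper imports the resulting split exact sequence wholesale from Johannson \cite[Proposition 25.3]{Johannson} and only has to upgrade the semi-direct product to a direct product, which it does by a winding-number retraction generalizing the map $\Phi$ of Lemma \ref{DiffExteriorOfHopfLink}. The problem is that both delicate points of your Step 4 contain genuine errors, not just bookkeeping to be filled in.

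First, the injectivity argument as written proves nothing: the pairing $H^1(B,\d B;\Z)\otimes H_1(B;\Z)\to\Z$ is not the Lefschetz duality pairing, and for a genus-$0$ base it is identically zero. Indeed $H_1(B;\Z)$ is generated by boundary-parallel curves, on each of which a map $\alpha\co B\to S^1$ vanishing on $\d B$ has degree zero; equivalently, since $H^1(B)\to H^1(\d B)$ is injective in genus $0$, the map $H^1(B,\d B)\to H^1(B)$ in the long exact sequence of the pair is zero. Concretely, for the Hopf link exterior the generating vertical twist $(\theta,\phi,t)\mapsto(\theta,\phi+2\pi t,t)$ shears no closed lifted curve; it is detected only by its effect on an \emph{arc} joining the two boundary tori, i.e.\ by the class of $a\cup-\phi(a)$ in $H_1(M;\Z)$ for $[a]\in H_1(B,\d B;\Z)$---which is the actual Lefschetz partner of $H^1(B,\d B)$ and is exactly the map $\Phi$ in the proof of Lemma \ref{DiffExteriorOfHopfLink}. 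Second, the splitting retraction ``project $[A_i\cup-\phi(A_i)]$ to the fiber summand'' does not exist as stated. Beyond the dimensional slip (an arc, not an annulus, produces a $1$-cycle), $H_1(M,\d M;\Z)$ is the wrong home: a regular fiber is isotopic into $\d M$, so its class there is zero. And in $H_1(M;\Z)$ there is no fiber summand once $r\geq 1$, because the regular fiber class is then imprimitive---for $C_{S_{p,q}}$ it is $(pq,p)$ in the meridian basis---so no homomorphism $H_1(M;\Z)\to\Z$ sends it to $\pm 1$, and your $n$ maps fail to be retractions. The paper sidesteps exactly this: it constructs the retraction only for $r=0$, where $M=P_n\x S^1$ and the projection to the $S^1$ factor supplies the needed functional, and then transfers the splitting to general $r$ along the surjection $\pi_0\Diff(M';\d)\twoheadrightarrow\pi_0\Diff(M;\d)$ obtained by drilling out the singular fibers. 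Alternatively you could define your winding numbers inside $M'\cong P_{n+r}\x S^1$ after isotoping $\phi$ to preserve $M'$, but either way this step must be supplied rather than flagged as an obstacle.
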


\begin{proof}
Recall  that $P_n$ is the genus-0 surface with $n+1$ boundary components.
A result of Johannson \cite[Proposition 25.3]{Johannson} (building on work of Waldhausen \cite[pp.~85-86]{Waldhausen:Annals1968}) says that for $M$ as above, there is an exact sequence of groups
\begin{equation}
\label{JohannsonSES}
\{e\} \to H_1(P_n, \d P_n) \to \pi_0\Diff(M; \d M) \to \pi_0 \Diff(P_n; \d P_n \cup \{x_1, \dots, x_r\}) \to \{e\}
\end{equation}
which splits as a semi-direct product (because $\d M \neq \varnothing$).  The right-hand map comes from the fact that a diffeomorphism preserves the Seifert fibering, so it induces a diffeomorphism of the base.
Identifying the outer two groups gives 
\begin{align}
\pi_0\Diff(M; \d M) 
\cong \Z^n \rtimes \mathrm{PMod}(\Sigma_{0,r}^{n+1})
\cong \Z^n \rtimes (\PB_{n+r} \x \Z^n).
\label{Semidirect}
\end{align}
Let $T_i := D_i \x S^1$ be a boundary torus of $M$ corresponding to an inner boundary component of $P_n$.
Johannson's work shows that in the factor $\Z^n \cong H_1(P_n, \d P_n)$, a generator is an annular Dehn twist supported in a collar neighborhood of $T_i$, where the twist is along the fiber $S^1$.

To verify that \eqref{Semidirect} is a direct product, first let $r=0$.  
Then $M=P_n \x S^1$, and we can define a right-inverse $\pi_0\Diff(M; \d M) \to \Z^n$ to the first map in \eqref{JohannsonSES} as follows.  
This is the step where we will use that the base surface of $M$ has genus 0.  
Suppose the removed disks in $P_n$ have centers on the horizontal axis.  
Label the boundary components $c_0, \dots, c_n$, where $c_0$ is the unit circle.
For $j=1, \dots, n$, let $\alpha_j$ be the straight line segment from $x := (i, 1) \in D^2 \x S^1$ to $c_j \x \{1\}$.  
For $f \in \Diff(M; \d M)$, we can view $[\alpha_j \cup -f(\alpha_j)]$ as an element of $\pi_1(M,x)$.
Let $w_j(f)$ be the winding number of the image of $\alpha_j \cup -f(\alpha_j)$ under the projection $P_n \x S^1 \to S^1$.  
That is, $w_j(f)$ is the image of $[\alpha_j \cup -f(\alpha_j)] \in \pi_1(M, x)$ under the projection $F_n \x \Z \to \Z$.  
Define $\Phi: \pi_0(\Diff(M; \d M)) \to \Z^n$ by $\Phi(f)=(w_1(f), \dots, w_n(f))$; this is the claimed right-inverse.  Thus if $r=0$, $\pi_0(\Diff(M; \d M)) \cong \Z^{2n} \x \PB_n$.

To establish the direct product in the general case, suppose 
$M$ is Seifert-fibered over $P_n$ with $r$ singular fibers.
Since $M\subset S^3$, we have $0 \leq r \leq 2$, though we need not use this fact.  
Let $M'$ be the result of removing tubular neighborhoods $\nu_1, \dots, \nu_r$ of the singular fibers from $M$.  Then $M'$ is the Seifert-fibered manifold $M(0,n+r; )$.
There is a homomorphism $\pi_0\Diff(M'; \d) \to \pi_0\Diff(M; \d)$ given by extending by the identity on the $\nu_i$.
From the case $r=0$, this is a map $\Z^{n+r} \x \mathrm{PMod}(\Sigma_0^{n+r}) \to \Z^{n} \rtimes \mathrm{PMod}(\Sigma_{0,r}^{n})$ which we view as a map of short exact sequences of the form \eqref{JohannsonSES}.  From the description of generators above, it is surjective on both the left- and right-hand terms of this sequence (with kernel $\Z^r$ in both cases), hence surjective on all three terms, by the Four Lemma.  It is then straightforward to check that this surjectivity and the splitting of $\pi_0\Diff(M'; \d)$ as a direct product implies the splitting of $\pi_0\Diff(M; \d)$ as a direct product.
Hence $\pi_0\Diff(M; \d) \cong \Z^{2n} \x \PB_{n+r}$.
\end{proof}

\begin{corollary}
\label{SeifertFramedLinks}
Let $F$ be an $(n+1)$-component framed link whose complement $C_F$ is Seifert-fibered over a genus-0 surface with  $r$ singular fibers (and $n+1$ boundary components), where $n\geq 0$ and $0 \leq r \leq 2$.  Then $\pi_1(\tL_F/SO_4) \cong \Z^{2n} \x \PB_{n+r}$.
\end{corollary}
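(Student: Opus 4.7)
The plan is to invoke the two main results immediately preceding this corollary and combine them. By Proposition \ref{FramedLinksKpi1}(b), once we know that $F$ is irreducible, $\tL_F/SO_4$ is an Eilenberg--MacLane space $K(\pi,1)$ with $\pi = \pi_0\Diff(C_F;\d C_F)$; and Proposition \ref{DiffSeifertFramed} identifies this group as $\Z^{2n} \x \PB_{n+r}$ precisely under the Seifert-fibered hypothesis on $C_F$ appearing in the corollary. So the conclusion $\pi_1(\tL_F/SO_4)\cong \Z^{2n}\x\PB_{n+r}$ will follow by composing these two isomorphisms.

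First I would verify that $F$ is irreducible as a link. This is where the classification of Seifert-fibered link complements, Proposition \ref{SeifertFiberedLinks}, enters: $F$ must be (up to isotopy) one of $T_{p,q}$, $S_{p,q}$, $T_{p,q}\cup C_1$, $R_{p,q}$, or a keychain link $KC_n$. Each of these is evidently irreducible, since none of their complements contains a separating 2-sphere (equivalently, each complement is a Seifert-fibered 3-manifold with incompressible torus boundary, which is irreducible). The only degenerate case to be noted is $n=0$, $r=0$, where the "link" is a framed unknot with complement a solid torus; here the formula $\Z^{0}\x\PB_{0}$ is trivial, and Proposition \ref{FramedLinksKpi1}(b) still applies since a knot is automatically irreducible.

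Next I would apply Proposition \ref{FramedLinksKpi1}(b) to conclude $\tL_F/SO_4\simeq B\pi_0\Diff(C_F;\d C_F)$, so in particular
\[
\pi_1(\tL_F/SO_4)\;\cong\;\pi_0\Diff(C_F;\d C_F).
\]
Finally I would plug in the Seifert-fibered hypothesis and apply Proposition \ref{DiffSeifertFramed} to this right-hand side, producing $\Z^{2n}\x\PB_{n+r}$.

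There is no real obstacle here: the argument is a one-line combination of the cited Proposition \ref{FramedLinksKpi1}(b) (Haken/Hatcher--Ivanov asphericity) and Proposition \ref{DiffSeifertFramed} (the splitting of the Johannson sequence for Seifert-fibered complements). The only thing that requires a brief word is the irreducibility check so that Proposition \ref{FramedLinksKpi1}(b) actually applies, and that follows at once from the list in Proposition \ref{SeifertFiberedLinks}.
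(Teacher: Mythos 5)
Your proposal is correct and is essentially identical to the paper's own proof, which likewise just combines Proposition \ref{FramedLinksKpi1}(b) with Proposition \ref{DiffSeifertFramed}. The extra paragraph checking irreducibility via Proposition \ref{SeifertFiberedLinks} is a reasonable (if implicit in the paper) sanity check, and changes nothing about the argument.
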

\begin{proof}
By Proposition \ref{FramedLinksKpi1}, $\tL_F/SO_4$ is a $K(\pi,1)$ space for the group $\Diff(C_F; \d)$, which by Proposition \ref{DiffSeifertFramed} is $\Z^{2n} \x \PB_{n+r}$.
\end{proof}

\begin{proposition}
\label{DiffSeifertUnframed}
Let $M$ be the exterior of a nontrivial $(n+1)$-component Seifert-fibered link $f$ that is Seifert-fibered over a genus-0 surface with $r$ singular fibers, where $n \geq 0$ and $0 \leq r \leq 2$.  
Then the quotient of $\pi_0\Diff(M; \d M)$ by the annular Dehn twists along the meridians of $L$ is isomorphic to $\Z^{n-1} \x \PB_n$.
\end{proposition}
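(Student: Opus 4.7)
My plan is to leverage Proposition~\ref{DiffSeifertFramed}, which gives $\pi_0\Diff(M; \d M) \cong \Z^n \x \mathrm{PMod}(\Sigma_{0,r}^{n+1}) \cong \Z^{2n} \x \PB_{n+r}$, by carefully identifying the $n+1$ meridional annular Dehn twists inside this decomposition and then taking the quotient. In the splitting $\mathrm{PMod}(\Sigma_{0,r}^{n+1}) \cong \Z^n \x \PB_{n+r}$, the inner $\Z^n$ is generated by Dehn twists $T_1, \dots, T_n$ along the inner boundary circles of the base $P_n$, while $\PB_{n+r}$ comes from capping these boundaries with once-punctured disks; the outer boundary twist satisfies the standard surface relation $T_0 = T_1 + \cdots + T_n + \Delta^2$, with $\Delta^2$ generating $Z(\PB_{n+r})$.

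Next, I would compute each $\tau_{m_i}$ by lifting from the base to $\pi_0\Diff(M;\d M)$ using the extended Seifert fibering on $S^3$ recorded in Proposition~\ref{SeifertFiberedLinks}. Each link component $L_i$ is a regular or singular fiber of some type $(\alpha_i, \beta_i)$ in the $S^3$-fibering, and the meridian on $T_i$ is the corresponding $\alpha_i s_i + \beta_i h$-curve. Since a base-level Dehn twist along $c_i$ lifts to a section twist $\tau_{s_i}^{T_i}$ only up to fiber twists, the meridional Dehn twist $\tau_{m_i}$ in general has contributions in both the fiber-twist $\Z^n$ and the inner $\Z^n \subset \mathrm{PMod}$, along with possible central terms in $Z(\PB_{n+r})$. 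The Hopf link case (Lemma~\ref{DiffExteriorOfHopfLink}) is illuminating: its two meridional Dehn twists span all of $\Z^2 \cong \pi_0\Diff(M;\d M)$ because the sections at the two boundary tori differ by fiber amounts dictated by the Hopf fibration.

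Finally, I would verify that the resulting quotient is $\Z^{n-1} \x \PB_n$ via case-by-case analysis using the five link types of Proposition~\ref{SeifertFiberedLinks}. Broadly, the $n+1$ meridional Dehn twists together with the relation $T_0 = T_1 + \cdots + T_n + \Delta^2$ kill $T_1, \dots, T_n$ and $\Delta^2$, and the non-canonical fiber-twist discrepancies between the section lifts kill an extra $\Z$-summand in the fiber-twist $\Z^n$; in the cases with $r \geq 1$, the singular-fiber meridians provide additional $(\alpha_i, \beta_i)$-relations that collapse $\PB_{n+r}$ down to $\PB_n$ by trivializing the $r$ strands associated with the singular-fiber punctures. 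Rewriting via $\PB_n \cong \Z \x \PB_n/Z$ (valid for $n \geq 3$) then absorbs the remaining free factors to yield the uniform answer $\Z^{n-1} \x \PB_n$.

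The main obstacle will be verifying the collapse of $\PB_{n+r}$ to $\PB_n$ in the $r \geq 1$ cases: one must carefully track how the $(\alpha_i, \beta_i)$-relations from singular-fiber meridians interact with the non-canonical fiber-twist discrepancies across the $n+1$ boundary tori, since $\pi_0\Diff(M;\d M)$ is nonabelian and we are taking the normal closure of the meridians rather than a mere abelianized quotient. Low-rank anomalies ($n = 1$ and $n = 2$, together with the special $\gcd$-values for $T_{p,q}$ forcing a nominal singular fiber to be regular) must be verified separately, since the absorption $\PB_n \cong \Z \x \PB_n/Z$ is degenerate there.
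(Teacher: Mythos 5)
There is a genuine gap, and it sits exactly where you flagged the ``main obstacle'': the collapse of $\PB_{n+r}$ to $\PB_n$ for $r \geq 1$ does not happen, because there are no ``singular-fiber meridians'' to quotient by. The singular fibers of the fibering of $M$ lie in the \emph{interior} of $M$ and are not components of the link; the quotient in the statement is only by the $n+1$ annular Dehn twists at the boundary tori of $M$. These $n+1$ classes are central (each is supported in a collar of a boundary torus --- which also makes your worry about normal closures in a nonabelian group moot) and span a free abelian group of rank $n+1$ by Proposition \ref{LfModSO4isKpi1}, so they cannot trivialize the $r$ strands of $\PB_{n+r}$ corresponding to the punctures: that would require killing the large nonabelian kernel of the forgetful map $\PB_{n+r}\to\PB_n$. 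The paper's proof instead simply locates the $n+1$ twists inside the decomposition of Proposition \ref{DiffSeifertFramed}: for a non-keychain link with $n+r\geq 2$, the $n$ twists at the inner boundary tori generate a $\Z^n$ direct summand of the $\Z^{2n}$ factor (each is a primitive element of the corresponding $\Z^2$ regardless of the fiber-twist discrepancy in the lift, so your $(\alpha_i,\beta_i)$ bookkeeping is not needed), and the outer one generates $Z(\PB_{n+r})$ modulo the others. The quotient is therefore $\Z^n\x\bigl(\PB_{n+r}/Z(\PB_{n+r})\bigr)\cong\Z^{n-1}\x\PB_{n+r}$, \emph{not} $\Z^{n-1}\x\PB_n$.

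Be aware that your final answer matches the displayed statement but not the paper's own proof or its applications: Corollary \ref{SeifertSpacesOfLinks} records $\Z^{n-1}\x\PB_{n+2}$ for torus links ($r=2$) and $\Z^{n-1}\x\PB_{n+1}$ for Seifert links ($r=1$), so the ``$\PB_n$'' in the proposition as printed is evidently a typo for $\PB_{n+r}$, and your argument manufactures relations in order to land on the typo. Separately, you never isolate the keychain link, where the meridian of the special component is the fiber itself rather than a section over a boundary circle of $P_n$; the paper handles this case first and separately, with the meridional twists generating one $\Z^n$ factor plus the diagonal of the other $\Z^n$ inside $\Z^n\x\Z^n\x\PB_n$.
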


\begin{proof}
Recall the structure of the Seifert fibering outlined at the end of Section \ref{S:SeifertPrelims}.  In particular, the $n+1$ meridians of $L$ are the boundary components of $P_n$, unless $L$ is a keychain link, in which case, they are the $n$ inner boundary components of $P_n$ and the fiber.  

If $M$ is the complement of a keychain link, then $\pi_0\Diff(M; \d M) \cong \Z^n \x \Z^n \x \PB_n$, where the meridional Dehn twists generate a subgroup $\Z^{n+1}$, with $n$ copies from the left-hand factor of $\Z^n$ and one copy from the diagonal in the right-hand factor of $\Z^n$. 

Now suppose $M$ is the complement of any other nontrivial Seifert-fibered link.
If $n+r<2$, then since $M$ is by assumption not the complement of the unknot, $n=1$ and $r=0$. 
By Proposition \ref{SeifertFiberedLinks}, $M$ is then the complement of the Hopf link $KC_1$.
So we may suppose $n+r \geq 2$.  Then $n$ of the meridional Dehn twists generate copies of $\Z$ in the left-hand factor of $\pi_0\Diff(M; \d M) \cong \Z^{2n} \times \PB_{n+r}$, while the last one generates $Z(\PB_{n+r})$.  The group $\PB_{n+r} / Z(\PB_{n+r})$ is either $\mathrm{PMod}(\Sigma_{0,n+r+1})$ or trivial, according as $n+r \geq 3$ or $n+r=2$.  
The quotient of $\pi_0\Diff(M; \d M)$ is then respectively
 $\Z^n \x \mathrm{PMod}(\Sigma_{0,n+r+1}) \cong \Z^{n-1} \x \PB_{n+r}$ or $\Z^n \cong \Z^{n-1} \x \PB_2$, as desired.
\end{proof}

\begin{corollary}
\label{SeifertSpacesOfLinks}
Let $f$ be a Seifert-fibered link that is not the unknot.
\begin{enumerate}
\item[(a)]
If $f$ is a torus knot $T_{p,q}$ (so $p,q>1$ and $\gcd(p,q)=1$), then $\pi_1(\L_f / SO_4) \cong \{e\}$.
\item[(b)]
If $f$ is a torus link $T_{p,q}$ with $n+1\geq 2$ components, none of which is an unknot (i.e.~$p/\gcd(p,q)>1$ and $q/\gcd(p,q)>1$), then $\pi_1(\L_f/SO_4) \cong \Z^{n-1} \x \PB_{n+2}$.
\item[(c)]
If $f$ is a Seifert link $S_{p,q}$ with $n+1$ components and $p/\gcd(p,q)>1$, then $\pi_1(\L_f/SO_4) \cong \Z^{n-1} \x \PB_{n+1}$.
\item[(d)]
If $f$ is a link $R_{p,q}$ with $n+1$ components, then $\pi_1(\L_f/SO_4)$ $\cong $
$\Z^{n-1} \x \PB_{n}$.
\item[(e)]
If $f$ is the $(n+1)$-component keychain link, then $\pi_1(\L_f/SO_4) \cong \Z^{n-1} \x \PB_n$.  
\end{enumerate}
\end{corollary}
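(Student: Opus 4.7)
The plan is to combine Propositions \ref{LfModSO4isKpi1}, \ref{FramedLinksKpi1}, and \ref{DiffSeifertUnframed}, then read off the number $r$ of singular fibers for each family from Proposition \ref{SeifertFiberedLinks}. By Proposition \ref{LfModSO4isKpi1}, $\pi_1(\L_f/SO_4)$ is the quotient of $\pi_1(\tL_F/SO_4)$ by the subgroup generated by the meridional rotations $\mu_0,\dots,\mu_n$. Under the identification $\pi_1(\tL_F/SO_4) \cong \pi_0\Diff(C_F;\d C_F)$ from Proposition \ref{FramedLinksKpi1}(a), each $\mu_i$ corresponds to the annular meridional Dehn twist $\tau_i$ supported in a collar of the $i$-th boundary torus of $C_F$. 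This correspondence follows from chasing the bundle $\Diff(C_F;\d C_F)\to\Diff^+(S^3)/SO_4\to\tL_F/SO_4$ used in the proof of Proposition \ref{FramedLinksKpi1}: $\mu_i$ can be realized by an ambient isotopy of $S^3$ supported in an arbitrarily thin tubular neighborhood of $\d\nu(f_i)$ that rotates each normal disk through one full turn, so its monodromy on $C_F$ is precisely $\tau_i$. Therefore the quotient we seek is exactly the one computed in Proposition \ref{DiffSeifertUnframed}.

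It then remains to identify $r$ in each case using Proposition \ref{SeifertFiberedLinks}. In case (b), $T_{p,q}$ with $n+1\geq 2$ components (and $p',q'>1$) has its two singular fibers $C_1$ and $C_2$ sitting in the complement, so $r=2$ and the formula gives $\Z^{n-1}\x\PB_{n+2}$. In case (c), $S_{p,q}=T_{p,q}\cup C_2$ adjoins one of the singular fibers to the link, leaving $r=1$, so the formula gives $\Z^{n-1}\x\PB_{n+1}$. In case (d), $R_{p,q}=T_{p,q}\cup C_1\cup C_2$ adjoins both singular fibers, leaving $r=0$, so the formula gives $\Z^{n-1}\x\PB_n$. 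Case (e), the keychain link $KC_n$ with $n+1$ components, already has $r=0$ and is handled by the keychain branch of Proposition \ref{DiffSeifertUnframed}, which gives $\Z^{n-1}\x\PB_n$. Finally, in case (a) the torus knot has $n=0$ and $r=2$, so $n+r=2$ and the small-case branch in the proof of Proposition \ref{DiffSeifertUnframed} gives the trivial group $\Z^n=\{e\}$.

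I expect no serious obstacle; almost all of the work is imported. The one point meriting verification is the identification $\mu_i \leftrightarrow \tau_i$ outlined above, which is a straightforward argument about the supports of the representatives involved. The only mild bookkeeping subtlety is case (a), where the written formula $\Z^{n-1}\x\PB_{n+r}$ degenerates for $n=0$ and one must invoke the $n+r=2$ branch of Proposition \ref{DiffSeifertUnframed} directly.
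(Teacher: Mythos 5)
Your proposal is correct and follows essentially the same route as the paper: Proposition \ref{LfModSO4isKpi1} reduces the computation to the quotient by meridional Dehn twists computed in Proposition \ref{DiffSeifertUnframed}, with the value of $r$ in each case read off from Proposition \ref{SeifertFiberedLinks} (the identification $\mu_i\leftrightarrow\tau_i$ that you flag is already built into the statement and proof of Proposition \ref{LfModSO4isKpi1}, so no extra verification is needed). Your handling of the degenerate case (a) via the $n+r=2$ branch matches the paper's treatment.
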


\begin{proof}
By Proposition \ref{LfModSO4isKpi1}, the groups in question are the quotients by meridional rotations given in Proposition \ref{DiffSeifertUnframed}.  
Proposition \ref{SeifertFiberedLinks} tells us the number $r$ of singular fibers for each link.
In case (a), $n=0$ and $r=2$; 
in case (b), $r=2$; 
in case (c), $r=1$;
and in cases (d) and (e), $r=0$.
(The extra condition in cases (b) and (c) are needed to guarantee these values of $r$.)
The five cases above are all the possibilities (and they are disjoint), by Proposition \ref{SeifertFiberedLinks} and the following facts: 
\begin{itemize}
\item for $n\geq 3$, $T_{n,n} \sim S_{n-1,n-1} \sim R_{n-2,n-2}$; 
\item for $n\geq 2$ and $p'>1$, $T_{np',n} \sim S_{(n-1)p',n-1}$ (and $T_{n,nq'} \sim T_{nq',n}$); and
\item $S_{1,q}$ is the Hopf link, while for $n \geq 2$, $S_{n,nq'} \sim R_{n-1, (n-1)q'}$.
\end{itemize}
The Hopf link is covered by case (e)  with $n=1$, which yields the trivial group.
\end{proof}
  
We now describe explicit generators of $\pi_1(\L_f/SO_4)$ in some special cases.  See also Remark \ref{TorusKnotRmk} below.

In case (c) with $\gcd(p,q)=1$ (i.e.~$n+1=2$), we have a $(p,q)$-Seifert link, and we can take as a generator of $\PB_2 \cong \Z$ a loop that reparametrizes $C_2$ while leaving the knotted component fixed. 

In case (d) with $\gcd(p,q)=1$, we have a 3-component link $R_{p,q}$, and we can take independent reparametrizations of the unknotted components $C_1$ and $C_2$ as generators for $\Z \x \PB_2 \cong \Z \x \Z$.  

In case (e) of $KC_n=(L_0, L_1, \dots, L_n)$, we can take independent reparametrizations of all but one of $L_1, \dots, L_n$ as generators of the factor $\Z^{n-1}$.  Let  $p_{ij}$, $1 \leq i < j \leq n$ be the standard generators of $\PB_n$.  We can represent $p_{ij}$ by a loop where $L_i$ passes through $L_{i+1}, \dots, L_j$ counter-clockwise, then $L_j$ passes through $L_i$ counter-clockwise, and finally $L_i$ passes clockwise back through $L_{j-1},\dots, L_{i+1}$, returning to its original position.

\begin{figure}[h!]
\includegraphics[scale=0.2]{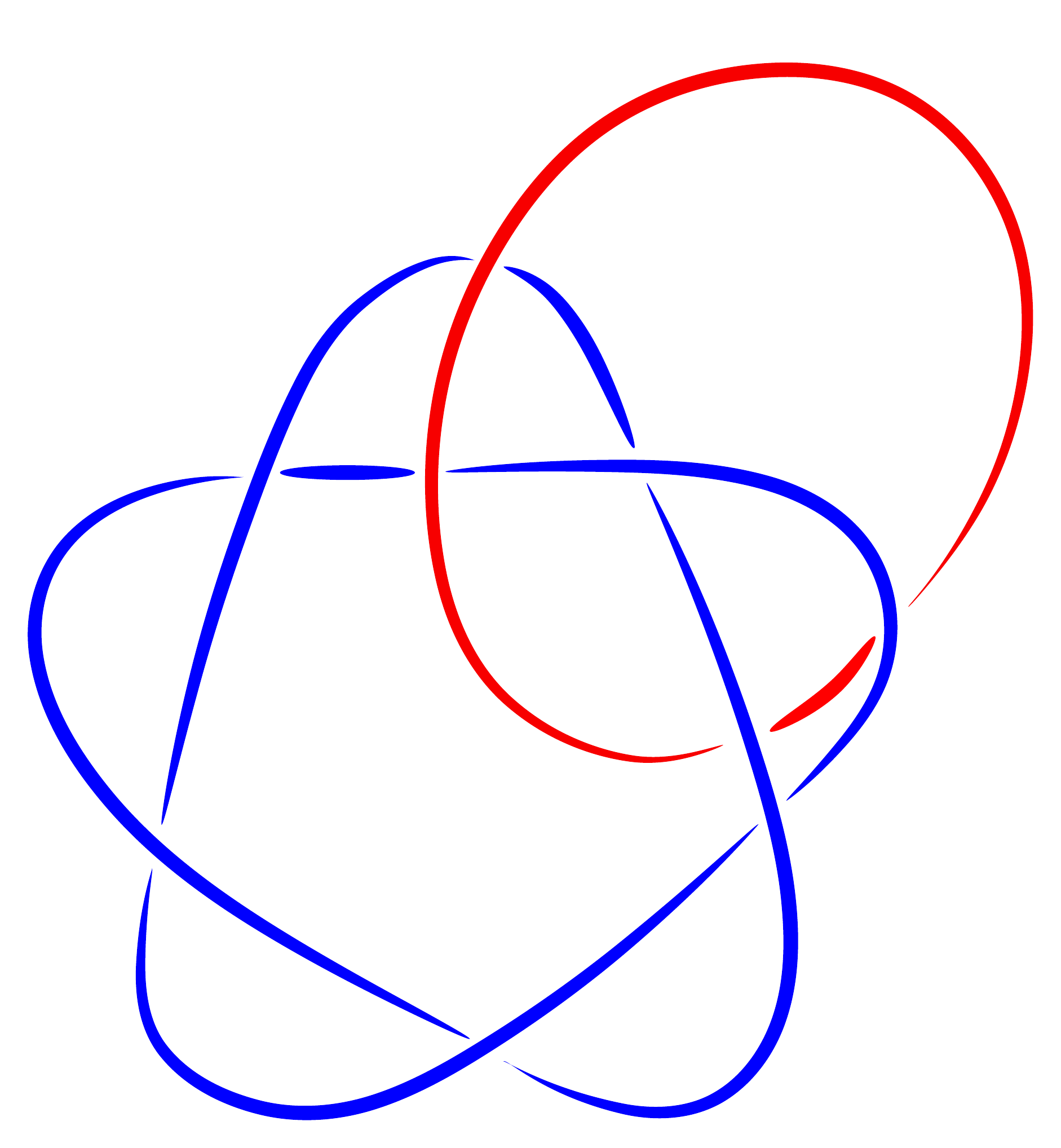} \qquad \qquad
\includegraphics[scale=0.35]{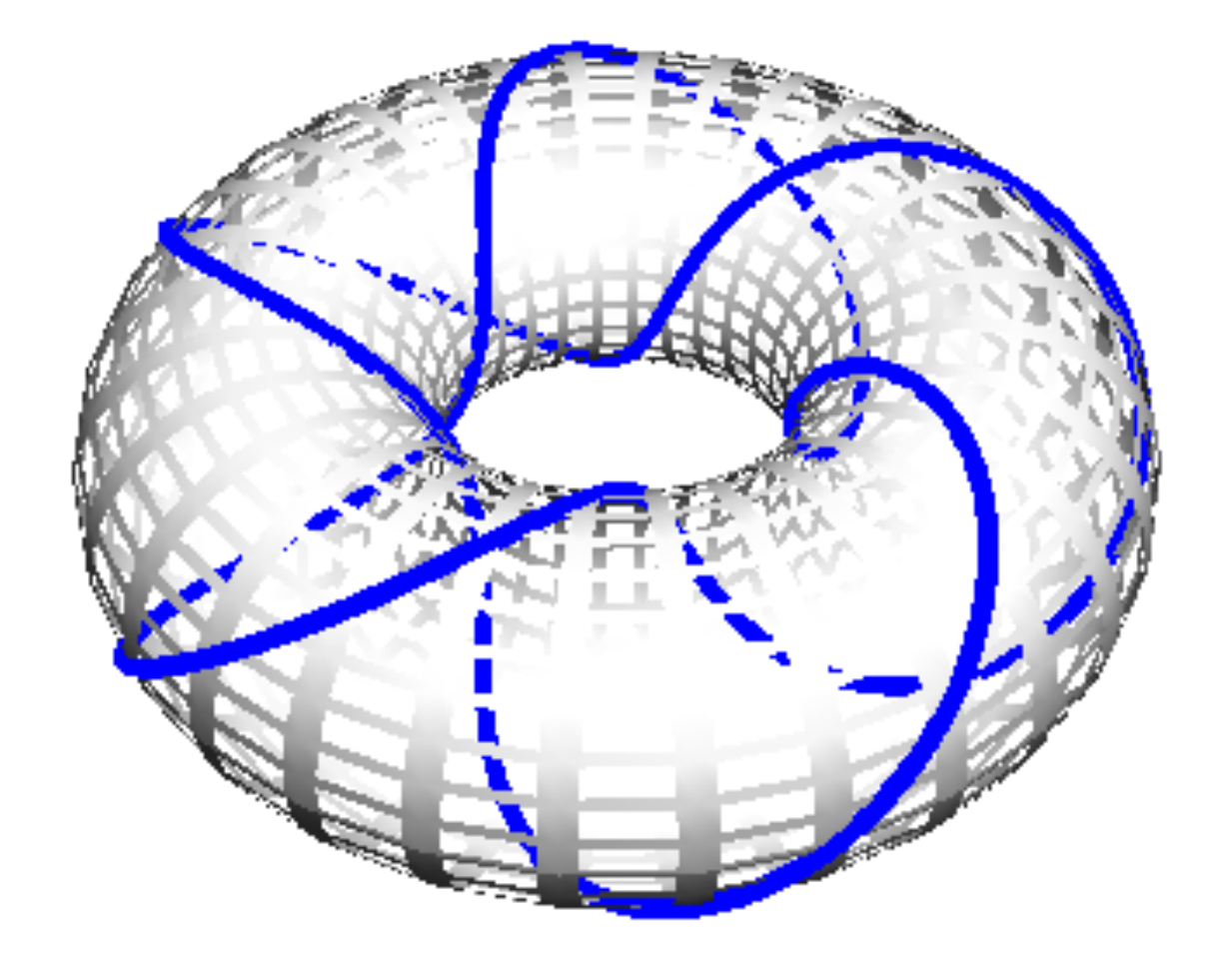}
\caption{The $(2,5)$-Seifert link, which may also be regarded as a knot $f$ in $S^1 \x D^2$.  Then $\pi_1(\T_f) \cong \Z\langle \lambda, \mu\rangle.$}
\label{F:2-5-SeifertLink}
\end{figure}

\begin{remark}
The above calculation of spaces $\L_f/SO_4$ of keychain links $f$ is implicit in Budney's decompositions of spaces of knots \cite{BudneyCubes, BudneyTop, BudneySplicing}.  
Presumably, an argument as in the work of Brendle and Hatcher \cite{Brendle-Hatcher} would show that these are equivalent to  the spaces of $(n+1)$-component \emph{necklaces} studied by Bellingeri and Bodin \cite[Theorem 2.3]{Bellingeri-Bodin}, which are keychain links consisting of Euclidean circles.  
Our quotient by $SO_4$ corresponds to fixing $L_0$ as the circle $C_1$ and fixing an additional component $L_{n+1}$ as $C_2$.  The bound on the radii of the components in necklaces can be viewed as the presence of such an additional component.  Because $L_0$ is fixed, our space of keychain links is most closely related to their space of normalized necklaces.  To get the latter space, one only has to take the quotient of $\L_f/SO_4$ by the group $\mathfrak{S}_n \wr S^1$ of relabelings and orientation-preserving reparametrizations of $L_1, \dots, L_n$.  The fundamental group of the resulting space has no factors of $\Z$ and features braids rather than pure braids; because $L_{n+1}$ is fixed, it is the braid subgroup $\B_{1,n}<\B_{n+1}$ with induced permutations fixing one point.  That is, the space of normalized necklaces is a $K(\pi,1)$ for the annular braid group.  
We will see this same group in Example \ref{Ex:MoreGeneralConnectSum}.
\end{remark}

\begin{proposition}
\label{TorusKnotInTorus}
Let $f$ be a knot in a solid torus whose complement is Seifert-fibered.  
If the associated 2-component link is the Hopf link, then $\T_f \simeq S^1$.
Otherwise $\T_f \simeq S^1 \x S^1$.
\end{proposition}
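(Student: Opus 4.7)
The plan is to reduce the statement to a fundamental group computation, using that $\T_f$ is aspherical whenever the associated framed 2-component link $F$ is irreducible.  Let $F$ be a $\vec{0}$-framed link corresponding to $(f_0, f_1)$ via Proposition \ref{TfandLinksWithUnknots}.  In each of the cases I will enumerate below, the two components of $F$ have nonzero linking number, so $F$ is irreducible; Proposition \ref{TfisKpi1} then gives $\T_f \simeq K(\pi_1(\T_f), 1)$, and it remains to compute $\pi_1(\T_f)$ as a discrete group.

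The next step is to enumerate, via Proposition \ref{SeifertFiberedLinks} and the equivalences listed at the end of the proof of Corollary \ref{SeifertSpacesOfLinks}, all 2-component Seifert-fibered links with at least one unknotted component.  I expect exactly three families: (i) the Hopf link, whose exterior is $M(0,2;\,)$, so $r=0$; (ii) the $(2,2k)$-torus link for $|k|\geq 2$, which coincides with $S_{k,1}$ and whose exterior has $r=1$ (the slope $s/p'$ is an integer and disappears); and (iii) the Seifert links $S_{p,q}$ with $\gcd(p,q)=1$ and $|p|,|q|\geq 2$, whose exterior has $r=1$.  In particular, $r\leq 1$ throughout, so Corollary \ref{SeifertFramedLinks} (with $n=1$) yields $\pi_1(\tL_F/SO_4)\cong \Z^2\x \PB_{1+r}$ with $\PB_{1+r}$ abelian, hence $\pi_1(\tL_F/SO_4)$ is abelian.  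Its quotient $\pi_1(\T_f)$ is therefore also abelian.

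With $\pi_1(\T_f)$ abelian in hand, I would plug into the short exact sequence of Corollary \ref{TfVsLf},
\[
\{0\}\to \Z \to \pi_1(\T_f) \to \pi_1(\L_f/SO_4) \to \{0\},
\]
and read off $\pi_1(\L_f/SO_4)$ from Corollary \ref{SeifertSpacesOfLinks}.  In the Hopf case, Corollary \ref{SeifertSpacesOfLinks}(e) with $n=1$ gives a trivial quotient, so $\pi_1(\T_f)\cong \Z$.  In cases (ii) and (iii), Corollary \ref{SeifertSpacesOfLinks}(c) gives $\pi_1(\L_f/SO_4)\cong \PB_2\cong \Z$; the sequence then exhibits $\pi_1(\T_f)$ as an extension of $\Z$ by $\Z$ in the abelian category, which splits since $\Z$ is free, yielding $\pi_1(\T_f)\cong \Z^2$.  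Combining with the asphericity gives $\T_f\simeq K(\Z,1)\simeq S^1$ in the Hopf case and $\T_f\simeq K(\Z^2,1)\simeq S^1\x S^1$ otherwise.

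The main obstacle is the enumeration step and the accompanying check that $r\leq 1$, since this is what ensures $\pi_1(\tL_F/SO_4)$ is abelian and lets me sidestep any subtlety about primitivity of $\mu_0$ inside a nonabelian group.  The identification of the $(2,2k)$-torus link with $S_{k,1}$ is the key point in bringing case (ii) under the umbrella of Corollary \ref{SeifertSpacesOfLinks}(c), since case (b) there assumes neither component is unknotted.
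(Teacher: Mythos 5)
Your proof is correct. The framework is the same as the paper's (asphericity from Proposition \ref{TfisKpi1} plus the Seifert-fibered mapping class group computations), and your enumeration agrees with the paper's, which folds your case (ii) into your case (iii) by noting $S_{p,1} \sim T_{2,2p}$ and treating all non-Hopf cases as the $T_{p,q}$ component of $S_{p,q}$ with $\gcd(p,q)=1$ and $p>1$. The difference is in how the quotient by $\mu_0$ is extracted. The paper identifies $\pi_1(\T_f)$ directly as $\pi_0\Diff(M;\d M)/\langle \mu_0\rangle$ and uses the explicit generators of $\pi_0\Diff(M;\d M)\cong \Z^3$ from Proposition \ref{DiffSeifertFramed} (boundary Dehn twist, fiber Dehn twist, pure braid) to see that $\mu_0$ spans a direct $\Z$ factor. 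You instead quotient by \emph{both} meridians at once by citing Corollary \ref{SeifertSpacesOfLinks} (whose proof, via Proposition \ref{DiffSeifertUnframed}, is where the meridians actually get located among the generators), and then recover $\pi_1(\T_f)$ from the short exact sequence of Corollary \ref{TfVsLf} together with the observation that $\pi_1(\T_f)$ is abelian because $r\le 1$ makes $\Z^{2}\x\PB_{1+r}$ abelian; the resulting extension with kernel $\Z$ and quotient $\Z$ then splits by projectivity of $\Z$. This spares you a direct primitivity check for $\mu_0$ at the price of routing through two more of the paper's intermediate results; the underlying computation is ultimately the same, and your citations (in particular the applicability of Corollary \ref{SeifertSpacesOfLinks}(c) to $S_{k,1}$, where $p/\gcd(p,q)=k>1$) check out.
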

\begin{proof}
Let $M$ be the exterior of the knot $f$ in $S^1 \x D^2$.
By Proposition \ref{TfisKpi1}, $\T_f$ is a $K(\pi,1)$ with $\pi_1(\T_f)$ given by the quotient of $\pi_0 \Diff(M; \d M)$ by a Dehn twist along the meridian of $f$.

First consider the case that $f$ corresponds to the Hopf link.  
By Lemma \ref{DiffExteriorOfHopfLink},  $\pi_0 \Diff(M; \d M) \cong \Z^2$.  Its quotient by the Dehn twist along one of the components is $\Z$, completing the proof in this case.

Now suppose $f$ does not correspond to the Hopf link.
By Proposition \ref{TfandLinksWithUnknots} and Proposition \ref{SeifertFiberedLinks} (and the discussion before Proposition \ref{SeifertFiberedLinks}), $f$ corresponds to the component $T_{p,q}$ in the Seifert link $S_{p,q}$ where $\gcd(p,q)=1$ and $p>1$.  (This includes the possibility $S_{p,1} \sim T_{2p,2} \sim T_{2,2p}$.)
The link exterior $M$ is Seifert-fibered over a genus-0 surface $B$ with 2 boundary components and one singular fiber.  
By Proposition \ref{DiffSeifertFramed}, $\pi_0\Diff(M; \d M) \cong \Z^3$, generated by a Dehn twist along a boundary component of $B$, a Dehn twist along the fiber, and a pure braid on 2 strands.  
The Dehn twist along the meridian of $f$ 
generates one factor of $\Z$, so the quotient by it is $\Z^2$.  
\end{proof}

\begin{remark}
\label{TorusKnotRmk}
Let $f$ be the link $T_{p,q}$ in its standard position on a torus (slightly thinner than $U$ so that $f$ lies in the interior of $U$), e.g.~as shown in Figure \ref{F:2-5-SeifertLink}.
There are loops $\mu$,  $\lambda$, and $\rho$ given respectively by reparametrization (of all the components, ``diagonally''), meridional rotation, and longitudinal rotation.  These give rise to loops in $\L_f$.
Let $f'=S_{p,q}$.  If $\gcd(p,q)=1$, then we can make sense of $\T_{f'}$ and loops $\mu$,  $\lambda$, and $\rho$ in $\T_{f'}$.  
In either case, they come from loops in $SO_4$ given by 
$\left( \begin{array}{cc} e^{2\pi i t} & 0 \\ 0 & 1 \end{array} \right), 
\left( \begin{array}{cc} 1 & 0 \\ 0 & e^{2\pi i t} \end{array} \right),$ and 
$\left( \begin{array}{cc} e^{2\pi i pt} & 0 \\ 0 & e^{2\pi i q t} \end{array} \right)
 \in \C^{2 \x 2}$, $t \in [0,1]$.
Both $\mu$ and $\lambda$ represent the generator in $\pi_1 \L_f \cong \pi_1 SO_4 \cong \Z/2$.
Since $\rho \simeq p \mu + q \lambda$, so does $\rho$ if $p+q$ is odd, while $\rho$ is nullhomotopic if $p+q$ is even.  
This agrees with a result of Goldsmith \cite[Theorem 3.7]{Goldsmith:Math-Scand}.    
In the case of $\T_{f'}$, we cannot use $SO_4$ to find a homotopy between any pair of the three, but still $\rho \simeq p \mu + q \lambda$, so $\mu$ and $\lambda$ generate $\pi_1(\T_{f'}) \cong \Z^2$.
\end{remark}

\begin{proposition}
\label{P:SeifertVf}
Let $f$ be a Seifert-fibered 3-component link which corresponds to a knot in the thickened torus $S^1 \x S^1 \x I$.  Then $\V_f \simeq S^1 \x S^1$.
\end{proposition}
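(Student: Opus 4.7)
The plan is to first reduce the claim to computing $\pi_1(\V_f)$ and then to identify this group as $\Z^2$. By Proposition \ref{VfIsLfModSO4}, $\V_f \simeq \L_f/SO_4$. Since a Seifert-fibered 3-manifold with nonempty boundary is irreducible, the complement $C_f$ is irreducible, so by Proposition \ref{LfModSO4isKpi1}, $\L_f/SO_4$ is aspherical. It therefore suffices to show that $\pi_1(\V_f) \cong \Z^2$, because then $\V_f \simeq K(\Z^2,1) \simeq S^1 \x S^1$.

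For the fundamental group, I would apply Corollary \ref{SeifertSpacesOfLinks} with $n+1 = 3$, i.e., $n=2$, using the fact (Proposition \ref{VfandLinksWithHopf}) that $f = (f_0, f_1, f_2)$ contains $(f_1, f_2)$ as a Hopf sublink. The key observation is that having a Hopf sublink forces some pair of components of $f$ to have linking number $\pm 1$. This immediately rules out Case (b) of Corollary \ref{SeifertSpacesOfLinks}, where $f \cong T_{p,q}$ with $p':=p/\gcd(p,q)$ and $q':=q/\gcd(p,q)$ both greater than $1$, since every pairwise linking number is $p'q' > 1$. It also rules out Case (c), where $f \cong S_{p,q}$ with $\gcd(p,q)=2$ and $p' > 1$: pairwise linkings are $p'q'$ or $p'$, all at least $2$. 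The symmetric argument dispatches $T_{p,q} \cup C_1$.

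After ruling out Cases (b) and (c), the remaining candidates together with the isotopies catalogued in the proof of Corollary \ref{SeifertSpacesOfLinks} (such as $T_{3,3} \sim R_{1,1}$ and $S_{2,2q'} \sim R_{1,q'}$ for $q' \geq 1$) all fall into Case (d) or (e) with $n=2$. Both contain explicit Hopf sublinks: $(C_1, C_2)$ in $R_{p,q}$, and $(C_1, F_i)$ in $KC_2$. In either case Corollary \ref{SeifertSpacesOfLinks} yields $\pi_1(\L_f/SO_4) \cong \Z^{n-1} \x \PB_n = \Z \x \PB_2 \cong \Z^2$, completing the argument.

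The main obstacle is the bookkeeping: ensuring that every 3-component Seifert-fibered link admitting a Hopf sublink is captured by Case (d) or (e), in light of the overlap among the families $T_{p,q}$, $S_{p,q}$, $T_{p,q} \cup C_1$, $R_{p,q}$, and $KC_n$. Once the linking-number argument disposes of Cases (b) and (c), the residual analysis is a routine application of the isotopies already noted.
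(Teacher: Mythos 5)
Your proposal is correct and follows essentially the same route as the paper's proof: both reduce to $\L_f/SO_4$ via Proposition \ref{VfIsLfModSO4}, use linking numbers to show that the presence of a Hopf sublink forces $f$ to be $KC_2$ or $R_{p,q}$ (cases (d) and (e) of Corollary \ref{SeifertSpacesOfLinks}), and read off $\pi_1 \cong \Z^{n-1}\x\PB_n \cong \Z^2$ for $n=2$. Your explicit elimination of cases (b) and (c) by the bound $|p'q'|, |p'| \geq 2$ is just a slightly more detailed version of the paper's one-line appeal to linking numbers.
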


\begin{proof}
The link $f$ is precisely a 3-component link that contains the Hopf link as a sublink.  Thus possible cases include $f=KC_2$ and $f=R_{p,q}=T_{p,q} \cup C_1 \cup C_2$ for any integers $p$ and $q$.  By considering the linking numbers of components of $T_{p,q}$ with each other and with $C_1$ and $C_2$, we see that these are the only possibilities.  (Note that $S_{2,2q}\sim R_{1,q}$, and $T_{3,3} \sim S_{2,2} \sim R_{1,1}$.)  By Proposition \ref{VfIsLfModSO4} and Corollary \ref{SeifertSpacesOfLinks}, parts (d) and (e), $\V_f$ is a $K(\pi,1)$ with fundamental group $\Z\x\Z$.
\end{proof}

As generators of $\pi_1(\V_f)$ in the Seifert-fibered case, we can take longitudinal and meridional rotations of the torus.

\begin{example}
\label{Ex:T_3,3}
The Seifert-fibered link $T_{3,3}$ has the Hopf link as a sublink so gives rise to a knot in $S^1 \x S^1 \x I$.  See Figure \ref{F:T_3-3}.  
As for any such Seifert-fibered link, $\pi_1(\V_f) \cong \Z \x \Z$ by Proposition \ref{P:SeifertVf}, with generators given by longitudinal and meridional rotations of the torus.
\end{example}

\begin{figure}[h!]
\includegraphics[scale=0.25]{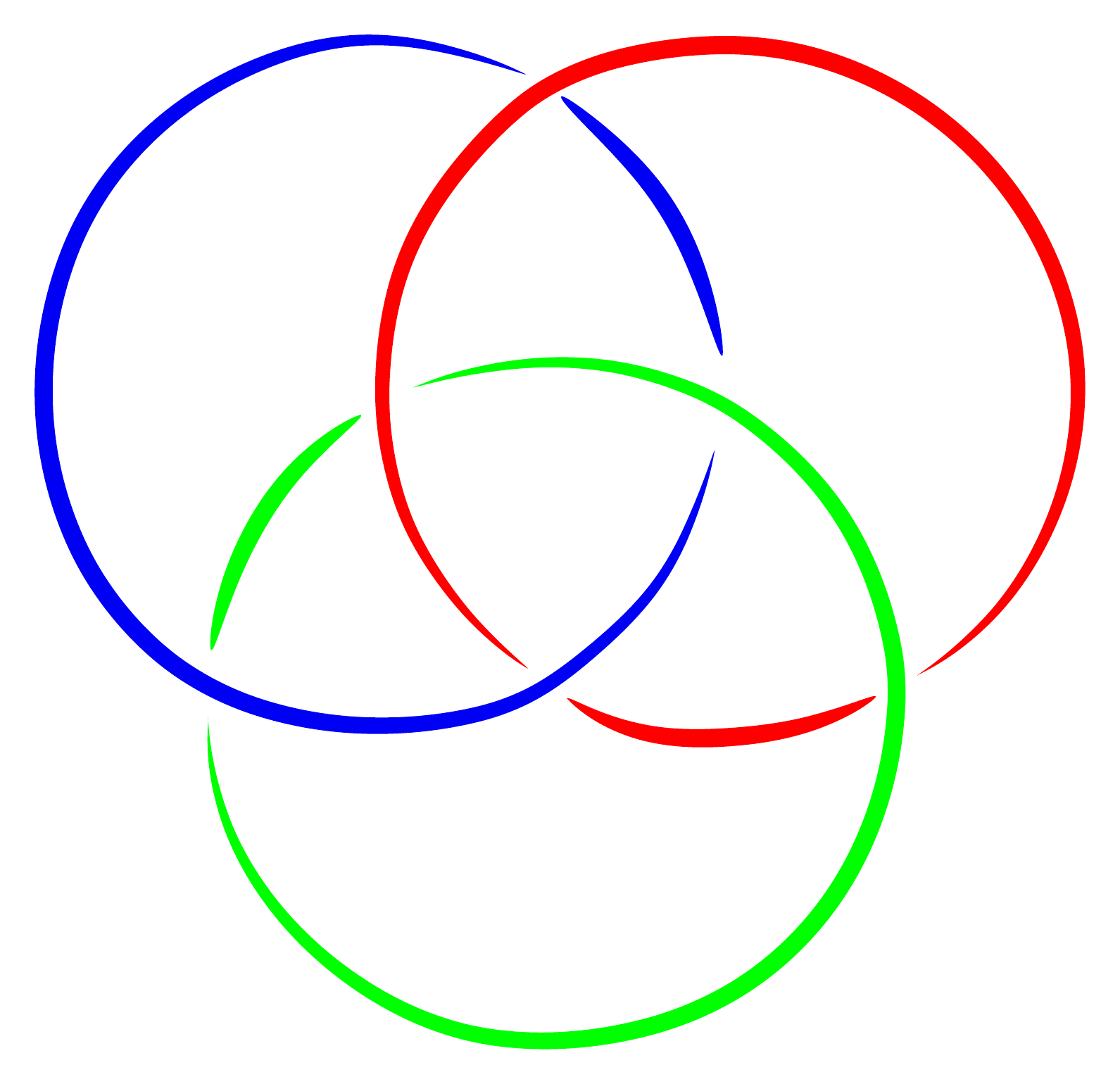}
\caption{The Seifert-fibered link $T_{3,3} \sim T_{2,2} \cup C_2 \sim T_{1,1} \cup C_1 \cup C_2$, which gives rise to a knot $f$ in $S^1 \x S^1 \x I$, with $\pi_1(\V_f) \cong \Z \langle \lambda, \mu \rangle.$}
\label{F:T_3-3}
\end{figure}

\begin{example}
\label{Ex:KC2}
The 3-component keychain link $KC_2$ also gives rise to a knot in $S^1 \x S^1 \x I$.  
Again, $\pi_1(\V_f) \cong \Z \x \Z$.  For generators in this case, we can take a longitudinal rotation of the torus and either a meridional rotation or reparametrization.
\end{example}

\section{Spaces of hyperbolic links}
\label{S:Hyperbolic}
We now determine the homotopy types of spaces of hyperbolic links.  Our starting point (Proposition \ref{HMProp}) is a special case of work of Hatcher and McCullough \cite{HatcherMcCullough} where the boundary consists of only tori.  
It quickly yields the homotopy type of spaces $\tL_F/SO_4$ of framed links (Corollary \ref{DiffHyp}).
We sketch its proof because we need it for a necessary refinement (Proposition \ref{DiffS3LHypLink}) which generalizes work of Hatcher   \cite{HatcherKnotSpacesArxiv, HatcherKnotSpacesWebsite} from knots to links.  That refinement gives us 
the homotopy types of $\L_f/SO_4$ for hyperbolic links $f$ (Corollary \ref{nCompHypLink}), as well as of the spaces $\T_f$ and $\V_f$ where applicable (Corollaries \ref{HypKnotInTorus} and \ref{HypKnotVf}).  Recall that hyperbolic links are precisely those irreducible links whose complements have trivial JSJ decompositions and are not Seifert-fibered, so they are the second basis subcase in the recursion in Theorem \ref{MainT:FramedLinks}, part (A).
We conclude this Section with a handful of examples.

\begin{proposition}[Proposition 3.2 in \cite{HatcherMcCullough}]
\label{HMProp}
Let $M$ be a hyperbolic 3-manifold whose boundary is a nonempty disjoint union of tori.  Let $R$ be a union of $n>0$ components of $\d M$.  
Let $\Diff_R(M)$ be the group of diffeomorphisms of $M$ whose restrictions to $R$ are isotopic to the identity.
\begin{itemize}
\item[(a)]
There is a short exact sequence 
\begin{equation}
\label{HypLinkSES}
\xymatrix@R-0.5pc{
\{e\}  \ar[r] & \pi_1(\Diff_0(R))  \ar[r]  & \pi_0(\Diff(M; R))  \ar[r] & \pi_0(\Diff_R(M)) \ar@{^(->}[d] \ar[r] & \{e\} \\
 & \Z^{2n} \ar@{<->}[u]_-\cong
 %\ar@2{~}[u] 
 & & \mathrm{Isom}(M) & 
}
\end{equation}
where the right-hand group is identified with a subgroup of the finite group $\Isom(M)$ of hyperbolic isometries of $M$.  
\item[(b)]
Moreover $\pi_0 (\Diff(M; R)) \cong H_1(R; \, \Z) \cong \Z^{2n}$.
\end{itemize}
\end{proposition}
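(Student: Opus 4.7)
My plan is to extract part (a) from the long exact sequence of the restriction fibration
\[
\Diff(M;R) \to \Diff_R(M) \xrightarrow{\rho} \Diff_0(R),
\]
whose image under $\rho$ lies in $\Diff_0(R)$ by the very definition of $\Diff_R(M)$. By Earle--Eells each component of $\Diff(T^2)$ is homotopy equivalent to $T^2$, so $\Diff_0(R) \simeq (S^1 \x S^1)^n$; in particular the base is connected and $\pi_1\Diff_0(R) \cong \Z^{2n}$. The tail of the long exact sequence then reads
\[
\pi_1(\Diff_R(M)) \to \Z^{2n} \xrightarrow{\d} \pi_0\Diff(M;R) \to \pi_0\Diff_R(M) \to 0,
\]
and the content of (a) beyond this reduces to (i) the leftmost arrow being zero (equivalently, injectivity of the connecting map $\d$), and (ii) an embedding of the rightmost group into $\Isom(M)$.

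For (i), I would use that $M$ is Haken: a compact hyperbolic $3$-manifold with nonempty toroidal boundary is orientable, irreducible, and has incompressible boundary. Hatcher--Ivanov \cite{HatcherIncomprSurfs,Ivanov,Ivanov2} then give that $\Diff(M;\d M)$ has contractible components. Feeding this into a further restriction fibration $\Diff(M;\d M) \to \Diff(M;R) \to \Emb_0(\d M \setminus R, M)$---whose base has components homotopy equivalent to products of $2$-tori by the torus case of Hatcher--Ivanov---shows that every loop in $\Diff_R(M)$ based at $\id$ can be homotoped to be supported off of $R$, so its image in $\pi_1\Diff_0(R)$ vanishes. For (ii) I would invoke Mostow rigidity, which supplies an injection $\pi_0\Diff(M) \hookrightarrow \Isom(M)$: each mapping class is uniquely represented by an isometry, and only $\id$ is isotopic as an isometry to itself. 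Restricting to $\Diff_R(M)$ and using that $\Isom(M)$ is finite (since $\mathrm{int}(M)$ has finite hyperbolic volume) gives the claim.

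For part (b), the plan is to define a winding-number homomorphism $w\co \pi_0\Diff(M;R) \to H_1(R;\Z)$ and show it is an isomorphism. Given $f \in \Diff(M;R)$, for $x \in R$ choose a short arc $c_x\co [0,\eps] \to M$ pointing into $M$ transversely to $R$; since $f$ fixes $R$ pointwise, $c_x \cup -f(c_x)$ is a loop at $x$, whose class in $\pi_1(M, x)$ projected to $H_1(R;\Z)$ through a collar retraction is independent of the choices modulo isotopy. Surjectivity of $w$ is realized by the annular Dehn twists along meridian and longitude curves in a collar of $R$, which hit the standard generators of $H_1(R;\Z)$.

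The main obstacle is injectivity of $w$. If $w(f)=0$, I plan to Mostow-rigidify $f$ to an isometry $\phi$, observe that $\phi|_R$ is an isometry of the flat cusp torus isotopic to $\id$ in $\Diff(R)$ and hence a translation, use $w(f)=0$ to control which translation, and then cancel this translation by concatenating the isotopy from $\phi$ to $f$ with a suitable path through $\Diff_R(M)$ lifting a path in $\Diff_0(R)$; Waldhausen's homotopy-implies-isotopy theorem for Haken manifolds would then upgrade the resulting homotopy to an isotopy in $\Diff(M;R)$. The delicate point---and the hardest step---is coordinating this correction with the connecting map $\d$ of (a) so that the result genuinely represents $[f]$; this is where the interplay between (a) and (b) forces $\pi_0\Diff_R(M)$ to be the finite quotient of $H_1(R;\Z)$ by the image of $\d$ and hence a subgroup of $\Isom(M)$.
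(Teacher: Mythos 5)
Your setup for part (a) coincides with the paper's: the restriction fibration $\Diff(M;R) \to \Diff_R(M) \to \Diff_0(R)$ with $\Diff_0(R) \simeq (T^2)^n$, plus Mostow rigidity to embed $\pi_0(\Diff_R(M))$ into the finite group $\Isom(M)$. But the one step carrying all the content --- injectivity of the connecting map $\Z^{2n} \to \pi_0(\Diff(M;R))$, i.e.\ that the $2n$ meridional and longitudinal Dehn twists generate a free abelian group of rank $2n$ --- is not actually proved. The fibration $\Diff(M;\d M)\to\Diff(M;R)\to \Emb(\d M \setminus R, M)$ you invoke only constrains the homotopy type of the \emph{fiber} $\Diff(M;R)$; it says nothing about loops in the total space $\Diff_R(M)$, so the assertion that every such loop ``can be homotoped to be supported off of $R$'' is a non sequitur. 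Worse, your argument uses only that $M$ is Haken, and the desired conclusion fails for Haken non-hyperbolic manifolds: for $M=T^2\x I$ with $R$ one boundary torus, the $\Z^2$ of rotation loops in $\Diff_0(M)\subset \Diff_R(M)$ maps onto $\pi_1(\Diff_0(R))\cong\Z^2$, so the connecting map is far from injective there. Hyperbolicity must enter at exactly this point; the paper's route (as in Proposition \ref{LfModSO4isKpi1}) is to let a product of twists act on $\pi_1(M,x_i)$ by conjugation by $m_i^{a_i}\ell_i^{b_i}$ and use that the center of $\pi_1(M)$ is trivial for hyperbolic $M$, together with incompressibility of $\d M$ and torsion-freeness of $\pi_1(M)$.

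For part (b) your plan is structurally inconsistent with part (a). A homomorphism $w\co\pi_0(\Diff(M;R))\to H_1(R;\Z)$ sending the Dehn twists to the standard basis and also injective would force the twists to generate all of $\pi_0(\Diff(M;R))$, i.e.\ $\pi_0(\Diff_R(M))=\{e\}$; but there are hyperbolic link exteriors where this quotient is nontrivial (e.g.\ L8n1 in Example \ref{Ex:HypLinkFromFig8}, where the twist lattice is a proper index-$2$ sublattice and the extra class is a ``half twist'' such as $\frac{1}{2}(\lambda_0+\lambda_1)$). In such cases $w$ cannot be both surjective via the twists and injective. Moreover $w$ is not well defined as written: $c_x\cup -f(c_x)$ is not a loop unless $f$ fixes $c_x(\eps)$, and there is no retraction of $M$ onto $R$ through which to push a class of $\pi_1(M,x)$. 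The paper avoids both problems by identifying $\Diff(M;R)$ with the homotopy fiber of $\Diff(M)\to\Diff(R)$: a point is a pair $(\phi,\gamma)$ with $\phi$ an isometry and $\gamma$ a path from $\phi|_R$ to $\id_R$; since $\phi$ is determined by $\phi|_R$ and $\Isom(M)$ is finite, lifting $\gamma$ to the universal cover $\R^{2n}$ embeds $\pi_0(\Diff(M;R))$ into $\R^{2n}$ as a torsion-free abelian group, which by part (a) contains $\Z^{2n}$ with finite quotient and is therefore a lattice isomorphic to $\Z^{2n}$. The nontrivial isometries contribute precisely the fractional lattice points that an integral winding number cannot detect.
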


\begin{proof}[Sketch of proof]
Establishing \eqref{HypLinkSES} is similar to the proof of Proposition \ref{LfModSO4isKpi1}.  The key is to obtain the injectivity of the first map, much like the injectivity of \eqref{DehnTwistsShouldInject}.  In this hyperbolic case, both meridional and longitudinal Dehn twists are nontrivial (whereas in the general case, only the meridional ones were guaranteed to be nontrivial, since a longitude may be central in $\pi_1(C_L) = \pi_1(M)$).  The identification of $\pi_0(\Diff_R(M))$ comes from a variant of Mostow rigidity, and $\Isom(M)$ is finite because $M$ has finite volume in the case where $\d M$ consists of only tori.

The key to prove part (b) is to exhibit $\pi_0(\Diff(M; R))$ as a subgroup of $\R^{2n}$, which together with part (a) suffices.  
We use this description to prove Proposition \ref{DiffS3LHypLink} below.
The space $\Diff(M; R)$ is the fiber of the map $\Diff(M) \to \Diff(R)$, but it is also equivalent to its homotopy fiber, i.e.,
\[ 
\Diff(M;R) \simeq \{ (\phi, \gamma) :  \phi \in \Diff(M), \, \gamma : I \to \Diff(R) \mbox{ with } \gamma(0) = \phi|_{R} \mbox{ and } \gamma(1) = \id_{R}\}.
\]  
We restrict our attention to $\pi_0$.
By Mostow rigidity, we can take $\phi$ to be a hyperbolic isometry of $M$ and $\gamma$ to be a path of rotations of the $n$ boundary tori.  
Since a Riemannian isometry is locally determined, and since the geometry of the boundary tori determine the geometry of their cusp neighborhoods, $\phi$ is determined by $\phi|_{R} \in (S^1)^{2n}$. That is, the restriction $\phi \mapsto \phi|_{R}$ is injective on isometries.  
Since $Isom(M)$ is finite, its image under this map is a discrete set in $(S^1)^{2n}$.  
We then replace $(\phi, \gamma)$ by $(\phi|_{R}, \gamma)$, and since the isometries have a discrete image, 
we may identify $\pi_0(\Diff(M; R))$ with equivalence classes of $(\phi|_{R}, \gamma)$ up to homotopies of $\gamma$ fixing \emph{both} endpoints. 
Finally, such equivalence classes correspond precisely to paths $\tgamma$ in the universal cover $\R^{2n}$ ending at $0$.  So we have constructed an inclusion $\pi_0(\Diff(M; R)) \incl \R^{2n}$, as desired.  
\end{proof}

Specializing Proposition \ref{HMProp}(b) to the case $R=\d M$ and applying Proposition \ref{FramedLinksKpi1} immediately yields the desired calculation for framed links:

\begin{corollary}
\label{DiffHyp}
Let $F$ be an $n$-component framed link in $S^3$ with hyperbolic complement $M$.  
Then 
\begin{flalign*}
&& \pi_1(\tL_F/SO_4) \cong \Z^{2n}. && \qed
\end{flalign*}
\end{corollary}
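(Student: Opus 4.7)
The plan is to chain together the two propositions immediately preceding the statement. First I would invoke Proposition~\ref{FramedLinksKpi1}(b). To apply it, I need $F$ to be irreducible, which follows from hyperbolicity: the complement $M = C_F$ is aspherical (its universal cover is contractible hyperbolic $3$-space), so $\pi_2(M) = 0$, whence the Sphere Theorem gives that $M$ is irreducible as a 3-manifold, and by Alexander's theorem this is equivalent to $F$ being an irreducible link. Hence Proposition~\ref{FramedLinksKpi1}(b) yields
\[
\pi_1(\tL_F/SO_4) \;\cong\; \pi_0\Diff(C_F; \d C_F).
\]

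Next I would specialize Proposition~\ref{HMProp}(b) to the case $R = \d M$. Since $F$ has $n$ components and is framed, $\d M$ is a disjoint union of precisely $n$ tori, and the proposition directly identifies
\[
\pi_0\Diff(M; \d M) \;\cong\; H_1(\d M; \Z) \;\cong\; \Z^{2n}.
\]
Combining this with the previous display produces the stated isomorphism.

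No substantive obstacle arises; the corollary is a short synthesis of results already in hand. What I would be careful to record, with an eye toward later use, is the geometric content of the isomorphism: the $2n$ generators are the meridional and longitudinal Dehn twists supported in collar neighborhoods of the boundary tori of $C_F$. This explicit description is what will subsequently allow us to match these generators with the meridional and longitudinal rotation loops $\mu_i$ and $\lambda_i$ on $\tL_F/SO_4$ (as needed in Theorem~\ref{T:MeridiansFactor} and in the hyperbolic splicing analysis of Section~\ref{S:Splicing}).
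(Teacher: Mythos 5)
Your proposal is correct and follows exactly the paper's route: the paper's one-line proof is precisely "specialize Proposition~\ref{HMProp}(b) to $R=\d M$ and apply Proposition~\ref{FramedLinksKpi1}." Your additional remarks (hyperbolicity forces irreducibility, and the generators are the meridional and longitudinal Dehn twists along the boundary tori) are accurate and consistent with how the paper later uses this result.
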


Next, we consider unframed hyperbolic links, which involves a straightforward generalization of work of Hatcher \cite{HatcherKnotSpacesArxiv, HatcherKnotSpacesWebsite} from hyperbolic knots to hyperbolic links.

\begin{proposition}
\label{DiffS3LHypLink}
Let $f$ be an $n$-component hyperbolic link in $S^3$ with exterior $M$. 
Then \[\pi_0(\Diff^+(S^3; f)) \cong \Z^{n}.\]
\end{proposition}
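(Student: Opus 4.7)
The plan is to combine two previously established results. By Proposition \ref{LfModSO4isKpi1}, $\pi_1(\L_f/SO_4)$ is canonically identified with $\pi_0(\Diff^+(S^3; f))$, and it fits into a short exact sequence
\[
\{e\} \to \Z^n \to \pi_1(\tL_F/SO_4) \to \pi_0(\Diff^+(S^3; f)) \to \{e\},
\]
where the kernel $\Z^n$ is generated by annular meridional Dehn twists $\tau_{m_1}, \dots, \tau_{m_n}$ supported in collars of the $n$ boundary tori of $M$. By Corollary \ref{DiffHyp}, the middle group is $\pi_0(\Diff(M; \d M)) \cong \Z^{2n}$. So the task reduces to showing that $\la \tau_{m_1}, \dots, \tau_{m_n}\ra$ is a direct $\Z^n$-summand of $\Z^{2n}$; the quotient will then automatically be $\Z^n$, generated by the $n$ longitudinal Dehn twists.

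To verify this, I will unpack the explicit identification
\[
\pi_0(\Diff(M; \d M)) \cong H_1(\d M; \Z) = \bigoplus_{i=1}^n (\Z\la m_i\ra \oplus \Z\la \ell_i\ra)
\]
furnished by the proof sketch of Proposition \ref{HMProp}(b), and check that under it $\tau_{m_i}$ is sent to the basis class $[m_i]$ (and similarly $\tau_{\ell_i}$ to $[\ell_i]$). That identification realizes a class $[\phi]$ as (the endpoint of) a lift to the universal cover $\R^{2n}$ of a path from $\phi|_{\d M}$ to $\id$ in $\Diff_0(\d M) \simeq (S^1\x S^1)^n$. For the meridional Dehn twist $\tau_{m_i}$, which is the identity on $\d M$ and is built from exactly one full meridional rotation on a collar of the $i$-th torus, the associated loop in $\Diff_0(\d M)$ is the one-parameter family of meridional rotations of the $i$-th torus; this lifts to the standard basis vector $m_i \in \R^{2n}$. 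Hence $\{\tau_{m_1}, \dots, \tau_{m_n}, \tau_{\ell_1}, \dots, \tau_{\ell_n}\}$ corresponds bijectively to the standard basis of $\Z^{2n}$, and the meridional Dehn twists form a direct $\Z^n$-summand.

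The main obstacle I anticipate is purely bookkeeping: aligning the $\Z^{2n}$-basis produced by the Proposition \ref{HMProp}(b) argument with the $(m_i, \ell_i)$ basis appearing in the Proposition \ref{LfModSO4isKpi1} short exact sequence, and confirming that a meridional Dehn twist lifts to a primitive lattice element and not a proper multiple of one. Since $\tau_{m_i}$ is constructed from precisely one full revolution along the meridian of the $i$-th torus, this alignment is immediate, and the direct-summand property (hence the stated conclusion $\pi_0(\Diff^+(S^3; f)) \cong \Z^n$) follows formally.
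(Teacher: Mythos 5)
Your reduction is sound up to the last step: combining the short exact sequence from Proposition \ref{LfModSO4isKpi1} with $\pi_0(\Diff(M;\d M)) \cong \Z^{2n}$ from Corollary \ref{DiffHyp} does reduce the claim to showing that the meridional Dehn twists span a direct summand, equivalently that the quotient is torsion-free. But your concluding assertion that $\{\tau_{m_1},\dots,\tau_{m_n},\tau_{\ell_1},\dots,\tau_{\ell_n}\}$ corresponds to the standard basis of $\pi_0(\Diff(M;\d M))$ is false whenever $f$ has nontrivial symmetries. The identification of $\pi_0(\Diff(M;\d M))$ with a rank-$2n$ lattice in $\R^{2n}$ in the proof of Proposition \ref{HMProp}(b) exhibits it as an extension of the finite group $\pi_0(\Diff_R(M)) \leq \Isom(M)$ by the standard lattice $\Z^{2n}$ generated by the Dehn twists; when that finite group is nontrivial, the Dehn twists generate only a proper finite-index sublattice, and the full lattice contains fractional combinations of the Dehn-twist basis vectors (the class $\frac{1}{2}(\lambda_0+\lambda_1)$ in Example \ref{Ex:HypLinkFromFig8} is the image of such an element). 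So a priori the quotient by the meridional sublattice could have torsion, and the ``bookkeeping'' you describe cannot resolve this.

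What is missing is the geometric input that rules out a lattice element of the form $(a_1/k,\dots,a_n/k,0,\dots,0)$ with $k>1$ and $\gcd(a_1,\dots,a_n)=1$. Such an element would arise from a nontrivial hyperbolic isometry of $M$ whose restriction to $\d M$ consists of purely meridional rotations; its extension to $S^3$ would then be a nontrivial finite-order orientation-preserving diffeomorphism whose fixed-point set contains $f$. This contradicts the Smith conjecture (or, for $n>1$, already Smith's theorem that such a fixed-point set is a knot). This is precisely how the paper completes the proof; without it, the direct-summand property does not follow formally from the lattice description.
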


\begin{proof}
The fibration \eqref{LinkCompVsExtS1} yields the exact sequence 
\begin{equation}
\label{HypLinkSESMeridians}
\xymatrix{
0 \ar[r] & \pi_1((S^1)^n) \ar^-{\iota}[r] & \pi_0(\Diff(M; \d M)) \ar[r] & \pi_0(\Diff^+(S^3; f)) \ar[r] & 0
}
\end{equation}
where the first map sends the $i$-th standard generator to a Dehn twist along the $i$-th {meridian}.
All the groups are indeed abelian since by Proposition \ref{HMProp} the middle term is $\Z^{2n}$.  

It suffices to show that the quotient is torsion-free.  Suppose not.  Then some $\mu = (a_1, \dots, a_n) \in \Z^n \cong \pi_1((S^1)^n)$ satisfies $\iota(\mu)=k[\psi]$ for some $\psi \in \Diff(M; \d M)$ and some $k>1$, with $[\psi]$ nonzero in $\pi_0(\Diff(M; \d M)) \cong \Z^{2n}$.  We may suppose $\gcd(a_1, \dots, a_n)=1$, i.e.~$\mu$ is not a proper multiple.  
As in the proof of Proposition \ref{HMProp}, we view $\pi_0(\Diff(M; \d M)) \cong \Z^{2n}$ as a subgroup of $\R^{2n}$.
Under this identification, $\iota(\mu)$ corresponds to $(a_1, \dots, a_n, 0, \dots, 0)$ 
and hence $[\psi]$ corresponds to  $(a_1/k, \dots, a_n/k, 0, \dots, 0)$ (where we map $\R\to S^1$ by $t \mapsto e^{2\pi i t}$).

Recall that this identification of $[\psi]$ is obtained via the isometry $\phi \in \mathrm{Isom}(M)$ that is homotopic to $\psi$ (by a homotopy not fixing $\d M$).  
One chooses a path $\gamma$ from $\psi$ to $\phi$, 
and $\gamma|_{\d M}$ is a path in $(S^1)^{2n}$ starting at $(1,\dots,1)$, which one
lifts to a path in $\R^{2n}$ starting at $\vec{0}$.  Then $(a_1/k, \dots, a_n/k, 0, \dots, 0)$ is the endpoint.
The  isometry $\phi$ is nontrivial since $k>1$ and $\gcd(a_1, \dots, a_n)=1$.
Since $\phi|_{\d M}$ consists of only meridional rotations, the extension of $\phi$ to $S^3$ acts by the identity on $L$.  But the group of hyperbolic isometries is finite, so this extension of $\phi$ is an orientation-preserving periodic diffeomorphism of $S^3$ whose fixed point set contains $L$.  
This contradicts the Smith conjecture that such a fixed-point set must be an unknot \cite{MorganBass} (though for $n>1$, one may simply apply Smith's theorem that the fixed-point set of such an action must be a knot \cite{Smith:Annals1939}).
\end{proof}

\begin{corollary}
\label{nCompHypLink}
For an $n$-component hyperbolic link $f$, $\L_f/SO_4 \simeq (S^1)^n$.  
\end{corollary}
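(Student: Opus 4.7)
The plan is to deduce this corollary directly from the two previous results of this section. Since a hyperbolic link $f$ is in particular irreducible, Proposition \ref{LfModSO4isKpi1} applies and yields that $\L_f/SO_4$ is a $K(\pi,1)$ space with $\pi = \pi_0(\Diff^+(S^3; f))$. Proposition \ref{DiffS3LHypLink} then identifies this group as $\Z^n$. Therefore $\L_f/SO_4 \simeq K(\Z^n,1) \simeq (S^1)^n$, using the standard fact that a product of Eilenberg--MacLane spaces is an Eilenberg--MacLane space for the product.

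A minor point worth verifying in the writeup is that the equivalence $K(\Z^n,1)\simeq (S^1)^n$ really applies to our $\L_f/SO_4$. This is automatic: since $\L_f/SO_4$ is dominated by a CW-complex and has the homotopy type of such, having $\pi_1 \cong \Z^n$ and all higher homotopy groups vanishing is enough to produce a weak equivalence with $(S^1)^n$ via classifying maps of the generators of $\pi_1$, and Whitehead's theorem then upgrades this to a homotopy equivalence.

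There is essentially no obstacle here, since the substantive work is already contained in Proposition \ref{DiffS3LHypLink}, where the Smith conjecture (or Smith's theorem for $n>1$) was used to rule out torsion in the quotient $\pi_0 \Diff(M;\d M)/\langle\text{meridional Dehn twists}\rangle$. The corollary is just the packaging of this calculation into the language of embedding spaces.
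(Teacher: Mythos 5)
Your proof is correct and is essentially identical to the paper's: both invoke Proposition \ref{LfModSO4isKpi1} to see that $\L_f/SO_4$ is a $K(\pi,1)$ for $\pi_0(\Diff^+(S^3;f))$ and then Proposition \ref{DiffS3LHypLink} to identify that group with $\Z^n$. The extra remark about Whitehead's theorem is fine but unnecessary, as the paper already records in the preliminaries that these embedding spaces are dominated by CW-complexes.
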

\begin{proof}
A hyperbolic link is necessarily irreducible, so by Proposition \ref{LfModSO4isKpi1}, $\L_f / SO_4$ is a $K(\pi,1)$ for the group $\Diff^+(S^3; f)$.  By Proposition \ref{DiffS3LHypLink} this group is $\Z^n$. 
\end{proof}

\begin{corollary}
\label{HypKnotInTorus}
If $f$ is a 2-component hyperbolic link whose first component is unknotted, then $\T_f \simeq (S^1)^3$.
\end{corollary}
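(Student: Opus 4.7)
The plan is to deduce this directly from the framed link computation via the fibration relating $\T_f$ and $\tL_F/SO_4$. Since a hyperbolic link is irreducible, the associated framed link $F$ is irreducible, so Proposition \ref{TfisKpi1} applies and gives us that $\T_f$ is a $K(\pi,1)$ space. It also provides the short exact sequence
\[
\{e\} \to \Z\langle \mu_0 \rangle \to \pi_1(\tL_F/SO_4) \to \pi_1(\T_f) \to \{e\},
\]
where $\mu_0$ is the meridional rotation of the knotted component. So the task reduces to identifying the quotient on the right.

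Next I would apply Corollary \ref{DiffHyp} (itself a consequence of Proposition \ref{HMProp}) to the $2$-component hyperbolic link $F$, giving $\pi_1(\tL_F/SO_4) \cong \Z^4$. The key observation is that under the identification $\pi_1(\tL_F/SO_4) \cong \pi_0\Diff(C_F;\d C_F) \cong H_1(\d C_F;\Z)$ from Proposition \ref{HMProp}(b), the four generators are precisely the meridional and longitudinal Dehn twists $\mu_0, \mu_1, \lambda_0, \lambda_1$ on the two boundary tori. Thus $\mu_0$ is one of the standard $\Z$-factors, and the subgroup it generates splits off as a direct factor. The quotient is therefore $\Z^3$, so $\pi_1(\T_f) \cong \Z^3$.

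Finally, since $\T_f$ is a $K(\Z^3,1)$-space, it is homotopy equivalent to $(S^1)^3$.

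The only real point requiring care is the identification of $\mu_0 \in \pi_1(\tL_F/SO_4)$ with one of the four generating Dehn twists on $\d C_F$. This is essentially built into the translation between loops of meridional rotations of a link and meridional Dehn twists on the corresponding boundary torus of $C_F$ under the equivalence $\tL_F/SO_4 \simeq B\Diff(C_F; \d C_F)$ from Proposition \ref{FramedLinksKpi1}; it is the same identification used in the proof of Proposition \ref{LfModSO4isKpi1} to describe the map $\Z^n \to \pi_1(\tL_F/SO_4)$. Everything else is routine given the prior results.
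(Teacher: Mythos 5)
Your overall strategy matches the paper's: reduce to the quotient of $\pi_1(\tL_F/SO_4)\cong\Z^4$ by $\langle\mu_0\rangle$ via Proposition \ref{TfisKpi1} and Corollary \ref{DiffHyp}. But there is a genuine gap in the middle step. You assert that under the identification $\pi_0\Diff(C_F;\d C_F)\cong\Z^4$ of Proposition \ref{HMProp}(b), ``the four generators are precisely the meridional and longitudinal Dehn twists $\mu_0,\mu_1,\lambda_0,\lambda_1$.'' That is false in general. The Dehn twists generate only the subgroup $\pi_1(\Diff_0(R))\cong\Z^4$ appearing as the \emph{kernel} in the exact sequence \eqref{HypLinkSES}; the full group $\pi_0\Diff(C_F;\d C_F)$ is an extension of the finite cyclic group $\pi_0(\Diff_R(C_F))\le\Isom(C_F)$ by that kernel, so the Dehn twists form only a finite-index sublattice whenever the link has nontrivial symmetries of the relevant kind. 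The paper's own examples make this concrete: for the link of Example \ref{Ex:HypLinkFromFig8} the class $\frac{1}{2}(\lambda_0+\lambda_1)$ is a generator not lying in the span of the Dehn twists. Consequently, knowing that $\mu_0$ is a basis vector of the Dehn-twist sublattice does not tell you it is primitive in the ambient $\Z^4$, and primitivity is exactly what you need for $\Z^4/\langle\mu_0\rangle$ to be torsion-free (hence $\cong\Z^3$).

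The missing ingredient is the argument of Proposition \ref{DiffS3LHypLink}, run with the single meridian $\mu_0$ in place of all meridians: if $\mu_0=k[\psi]$ with $k>1$, then viewing $\pi_0\Diff(C_F;\d C_F)$ inside $\R^4$ as in the proof of Proposition \ref{HMProp}, the class $[\psi]$ corresponds to a nontrivial hyperbolic isometry whose restriction to $\d C_F$ is a fractional meridional rotation of the first boundary torus and the identity on the second. Its extension to $S^3$ is then a nontrivial finite-order orientation-preserving diffeomorphism whose fixed-point set contains the 2-component link $f$, contradicting Smith's theorem (the fixed-point set must be connected). This is precisely how the paper closes the gap, and without it your proof does not go through.
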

\begin{proof}
Let $M$ be the exterior of the link $f$ in $S^3$.   By Proposition \ref{DiffHyp}, $\pi_0 \Diff(M; \d M) \cong \Z^4$.
By Proposition \ref{TfisKpi1}, $\T_f$ is a $K(\pi, 1)$, and $\pi_1(\T_f)$ is the quotient of $\pi_0 \Diff(M; \d M)$ by a Dehn twist along the meridian of the first component.  One can now argue as in the proof of Proposition \ref{DiffS3LHypLink}, but with the subgroup of Dehn twists along just the first meridian, rather than both meridians.  The result is that the quotient is $\Z^3$.
\end{proof}

\begin{corollary}
\label{HypKnotVf}
If $f=(f_0,f_1,f_2)$ is a hyperbolic link where $(f_1,f_2)$ is the Hopf link, then $\V_f \simeq (S^1)^3$.
\end{corollary}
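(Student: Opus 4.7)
The plan is to combine two results proved immediately above: Proposition \ref{VfIsLfModSO4}, which identifies $\V_f$ with $\L_f/SO_4$ for any knot $f$ in the thickened torus, and Corollary \ref{nCompHypLink}, which computes $\L_f/SO_4$ for any hyperbolic link.

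First I would invoke Proposition \ref{VfandLinksWithHopf} to regard the 3-component link $f=(f_0,f_1,f_2)$ as specifying a knot in $V \cong S^1 \x S^1 \x I$, so that $\V_f$ is well defined in the sense used throughout this section. Then Proposition \ref{VfIsLfModSO4} gives the equivalence
\[
\V_f \ \simeq\ \L_f/SO_4.
\]
Since by hypothesis $f$ is a hyperbolic link in $S^3$ with $n=3$ components, Corollary \ref{nCompHypLink} immediately yields $\L_f/SO_4 \simeq (S^1)^3$, and combining the two equivalences gives the statement.

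There is essentially no obstacle: the work is all carried by Proposition \ref{VfIsLfModSO4} (where the quotient by $SO_4$ absorbs the entire contribution of the Hopf sublink $(f_1,f_2)$, because $\L_H/SO_4 \simeq S^1 \x S^1$ consists of exactly the two meridional rotations that the $V$-model also quotients out) and by Proposition \ref{DiffS3LHypLink}/Corollary \ref{nCompHypLink} (which relies on Mostow rigidity and the Smith conjecture to rule out any additional periodic symmetry contributing torsion to $\pi_1$). Thus unlike the case of $\T_f$ in Corollary \ref{HypKnotInTorus}, no further quotient by a meridional Dehn twist is needed, since for a knot in the thickened torus the unknotted Hopf sublink is already fully accounted for by the $SO_4$ quotient rather than by an additional $S^1$-fiber.
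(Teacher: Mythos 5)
Your proof is correct and is exactly the paper's argument: the paper proves this corollary by citing Proposition \ref{VfIsLfModSO4} and Corollary \ref{nCompHypLink} (with $n=3$), just as you do. Your additional commentary contrasting this with the $\T_f$ case is accurate but not needed for the proof itself.
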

\begin{proof}
Apply Proposition \ref{VfIsLfModSO4} and Corollary \ref{nCompHypLink}.
\end{proof}

In each example below, we consider a hyperbolic link $f$ corresponding to knots $f \in \T$ or $f \in \V$.  In some cases, the hyperbolicity follows from a theorem of Menasco that a non-split, prime, non-torus alternating link  is hyperbolic \cite{Menasco1984}.
In all cases, one can use software such as SnapPy \cite{SnapPy} (based on SnapPea) or KLO \cite{KLO} to verify that they are hyperbolic.

The key to understanding generators of $\pi_1(\L_f/SO_4)$ for each link $f$ is the exact sequence \eqref{HypLinkSES}.  The right-hand side $\pi_0(\Diff_R(M))$ is the finite subgroup of isometries in the link's symmetry group $\mathrm{Isom}(M)$ which extend to diffeomorphisms of $S^3$ preserving all the components of $f$ and their orientations.    
SnapPy reveals this group by computing $\mathrm{Isom}(M)$, as well as whether elements extend to the link and their effects on boundary tori.
By the validity of the Smith conjecture \cite{MorganBass}, $\pi_0(\Diff_R(M))$ embeds in the space $\Diff^+(S^1)$ of symmetries of some component and hence must be cyclic.  
By the validity of Thurston's geometrization conjecture, the action of these symmetries on $S^3$ is equivalent to an action of a subgroup of $SO_4$ (though this was known to be true in many cases before geometrization was fully proven).
This together with basic facts about lattices implies that in general, one generator of $\pi_1(\L_f/SO_4)$ is some fraction of an integer combination of the loops of reparametrizations $\lambda_i$.  A generating set consists of this element together with all but one of the $\lambda_i$.

To describe generators for $\pi_1(\T_f/SO_4) \cong \Z^3$ (when applicable), we start with the two generators for $\pi_1(\L_f/SO_4)$, where we view $\lambda_1$ as a meridional rotation of the solid torus.  The third generator is simply given by the longitudinal rotation of the solid torus.

\begin{example}[The Whitehead link]
\label{Ex:WhiteheadLink}
The full symmetry group $\Isom(M)$ of the Whitehead link exterior $M$  is the dihedral group $D_4$ of order 8.  All of these extend to the link, and the effect on the $i$-th oriented component corresponds to the effect of $D_4$ on the $i$-th standard basis vector.  (One can realize a $\Z/2 \oplus \Z/2$ subgroup in the leftmost picture in Figure \ref{F:WhiteheadLink} via the $180^\circ$ rotations around the three coordinate axes as generators.)  
Thus the subgroup preserving the two components and their orientations is trivial.  So for generators of $\pi_1(\L_f/SO_4)$, we can take reparametrizations of the two components.
\end{example}

\begin{figure}[h!]
\raisebox{1pc}{\includegraphics[scale=0.25]{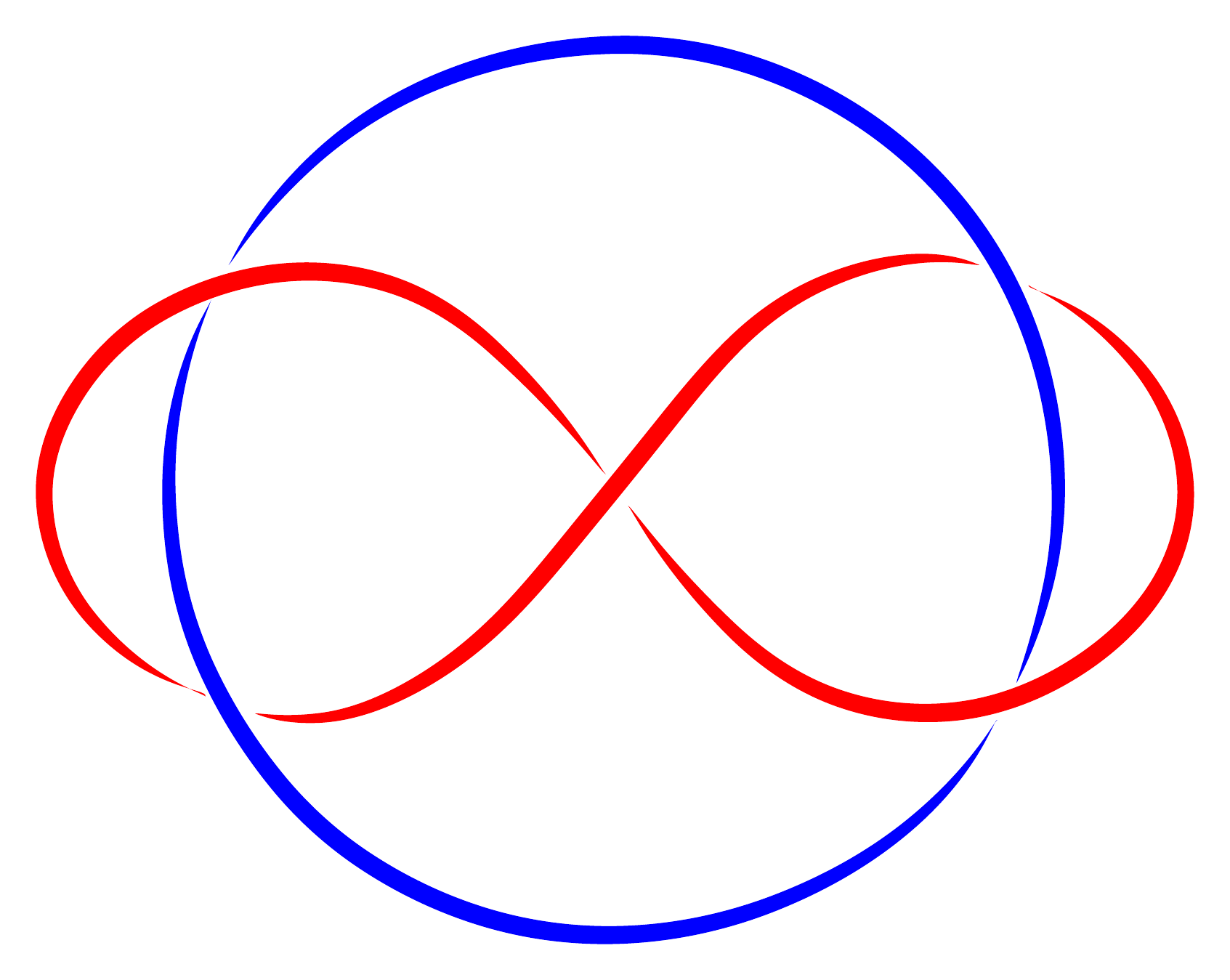}}
\qquad \qquad
\includegraphics[scale=0.23]{whitehead-link-2.pdf}
\caption{Two planar projections of the Whitehead link $f$, where $\pi_1(\L_f/SO_4)\cong \Z\langle \lambda_0, \lambda_1\rangle$.}
\label{F:WhiteheadLink}
\end{figure}

\begin{example}[A hyperbolic link from the figure-eight knot]
\label{Ex:HypLinkFromFig8}
The full symmetry group $\Isom(M)$ of the figure-eight knot exterior $M$ is $D_4$.  All of these isometries extend to the knot, by the theorem of Gordon and Luecke that homeomorphisms of knot complements send a meridian to a meridian \cite{Gordon-Luecke}.  Those preserving orientation form a subgroup $D_2 \cong \Z/2 \x \Z/2$.\footnote{This $D_2<D_4$ is of course the subgroup containing the $180^\circ$ rotation and reflections across the coordinate axes.  The isometry of $M$ corresponding to a reflection of $\R^3$ (and an isotopy between mirror images of the knot) can be identified with a reflection across a diagonal in $D_4$.}
If $f$ is the link L8n1 a.k.a.~$8^2_{16}$ shown in Figure \ref{F:HypLinkFromFig8}, then $\Isom(C_f) \cong \Isom^+(C_f) \cong (\Z/2)^3$, but the
isometries which extend to $f$ form the Klein four group, given by $180^\circ$ rotations in the coordinate axes, much like the orientation-preserving isometries of the figure-eight knot complement.  
They all preserve the two components, but the subgroup preserving orientations of the components is $\Z/2$, given by the rotation in the plane.  Thus 
$\pi_1(\L_f/SO_4) \cong \Z \langle \frac{1}{2}(\lambda_0+ \lambda_1), \ \lambda_0\rangle
\cong \Z \langle \frac{1}{2}(\lambda_0+ \lambda_1), \ \lambda_1\rangle$, 
where the loop $\frac{1}{2}(\lambda_0+ \lambda_1)$ can be visualized by a path of rotations in the plane (which is trivial modulo $SO_4$) followed by a path of reparametrizations of each component by $\pi$.
\end{example}

\begin{figure}[h!]
\includegraphics[scale=0.25]{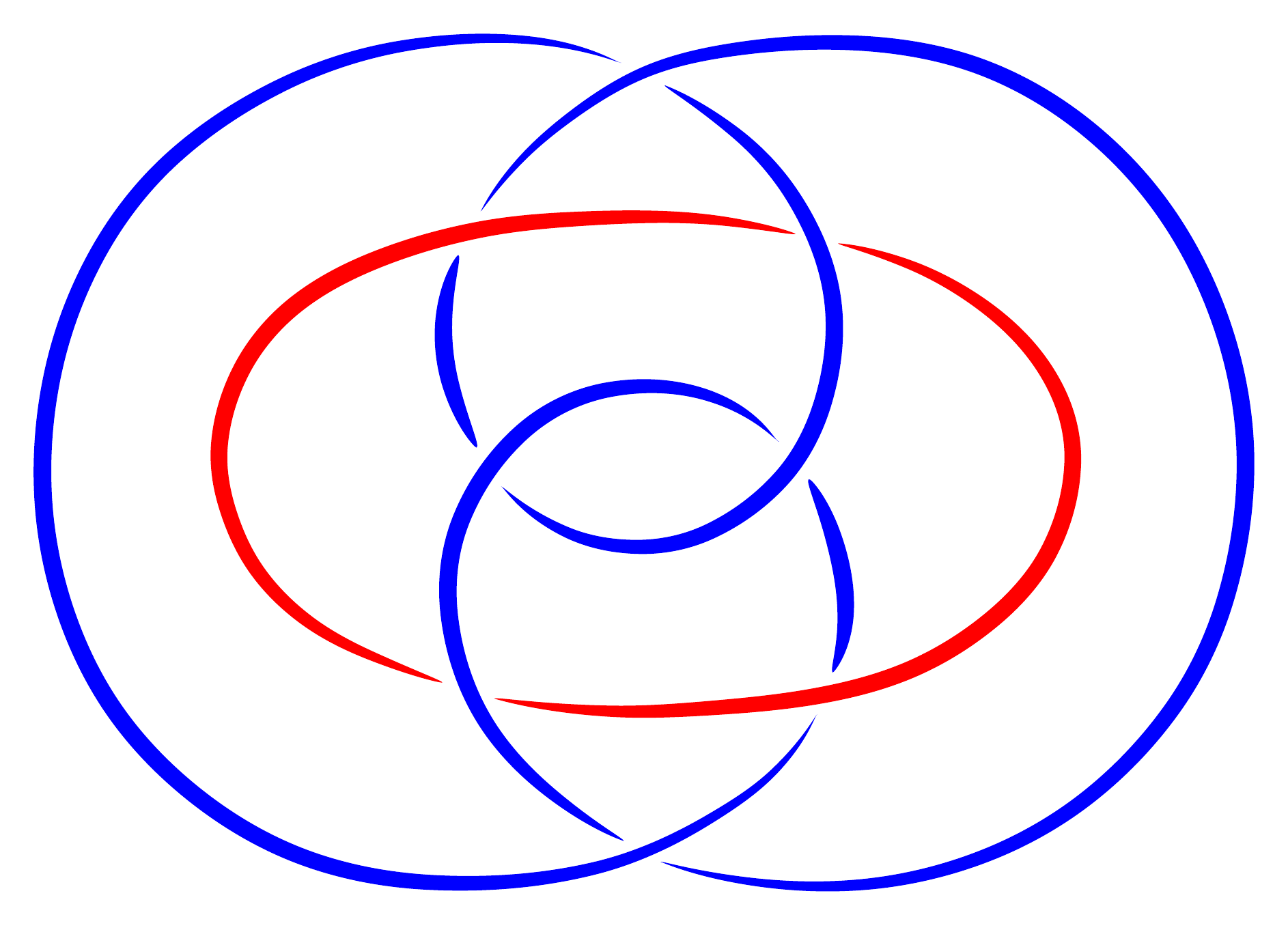}
\caption{The hyperbolic link L8n1 a.k.a.~$8^2_{16}$, which has one unknot component and one figure-eight knot component.  It has a symmetry preserving both components and their orientations by rotating by $\pi$ in the plane.  For this link $f$, 
$\pi_1(\L_f/SO_4) \cong \Z \langle \frac{1}{2}(\lambda_0+ \lambda_1), \ \lambda_0\rangle
\cong \Z \langle \frac{1}{2}(\lambda_0+ \lambda_1), \ \lambda_1\rangle$.
}
\label{F:HypLinkFromFig8}
\end{figure}

\begin{example}[A hyperbolic link from the $(2,k)$-torus knot]
\label{Ex:HypLinkFromTrefoil}
For any odd $k\geq 3$, there is a hyperbolic link $f$ with an unknot component and a $(2,k)$-torus knot component, as shown in Figure \ref{F:HypLinkFromTrefoil}.
We have $\Isom(M) \cong \Isom^+(M)\cong D_k$, with all isometries extending to the link and preserving the two components.  These symmetries are apparent from the picture.  Those that preserve orientations form a subgroup $\Z/k$.  
Thus $\pi_1(\L_f/SO_4) \cong \Z \langle \frac{1}{k}(\lambda_0+ \lambda_1), \ \lambda_0\rangle
\cong \Z \langle \frac{1}{k}(\lambda_0+ \lambda_1), \ \lambda_1\rangle$, where the first generator can be visualized by a path of rotations in the $xy$-plane by $2\pi/k$, followed by a path of reparametrizations of both components by $2\pi/k$.  
\end{example}

\begin{figure}[h!]
\includegraphics[scale=0.22]{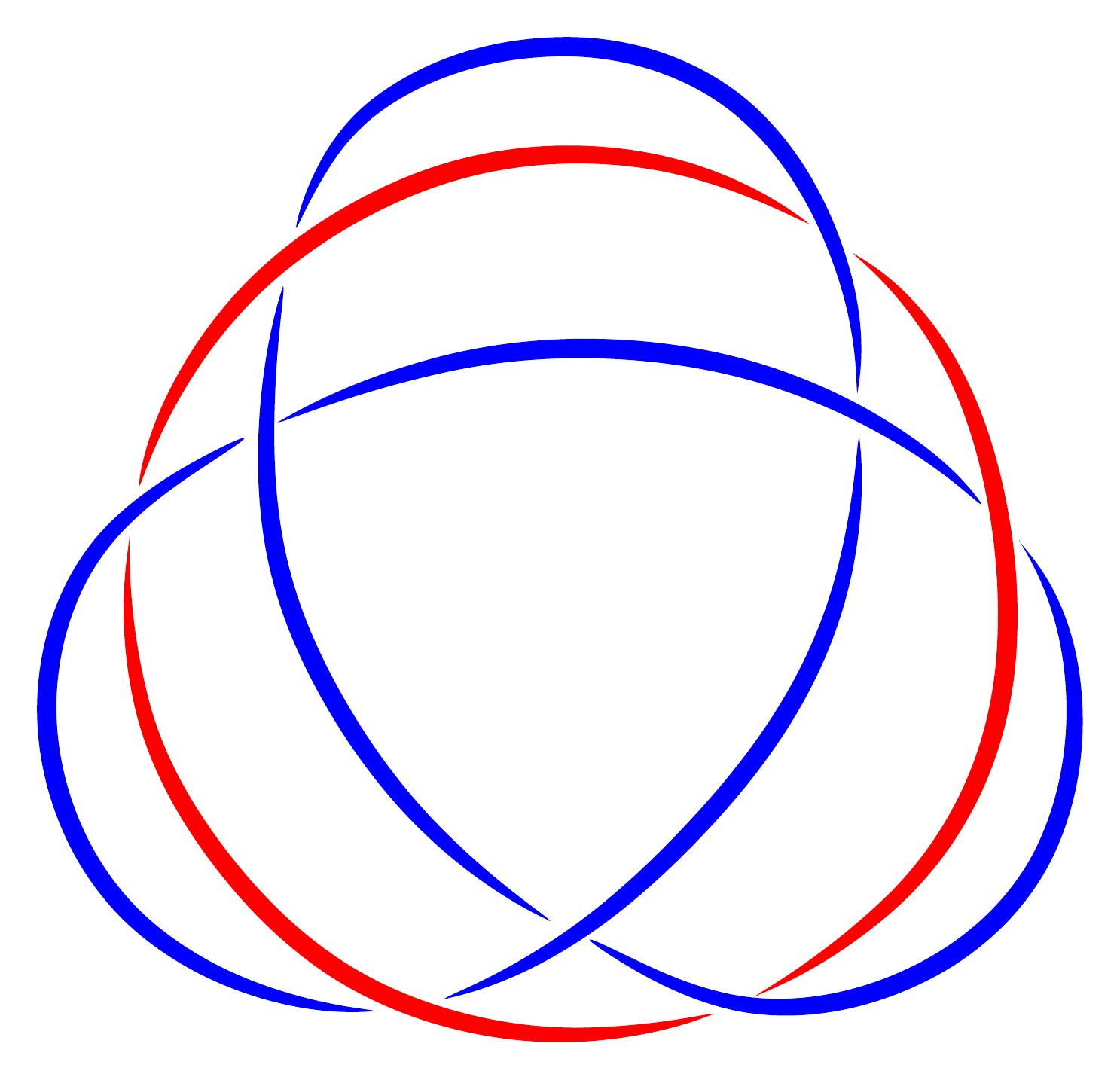}
\caption{The hyperbolic link L9a32 a.k.a.~ $9^2_{40}$, which has one unknot component and one trefoil component.  It has $\Z/3$ symmetry preserving both components and their orientations, given by rotation by $2\pi/3$ in the plane.  
For this link $f$, 
$\pi_1(\L_f/SO_4) \cong \Z \langle \frac{1}{3}(\lambda_0+ \lambda_1), \ \lambda_0\rangle
\cong \Z \langle \frac{1}{3}(\lambda_0+ \lambda_1), \ \lambda_1\rangle$.
}
\label{F:HypLinkFromTrefoil}
\end{figure}

\begin{example}[A hyperbolic link from the knot $8_{18}$]
\label{Ex:HypLinkFrom8-18}
For the link $f$ shown in Figure \ref{F:HypLinkFrom8-18}, a similar analysis as above applies, but with $D_4$ and $\Z/4$ rather than $D_k$ and $\Z/k$.  
\end{example}

\begin{figure}[h!]
\includegraphics[scale=0.22]{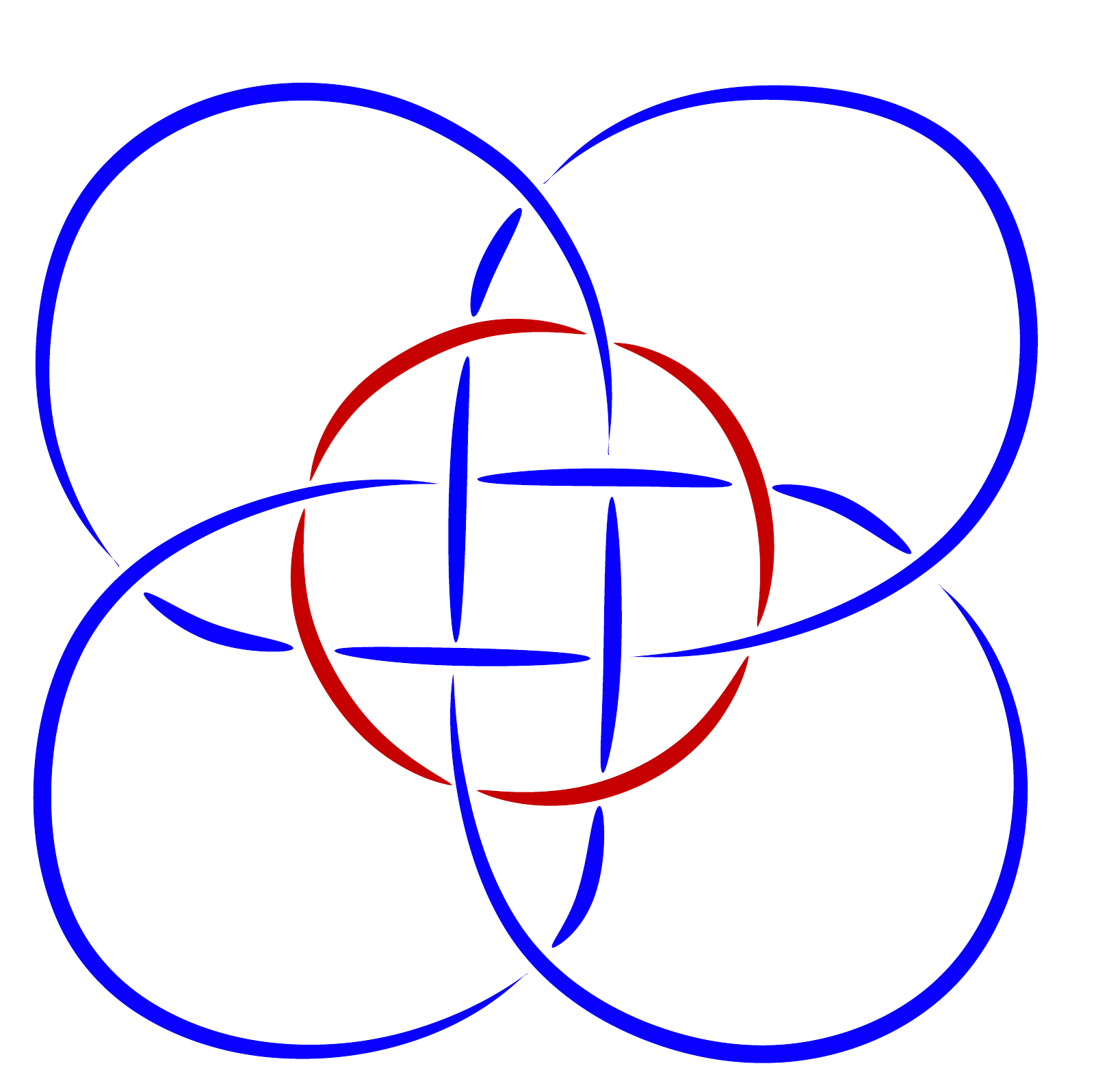}
\caption{A hyperbolic link $f$ with one unknot component and one $8_{18}$ component, and with $\Z/4$ symmetry preserving both components and their orientations.  Thus
$\pi_1(\L_f/SO_4) \cong \Z \langle \frac{1}{4}(\lambda_0+ \lambda_1), \ \lambda_0\rangle
\cong \Z \langle \frac{1}{4}(\lambda_0+ \lambda_1), \ \lambda_1\rangle$.
}
\label{F:HypLinkFrom8-18}
\end{figure}

\begin{example}
\label{Ex:L6a5}
The hyperbolic link L6a5 a.k.a.~$6^3_1$ contains the Hopf link as a sublink.  See Figure \ref{F:HypLinkVf}.  We have $\Isom(M) \cong\Isom^+(M) \cong D_6$.  A $D_3$ subgroup is clear from the picture, but $\Isom(M)$ is an extension by $D_3$ of $\Z/2$ given by a $180^\circ$ rotation along the meridian of a solid torus containing the link.
However, there are no nontrivial symmetries preserving all three components and their orientations.  Thus $\pi_1(\L_f/SO_4) \cong \Z\langle \lambda_0, \lambda_1, \lambda_2 \rangle$.  Equivalently, $\pi_1(\V_f)$ is generated by the two rotations of the torus and reparametrization.
\end{example}

\begin{figure}[h!]
\includegraphics[scale=0.18]{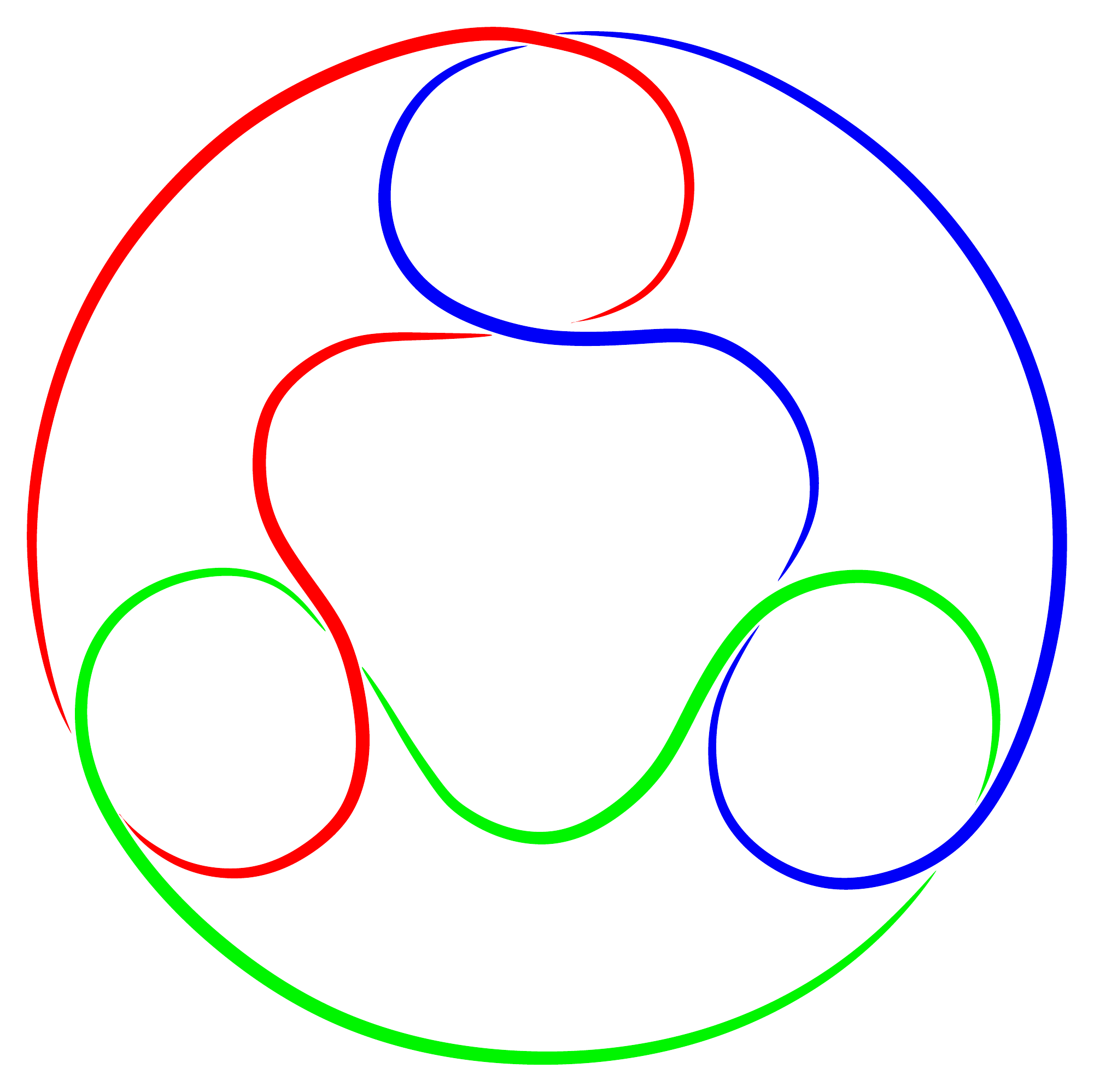}
\caption{The link L6a5 a.k.a.~$6^3_1$ is a hyperbolic link corresponding to a knot in $S^1 \x S^1 \x I$.
This link $f$ has $\pi_1(\L_f/SO_4) \cong \Z\langle \lambda_0, \lambda_1, \lambda_2 \rangle$.}
\label{F:HypLinkVf}
\end{figure}

\section{Spaces of spliced links}
\label{S:Splicing}
We will now complete the proof of Theorem \ref{MainT:FramedLinks}, part (A), by recursively determining the homotopy type of $\tL_F/SO_4$ for any irreducible framed link $F$.
Recall the notions of JSJ trees $G_F$, companionship trees $\mathbb{G}_F$, and the splicing operation $\bowtie$ from Section \ref{S:JSJ}.  
Suppose $F$ has a nontrivial JSJ tree $G_F$, or equivalently, a nontrivial companionship tree $\mathbb{G}_F$.
We arbitrarily choose a distinguished component $F_0$ of $F$.  Its boundary lies in $G_F(v_R)$ for some vertex $v_R$ in $G_F$, now designated as the root.  
Let $L:=\mathbb{G}_F(v_R)$.  Suppose $v_R$ has $n+1$ leaf half-edges (where $n \geq 0$) and $r$ remaining half-edges.  
Then $L=(L_0,\dots,L_{n+r})$ and 
\[
F=(\varnothing, \dots, \varnothing, J_1,\dots, J_r)\bowtie L.
\]
As mentioned in Remark \ref{SplicingRemark}, by writing the expression on the right-hand side above, we imply that it corresponds to a sub-decomposition of the JSJ decomposition of $C_F$.
We visualize this sub-decomposition as a tree shown in Figure \ref{F:TreeF=JbowtieL}, where the half-edges incident to a vertex $v$ labeled by a link $L_v$ correspond to the components of $L_v$, and the one emanating downwards corresponds to the distinguished component of $L_v$.  
The leaf half-edges correspond to components of $F$.
This tree is not $\mathbb{G}_F$ unless all the $J_i$ are Seifert-fibered or hyperbolic, but $\mathbb{G}_F$ can always be obtained from it by inserting each tree $\mathbb{G}_{J_i}$ into the vertex labeled by $J_i$.  
So we will call it a \emph{schematic of} $\mathbb{G}_F$.
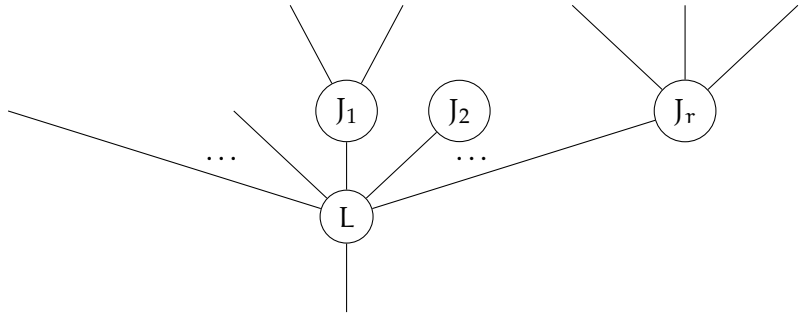
\begin{figure}[h!]
\begin{tikzpicture}
[grow=north, level distance=40pt]
\node[] {}[sibling angle=40]
child {node[draw,circle,fill=white, opacity=1, text opacity=1] {$L$}
child{node[draw,circle,fill=white, opacity=1, text opacity=1] {$J_r$}
child{[]} child{[]} child{[]} }
child { edge from parent[draw=none] node[draw=none] (ellipsis) {$\ldots$} }
child{node[draw,circle,fill=white, opacity=1, text opacity=1] {$J_2$}}
child{node[draw,circle,fill=white, opacity=1, text opacity=1] {$J_1$}
child{[]} child{[]} }
child{[]}
child { edge from parent[draw=none] node[draw=none] (ellipsis) {$\ldots$} }
child{[]}
};
\end{tikzpicture}
\caption{A tree corresponding to the decomposition $F=(\varnothing, \dots, \varnothing, J_1, \dots, J_r) \bowtie L$.   Inserting the tree $\mathbb{G}_{J_i}$ into each vertex labeled by $J_i$ produces the tree $\mathbb{G}_F$.}
\label{F:TreeF=JbowtieL}
\end{figure}

We will describe $\tL_F/SO_4$ in terms of the spaces $\tL_{J_i}/SO_4$.  By repeating this process upward along the tree $\mathbb{G}_F$ from the root half-edge (corresponding to $F_0$), we will obtain a complete description of $\tL_F/SO_4$.  Indeed, consider induction on the maximum distance $d$ from $v_R$ to any other vertex in $\mathbb{G}_F$, counted by the number of edges in a path joining them. 
The basis case of $d=0$ is covered by Corollary \ref{SeifertFramedLinks} and Corollary \ref{DiffHyp}.  The description of $\tL_F/SO_4$ in terms of the $\tL_{J_i}/SO_4$ is the induction step, and it will be given below.  
It varies by cases, according as $L$ is Seifert-fibered or hyperbolic.  The Seifert-fibered case itself has two subcases, one for $L=S_{p,q}$ or $R_{p,q}$ and one for $L=KC_n$.  
The contents of the upcoming subsections, which all generalize Budney's work on knots \cite{BudneyTop}, are as follows:
\begin{itemize}[leftmargin=0.25in]
\item
Section \ref{S:SplicingLemma}: Lemma \ref{SpliceSES}, a key result about splicing used in each of the remaining subsections. 
\item 
Section \ref{S:Cables}: the Seifert-fibered subcase generalizing cabling in Propositions \ref{P:Cable} and \ref{P:SpliceIntoRpq}.
\item
Section \ref{S:ConnectSums}: the Seifert-fibered subcase generalizing connected sum in Proposition \ref{P:ConnectSum}.
\item
Section \ref{S:HypSplices}: the case of hyperbolic splicing in Proposition \ref{P:HypSplice}. 
\end{itemize} 

In most of our examples of links $F$ below, we label each edge or leaf half-edge of $\mathbb{G}_F$ by the two loops of rotations of the corresponding torus.  These are nontrivial loops in $\L_f/SO_4$, except for the meridional rotations of boundary tori, which we  parenthesize.  Note however that in $\pi_1(\L_f/SO_4)$, this set of loops does not always extend to a generating set, and sometimes there are relations among them. 

The setting of arbitrary irreducible links $F$ includes the special cases of those $F$ corresponding to knots in $\T$ or $\V$.  
If $F$ yields a knot in $\T$, then $\mathbb{G}_F$ has only one leaf half-edge in addition to the root half-edge.  Moreover, since the corresponding component is unknotted, one can attach a vertex corresponding to a knot to obtain a knot in $S^3$.  Therefore, every vertex in $\mathbb{G}_F$ is labeled by a KGL, just as in the case of knots in $S^3$, and we can think of $\mathbb{G}_F$ as the result of removing a non-root vertex from the companionship tree of a knot (indeed many different ones).  
If $F$ yields a knot in $\V$, then $\mathbb{G}_F$ has two leaf half-edges in addition to the root half-edge.  The corresponding components are unknotted but linked, so one can attach a vertex labeled by a knot $J$ to only one of these half-edges.  As a result, the links labeling vertices of $\mathbb{G}_F$ need not be KGL's;  for example, a vertex may be labeled by $R_{p,q}$.  If one is solely interested in the homotopy types of $\T_f$ and $\V_f$, these properties of $\mathbb{G}_F$ in these cases would only simplify the proofs of the results in this section by ruling out a few subcases.

\subsection{A general lemma about splicing}
\label{S:SplicingLemma}
We now set up the general framework which we will apply to all of the relevant cases of splicing.

\begin{definition}
\label{EquivRelDef}
Suppose $\vec{J}=(J_1,\dots,J_r)$, where each $J_i$ is a link with a distinguished component $J_{i,0}$.
Let $\iota_0^+(J_i):=J_i$, and let $\iota_0^-(J_i)$ be the result of reversing the orientation on $J_{i,0}$.  
Write $\iota_0^\pm(J_i)$ to mean either possibility.
Define an equivalence relation $\sim$ on $\{J_1, \dots, J_r, \iota_0^-(J_1), \dots, \iota_0^-(J_r)\}$ by $J_i \sim \iota_0^\pm(J_k)$ if  
\begin{itemize}
\item either $J_i$ and $\iota_0^\pm(J_k)$ are isotopic knots
\item or $i=k$ and $J_i$ and $\iota_0^\pm(J_i)$ are isotopic links.
\end{itemize}
The above two possibilities are not mutually exclusive, since knots are links.  The relation $\sim$ gives rise to a (Young) subgroup of $\mathfrak{S}_r^\pm$.  
Define $\mathfrak{S}^\pm(\vec{J})$ to be this subgroup.  
\end{definition}

The following Lemma and its proof are a variant of Budney's work \cite[Lemma 2.2]{BudneyTop} and \cite[p.~20]{BudneyCubes}.

\begin{lemma}
\label{SpliceSES}
Let $F$ be a framed link in $S^3$ with a designated component, so that the JSJ tree for its complement $C_F$ is rooted.  Let $L=(L_0,\dots, L_{n+r})$ be the companion link to the root manifold $C_L$ (so $L$ is either Seifert-fibered or hyperbolic).  Let $C_{J_1}, \dots C_{J_r}$ be the components of $C_F \setminus C_L$, and let $J_1, \dots, J_r$ be the associated companion links.  Then there is an exact sequence of groups
\begin{equation}
\label{Eq:SpliceSES}
\{e\} \to \prod_{i=1}^{r} \pi_0 \Diff(C_{J_i}; \d C_{J_i}) \to \pi_0 \Diff(C_F; \d C_F) \to \pi_0 \Diff(C_L; T_0 \cup \dots \cup T_{n})
\end{equation}
where $T_0, \dots, T_{n}$ are the boundary tori of $C_L$ that are also boundary tori of $C_F$.  

The image $H$ of the rightmost map is represented by those diffeomorphisms $\psi$ which extend to a diffeomorphism $\widetilde{\psi}$ of the pair $(S^3, L)$  whose action on $L_{1}, \dots, L_{n+r}$ is given by an element of $\mathfrak{S}^\pm(\vec{J}) < \mathfrak{S}_r^\pm<\mathfrak{S}_{n+r}^\pm$.

Finally, the image $H$ admits a splitting, i.e., $\pi_0 \Diff(C_F; \d C_F) \cong H \ltimes \prod_{i=1}^{r} \pi_0 \Diff(C_{J_i}; \d C_{J_i})$ where the action on $H$ is via the composition $H \to \mathfrak{S}^\pm(\vec{J}) \incl \mathfrak{S}_r^\pm$.
\end{lemma}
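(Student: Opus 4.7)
The plan is to exploit the canonicity of the JSJ decomposition up to isotopy in order to set up the restriction map. Given $\phi \in \Diff(C_F;\partial C_F)$, the Jaco--Shalen--Johannson uniqueness theorem lets me isotope $\phi$ relative to $\partial C_F$ so that it preserves the collection $\mathcal{T}$ of JSJ tori of $C_F$ setwise. Since $\phi$ fixes $\partial C_F$ pointwise and $C_L$ is the unique JSJ piece meeting the tori $T_0,\dots,T_n$, it must carry $C_L$ to itself, and restriction yields the right-hand map $\pi_0\Diff(C_F;\partial C_F) \to \pi_0\Diff(C_L;T_0\cup\dots\cup T_n)$. Well-definedness on $\pi_0$ uses the parametrized version of JSJ uniqueness, which follows from the Hatcher--Ivanov asphericity results recalled in Section~\ref{S:DiffResults}: components of the space of embeddings of $\mathcal{T}$ into $C_F$ are contractible or equivalent to $(S^1\times S^1)^k$, and in either case JSJ tori can be transported to their images through an ambient isotopy.

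For exactness at the middle, given $\phi$ whose restriction to $C_L$ is isotopic to the identity rel.\ $T_0\cup\dots\cup T_n$, I extend that isotopy to all of $C_F$ to arrange $\phi|_{C_L}=\id$. Then $\phi$ fixes each torus $T_{n+i}$ pointwise, so it preserves each $C_{J_i}$ setwise, and decomposes as a tuple $(\phi_1,\dots,\phi_r) \in \prod \Diff(C_{J_i};\partial C_{J_i})$ extended by the identity to $C_L$. Injectivity of the left map is proved by the dual argument: an isotopy through $\Diff(C_F;\partial C_F)$ from an identity-extended tuple to the identity can, using JSJ uniqueness, be made to preserve $\mathcal{T}$ setwise throughout, and a further isotopy inside each piece restricts it to an isotopy of each $\phi_i$ to the identity in $\Diff(C_{J_i};\partial C_{J_i})$.

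To characterize the image $H$, observe that a class $[\psi]\in \pi_0\Diff(C_L;T_0\cup\dots\cup T_n)$ lies in the image precisely when $\psi$ admits a gluing partner on each $C_{J_i}$: a diffeomorphism $C_{J_i}\to C_{J_{\sigma(i)}}$ that matches $\psi|_{T_{n+i}}$ on the boundary and fixes the remaining boundary tori pointwise. Filling in solid-torus neighborhoods of $L_{n+1},\dots,L_{n+r}$ on the $C_L$-side converts such a partner into a diffeomorphism of pairs $(S^3, J_i\cup\nu(J_{i,0}))\to (S^3, J_{\sigma(i)}\cup\nu(J_{\sigma(i),0}))$ sending the distinguished component to the distinguished component, possibly with reversed orientation. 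This is exactly the equivalence relation defining $\mathfrak{S}^\pm(\vec{J})$, and filling in the remaining solid tori produces the promised extension $\widetilde{\psi}$ of $\psi$ to $(S^3,L)$.

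For the splitting, for each equivalence class in $\{J_1,\dots,J_r\}/{\sim}$ I fix a representative and, for every $J_i$ in the class, a specific diffeomorphism $g_i$ realizing its equivalence with the representative (and, where applicable, the orientation reversal on the distinguished component). A section $H \to \pi_0\Diff(C_F;\partial C_F)$ is defined by splicing these chosen $g_i$'s into a fixed lift of $\widetilde\psi$, and the induced conjugation action on the normal subgroup $\prod \pi_0\Diff(C_{J_i};\partial C_{J_i})$ permutes the factors via the composition $H \to \mathfrak{S}^\pm(\vec{J}) \hookrightarrow \mathfrak{S}_r^\pm$. I expect the main technical obstacle to be verifying that this section is a group homomorphism on $\pi_0$: the composition of two lifts and the lift of a composition differ by a tuple in $\prod \Diff(C_{J_i};\partial C_{J_i})$ whose class in $\pi_0$ must be shown to be trivial, which requires coherently choosing the $g_i$'s so that the failure of strict equality is absorbed into the product of mapping class groups of the pieces.
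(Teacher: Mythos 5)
Your overall strategy -- restrict a diffeomorphism of $C_F$ to the root piece $C_L$, identify the kernel with the product of the mapping class groups of the $C_{J_i}$, and characterize the image via gluing partners -- is the same as the paper's; the paper just packages the restriction as a fibration $\prod_i\Diff(C_{J_i};\d C_{J_i}) \to \Diff(C_F, C_L;\d C_F)\to \Diff(C_L; T_0\sqcup\dots\sqcup T_n)$ (after showing, via Hatcher's theorem on spaces of incompressible surfaces, that the torus-preserving subgroup is homotopy equivalent to all of $\Diff(C_F;\d C_F)$) and reads off the sequence from the long exact sequence in homotopy.

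There is, however, a genuine gap in your injectivity argument. You take a path in $\Diff(C_F;\d C_F)$ from an identity-extended tuple $(\phi_1,\dots,\phi_r)$ to the identity, arrange it to preserve the JSJ tori setwise, and then assert that ``a further isotopy inside each piece'' turns its restriction to $C_{J_i}$ into an isotopy of $\phi_i$ to the identity \emph{rel} $\d C_{J_i}$. That last step is exactly where the content lies: the restriction of your path to $C_L$ is a loop in $\Diff(C_L; T_0\cup\dots\cup T_n)$ based at the identity, and its restriction to each splicing torus $T_{n+i}$ is a loop in $\Diff(T^2)$ that may represent a nontrivial class in $\pi_1\Diff(T^2)\cong\Z^2$. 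If it does, then straightening the path to be constant on $T_{n+i}$ changes $\phi_i$ by the corresponding meridional/longitudinal Dehn twists, so you only recover $[\phi_i]$ up to Dehn twists along $\d C_{J_i}$ -- precisely the potential failure of injectivity. In the language of the fibration, you must show the connecting map $\pi_1\Diff(C_L;T_0\cup\dots\cup T_n)\to\pi_0\prod_i\Diff(C_{J_i};\d C_{J_i})$ vanishes; the paper does this by proving $\pi_1\Diff(C_L;T_0\cup\dots\cup T_n)$ is trivial, using a second fibration over $\Diff(T_{n+1}\sqcup\dots\sqcup T_{n+r})$ and the fact that the Dehn twists along the internal tori inject into $\pi_0\Diff(C_L;\d C_L)$ -- an input drawn from the explicit generators in Proposition \ref{DiffSeifertFramed} in the Seifert-fibered case and from the sequence \eqref{HypLinkSES} of Proposition \ref{HMProp} in the hyperbolic case. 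Your proposal never invokes either, so the argument as written does not close. Separately, you correctly flag that the section $H\to\pi_0\Diff(C_F;\d C_F)$ must be checked to be a homomorphism and that this requires choosing the identifications $g_i$ coherently (one per equivalence class of $\sim$); that is indeed how the paper's splitting works, so this is an honest to-do rather than a wrong turn, but it also remains unresolved in your write-up.
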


We can restate the condition that $\widetilde{\psi}$ acts by $\mathfrak{S}^\pm(\vec{J})$ by saying that $\widetilde{\psi}$ fixes $L_0,\dots, L_n$ and for any $i,k \in \{1,\dots, r\}$, if $\widetilde{\psi}(L_{n+i}) = \pm L_{n+k}$ then $J_i \sim \iota_0^\pm(J_k)$.  
In other words, the relation induced by the action of $\psi$ is at least as fine as the relation $\sim$.  In general, $H \to \mathfrak{S}^\pm(\vec{J})$ need not be injective or surjective.

\begin{proof}
Let $\Diff(C_F, C_L; \d C_F)$ be the space of diffeomorphisms which preserve $C_L$ setwise and fix $\d C_F$ pointwise.  First, we claim that $\Diff(C_F; \d C_F) \simeq \Diff(C_F, C_L; \d C_F)$.
Let $T_{n+1}, \dots, T_{n+r}$ be the tori separating $C_L$ from $C_{J_1}, \dots, C_{J_r}$.  
There is the following inclusion of fibrations, which are not necessarily surjective on path components:
\[
\xymatrix{
\Diff(C_F; \d C_F \cup \d C_L)  \ar@{=}[d] \ar[r] &  \Diff(C_F, C_L; \d C_F) \ar@{^(->}[d] \ar[r] & \ar@{^(->}[d]  
\Diff(T_{n+1} \sqcup \dots \sqcup T_{n+r}) = 
\mathfrak{S}_r \wr \Diff(S^1 \x S^1) \\
\Diff(C_F; \d C_F \cup \d C_L) \ar[r] & \Diff(C_F; \d C_F) \ar[r] & \Emb(\coprod_{i=1}^{r} T_i, C_F) 
}
\]
The right-hand vertical map is given by post-composing by the inclusion.  
The uniqueness up to isotopy of the tori in the JSJ decomposition together with the fixing of $\d C_F$ guarantee that the image of the lower-right hand horizontal map is contained in the components hit by the composition  from $\Diff(C_F, C_L; \d C_F)$.
Hatcher's theorem on spaces of incompressible surfaces in 3-manifolds guarantees that the right-hand vertical inclusion is a homotopy equivalence on each component in the image.  
Thus the middle inclusion is an equivalence, proving the claim.  

The long exact sequence in homotopy for the fibration 
\[
\prod_{i=1}^{r} \Diff(C_{J_i}; \d C_{J_i}) \to \Diff(C_F, C_L; \d C_F) \to \Diff(C_L; T_0 \sqcup \dots \sqcup T_{n})
\]
then yields at $\pi_0$ the three nontrivial terms in the desired exact sequence.  It only remains to check that $\pi_1 \Diff(C_L; T_0 \cup \dots \cup T_{n})$ is trivial.  This in turn can be seen from the fibration
\[
\Diff(C_L; \d C_L) \to \Diff(C_L; T_0 \sqcup \dots \sqcup T_{n}) \to \Diff(T_{n+1} \sqcup \dots \sqcup T_{\ell}).
\]
The fiber is a $K(\pi,0)$ space, so it suffices to see that the map $\pi_1\Diff(T_{n+1} \sqcup \dots \sqcup T_{\ell}) \to \pi_0 \Diff(C_L; \d C_L)$ is injective.  
If $C_L$ is Seifert-fibered, the injectivity is clear from the descriptions of generators of $\pi_0 \Diff(C_L; \d C_L)$ in the proof of Proposition \ref{DiffSeifertFramed}.  
If $C_L$ is hyperbolic, this injectivity is part of the sequence \eqref{HypLinkSES} in 
the Hatcher--McCullough result, Proposition \ref{HMProp}.  
Hence the exact sequence \eqref{Eq:SpliceSES} is established.

We now show that elements in the image $H$ satisfy the conditions in the lemma statement.  
Let $[\psi] \in H < \pi_0\Diff(C_L; T_0 \cup \dots \cup T_n)$ be the image of $[\phi] \in \pi_0\Diff(C_F, C_L; \d C_F)$.  
Since $\phi$ extends to an orientation-preserving diffeomorphism of the pair $(S^3,L)$ which fixes $L_0, \dots, L_n$, so must $\psi$.  Call this mutual extension $\widetilde{\psi}$.
Then $\widetilde{\psi}$ yields an element $\sigma \in \pi_0\Diff(L_{n+1} \sqcup \dots \sqcup L_{n+r}) \cong \mathfrak{S}_r^\pm$, where the action of $-1$ on $L_{n+i}$ is given by reversing its orientation.
We check that $\sigma \in \mathfrak{S}^\pm(\vec{J})$. 
Indeed, the restriction of $\widetilde{\psi}$ to the $i$-th component $(S^3, L_{n+i}) \overset{\cong}{\to} (S^3, \pm L_{n+k})$ gives rise (via $(S^3, T_{n+i}) \overset{\cong}{\to} (S^3, \pm T_{n+k})$ and $(S^3, C_{J_i}) \overset{\cong}{\to} (S^3, \pm C_{J_k})$) to a diffeomorphism of pairs $(S^3, J_i) \to (S^3, \iota_0^{\pm}(J_k))$.  The latter gives 
an isotopy between $J_i$ and $\iota_0^{\pm}(J_k)$, since $\Diff^+(S^3)$ is connected.  
Moreover, if $J_i$ has more than one component, then at least one component $T$ of $\d C_{J_i}$ lies in $\d C_F$.  
So $\phi$ must preserve $T$ and hence $\widetilde{\psi}$ preserves $L_{n+i}$ (though it may act by $-1$ on $L_{n+i}$).  
In other words, only the $J_i$ that are knots can be permuted. 
Thus the action of $\sigma$ identifies only $\iota_0^\pm(J_i)$ in the same equivalence class under relation $\sim$, so $\sigma \in \mathfrak{S}^\pm(\vec{J})$.

Finally, we check that if $\psi \in \Diff(C_L; T_0 \cup \dots \cup T_{n})$ satisfies the conditions in the lemma statement, then $[\psi]\in H$.
So suppose $\psi$ extends to a diffeomorphism of the pair $(S^3, L)$ which fixes $L_0, \dots, L_n$ and whose action on $L_{n+1}, \dots, L_{n+r}$ is given by $\sigma \in \mathfrak{S}^\pm(\vec{J})$.  
Then $\psi$ gives rise to a diffeomorphism in $\Diff(C_F; \d C_F)$ by choosing for each $i$ a diffeomorphism $C_{J_i} \to \pm C_{J_k}$, where $\sigma(i)=\pm k$.
This establishes both the desired description of $H$ and the splitting as a semi-direct product.  
\end{proof}

As in Sections \ref{S:Seifert} and \ref{S:Hyperbolic}, we will give our results in terms of homotopy types of spaces, rather than fundamental groups.  
So we will apply the classifying space functor $B(-)$ to exact sequences involving the discrete groups $G = \pi_0\Diff(C_F; \d C_F)$ to understand the corresponding $K(G,1)$ spaces $\widetilde{\mathcal{L}}_F/SO_4$.  
For splices into keychain links and hyperbolic links in Sections \ref{S:ConnectSums} and \ref{S:HypSplices}, it will help to understand how $B(-)$ interacts with a semi-direct product.  Let $G$ be a discrete group.  Suppose $G = K \rtimes H$.  Then 
\begin{equation}
\label{Bsemidirect}
BG \simeq EH \x_H (E(K\rtimes H)/K)
 \simeq EH \x_H BK.  
\end{equation}
The bundle $EH \to BH$ is the universal cover of $BH$, and $H$ acts on $EH$ by Deck transformations.  The action of $H$ on $BK$ is induced by the map $H \to \Aut(K)$ from the semi-direct product.  Suppose the latter map factors through a surjection $H \twoheadrightarrow H'' \to \Aut(K)$ with $H' := \ker(H \to H'')$.  Then the quotient $EH \x BK \to EH \x_H BK$ factors through $EH/H' \x BK \simeq BH' \x BK$, and we can simplify the right-hand side of \eqref{Bsemidirect} as 
\[
BG \simeq BH' \x_{H''} BK
\]
where the action of $H''$ on the left-hand factor is by Deck transformations of the covering space $BH' \to BH$, and the action on the right-hand factor is induced by automorphisms of $K$.  We record the result:

\begin{lemma}
\label{SemidirectLemma}
Suppose that $G = K \rtimes H$, and
let $H'$ and $H''$ be the kernel and  image of the map $H \to \Aut(K)$.
Then $BG \simeq BH' \x_{H''} BK$.
\qed
\end{lemma}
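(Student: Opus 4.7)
The plan is to follow the construction sketched in the paragraph immediately preceding the lemma, making explicit how the $H$-action on $BK$ descends to an $H''$-action. First I would fix a functorial model of $EG$ for $G=K\rtimes H$ on which $K$ acts freely (e.g.\ the bar construction, or $EK\x EH$ with $G$ acting diagonally). Since $K$ is normal, the quotient $EG/K$ is a model for $BK$ carrying a residual free $H$-action, and the quotient of this by $H$ recovers $BG$. In Borel form this is exactly $BG\simeq EH\x_H BK$, where $H$ acts on $BK$ through its action on $K$ built into the semi-direct product.

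Next I would verify that this $H$-action on $BK$ factors through $H''$ up to homotopy. Concretely, for $h\in H$ the induced self-map of $BK$ is $B(\phi_h)$, where $\phi_h\in\Aut(K)$ is the automorphism coming from conjugation by $h$ inside $G$; by construction, $h\mapsto\phi_h$ is exactly the map $H\to\Aut(K)$ appearing in the lemma. Hence for $h\in H'$, $\phi_h=\id_K$, so $B(\phi_h)=\id_{BK}$, and the $H$-action descends canonically to an action of $H''=H/H'$ on $BK$ via the injection $H''\hookrightarrow\Aut(K)$.

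Having established this, I would split the Borel construction in two stages. The group $H'$ acts freely on $EH$ (since $H$ does) with quotient $EH/H'\simeq BH'$, and $H'$ acts trivially on $BK$ by the previous paragraph, so
\[
EH\x_H BK \;\cong\; (EH\x BK)/H \;\cong\; \bigl((EH\x BK)/H'\bigr)\big/H'' \;\simeq\; (BH'\x BK)/H'' \;\simeq\; BH'\x_{H''}BK,
\]
where in the final expression $H''$ acts on $BH'$ by deck transformations of the covering $BH'\to BH$ (induced by the original free $H$-action on $EH$) and on $BK$ through $H''\hookrightarrow\Aut(K)$. Combining with $BG\simeq EH\x_H BK$ from the first step gives the claimed equivalence.

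The one subtle point, which I expect to be the main thing to write out carefully, is the claim that the $H$-action on $BK$ factors through $H''$ \emph{strictly} enough that the iterated quotient above makes sense on the nose; this is handled by using a functorial, basepoint-preserving model of $B(-)$ on discrete groups, for which $B(\id_K)=\id_{BK}$. Once that is in hand, every remaining step is a standard manipulation of Borel constructions, so no further obstacles arise.
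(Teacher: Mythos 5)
Your proposal is correct and follows essentially the same route as the paper, which establishes $BG \simeq EH \times_H BK$ and then factors the quotient $EH \times BK \to EH \times_H BK$ through $BH' \times BK$ exactly as you do. Your extra care about using a functorial model of $B(-)$ so that the $H$-action on $BK$ strictly factors through $H''$ is a reasonable elaboration of a point the paper leaves implicit.
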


For example, if $G$ is the nontrivial semi-direct product $\Z\rtimes \Z$, then 
\[
BG \simeq \R \x_\Z S^1 
\]
where $\Z$ acts by translation on $\R$ and complex conjugation on $S^1$.  Since $\Z \to \Aut(\Z)$ has image isomorphic to $\Z/2$ and kernel $2 \Z \cong \Z$, the above expression simplifies to
\[
BG \simeq S^1 \x_{\Z/2} S^1
\]
where the actions are by rotation by $\pi$ and complex conjugation.  Either space is the Klein bottle.

\subsection{Generalizations of cables}
\label{S:Cables}
We now apply Lemma \ref{SpliceSES} to the Seifert-fibered case.  In this subsection, we first consider the case where $L=(L_0, \dots, L_{n+r})$ is the Seifert link $S_{p,q} = T_{p,q} \cup C_2$, with any ordering of the components (Proposition \ref{P:Cable}).  
This can produce links corresponding to knots in a solid torus.
We then consider the cases where $L=(L_0, \dots, L_{n+r})$ is $R_{p,q}= T_{p,q} \cup C_1 \cup C_2$ or $T_{p,q}$, again with any ordering of the components (Proposition \ref{P:SpliceIntoRpq}).  The case $L=R_{p,q}$ can produce knots in a thickened torus.
The operation $(\varnothing, \dots, \varnothing, J_1, \dots, J_r) \bowtie L$ will not always produce a link in $S^3$ (though it produces a link in some 3-manifold).  For example, if the $J_i$ are all knots, it does only if $r=1$ and $L_{n+r}$ is an unknot.  We assume that $F \in \tL$ is the result of such an operation, without considering the conditions in which this operation produces a link in $S^3$.
We leave keychain links for the next subsection.

\begin{proposition}
\label{P:Cable}
Suppose that $\gcd(p,q)=n+r$ so that $S_{p,q}$ has $n+r+1$ components, and
$F = (\varnothing, \dots, \varnothing, J_1, \dots, J_r) \bowtie S_{p,q}$.
Then  
\[
\boxed{
\tL_F/SO_4 \simeq \Conf(n+r+1,\R^2) \x (S^1)^{2n} \times \prod_{i=1}^r \tL_{J_i}/SO_4
}
\]
where the factor $(S^1)^{2n}$ corresponds to meridional and longitudinal rotations of $L_1, \dots, L_n$.
\end{proposition}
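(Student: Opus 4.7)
The plan is to apply the splicing exact sequence from Lemma \ref{SpliceSES} with $L = S_{p,q}$, identify the image $H$ using the Seifert fibering on $C_L$, and then pass from groups to spaces via the classifying space functor. Since $F$ is irreducible (being a nontrivial splice into an irreducible Seifert-fibered link along its JSJ decomposition), Proposition \ref{FramedLinksKpi1}(b) tells us that $\tL_F/SO_4$ is aspherical with fundamental group $G := \pi_0\Diff(C_F; \d C_F)$. It therefore suffices to identify $G$ with the direct product $\Z^{2n} \x \PB_{n+r+1} \x \prod_{i=1}^r \pi_0\Diff(C_{J_i}; \d C_{J_i})$, after which the homotopy equivalence follows by applying $B(-)$ and using $B\Z \simeq S^1$, $B\PB_{n+r+1} \simeq \Conf(n+r+1,\R^2)$, and Proposition \ref{FramedLinksKpi1}(a) to identify $B\pi_0\Diff(C_{J_i}; \d C_{J_i}) \simeq \tL_{J_i}/SO_4$ for each irreducible $J_i$.

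By Lemma \ref{SpliceSES}, there is a split exact sequence
\[
\{e\} \to \prod_{i=1}^r \pi_0\Diff(C_{J_i}; \d C_{J_i}) \to G \to H \to \{e\},
\]
with $H \leq \pi_0\Diff(C_L; T_0 \cup \dots \cup T_n)$ consisting of the classes that extend to $(S^3,L)$ with induced permutation of $L_{n+1},\ldots,L_{n+r}$ lying in $\mathfrak{S}^\pm(\vec{J})$. For $L=S_{p,q}$, the complement $C_L$ is Seifert-fibered over a planar base surface $P$ with $n+r$ inner boundary circles and one puncture $x_1$ coming from the single singular fiber. A Johannson-type analysis generalizing the one used in the proof of Proposition \ref{DiffSeifertFramed}, but with boundary conditions relaxed on $T_{n+1},\ldots,T_{n+r}$, decomposes $\pi_0\Diff(C_L; T_0 \cup \dots \cup T_n)$ into a factor $\Z^{2n}$ of fiber and annular Dehn twists on the pointwise-fixed inner boundaries $T_1,\ldots,T_n$ (the analogous Dehn twists on the free boundaries $T_{n+1},\ldots,T_{n+r}$ being trivialized by rotating those boundaries along the fiber and base directions) together with a mapping-class-group factor coming from pure braid and potential permutation motions of the $n+r$ inner boundary circles and the puncture $x_1$.

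The crucial step is identifying $H$ as $\Z^{2n} \x \PB_{n+r+1}$ with trivial action on the product of the $\pi_0\Diff(C_{J_i}; \d C_{J_i})$, so that the semi-direct product of Lemma \ref{SpliceSES} becomes a direct product. The $\PB_{n+r+1}$ factor records pure braid motions of the $n+r$ inner boundary circles together with the singular-fiber puncture, and one checks directly that all such motions and Dehn twists extend to $(S^3,L)$ preserving each meridian class. What remains is to argue that the condition of fixing $T_0,\ldots,T_n$ pointwise, together with the extension condition to $(S^3,L)$ imposed by Lemma \ref{SpliceSES}, forces the composition $H \to \mathfrak{S}^\pm(\vec{J})$ to be trivial. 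Thus $G \cong \Z^{2n} \x \PB_{n+r+1} \x \prod_i \pi_0\Diff(C_{J_i}; \d C_{J_i})$, and applying $B(-)$ gives the proposition.

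The main obstacle will be precisely this last verification that $H \to \mathfrak{S}^\pm(\vec{J})$ is trivial. Candidate permutations arise from the $SO_4$-rotations permuting the cable components of $T_{p,q}$ cyclically; these can be modified to fix a neighborhood of the distinguished boundary pointwise, but carefully tracking how the fixed-boundary conditions on $T_0,\ldots,T_n$ together with the meridian-longitude identifications imposed by the splicing convention interact with the Seifert-fibered structure (in particular, with the unique singular fiber $C_1$) should show that any remaining permutation factors as a product of pure braid and Dehn twist elements already accounted for in $\Z^{2n}\x\PB_{n+r+1}$, leaving no residual symmetric-group contribution.
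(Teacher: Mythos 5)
Your overall architecture matches the paper's proof: Lemma \ref{SpliceSES} with $L=S_{p,q}$, a Johannson/Waldhausen analysis of $\pi_0\Diff(C_L; T_0\cup\dots\cup T_n)$ via the Seifert fibering over a planar surface with one singular fiber (giving $\Z^{2n}$ of Dehn twists plus a braid factor on the $n+r$ inner boundaries and the singular-fiber puncture, hence $n+r+1$ strands), and then $B(-)$. But the step you yourself flag as ``the main obstacle'' --- showing that $H\to\mathfrak{S}^\pm(\vec J)$ is trivial --- is a genuine gap, and the resolution you sketch points in the wrong direction. A nontrivial permutation of $L_{n+1},\dots,L_{n+r}$ cannot ``factor as a product of pure braid and Dehn twist elements'': a pure braid induces the identity permutation by definition, so if a nontrivial element of $\mathfrak{S}^\pm(\vec J)$ were realized in $H$, it would genuinely enlarge $H$ beyond $\Z^{2n}\x\PB_{n+r+1}$ and force a nontrivial semi-direct product. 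Nothing about the fixed-boundary conditions on $T_0,\dots,T_n$ rules this out by itself.

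The actual mechanism, which your proposal never invokes, is the linking structure of $S_{p,q}$. By Lemma \ref{SpliceSES}, only those $J_i$ that are knots can be permuted (a multi-component $J_i$ contributes a torus to $\d C_F$, which is fixed pointwise). Since every pair of components of $S_{p,q}$ has nonzero linking number $p'q'$, and since $F=(\varnothing,\dots,\varnothing,J_1,\dots,J_r)\bowtie S_{p,q}$ must be a link in $S^3$ (Remark \ref{SplicingRemark}: the components along which knots are spliced must form an unlink sublink together with $L_0$), at most one $J_i$ can be a knot. Hence $\mathfrak{S}^\pm(\vec J)$ permits no nontrivial permutation, and the nonzero pairwise linking numbers also force every extension to $(S^3,L)$ fixing $L_0,\dots,L_n$ to preserve the orientations of $L_{n+1},\dots,L_{n+r}$, killing the sign part of $\mathfrak{S}_r^\pm$ as well. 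This is what gives $H=\Z^{2n}\x\PB_{n+r+1}$ acting trivially on $\prod_i\pi_0\Diff(C_{J_i};\d C_{J_i})$, and it is the one substantive input your argument is missing.
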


The case $r=0$ is already covered by Proposition \ref{SeifertFramedLinks}, and the only other case relevant to $\T$ or $\V$ is 
when $n=0$ and $r=1$.  That is, $\gcd(p,q)=1$, $S_{p,q}$ is a 2-component $(p,q)$-Seifert link, and $F= J \bowtie S_{p,q}$.  In this case,
\[
\boxed{
\tL_F/SO_4 \simeq S^1 \x \tL_J/SO_4
}
\]
which when $J$ is a knot recovers the result \cite[Theorem 2.3]{BudneyTop}, since $\tL_J/SO_4 \simeq \K_{\underline{j}}$ and $\tL_F/SO_4 \simeq \K_{\underline{f}}$.

\begin{proof}
Write $L=(L_0,\dots, L_{n+r})$ for $S_{p,q}$.
Lemma \ref{SpliceSES} gives us the exact sequence 
\begin{equation}
\label{CableSES}
\{e\} \to \prod_{i=1}^r \pi_0 \Diff(C_{J_i}; \d C_{J_i}) \to \pi_0 \Diff(C_F; \d C_F) \to \pi_0 \Diff(C_L; T_0 \cup \dots \cup T_{n} )
\end{equation}
and the semi-direct product $\pi_0 \Diff(C_F; \d C_F)\cong H \ltimes \prod_{i=1}^r \pi_0 \Diff(C_{J_i}; \d C_{J_i})$ where $H$ is the image of the right-hand map above.

We first determine  
$\pi_0 \Diff(C_L; T_0 \cup \dots \cup T_{n} )$.  
The manifold $C_L$ is Seifert-fibered over the surface $P_{n+r}=D^2 -(D_1 \sqcup \dots \sqcup D_{n+r})$ with one marked point $x$ corresponding to the singular fiber.  
The analysis is similar to that in the proof of Proposition \ref{DiffSeifertFramed}.
By work of Waldhausen \cite[pp.~85-86]{Waldhausen:Annals1968}, $\Diff(C_L)$ is equivalent to the space of diffeomorphisms which respect the Seifert fibering.  
Since an element of $\Diff(C_L; T_0 \cup \dots \cup T_{n})$ fixes the remaining boundary components $T_{n+1}, \dots, T_{n+r}$ setwise, this implies that $\Diff(C_L; T_0 \cup \dots \cup T_{n} )$ fibers over the subspace of $\Diff(P_{n}; \d P_{n} \cup \{x\})$ which setwise fixes the centers $\{y_1, \dots, y_r\}$ of the disks $D_{n+1}, \dots, D_{n+r}$. 
At the level of $\pi_0$, we then have the exact sequence
\begin{equation}
\label{DiffCLrelT0...Tn}
\{e\} \to \Z^n \to \pi_0 \Diff(C_L; T_0 \cup \dots \cup T_{n} ) \to  \Z^n \x \B_{1,\dots, 1, r} \to \{e\}
\end{equation}
 where $\B_{1,\dots, 1,r} < \B_{n+r}$ is the preimage of the Young subgroup $(\mathfrak{S}_1)^n \x \mathfrak{S}_r < \mathfrak{S}_{n+r}$ under the canonical map $\B_{n+r} \to \mathfrak{S}_{n+r}$.  That is, the image in \eqref{DiffCLrelT0...Tn} consists of braids on $n+r$ strands whose induced permutations fix the first $n$ points and which are equipped with framings on the first $n$ strands.
The kernel of \eqref{DiffCLrelT0...Tn} is analogous to that in \eqref{JohannsonSES}: it is identified with $H_1(P_{n})$ and generated by Dehn twists along the fibers $S^1$ of $T_1, \dots, T_n$.  
As in the proof of Proposition \ref{DiffSeifertFramed}, we obtain a splitting of this fibration.
Thus 
$\pi_0\Diff(C_L; T_0 \cup \dots \cup T_{n}) \cong \Z^{2n} \x \B_{1,\dots, 1, r}$.

We now consider the image $H$ of the last map in \eqref{CableSES}.  Because any pair of components of $S_{p,q}$ link nontrivially and $F$ is a link in $S^3$, at most one of the $J_i$ can be a knot.  Then by Definition \ref{EquivRelDef} and Lemma \ref{SpliceSES}, a map $[\psi] \in H < \pi_0\Diff(C_L; T_0 \cup \dots \cup T_{n} )$ must induce the identity permutation of $L_{n+1}, \dots, L_{n+r}$.  It must also preserve their orientations because of the nontrivial linking, so $\psi$ induces the identity in $\mathfrak{S}_r^\pm$.  
Thus $H=\Z^{2n} \x \PB_{n+r} < \Z^{2} \x \B_{1,\dots,1, r}$.  Moreover, the action of $H$ on the kernel in \eqref{CableSES} is by the identity in $\mathfrak{S}_r^\pm$.  
Hence the associated semi-direct product is a direct product.
Applying the classifying space functor $B(-)$ and Proposition \ref{FramedLinksKpi1} completes the proof.
\end{proof}

In the next two examples, $\gcd(p,q)=1$ and $(L_0,L_1)=S_{p,q}$, and we take $L_0=T_{p,q}$.

\begin{example}[Cables of knots]
\label{Ex:CableOfKnot}
Let $F:=J \bowtie S_{p,q}$ be a (framed) $(p,q)$-cable of a (framed) knot $J$.
We consider the classes in $\pi_1(\L_f/SO_4)$ of several canonical loops.  
Consider the following solid tori containing $J$.
Let $U$ be the standard unknotted one (or a slight thickening thereof) 
and let $W$ be a thinner one knotted into the shape of $J$.  
The loops $\mu$ and $\lambda$ of meridional and longitudinal rotations of $U$ each generate $\pi_1(SO_4)$.  
Let $\mu'$ and $\lambda'$ be the loops of meridional and longitudinal rotations of $W$.  
Then $\lambda_0=q \mu' + p \lambda'$.

If $J$ is the figure-eight knot, then $\pi_1(\tL_F/SO_4) \cong \Z\langle \mu_0, \mu', \frac{1}{2}\lambda\rangle$, and $\pi_1(\L_f/SO_4)\cong \Z\langle \mu', \frac{1}{2}\lambda \rangle$, where $\frac{1}{2}\lambda$ can be visualized by rotating the last picture in Figure \ref{F:CableOfKnot} by $180^\circ$ in the plane and then rotating $W$ longitudinally by $180^\circ$.  
If $J$ is a knot $T_{r,s}$, $\pi_1(\L_f/SO_4) \cong \Z\langle \mu'\rangle$ because $\lambda = s \mu + r \lambda$.  

In general, $\mu'$ is called ``rotation of the cabling parameter'' in \cite[Section 5]{BudneyTop}.
Modulo $SO_4$, we can also view $\mu'$ as the Gramain loop $g_J$.
For any nontrivial knot $J$, $\pi_1(\L_f/SO_4) \cong \pi_1(\tL_J/SO_4)$.  
\end{example}

\begin{figure}[h!]
\includegraphics[scale=0.2]{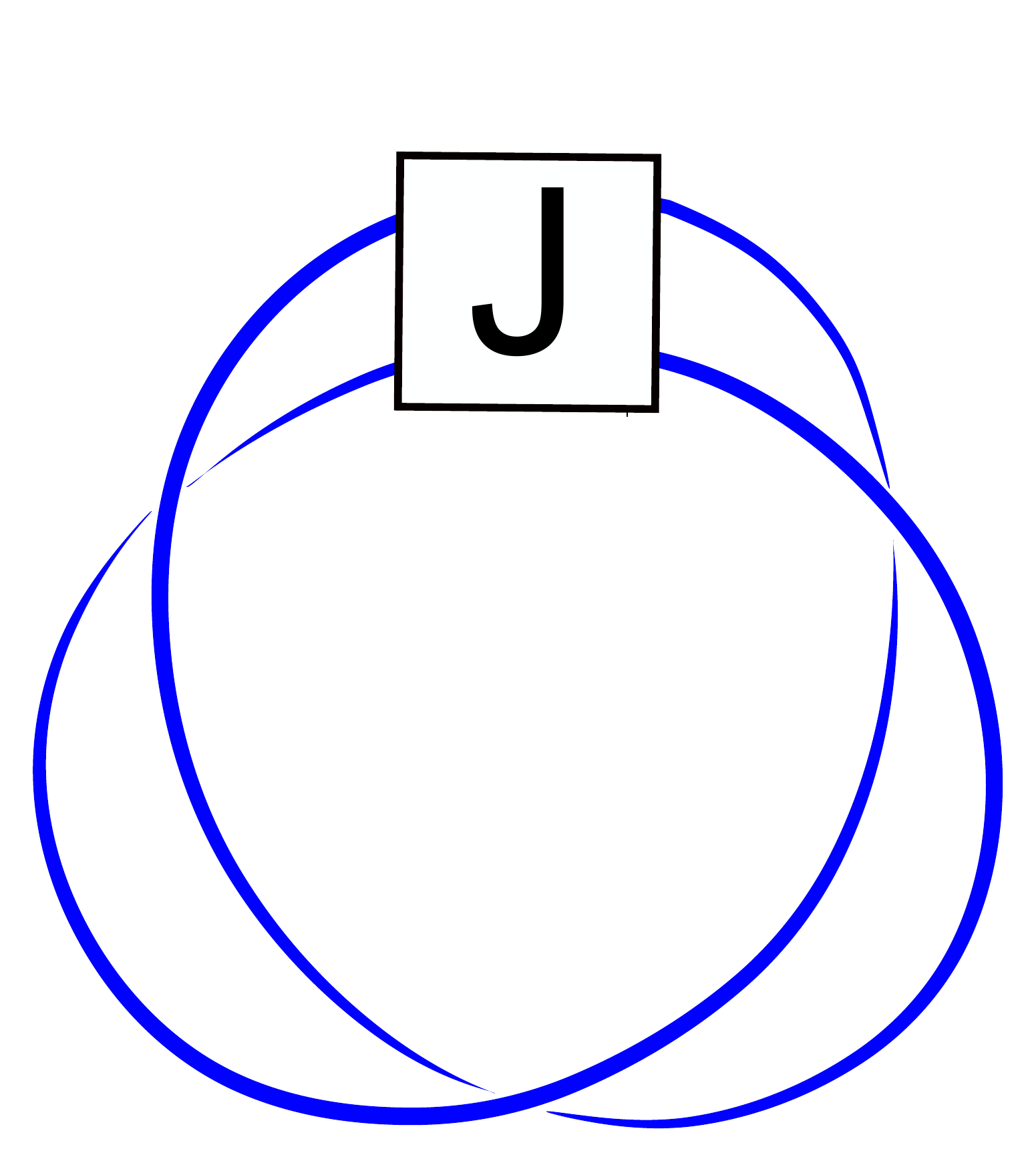} 
\qquad 
\includegraphics[scale=0.2]{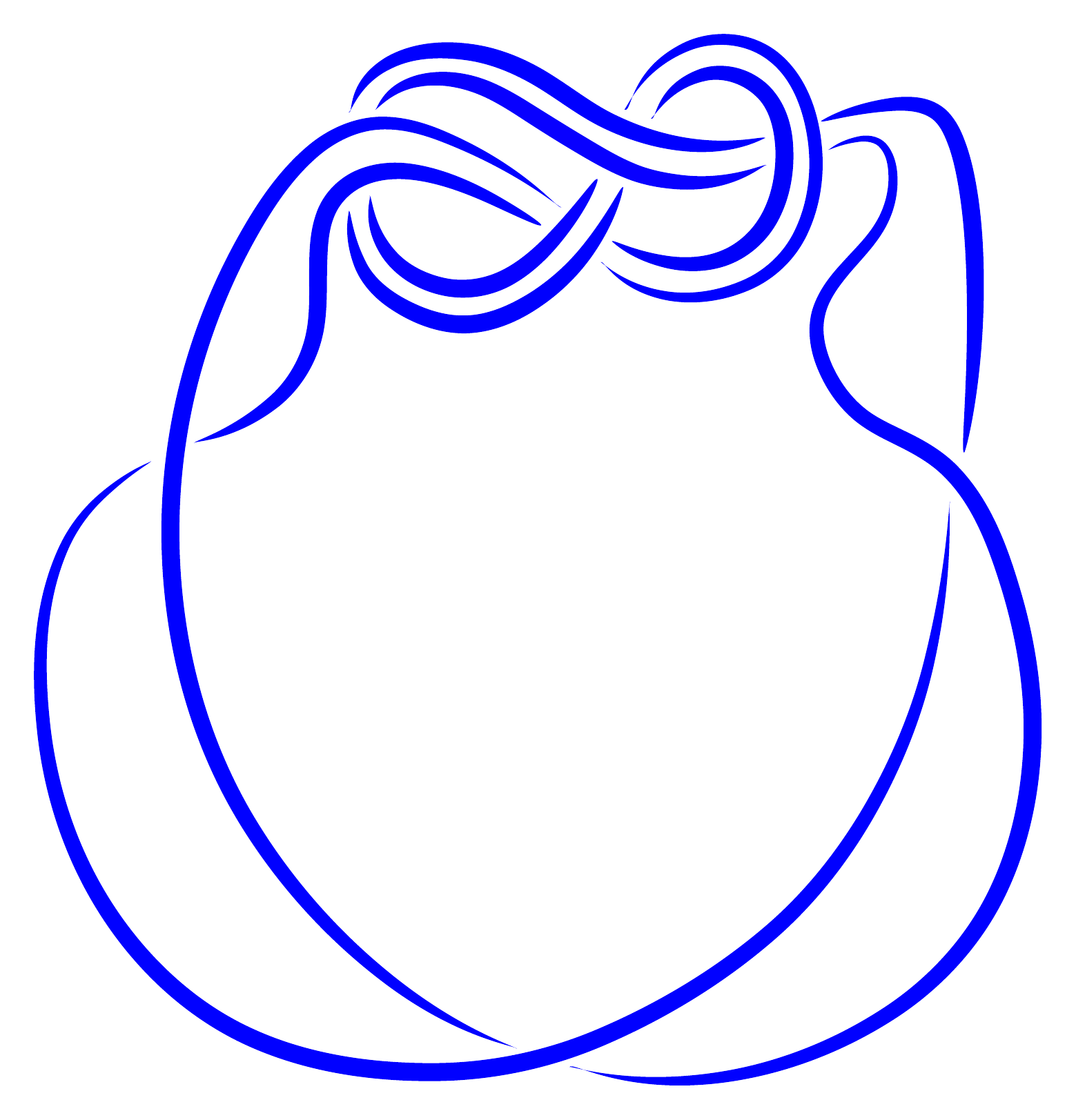}
\qquad
\includegraphics[scale=0.2]{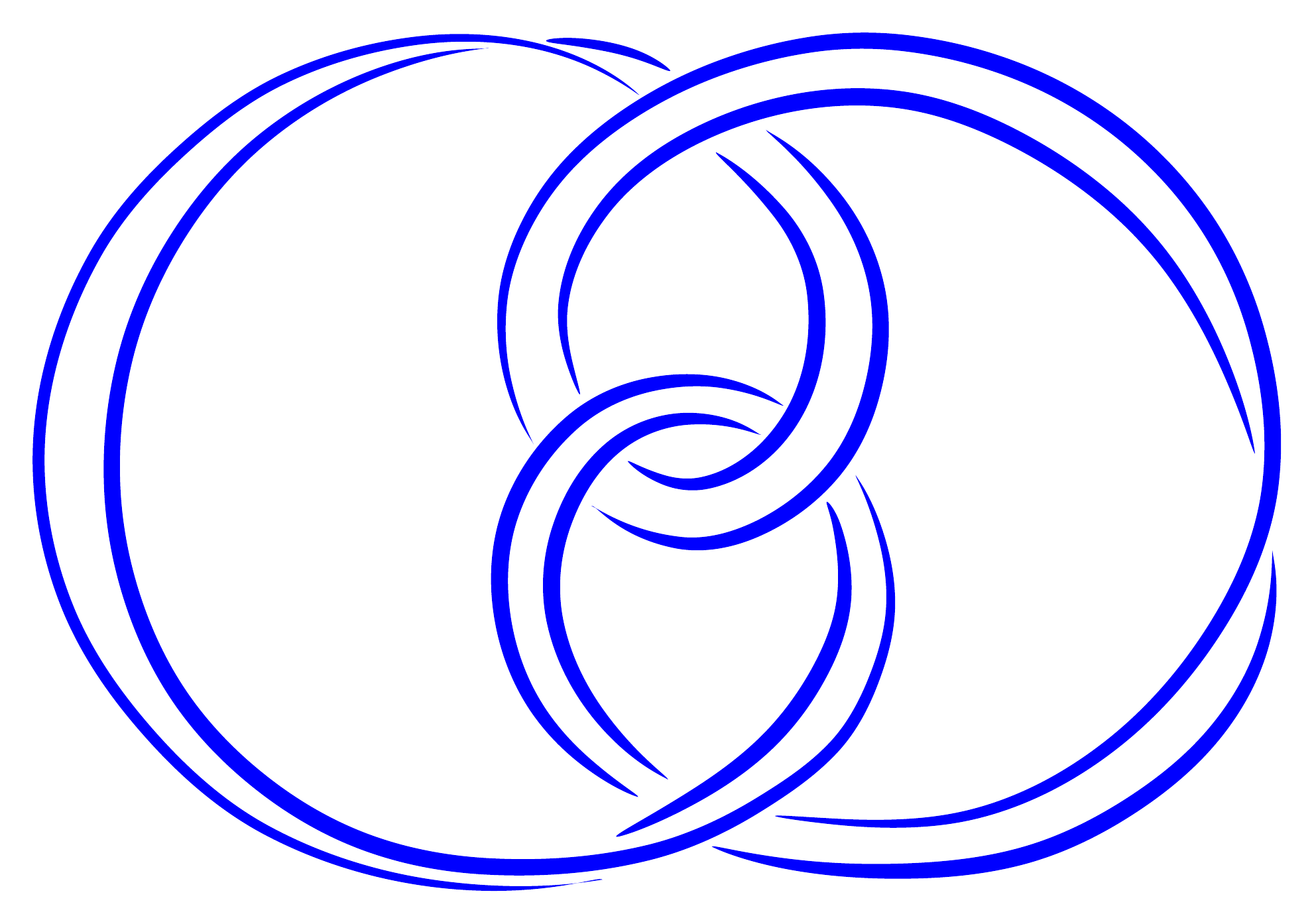}
\qquad 
\begin{forest}
for tree={grow=north, l sep=25pt}
[ [$S_{p,q}$, draw, circle, edge label={node[midway, right] {$\lambda_0$ (and $\mu_0$)}}
[$J$, draw, circle, edge label={node[midway,right] {$\lambda, \mu$}}]]
]
\end{forest}
\caption{
Left: schematic of $J \bowtie S_{2,3}$ for a knot $J$ using the long knot associated to $J$.  Center-left and center-right: the $(2,3)$-cable of the figure-eight knot $J$ (with $0$ framing, which agrees with the blackboard framings of the planar projections shown above, hence no extra twists appear).  
Right: (schematic of) $\mathbb{G}_F$.
Here 
$\pi_1(\L_f/SO_4) \cong \Z \langle \mu, \, \frac{1}{2}\lambda \rangle$.
See Example \ref{Ex:CableOfKnot}.
}
\label{F:CableOfKnot}
\end{figure}

\begin{example}[Cable of Whitehead link]
\label{Ex:CableWh}
Let $F:=\Wh \bowtie S_{p,q}$, the ``$(p,q)$-cable'' of the Whitehead link $\mathrm{Wh}$.  
Then $\pi_1(\tL_F/SO_4) \simeq \Z^5$, and $\pi_1(\L_f/SO_4) \simeq \Z^3$.  
To specify generators, take an embedding of $F$ in which the $(p,q)$-torus knot is in its standard symmetric position on a torus $T$.  
See the first picture in Figure \ref{F:SeifertWhitehead}.
Let $U$ be the solid torus in $S^3$ bounded by $T$ which does not contain the unknotted component.  
The loops of rotations $\mu'$ and $\lambda'$ of $U$ provide two generators.  
Then $\pi_1(\L_f/SO_4) \cong \Z\langle \lambda_1, \mu', \lambda'\rangle$, where $\lambda_1$ reparametrizes the unknotted component.
For $\T_f$, we have $\pi_1(\T_f) \cong \Z\langle \mu_1, \lambda_1,  \mu', \lambda'\rangle$, where $\mu_0$ can be visualized using the second picture in Figure \ref{F:SeifertWhitehead}.  
\end{example}

\begin{figure}[h!]
(a) \includegraphics[scale=0.25]{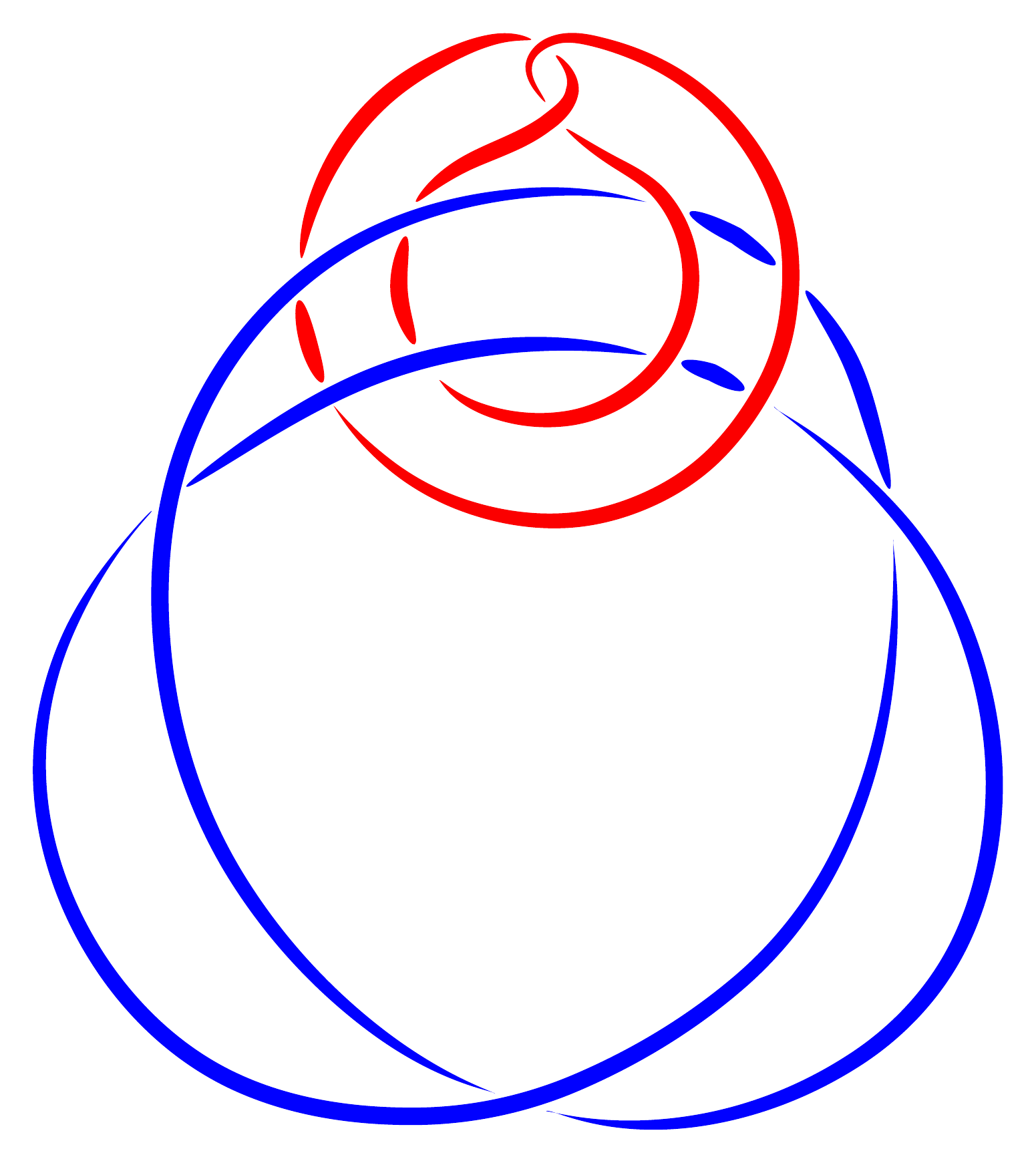} \quad
(b) \raisebox{-1pc}{\includegraphics[scale=0.25]{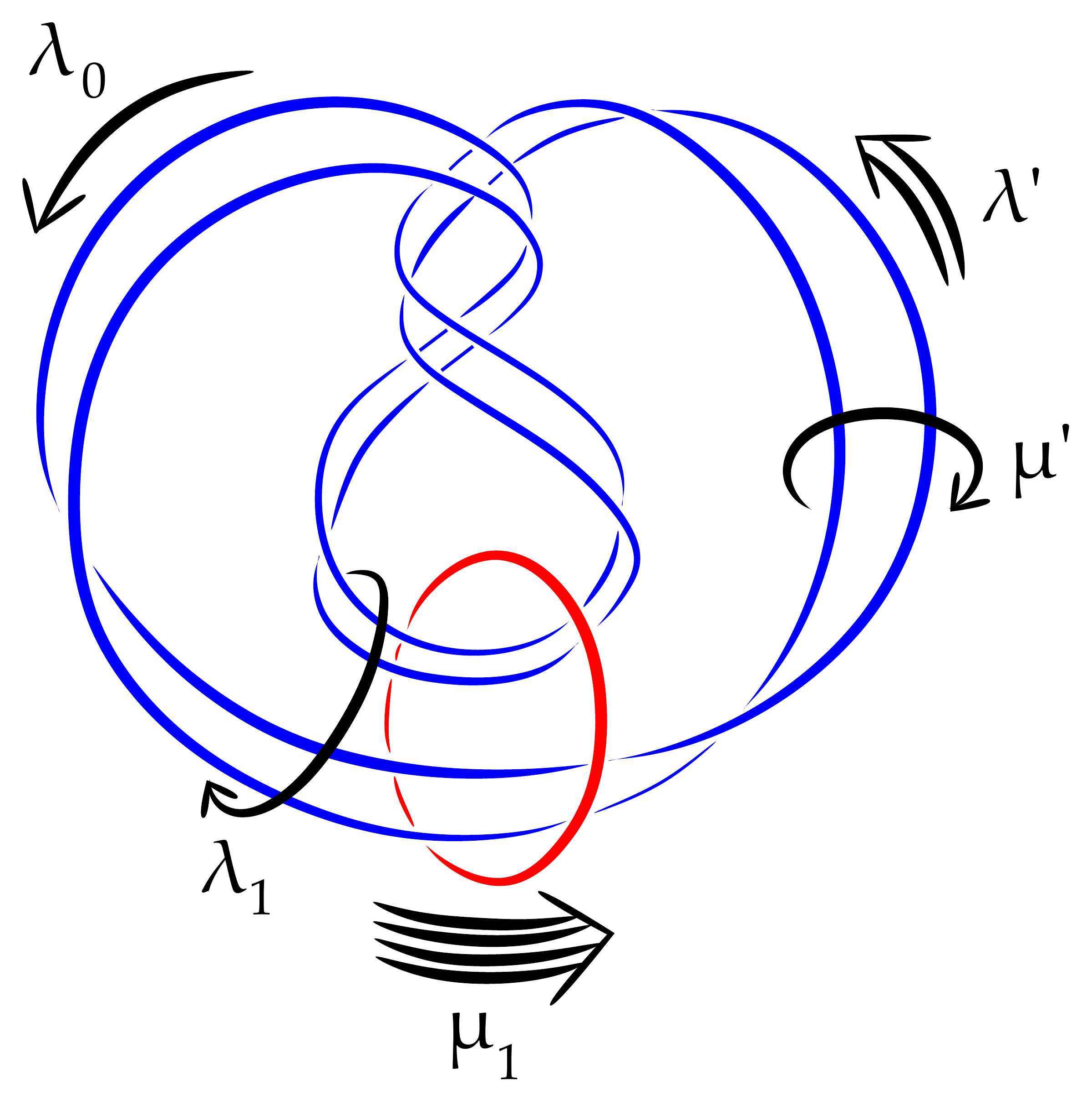}}
\quad 
\begin{forest}
for tree={l sep=25pt}
[ [$\mathrm{Wh}$, circle, draw, edge label={node[midway,right] {$\lambda_1$ (and $\mu_1$)}} 
[$S_{p,q}$, circle, draw, edge label={node[midway,right] {$\lambda', \mu'$}}
[ , edge label={node[midway,right] {$\lambda_0$ (and $\mu_0$)}}]
]]]
\end{forest}
\caption{Two planar projections of the link $F$ in Example \ref{Ex:CableWh} for $(p,q)=(2,3)$, together with $\mathbb{G}_F$.
Here $\pi_1(\L_f/SO_4) \cong \Z\langle \lambda_1,\, \mu',\, \lambda'\rangle$ and 
$\pi_1(\T_f) \cong \Z\langle\mu_1, \, \lambda_1, \, \mu', \, \lambda'\rangle$. }
\label{F:SeifertWhitehead}
\end{figure}

The proof of Proposition \ref{P:Cable} is easily adapted to prove the next result.  For $R_{p,q}$, we simply remove the neighborhood of a singular fiber, which in the base surface of the Seifert fibering corresponds to replacing a marked point by a missing disk.  
For $T_{p,q}$, we instead have an extra singular fiber.
As in Proposition \ref{P:Cable}, the labeling of the components $L_0, \dots, L_{n+r}$ is not needed to determine $\tL_F/SO_4$.

\begin{proposition}
\label{P:SpliceIntoRpq} \ 
\begin{itemize}
\item[(a)]
Suppose that $\gcd(p,q)=n+r-1$ so that $R_{p,q}$ has $n+r+1$ components.  
Let $F = (\varnothing, \dots, \varnothing, J_1, \dots, J_r) \bowtie R_{p,q}$.
Then  
\begin{flalign*}
&& \boxed{
\tL_F/SO_4 \simeq \Conf(n+r,\R^2) \x (S^1)^{2n} \times \prod_{i=1}^r \tL_{J_i}/SO_4.} && 
\end{flalign*}
\item[(b)]
Suppose that $\gcd(p,q)=n+r+1$ so that $T_{p,q}$ has $n+r+1$ components.  
Let $F = (\varnothing, \dots, \varnothing, J_1, \dots, J_r) \bowtie T_{p,q}$.
Then  
\begin{flalign*}
&& \boxed{
\tL_F/SO_4 \simeq \Conf(n+r+2,\R^2) \x (S^1)^{2n} \times \prod_{i=1}^r \tL_{J_i}/SO_4.} && 
\end{flalign*}
\end{itemize}
In either case, the factor of $(S^1)^{2n}$ corresponds to meridional and longitudinal rotations of $L_1, \dots, L_n$.
\qed
\end{proposition}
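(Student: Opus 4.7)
The plan is to mirror the proof of Proposition \ref{P:Cable}, where the only substantive change is the number of singular fibers in the Seifert fibering of $C_L$: zero for $L=R_{p,q}$ and two for $L=T_{p,q}$, versus one for $L=S_{p,q}$. First, I would apply Lemma \ref{SpliceSES} to express $\pi_0\Diff(C_F;\d C_F)$ as a semi-direct product of $\prod_{i} \pi_0\Diff(C_{J_i};\d C_{J_i})$ with the image $H$ inside $\pi_0\Diff(C_L;T_0\cup\dots\cup T_n)$.

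Next, I would compute $\pi_0\Diff(C_L;T_0\cup\dots\cup T_n)$ using Waldhausen's identification of diffeomorphisms of a Seifert-fibered 3-manifold with fiber-preserving ones, together with Johannson's exact sequence. By Proposition \ref{SeifertFiberedLinks}, $C_L$ is Seifert-fibered over a disk with $n+r$ open subdisks removed and $s$ singular fibers, where $s=0$ for $R_{p,q}$ and $s=2$ for $T_{p,q}$. After capping the $r$ movable boundaries to marked points $y_1,\dots,y_r$ exactly as in the proof of Proposition \ref{P:Cable}, the same analysis should yield a split exact sequence
\[
\{e\}\to \Z^n\to \pi_0\Diff(C_L;T_0\cup\dots\cup T_n)\to \Z^n\x \B_{1,\dots,1,r}\to \{e\},
\]
where $\B_{1,\dots,1,r}<\B_{n+s+r}$ now has $n+s$ pointwise-fixed strands (the $n$ capped boundaries together with the $s$ singular fibers) and $r$ freely permuted strands, so $\pi_0\Diff(C_L;T_0\cup\dots\cup T_n)\cong \Z^{2n}\x \B_{1,\dots,1,r}$.

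For the image $H$, I would argue exactly as in Proposition \ref{P:Cable}: every pair of components of $R_{p,q}$ or $T_{p,q}$ has nonzero linking number, so any element of $H$ must fix each $L_{n+i}$ setwise and preserve its orientation. Hence $H=\Z^{2n}\x \PB_{n+s+r}$, and the composition $H\to \mathfrak{S}^\pm(\vec{J})$ is trivial, which collapses the semi-direct product of Lemma \ref{SpliceSES} to a direct product. Finally, applying the classifying space functor together with Proposition \ref{FramedLinksKpi1} and the identifications $B\PB_{k}\simeq \Conf(k,\R^2)$ and $B\Z^{2n}\simeq (S^1)^{2n}$ will yield the claimed homotopy equivalence with $\Conf(n+s+r,\R^2)$, giving $n+r$ points for part (a) and $n+r+2$ points for part (b).

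The step requiring the most care will be verifying that $H$ consists only of pure braids. The nontrivial linking numbers between every pair of components will forbid orientation reversals, while the asymmetries between $C_1$, $C_2$, and the torus-knot components in $R_{p,q}$, and between the two singular fibers of $T_{p,q}$ with their generically distinct Seifert slopes $r/q'$ and $s/p'$, will prevent component permutations in the generic case. This is essentially the same obstacle as in the $S_{p,q}$ case, with just one additional symmetry class (the two singular fibers) to rule out for $T_{p,q}$.
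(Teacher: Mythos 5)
Your proposal is correct and follows exactly the route the paper takes: the paper's entire proof of this proposition is the remark that the argument for Proposition \ref{P:Cable} adapts by deleting the singular fiber (for $R_{p,q}$, giving $s=0$) or adding one (for $T_{p,q}$, giving $s=2$), and your bookkeeping of the $n+s+r$ marked points and the resulting $\Conf(n+r,\R^2)$ and $\Conf(n+r+2,\R^2)$ factors is right. One small caution about your last paragraph: the identity permutation on $L_{n+1},\dots,L_{n+r}$ is \emph{not} forced by asymmetries among the components or the singular fibers (the components of $T_{p,q}$ are cyclically symmetric), but rather, as in the proof of Proposition \ref{P:Cable}, by the facts that Lemma \ref{SpliceSES} only allows permutation of those $J_i$ that are knots and that at most one $J_i$ can be a knot because every pair of components of $R_{p,q}$ or $T_{p,q}$ links nontrivially.
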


This operation does not relate to $\T$, since $R_{p,q}$ has at least 3 components and, to obtain a link in $S^3$, a knot can be spliced along only one of them (since any pair of components links nontrivially).  One can however obtain links $F$ corresponding to $f\in \V$ when $\gcd(p,q)=1$, $n=1$, and $r=1$.
Let  $L=(L_0,L_1,L_2)=R_{p,q}$, where we take $L_2$ to be the knotted component.
If $F=(\varnothing, J) \bowtie R_{p,q}$, then 
\[
\boxed{
\tL_F/SO_4 \simeq (S^1)^3 \x \tL_J/SO_4.
}
\]
Such $F$ gives rise to a knot $f\in \V$ if $J$ is a 2-component KGL.

\begin{example}
\label{Ex:WhSpliceRpq}
Suppose $\gcd(p,q)=1$, and let $L=(L_0,L_1,L_2)$ be the link $R_{p,q}$ where $L_2$ is the knotted component.  Let $F=(\varnothing, \Wh) \bowtie L$ where $\Wh$ is the Whitehead link.  
Proposition \ref{P:SpliceIntoRpq} implies that $\pi_1(\tL_F/SO_4) \cong \Z \x \Z^2 \x \Z^4 \cong \Z^7$.
The link $F$ corresponds to a knot $f$ in $S^1 \x S^1 \x I$.  The knot $f$ is a Whitehead double of $T_{p,q}$ embedded in a neighborhood of $T_{p,q}$ in its standard position (see Figure \ref{F:3-4-SeifertLink}), which we then view as lying in a small neighborhood of the torus 
%$T:=U \cap U'$
 in the standard Heegaard decomposition of $S^3$.  The group $\pi_1(\V_f) \cong \Z\langle \mu, \lambda, \lambda_0, \mu' \rangle$ is generated by not only rotations of this torus and reparametrization, but also a meridional rotation of the neighborhood of $T_{p,q}$.
\end{example}

\subsection{Generalizations of connected sums}
\label{S:ConnectSums}

We now consider a generalization to links of a connected sum of knots.  
In the next proposition, we label the components of the keychain link $KC_{n+r}$ in any order.  The resulting link $F$ will differ according as the special component lies among the first $n+1$ or last $r$ components.  
Nonetheless, the homotopy type of $\tL_F/SO_4$ is the same in the two cases, much like the ordering of the components of $S_{p,q}$ and $R_{p,q}$ in Propositions \ref{P:Cable} and \ref{P:SpliceIntoRpq} was inconsequential.  

\begin{proposition}
\label{P:ConnectSum}
Let  $L=(L_0, \dots, L_{n+r})$ be the (framed) $(n+r+1)$-component keychain link $KC_{n+r}$.
Let $F=(\varnothing, \dots, \varnothing, J_1, \dots, J_r)\bowtie KC_{n+r}$.
Then 
\[
\boxed{
\tL_F/SO_4 \simeq   
(S^1)^{2n} \x 
\left( 
\Conf(n + r, \R^2) 
\x_{\mathfrak{S}(\vec{J})} \prod_{i=1}^r \tL_{J_i} / SO_4
\right)
}
\] 
where $\mathfrak{S}(\vec{J})$ is the Young subgroup of $\mathfrak{S}_r$ for the partition given by  $i \sim k$ iff $J_i$ and $J_k$ are isotopic knots, 
and where the factor $(S^1)^{2n}$ corresponds to meridional and longitudinal rotations of the components $L_1, \dots, L_n$.  
\end{proposition}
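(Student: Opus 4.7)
My approach follows the pattern of Proposition \ref{P:Cable}, with the root vertex now being the keychain link $L = KC_{n+r}$. First, Lemma \ref{SpliceSES} immediately yields
\[
\pi_0 \Diff(C_F; \d C_F) \cong H \ltimes \prod_{i=1}^r \pi_0 \Diff(C_{J_i}; \d C_{J_i}),
\]
where $H$ is the image in $\pi_0 \Diff(C_L; T_0 \cup \dots \cup T_n)$ and the semi-direct structure is via $H \to \mathfrak{S}^\pm(\vec{J})$.  By Proposition \ref{FramedLinksKpi1} and Lemma \ref{SemidirectLemma}, the task reduces to identifying $H$ as a group and showing that its outer action factors through the unsigned subgroup $\mathfrak{S}(\vec{J}) \subset \mathfrak{S}_r$.

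To compute $\pi_0 \Diff(C_L; T_0 \cup \dots \cup T_n)$, I would use the Seifert fibration $C_L \cong P_{n+r} \x S^1$ (no singular fibers, by Proposition \ref{SeifertFiberedLinks}).  By Waldhausen's theorem all diffeomorphisms are isotopic to fiber-preserving ones, and a Johannson-style analysis as in the proof of Proposition \ref{DiffSeifertFramed}, but now permitting the last $r$ boundary tori to be permuted and retwisted, produces a split exact sequence of the same shape as in the proof of Proposition \ref{P:Cable}, namely
\[
\pi_0 \Diff(C_L; T_0 \cup \dots \cup T_n) \cong \Z^{2n} \x \Gamma,
\]
where $\Z^{2n}$ consists of meridional and longitudinal Dehn twists along $T_1, \dots, T_n$, and $\Gamma$ is a framed mixed braid-permutation group projecting onto $\mathfrak{S}_r^\pm$ with kernel containing $\PB_{n+r}$ and the boundary-parallel framing twists.

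Next I would cut $\pi_0 \Diff(C_L; T_0 \cup \dots \cup T_n)$ down to $H$ using the criterion from Lemma \ref{SpliceSES}.  The key geometric observation is that for the keychain link, any orientation-preserving self-diffeomorphism of $(S^3, L)$ that fixes $L_0, \dots, L_n$ with their orientations can act on $L_{n+1}, \dots, L_{n+r}$ only by an \emph{unsigned} permutation: each non-special component links the special component with linking number $\pm 1$, and linking number is a diffeomorphism invariant, so reversing the orientation of a single non-special component is not realizable while leaving the rest intact.  Hence the image of $H$ in $\mathfrak{S}_r^\pm$ is contained in $\mathfrak{S}_r$, and intersecting with $\mathfrak{S}^\pm(\vec{J})$ gives exactly $\mathfrak{S}(\vec{J})$.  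Conversely, every permutation in $\mathfrak{S}(\vec{J})$ is realized by a rotational symmetry of $KC_{n+r}$ that cycles the relevant non-special components around the special one, so
\[
H \cong \Z^{2n} \x \B_F,
\]
where $\B_F < \B_{n+r}$ is the subgroup of braids whose induced permutation lies in $\mathfrak{S}(\vec{J})$.

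Finally, by Proposition \ref{FramedLinksKpi1} and Lemma \ref{SemidirectLemma}, the direct factor $\Z^{2n}$ contributes $(S^1)^{2n}$ and what remains is the Borel construction
\[
B\B_F \x_{\mathfrak{S}(\vec{J})} \prod_{i=1}^r B\pi_0 \Diff(C_{J_i}; \d C_{J_i}).
\]
The $\mathfrak{S}(\vec{J})$-action on the ordered configuration space is free, so $B\B_F \simeq \Conf(n+r, \R^2)/\mathfrak{S}(\vec{J})$, and $B\pi_0 \Diff(C_{J_i}; \d C_{J_i}) \simeq \tL_{J_i}/SO_4$, so this Borel construction telescopes to $\Conf(n+r, \R^2) \x_{\mathfrak{S}(\vec{J})} \prod_i \tL_{J_i}/SO_4$, yielding the boxed formula.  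The main obstacle is the third step: characterizing $H$ precisely, and in particular justifying the reduction from $\mathfrak{S}^\pm(\vec{J})$ to $\mathfrak{S}(\vec{J})$ via the linking-number obstruction, while also checking that every unsigned permutation compatible with $\vec{J}$ is indeed realized by an isometry of $KC_{n+r}$ fixing $L_0, \dots, L_n$ that survives to a nontrivial class in $\pi_0 \Diff(C_L; T_0 \cup \dots \cup T_n)$.
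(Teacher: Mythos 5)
Your proposal is correct and follows essentially the same route as the paper: Lemma \ref{SpliceSES} to set up the semi-direct product, the Seifert-fibration/Johannson analysis (as in Proposition \ref{P:Cable}) to identify $\pi_0\Diff(C_L;T_0\cup\dots\cup T_n)\cong\Z^{2n}\x\B_{1,\dots,1,r}$, the linking-number-with-the-special-component argument to see that the action on $\{L_{n+1},\dots,L_{n+r}\}$ is by unsigned permutations so that $H\cong\Z^{2n}\x\B(\vec J)$, and finally Lemma \ref{SemidirectLemma} (with kernel $\PB_{n+r}$ and image $\mathfrak{S}(\vec J)$) to obtain the boxed Borel construction. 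The only cosmetic difference is your intermediate expression $B\B_F\x_{\mathfrak{S}(\vec J)}(\cdots)$, which should read $B\PB_{n+r}\x_{\mathfrak{S}(\vec J)}(\cdots)$ as in the lemma, but your final telescoped formula is the correct one.
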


Note that in the partition $\sim$ above, links $J_i$ with multiple components lie in singleton sets.

\begin{proof}
With $L=KC_{n+r}$, $F$, and the $J_i$ as above, let $T_0, T_{1}, \dots, T_n$ be the tori which are in both $\d C_L$ and $\d C_F$.  
Let $G:=\pi_0 \Diff(C_F; \d C_F)$ and $K:=\prod_{i=1}^r K_i :=\prod_{i=1}^r \pi_0 \Diff(C_{J_i}; \d C_{J_i})$. 
Lemma \ref{SpliceSES} gives a map $\pi_0 \Diff(C_F; \d C_F) \to \pi_0 \Diff(C_L; T_0, T_{1}, \dots, T_{n})$, and if $H$ is the image of this map, then $G\cong H \ltimes K$.

We first describe $H$.  
As in the proof of Proposition \ref{P:Cable}, let $\B_{1,\dots, 1,r} < \B_{n+r}$ be the preimage of $(\mathfrak{S}_1)^n \x \mathfrak{S}_r < \mathfrak{S}_{n+r}$ under the map $\B_{n+r} \to \mathfrak{S}_{n+r}$.  By an argument analogous to the one given for a Seifert link there, $ \pi_0 \Diff(C_L; T_0, T_{1}, \dots, T_{n}) \cong (\Z^2)^{n} \x \B_{1,\dots,1,r}$.
A diffeomorphism in this group acts on $L$ by $\mathfrak{S}_r < \mathfrak{S}_r^\pm$ because it fixes some component and preserves the $+1$ linking number of the special component with the others.
Moreover, it acts by projection to $\B_{1,\dots,1,r}$ followed by the canonical map to $\mathfrak{S}_r$.
Let $\B(\vec{J}) < \B_{1,\dots,1,r}$ be the preimage of $\mathfrak{S}(\vec{J})$ under the map $\B_{1,\dots, 1,r} \to \mathfrak{S}_{r}$.
(Alternatively, it is the preimage of $\mathfrak{S}^\pm(\vec{J})$ under the composition $\B_{1,\dots, 1,r} \to \mathfrak{S}_{r} \incl \mathfrak{S}_{r}^\pm$, with $\mathfrak{S}^\pm(\vec{J})$ is as in Definition \ref{EquivRelDef}.)
Then by Lemma \ref{SpliceSES}, $H = \Z^{2n} \x \B(\vec{J})$.  

We now consider the action of $H$ on $K$ in the semi-direct product.
The factor $\Z^{2n}$ acts trivially on $K$, so $G\cong \Z^{2n} \x (\B(\vec{J}) \ltimes K)$.
The action of $\B(\vec{J})$ on $K=\prod_{i=1}^r K_i$ is given by the composition $\B(\vec{J}) \twoheadrightarrow \mathfrak{S}(\vec{J}) \incl \mathfrak{S}_r$.
The kernel of the surjection $\B(\vec{J}) \twoheadrightarrow \mathfrak{S}(\vec{J})$ is $\PB_{n+r}=\B_{1,\dots, 1, 1, \dots, 1}$.
By Lemma \ref{SemidirectLemma} with $H=\B(\vec{J})$, $H''= \mathfrak{S}(\vec{J})$, and $H'=\PB_{n+r}$, we have
\begin{align*}
\tL_F/SO_4 \simeq BG & \simeq B(\Z^{2n}) \x \left( B(\PB_{n+r})  \x_{\mathfrak{S}(\vec{J})} \prod_{i=1}^r BK_i \right) 
\end{align*}
which yields the claimed boxed formula.
\end{proof}

If $F$ is a link giving rise to a knot in $\T$ or $\V$, then the special component of $KC_{n+r}$ must be one of $L_0, \dots, L_{n+r}$ (without loss of generality $L_0$), for otherwise $F$ would have no unknotted components.    
For arbitrary links one could splice into $KC_{n+r}$ with different orderings of the components to obtain different results.
For example, $F:=(\varnothing, J) \bowtie KC_2$ with $L_0$ as the special component is a link as shown in Figure \ref{F:TrefoilSumEmpty} or \ref{F:Fig8SumEmpty}.  But $F':=(\varnothing, J) \bowtie KC_2$ with $L_2$ as the special component
is a pair of parallel knotted and linked components.
However, $\tL_F/SO_4 \simeq \tL_{F'}/SO_4$, as guaranteed by Proposition \ref{P:ConnectSum}.

We now consider some special cases.  In all the examples below, $L_0$ is the special component of $KC_{n+r}$.
Recall that for the framed 3-component keychain link $KC_2$,  $\pi_1(\tL_{KC_2}/SO_4) \cong \Z^5$.

\begin{example}[Connected sum of knots]
\label{Ex:ConnectSumOfTwo}
If $n=0$ and $r=2$, i.e.~$F:=(J_1,J_2)\bowtie KC_2$, and both $J_i$ are knots, we recover Budney's result for long knots, as ensured by Proposition \ref{FramedKnotsInS3VsLongKnots}.  For distinct $J_i$, 
\[
\pi_1(\tL_F / SO_4) \cong \PB_2 \x \pi_1(\tL_{J_1}/SO_4 \x \tL_{J_2}/SO_4)
\]
while if the $J_i$ are isotopic to the same knot $J$, 
\[
\pi_1(\tL_F / SO_4) \cong \B_2 \ltimes (\pi_1(\tL_J/SO_4))^2.
\]
The action of $\B_2$ on the right-hand factors is given by swapping them.
The description of a generator $p$ of $\PB_2$ most generalizable to $n>2$ is a motion that first pulls $J_1$ through $J_2$ counter-clockwise (thus swapping their positions) and then pulls $J_2$ through $J_1$ counter-clockwise.  This can be visualized using Figure \ref{F:ConnectSumOfTwo} (a).  
Similarly, $\B_2$ is generated by just the first half of that motion.  
Passing to the unframed knot $f$ in either case means taking 
the quotient in $\pi_1$ by $\mu_0$, i.e.~the diagonal in the subgroup $\Z^2 < \pi_1(\tL_{J_1}/SO_4) \x \pi_1(\tL_{J_2}/SO_4)$ spanned by the Gramain loops of the factors.

If the $J_i$ are distinct torus knots, as in Figure \ref{F:ConnectSumOfTwo}, then $\L_f/SO_4 \simeq S^1 \x S^1$ and $\pi_1(\L_f/SO_4) \cong \PB_2 \x \Z$.
If $J_1=J_2 = T_{p,q}$, as in Figure \ref{F:SumTrefoils} (a) but taken as a knot in $S^3$, 
then $\L_f/SO_4 \simeq S^1 \x_{\Z/2} S^1$, the Klein bottle, with $\pi_1(\L_f/SO_4) \cong \B_2 \ltimes \Z$.
For the generator of $\Z$ in either case, we can take the Gramain loop of either $J_1$ or $J_2$.  

Consider a loop $b_1$ that pulls $J_2$ a full circle counter-clockwise by passing through $J_1$, and a loop $b_2$ that moves $J_1$ a full circle counter-clockwise by passing through $J_2$.  
Each $b_i$ is equivalent to the product of the Fox--Hatcher loop of $J_i$ and possibly a loop of reparametrizations $\lambda_0^{\pm 1}$.
If both $J_i$ are torus knots, then $b_1$, $b_2$, $p$, and the loop $\lambda_0$ of reparametrizations are all equivalent,
since the Fox--Hatcher loop of a torus knots is trivial modulo $SO_4$.
See Figure \ref{F:ConnectSumOfTwo} (b).
So any of these four loops represent a generator of $\PB_2$.
(On the other hand, in the presence of a second unknotted component, each $b_i$ is a generator in a braid group; see Example \ref{Ex:MoreGeneralConnectSum}.)
If both $J_i$ are torus knots, then $p=b_i +\lambda_0$, so we can also take $\lambda_0$ to generate the factor $\PB_2$.
\end{example}

\begin{figure}[h!]
(a) \includegraphics[scale=0.23]{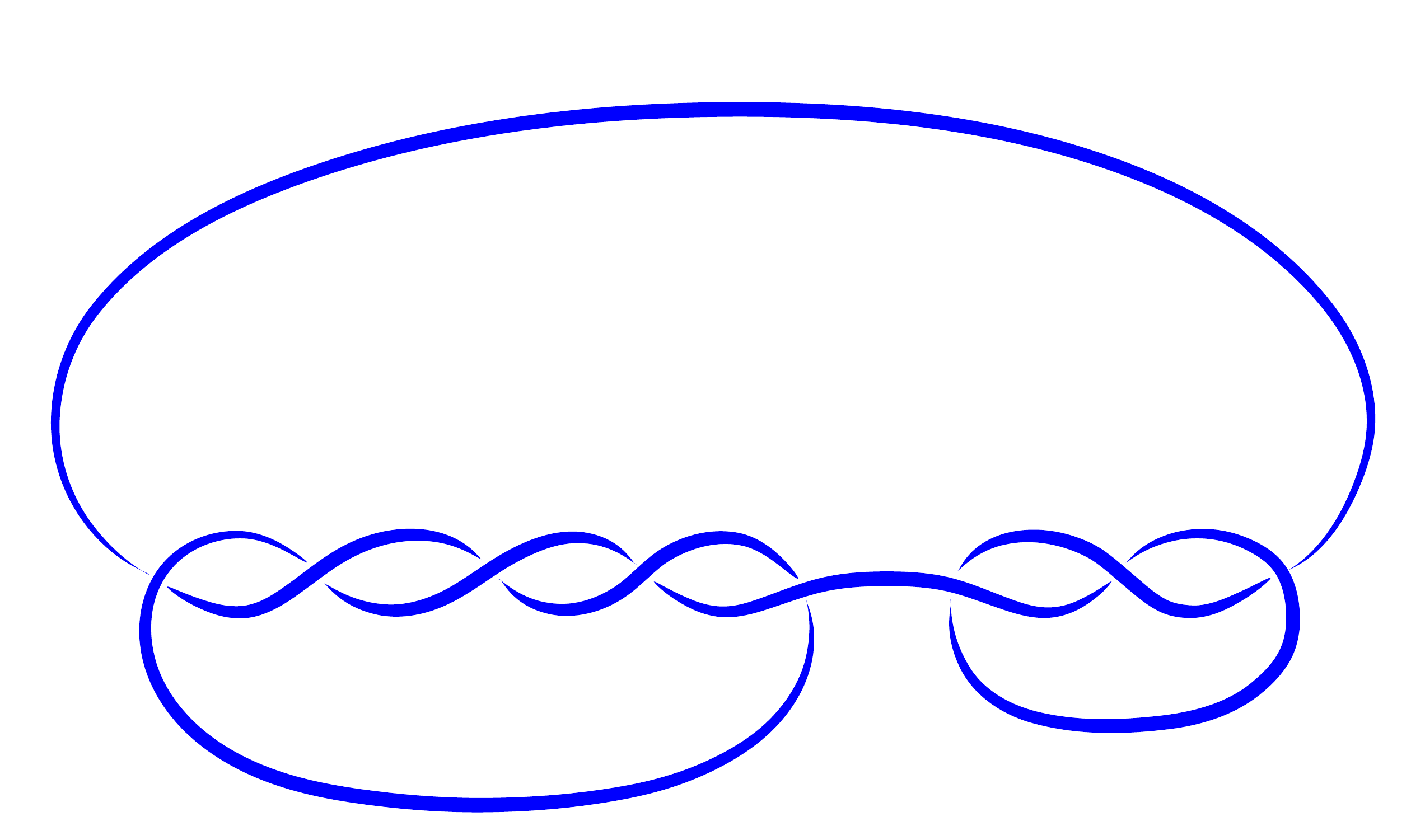}\qquad
(b) \includegraphics[scale=0.23]{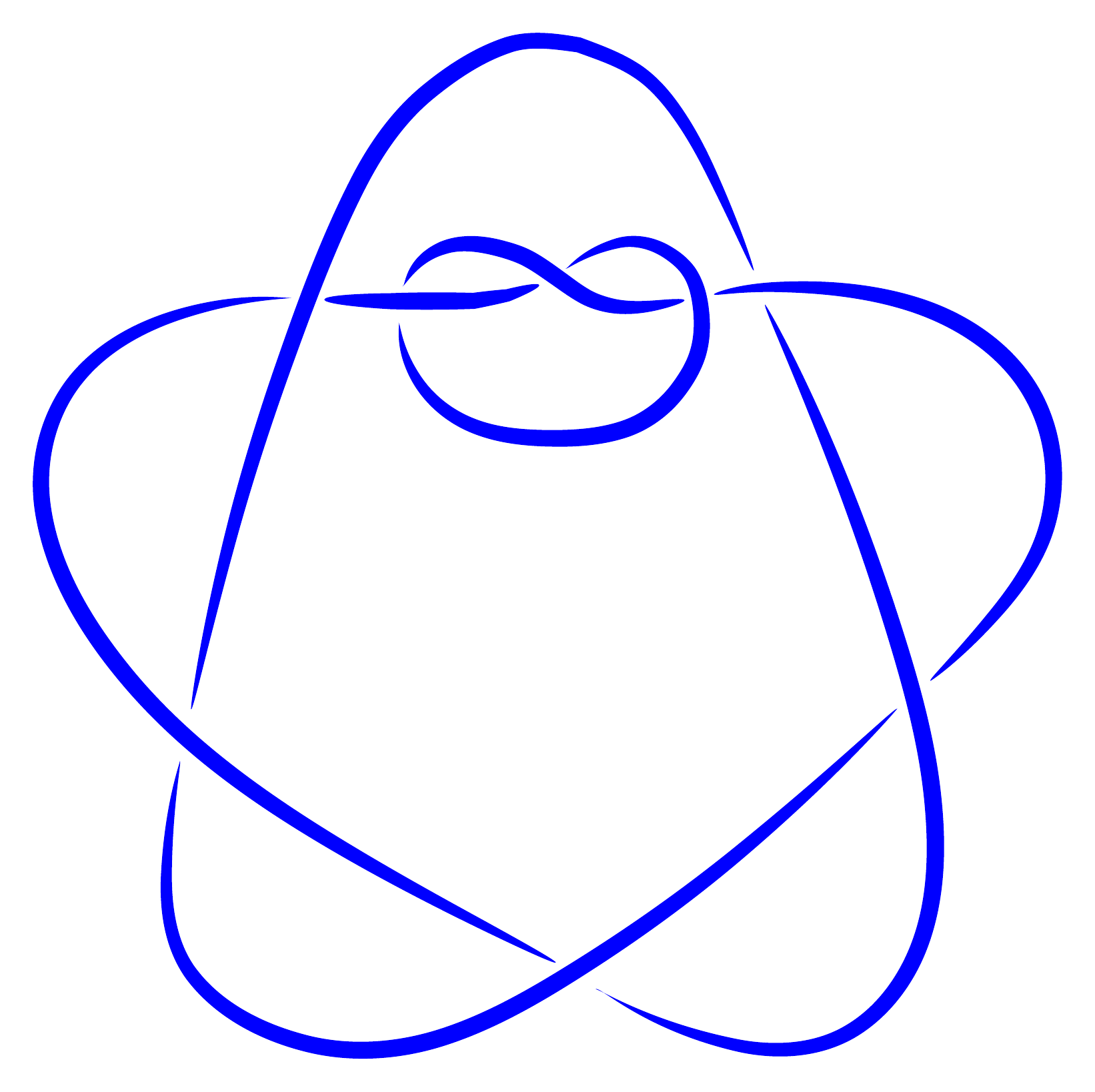}\qquad
\begin{forest}
for tree={grow=north, l sep=25pt, s sep=35pt}
[ [ $KC_2$, circle, draw, edge label={node[midway, right] {$\lambda_0$ (and $\mu_0$)}}
[$J_2$, circle, draw, edge label={node[midway, left] {$g_{J_2}$}}, edge label={node[midway,right] {$g_{J_2}=g_{J_1}^{-1}$}} ]
[ $J_1$, circle, draw, edge label={node[midway,left] {$g_{J_1}$}}]
 ]]
\end{forest}
\caption{Two planar projections of the knot $F$ from Example \ref{Ex:ConnectSumOfTwo}, with $J_1$ as the cinquefoil and $J_2$ as the trefoil, together with $\mathbb{G}_F$.  Here $\pi_1(\L_F/SO_4) \cong \Z \langle \lambda_0, \, g_{J_1},\, g_{J_2}\rangle$, while 
$\pi_1(\L_f/SO_4) \cong \Z \langle \lambda_0,\, g_{J_1}\rangle$.}
\label{F:ConnectSumOfTwo}
\end{figure}

\begin{example}
\label{Ex:ConnectSumJPlusEmpty}  
For $n=1$, $r=1$, i.e.~$F=(\varnothing, J)\bowtie KC_2$we get 
\begin{align*}
\pi_1(\tL_F / SO_4) &\cong \Z\langle \lambda_1,\, \mu_1,\, \lambda_0 \rangle \x \pi_1(\tL_{J}/SO_4)\\
\end{align*}
For $J$ a torus knot, 
\[
\pi_1(\tL_F / SO_4) \cong \Z\langle \lambda_1,\ \mu_1,\ \lambda_0,\ \mu_0\rangle
\]
We could replace either $\mu_0$ or $\lambda_1$ by the Gramain loop $g_J$ of $J$, since modulo $SO_4$, $g_J+\mu_0=\lambda_1$.
See Figure \ref{F:TrefoilSumEmpty}.

For $J$ the figure-eight knot,
\[
\pi_1(\tL_F / SO_4) \cong \Z\langle \lambda_1,\ \mu_1,\ \lambda_0,\ \mu_0,\  \frac{1}{2}(\lambda_0+ \lambda_J) \rangle
\]
where $\lambda_J$ is a loop that pushes the unknotted component a full loop along the knot.  It can also be viewed as the Fox--Hatcher loop of the long figure-eight knot; see the first picture in Figure \ref{F:Fig8SumEmpty}.

Passing to $\pi_1(\T_f)$ means taking the quotient by $\mu_0$.
For a torus knot $J$, this space is homotopically one dimension larger than the space of a torus knot in its most symmetric position in a solid torus.
Passing to $\pi_1(\L_f / SO_4)$ means taking the quotient by $\mu_1$.
\end{example}

\begin{figure}[h!]
\includegraphics[scale=0.25]{empty-sum-trefoil-long.pdf} \quad
\includegraphics[scale=0.25]{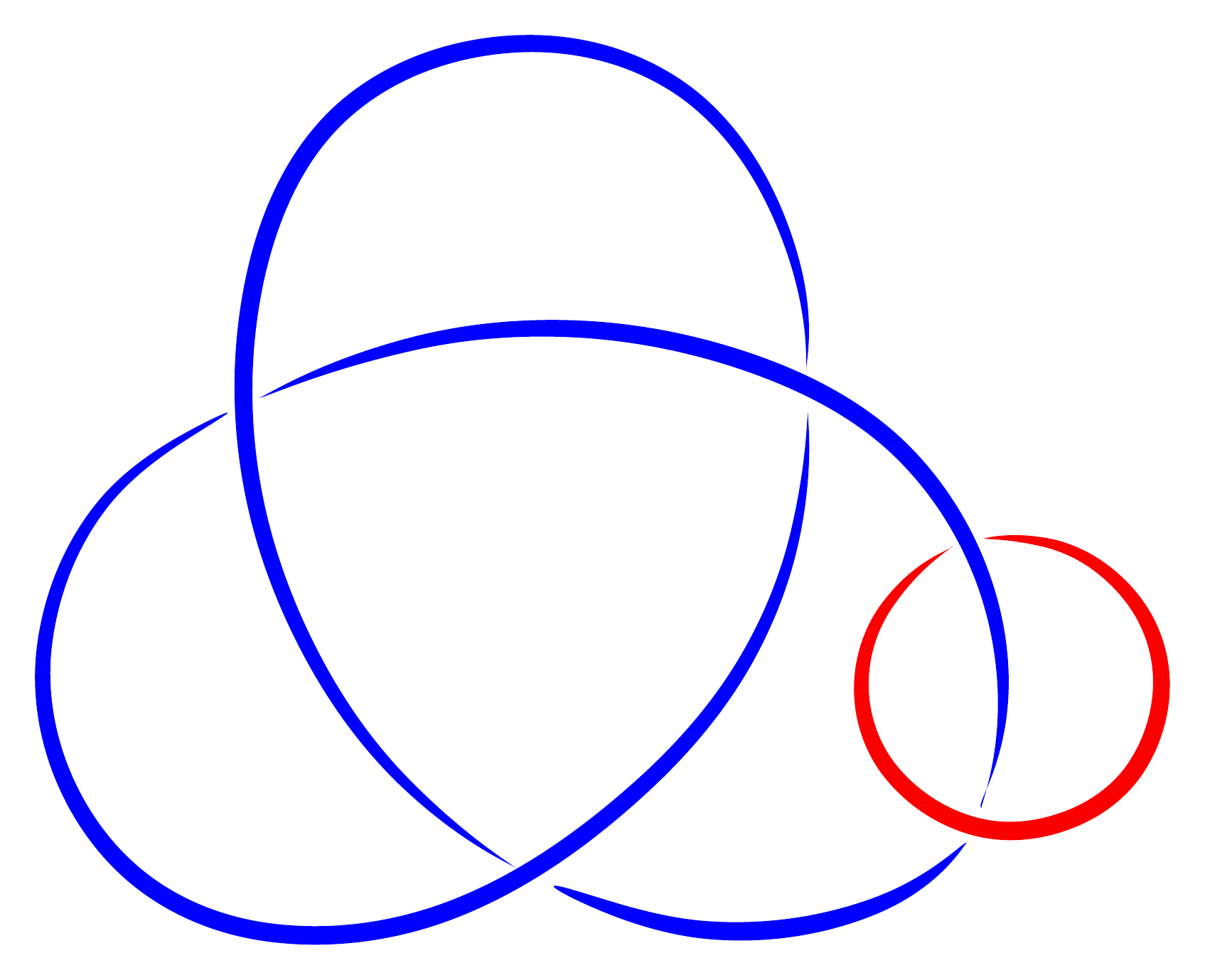} \quad
\begin{forest}
for tree={grow=north, l sep=25pt, s sep=35pt}
[ [ $KC_2$, circle, draw, edge label={node[midway,right] {$\lambda_0$ (and $\mu_0$)}}
[$J$, circle, draw, edge label={node[midway, right] {$g_J=\lambda_1 - \mu_0$}} ]
[ , edge label={node[midway,left] {$\lambda_1$ (and $\mu_1$)}} ]
 ]]
\end{forest}
\caption{Two planar projections of links $F$ from Example \ref{Ex:ConnectSumJPlusEmpty} with $J$ as the trefoil, together with $\mathbb{G}_F$.  Here $\pi_1(\L_f / SO_4) \cong \Z\langle \lambda_0,\ \lambda_1\rangle$.}
\label{F:TrefoilSumEmpty}
\end{figure}

\begin{figure}[h!]
\includegraphics[scale=0.25]{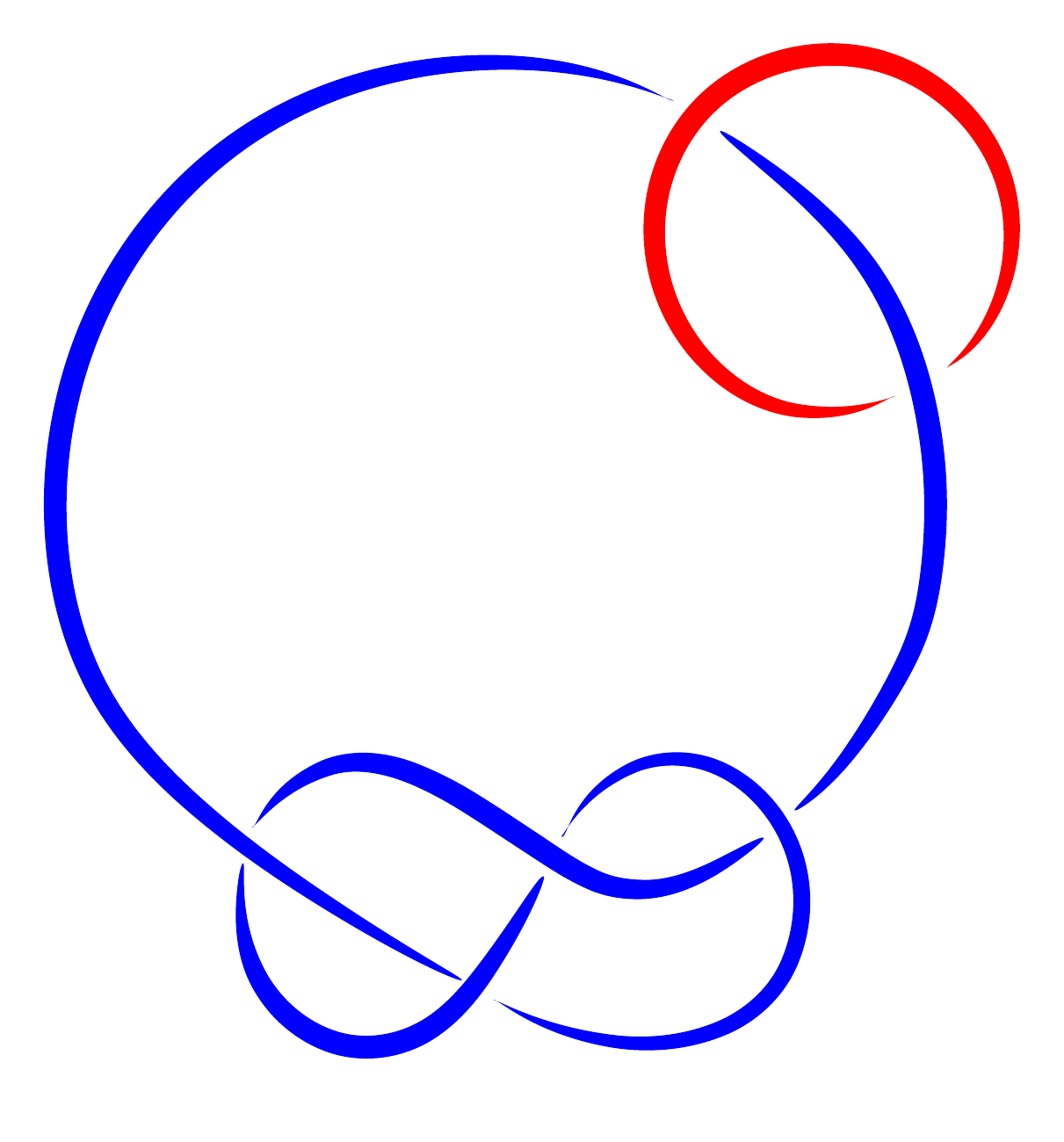} \qquad
\includegraphics[scale=0.25]{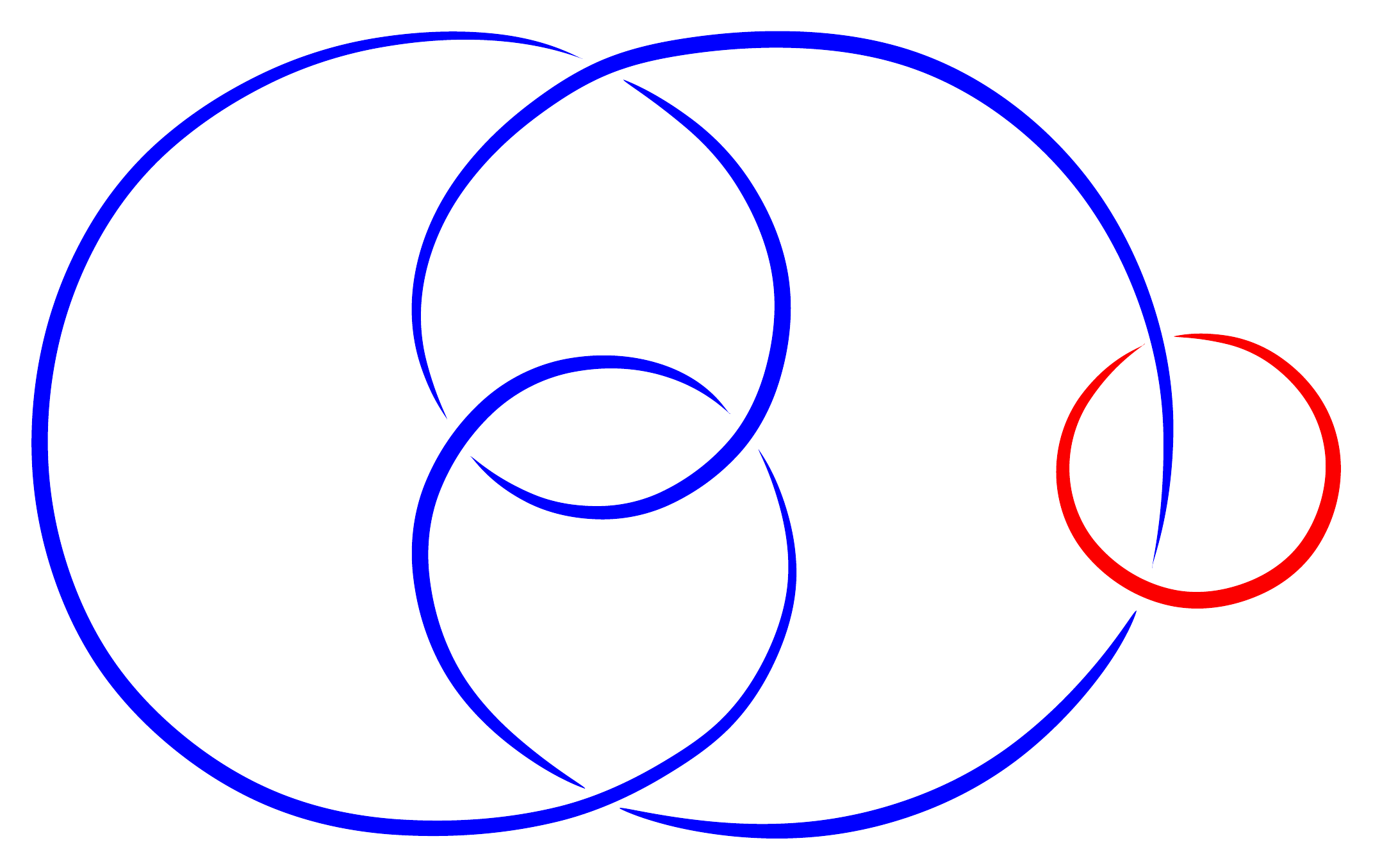} 
\begin{forest}
for tree={grow=north, l sep=25pt, s sep=35pt}
[ [ $KC_2$, circle, draw, edge label={node[midway,right] {$\lambda_0$ (and $\mu_0$)}}
[$J$, circle, draw, edge label={node[midway, right] {$\lambda_J, \ g_J=\lambda_1 - \mu_0$}} ]
[ , edge label={node[midway,left] {$\lambda_1$ (and $\mu_1$)}} ]
 ]]
\end{forest}
\caption{Two planar projections of links $F$ from Example \ref{Ex:ConnectSumJPlusEmpty} with $J$ as the figure-eight knot, together with $\mathbb{G}_F$.  Here $\pi_1(\L_f / SO_4) \cong \Z\langle \lambda_0,\ \lambda_1,\ \frac{1}{2}(\lambda_0+ \lambda_J) \rangle$.}
\label{F:Fig8SumEmpty}
\end{figure}

\begin{example}
\label{Ex:MoreGeneralConnectSum}
More generally, let $F = (\varnothing, J_1, \dots, J_r) \bowtie KC_{r+1} $, the result of splicing $r$
knots into the $(r+2)$-component keychain link $KC_{r+1}$.  
If the $J_i$ are distinct, then 
\begin{align*}
\pi_1(\tL_F/SO_4) &\cong \Z^2 \x \PB_{r+1} \x \prod_{i=1}^r \pi_1(\tL_J/SO_4)
\end{align*}
View $F_0$ as lying in the solid torus $U$.
 For $i<j<r+1$, the standard generators $p_{ij}$ are described similarly as for braids except that ``strand $k$ passes \emph{over} strand $\ell$ to the right/left'' corresponds to ``$J_k$ passes \emph{through} $J_\ell$ counter-clockwise/clockwise''.  So $p_{ij}$ is given by a motion where $J_i$ passes through $J_{i+1}, \dots, J_j$ counter-clockwise, then $J_j$ passes through $J_i$ counter-clockwise, and finally $J_i$ passes clockwise back through $J_{j-1}, \dots, J_{i+1}$, thus returning to its original position.  Each generator $p_{i,r+1}$ is given by $J_i$ passing through all of the other knots, counter-clockwise, until it returns to its original position.  
One may visualize the $(r+1)$-th configuration point at the center of $D^2$, with the remaining points arranged in a circle around it.
The full twist generating $Z(\PB_{r+1})$ corresponds to the product of a loop $\mu_1$ of longitudinal rotations of $U$ and
the loop $\lambda_0$ of reparametrizations of $F_0$.

If the $J_i$ are copies of the same knot $J$, then 
\begin{align*}
 \pi_1(\tL_F/SO_4) &\cong \Z^2 \x \B_{1,r} \x_{\mathfrak{S}_r} \pi_1(\tL_J/SO_4)^r.
\end{align*}
where $\mathfrak{S}_r$ permutes the last $r$ points in a configuration in $\Conf(r+1,\R^2)$ and the $r$ factors of $\tL_J/SO_4$.  
View $\B_{1,r}$ now as the subgroup of $\B_{r+1}$ inducing permutations of $\{0,\dots, r\}$ that fix $0$.  
It is isomorphic to the \emph{annular} (or \emph{circular}) \emph{braid group} $\mathcal{CB}_r$ on strands labeled $\{1,\dots,r\}$; see e.g.~\cite{Kent-Peifer} or \cite{Bellingeri-Bodin}.  Let $\beta_{i,i+1}$ denote the generator of $\mathcal{CB}_r$ that passes the $i$-th strand over the $(i+1)$-th strand, with the subscripts lying in $\{1,\dots,r\}$ and interpreted modulo $r$.  Then $\beta_{i,i+1}$ corresponds to passing $J_i$ through $J_{i+1}$.  A generator $\zeta$ that cyclically permutes the strands (without any crossings after projecting to $S^1 \x I$) corresponds to longitudinal rotation by $2\pi/r$ of a solid torus containing the knotted component followed by reparametrization in the opposite direction by $2\pi/r$.

To compute $\T_f$, suppose for simplicity that $J$ is a torus knot, so $\pi_1(\tL_J/SO_4) \cong \Z$.  
Then 
\[
\pi_1(\tL_F/SO_4) \cong \Z^2 \x \B_{1,r} \ltimes \Z^r = 
\Z \langle \mu_1, \lambda_1 \rangle \x \B_{1,r} \ltimes \Z \langle g_{J,1}, \dots, g_{J,r} \rangle
\]
 where the $g_{J,i}$ are the Gramain loops of the $r$ summands.  
Then $\pi_1(\T_f)$ is the quotient by the group generated by $\mu_0 = \lambda_1 g_{J,1} \dots g_{J_r}$ (and $\B_{1,r}$ acts trivially on the diagonal $\langle g_{J,1} \dots g_{J_r}\rangle$), so
\[
 \pi_1(\T_f) \cong \Z\langle \mu_1 \rangle \x \B_{1,r} \ltimes \Z\langle g_{J,1}, \dots, g_{J,r} \rangle.
 \]
See Figure \ref{F:SumTrefoils}.
For distinct torus knots $J_1, \dots, J_r$, 
\[
 \pi_1(\T_f) \cong \Z^2 \x  \PB_{r+1} \x \Z^{r-1} \cong \Z \langle \mu_1, \, \lambda_1, \,g_{J,1}, \dots, g_{J,r-1} \rangle \x \PB_{r+1}.
 \]
Alternatively, we could discard $\lambda_1$ and instead take all $r$ Gramain loops.
Taking the quotient by $\mu_1$ gives $\pi_1(\L_f/SO_4) \cong\Z \x  \B_{1,r} \ltimes \Z^{r-1}$ when all $J_i=J$ and 
$\pi_1(\L_f/SO_4) \cong\Z^r \x  \PB_{r+1}$ when the $J_i$ are distinct.  So for instance, if $r=2$ and $J_1$ and $J_2$ are distinct torus knots, $\pi_1(\L_f/SO_4) \cong\Z^2 \x  \PB_3$, whereas for $F=J_1 \# J_2$ in Example \ref{Ex:ConnectSumOfTwo}, we had $\pi_1(\L_f/SO_4) \cong \Z \x  \PB_2$.  An extra factor of $\Z$ comes from reparametrizing the second component $f_1$, while the extra braid generators come from loops called $b_1$ and $b_2$ in Example \ref{Ex:ConnectSumOfTwo}, corresponding to
 $p_{13}$ and $p_{23}$ as in the first paragraph of this example.
\end{example}

\begin{figure}
(a)  \includegraphics[scale=0.22]{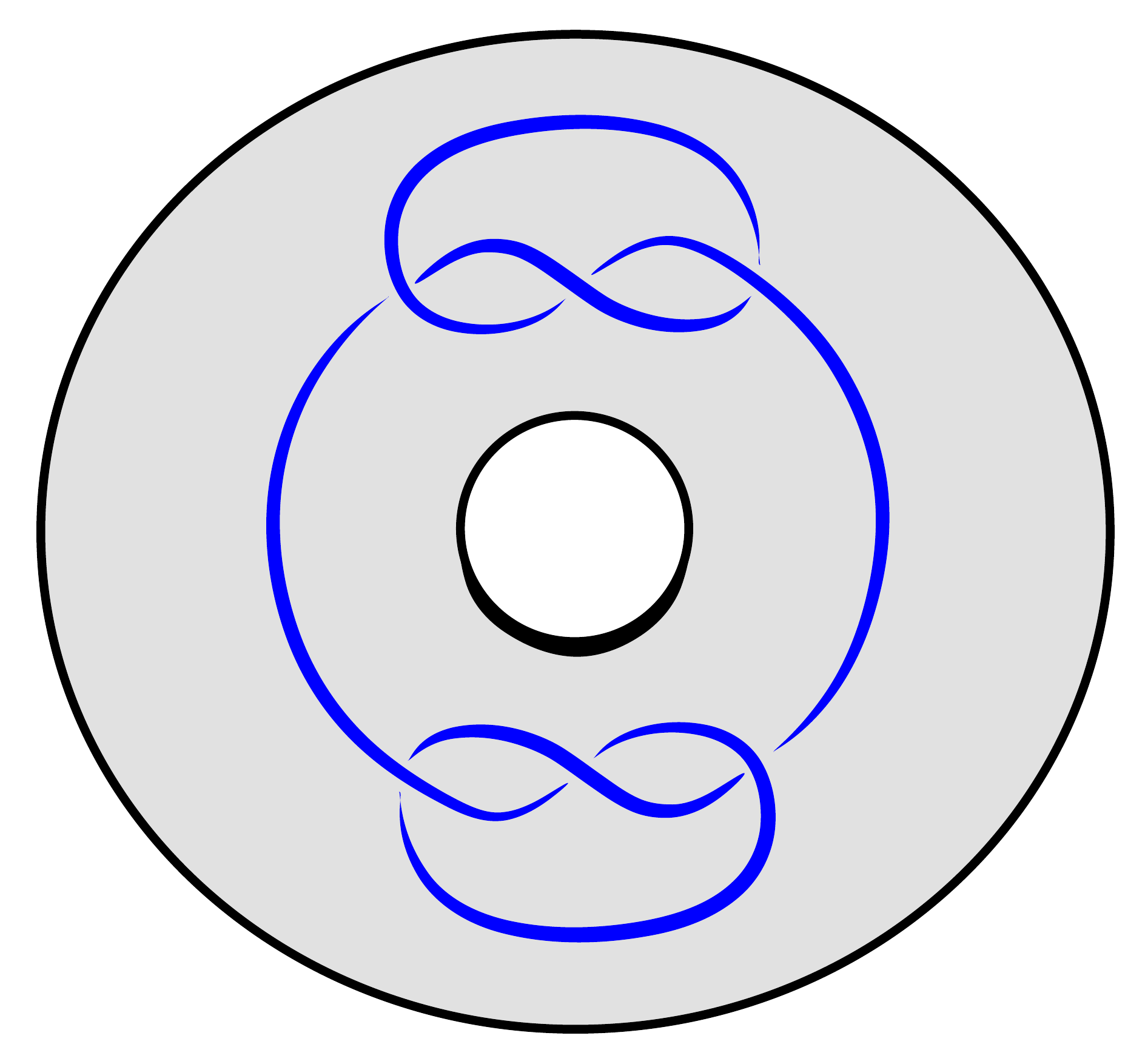}
\qquad \qquad
(b)  \includegraphics[scale=0.22]{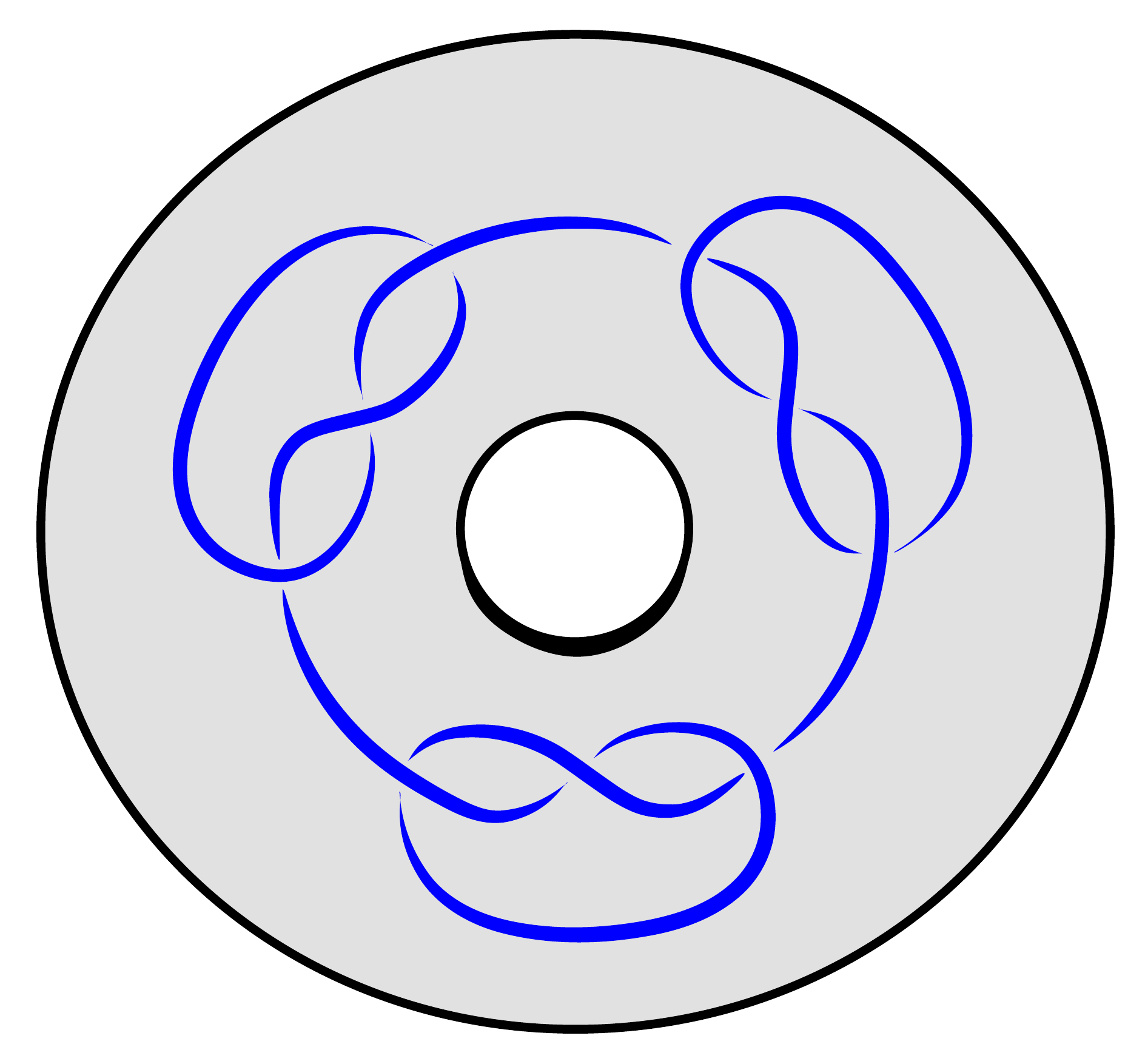}
\caption{
Knots $f$ and $g$ in a solid torus associated to (a) $F=(\varnothing, J,J) \bowtie KC_3$ and (b) $G=(\varnothing, J,J,J) \bowtie KC_4$ where $J$ is the trefoil.
The fundamental groups are given by 
(a) $\pi_1(\T_f) \cong \Z\langle \mu_1 \rangle \x \B_{1,2} \ltimes \Z^2$
and (b) $\pi_1(\T_g) \cong \Z\langle \mu_1 \rangle \x \B_{1,3} \ltimes \Z^3$, where $\B_{1,n}$ is the $n$-strand annular braid group.
See Example \ref{Ex:MoreGeneralConnectSum}
}
\label{F:SumTrefoils}
\end{figure}

%\newpage
\subsection{Splicing links into hyperbolic links}
\label{S:HypSplices}
We now treat the final case of splicing, namely the case of splicing into a hyperbolic link.
Let $L=(L_0,\dots, L_{n+r})$ be a hyperbolic link with boundary tori  $T_i = \d \nu(L_i)$ where $\nu(L_i)$ is a tubular neighborhood of $L_i$.  
The indexing of $L$ is meant to suggest a partitioning of the tori in $\d C_L$, as in Lemma \ref{SpliceSES}, according as we glue or do not glue link complements to them.
Let $k \in \{0,\dots,n+r\}$ (where we are most interested in the case $k=n$).
Let $\Diff_{0, \dots, k}(C_L)$ be the group of diffeomorphisms of $C_L$ whose restrictions to $T_0 \sqcup \dots \sqcup T_k$ are isotopic to the identity.  
By Proposition \ref{HMProp}, the fibration
\[
\Diff(C_L; T_0 \sqcup \dots \sqcup T_n) \to \Diff_{0,\dots,n}(C_L) \to \Diff(T_0 \sqcup \dots \sqcup T_n)
\]
yields the exact sequence 
\begin{equation}
\label{SequenceForHypSplicing}
\{e\} \to \pi_1\Diff(T_0 \sqcup \dots \sqcup T_n) \to  \pi_0 \Diff(C_L; T_0 \sqcup \dots \sqcup T_n) \to \pi_0 \Diff_{0, \dots, n}(C_L)  \to \{e\}.
\end{equation}

By Mostow rigidity, the group $\Diff_{0, \dots, k}(C_L)$ can be identified with a subgroup  of the finite group $\Isom^+(C_L) < \mathrm{Isom}(C_L) \cong \pi_0 \Diff(C_L)$.
Specifically, it consists of those isometries which extend to orientation-preserving diffeomorphisms of $S^3$ preserve $L_0, \dots, L_k$ and their orientations.

\begin{definitions}
For $L=(L_0, \dots, L_{n+r})$ and any $k=0,\dots, n+r$, define $B_{L,0,\dots,k}:=\pi_0 \Diff_{0,\dots,k}(C_L)$.  Define $B_L$ as the subgroup of isometries in $\Isom^+(C_L)$ which extend to diffeomorphisms of $S^3$.
\end{definitions}
 
So the group called $B_L$ in \cite{BudneyTop} is $B_{L,0}$ in our notation.  Note that what the software SnapPy's symmetry group function for a link $L$ returns $\Isom(C_L)$.

\begin{lemma}
Let $\Diff_{0, \dots,n}(L_0 \sqcup \dots \sqcup L_n)$ be the identity component of $\Diff(L_0 \sqcup \dots \sqcup L_n)$.
For any $k=0,\dots, n+r$, the map 
\[B_{L,0,\dots,k} \to \Diff_{0, \dots,k}(L_0 \sqcup \dots \sqcup L_k)\]
 is injective.
\end{lemma}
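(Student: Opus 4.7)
The plan is to combine Mostow rigidity with the positive resolution of the Smith conjecture, adapting the argument used in the proof of Proposition~\ref{DiffS3LHypLink}.

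First, I would apply Mostow rigidity to the finite-volume cusped hyperbolic manifold $C_L$ to replace each $[\phi] \in B_{L,0,\dots,k}$ by its unique isometric representative $\phi \in \Isom^+(C_L)$. The condition that $\phi|_{T_i}$ is isotopic to the identity for $i=0,\dots,k$ forces this restriction to be a Euclidean translation $(\alpha_i,\beta_i)$ of the flat cusp torus $T_i$, expressed in meridional--longitudinal coordinates inherited from the embedding $L \subset S^3$. Such a translation preserves the meridional slope, so $\phi$ extends canonically over the tubular neighborhood $\nu(L_i) \cong D^2 \times L_i$ by rotating each meridional disk by $\alpha_i$ while rotating along the core by $\beta_i$; the restriction to the core $L_i$ is rotation by $\beta_i$. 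This gives a concrete description of the map of the statement, $[\phi] \mapsto (\mathrm{rot}_{\beta_0},\dots,\mathrm{rot}_{\beta_k}) \in (\Diff^+(S^1))^{k+1}$.

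Next, suppose $[\phi]$ lies in the kernel, so that each $\beta_i = 0$ and $\phi|_{T_i}$ is a pure meridional rotation for every $i \leq k$. The partial extension just described then fixes $L_0 \sqcup \dots \sqcup L_k$ pointwise, and it is of finite order because $\Isom(C_L)$ is finite. To bring the Smith conjecture to bear, I would extend equivariantly across the remaining cusps $T_{k+1},\dots,T_{n+r}$ by a suitable Dehn filling: each $\phi|_{T_i}$ is an orientation-preserving finite-order isometry of a flat torus, and for generic cusp shapes it is a translation that extends periodically over any solid-torus filling by the same disk-and-longitude recipe; in the special cases of rectangular or hexagonal cusp shapes the rotational part still preserves the meridional slope of $L_i$ setwise and admits a periodic extension. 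This produces a finite-order orientation-preserving $\tilde\phi \in \Diff^+(S^3)$ with $\mathrm{Fix}(\tilde\phi) \supseteq L_0 \sqcup \dots \sqcup L_k$.

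Finally, the Smith conjecture~\cite{MorganBass} forces $\mathrm{Fix}(\tilde\phi)$ to be a single unknotted circle, which is immediately incompatible with containing the $k+1$ disjoint circles $L_0,\dots,L_k$ whenever $k \geq 1$; the remaining $k=0$ subcase can be handled by the argument in the proof of Proposition~\ref{DiffS3LHypLink}, ruling out a nontrivial periodic rotation that fixes an unknotted $L_0$ pointwise and preserves the hyperbolic link $L$. The hardest part will be the equivariant-filling step: one must verify that in the cusps $T_i$ with $i > k$ the finite-order isometry $\phi|_{T_i}$ genuinely takes the meridional slope of $L_i$ in $S^3$ to itself, which is automatic for translations but needs a short case analysis in the lattice-rotation cases before Smith can be invoked.
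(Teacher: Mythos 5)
For $k\geq 1$ your argument is essentially the paper's: represent a nontrivial kernel class by a finite-order extension $\tilde\phi$ to $S^3$ of the isometric representative, note that $\mathrm{Fix}(\tilde\phi)$ contains $L_0\sqcup\dots\sqcup L_k$, and contradict the Smith theory of periodic maps of $S^3$ (the paper gets by with Smith's theorem \cite{Smith:Annals1939} that the fixed-point set is a single circle, since $k+1\geq 2$ circles already violate connectedness; your appeal to the full Smith conjecture is fine but stronger than needed). Your worry about the unfilled cusps is slightly misplaced: the constraint is not the cusp shape but the fact that the extension to $S^3$ forces $\phi$ to carry each meridian slope to $\pm$ itself (and for $i>k$ the isometry may permute the tori $T_i$ rather than preserve each one), after which a periodic extension over the complementary solid tori is automatic.

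The genuine gap is the case $k=0$, which is exactly the case the paper flags as ``more subtle'' and delegates to \cite[Proposition 3.5]{BudneyTop} --- and it is the case that is actually used immediately afterwards, to conclude that $B_{L,0}$ and hence $B_{L,0,\dots,n}$ is cyclic. Your proposed reduction to the argument of Proposition \ref{DiffS3LHypLink} does not go through: in that proof the contradiction arises because $\mathrm{Fix}(\tilde\phi)$ contains the \emph{entire} hyperbolic link $L$, which is either disconnected or a nontrivial knot, so a Smith-type theorem applies directly. Here $\mathrm{Fix}(\tilde\phi)$ need only contain the single component $L_0$, which may well be unknotted; the Smith conjecture then tells you only that $\tilde\phi$ is conjugate to a rotation of $S^3$ about the unknotted axis $L_0$, and a nontrivial periodic rotation fixing an unknotted component pointwise while preserving the remaining components is not a priori absurd --- it is precisely the configuration of a periodic link together with its axis, and nothing in the Smith-theoretic part of the argument excludes it. Ruling this out requires a genuinely different input (the content of Budney's Proposition 3.5), so the $k=0$ subcase cannot be dispatched by the fixed-point argument alone as your proposal claims.
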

\begin{proof}
If $k>0$, then the kernel of this map has a finite-order element whose fixed-point set is a $(k+1)$-component link and in particular, not an unknot, contradicting Smith's theorem \cite{Smith:Annals1939}.  The more subtle case of $k=0$ is covered by \cite[Proposition 3.5]{BudneyTop}.  
\end{proof}

Thus $B_{L,0}$ must be cyclic, as a finite subgroup of $\Diff^+(S^1) \cong S^1 \times \Diff(I, \d I)$.
Hence $B_{L,0,\dots,n} < B_{L,0}$ is cyclic. 
Now $B_{L,0,\dots,n}$ maps to $(S^1)^{2n+2} \cong \Isom(T^{n+1}) \cong \Diff_{0,\dots,n}(L_0 \sqcup \dots \sqcup L_n)$.  Let $\widetilde{B}_{L,0,\dots,n}$ be the lift of $B_{L,0,\dots,n}$ to the universal cover $\R^{2n+2}$, i.e.~the pullback below:
\[
\xymatrix{
\widetilde{B}_{L,0,\dots,n} \ar[r] \ar[d] &\R^{2n+2} \ar[d] \\
B_{L,0,\dots,n} \ar@{^(->}[r] & (S^1)^{2n+2}
}
\]
As in the proof of Proposition \ref{HMProp}, one can show that the exact sequence \eqref{SequenceForHypSplicing} is equivalent to the exact sequence
\begin{equation}
\label{SequenceForHypSplicing2}
0 \to \Z^{2n+2} \to \widetilde{B}_{L,0,\dots,n} \to B_{L,0,\dots,n} \to 0.
\end{equation}
By that Proposition \ref{HMProp} (b), $\widetilde{B}_{L,0,\dots,n}$ is free abelian of rank $2n+2$.

As in the proof of Lemma \ref{SpliceSES}, there is a map $B_{L,0,\dots,n} \to \mathfrak{S}_r^\pm$ given by extending $\alpha \in B_{L,0,\dots,n}$ to a diffeomorphism of $S^3$ and considering the induced action on $L_{n+1} \sqcup \dots \sqcup L_{n+r}$.
This map to $\mathfrak{S}_r^\pm$ then gives an action of $B_{L,0,\dots,n}$ on $\{J_1, \dots, J_r, \iota_0^-(J_1), \dots, \iota_0^-(J_r) \}$, where $J_1, \dots, J_r$ are links, each with a distinguished component $J_{i,0}$.
Recall the definition of $\mathfrak{S}^\pm(J)$ from Definition \ref{EquivRelDef}.

\begin{definitions}
\label{AFDefinition}
Suppose $F= (\varnothing, \dots, \varnothing, J_1, \dots, J_r)\bowtie L$.   
Let $A_F$ be the preimage of $\mathfrak{S}^\pm(\vec{J})$ under the map $B_{L,0,\dots,n} \to \mathfrak{S}_r^\pm$.
(So  $A_F$ depends on $L$, the $J_i$, and the association of a component of each $J_i$ to a component of $L$.)  
Then define $\widetilde{A}_F$ as the pullback below, from which it follows that $\widetilde{A}_F$ is a subgroup of $\widetilde{B}_{L,0,\dots,n}$, is free abelian, and is of rank $2n+2$:
\begin{equation}
\label{E:AFTildeDefn}
\xymatrix{
\widetilde{A}_F \ar[r] \ar[d] &\widetilde{B}_{L,0,\dots,n} \ar[d] \\
A_F \ar@{^(->}[r] & B_{L,0,\dots,n}
}
\end{equation}
\end{definitions}

\begin{proposition}
\label{P:HypSplice}
Let  $L=(L_0, \dots, L_{n+r})$ be a (framed) $(n+r+1)$-component hyperbolic link.
Let $F=(\varnothing, \dots, \varnothing, J_1, \dots, J_r)\bowtie L$.
Then 
\[
\boxed{\tL_F/SO_4 \simeq (S^1)^{2n+1} \x \left(S^1 \x_{A_F}  \prod_{i=1}^r \tL_{J_i} / SO_4 \right)}
\]
where the factor $(S^1)^{2n+1}$ corresponds to meridional rotations of $L_0, \dots, L_n$ and longitudinal rotations of some $n$ of those components.
\end{proposition}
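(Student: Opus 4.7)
The approach mirrors that of Proposition \ref{P:ConnectSum}: apply the splicing lemma to reduce to $\pi_0 \Diff(C_L; T_0 \sqcup \dots \sqcup T_n)$, identify the relevant image using the hyperbolic structure, and then pass to classifying spaces via Lemma \ref{SemidirectLemma}, carefully tracking the resulting twisted product.

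First I would invoke Lemma \ref{SpliceSES} with $L = (L_0, \dots, L_{n+r})$ hyperbolic, writing $K := \prod_{i=1}^r \pi_0 \Diff(C_{J_i}; \d C_{J_i})$ and letting $H$ denote the image of $\pi_0 \Diff(C_F; \d C_F) \to \pi_0 \Diff(C_L; T_0 \sqcup \dots \sqcup T_n)$, so that
\[
\pi_0 \Diff(C_F; \d C_F) \cong H \ltimes K.
\]
Next I would identify $\pi_0 \Diff(C_L; T_0 \sqcup \dots \sqcup T_n)$. The sequence \eqref{SequenceForHypSplicing}, combined with Mostow rigidity as recalled in Proposition \ref{HMProp} and its proof, gives the identification with $\widetilde{B}_{L,0,\dots,n}$, a free abelian group of rank $2n+2$ sitting in the extension \eqref{SequenceForHypSplicing2}. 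The kernel $\Z^{2n+2}$ is generated by meridional and longitudinal Dehn twists along the $T_0, \dots, T_n$, all of which are supported in collar neighborhoods of these tori and hence lie in the image from $\pi_0\Diff(C_F; \d C_F)$.  Lemma \ref{SpliceSES} then tells me $H$ is precisely the preimage of $\mathfrak{S}^\pm(\vec{J})$ under the composition $\widetilde{B}_{L,0,\dots,n} \twoheadrightarrow B_{L,0,\dots,n} \to \mathfrak{S}_r^\pm$, which by Definition \ref{AFDefinition} is exactly $\widetilde{A}_F$.  Note $\widetilde{A}_F$ is free abelian of rank $2n+2$, sitting in $0 \to \Z^{2n+2} \to \widetilde{A}_F \to A_F \to 0$ with $A_F$ finite cyclic.

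Now I would pass to classifying spaces via Proposition \ref{FramedLinksKpi1} and Lemma \ref{SemidirectLemma}. The action of $\widetilde{A}_F$ on $K$ factors through $A_F$ (acting via $\mathfrak{S}^\pm(\vec{J})$), and the kernel of this factored action contains the translation subgroup $\Z^{2n+2}$. Thus, taking $H' = \Z^{2n+2}$ and $H'' = A_F$ in Lemma \ref{SemidirectLemma},
\[
\tL_F / SO_4 \;\simeq\; B(\Z^{2n+2}) \x_{A_F} BK \;\simeq\; (S^1)^{2n+2} \x_{A_F} \prod_{i=1}^r \tL_{J_i}/SO_4,
\]
where the $A_F$-action on $(S^1)^{2n+2}$ is the deck-transformation action of the cover $B(\Z^{2n+2}) \to B\widetilde{A}_F$, which is just translation on the torus $\R^{2n+2}/\Z^{2n+2}$ by lifts of elements of $A_F \subset \widetilde{A}_F/\Z^{2n+2}$.

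The final step, and the one I expect to require the most care, is to split off $(S^1)^{2n+1}$ from the twisted product. Since $A_F$ is a finite cyclic subgroup of $\widetilde{A}_F/\Z^{2n+2}$, I would apply Smith normal form to the inclusion $\Z^{2n+2} \hookrightarrow \widetilde{A}_F$: choose a basis $v_1, \dots, v_{2n+2}$ of $\widetilde{A}_F$ with $v_1, \dots, v_{2n+1}, m v_{2n+2}$ a basis of $\Z^{2n+2}$, where $m = |A_F|$.  In these coordinates the torus $B(\Z^{2n+2})$ splits as $(S^1)^{2n+1} \x S^1$ and a generator of $A_F$ acts as translation by $(0,\dots,0,1/m)$, so the action is trivial on the first $2n+1$ factors. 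Therefore
\[
(S^1)^{2n+2} \x_{A_F} \prod_{i=1}^r \tL_{J_i}/SO_4 \;\simeq\; (S^1)^{2n+1} \x \left( S^1 \x_{A_F} \prod_{i=1}^r \tL_{J_i}/SO_4 \right),
\]
which is the boxed formula. The geometric identification of the split-off factor with meridional rotations of $L_0, \dots, L_n$ and $n$ of the longitudinal rotations is then obtained by observing that the image of $A_F$ in $\widetilde{B}_{L,0,\dots,n}/\Z^{2n+2}$ corresponds, via the Hatcher--McCullough analysis, to an isometry whose boundary action combines longitudinal rotations on the chosen tori; the $v_1, \dots, v_{2n+1}$ basis can thus be chosen to coincide with the meridional Dehn twists on all $n+1$ boundary tori together with longitudinal Dehn twists on $n$ of them, leaving the remaining longitude to pair with the $A_F$-quotient in the Smith normal form.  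The main technical point to verify is that the cyclic quotient $A_F$ is indeed generated by the image of a single ``mixed longitudinal'' translation, so that only one longitude is absorbed into the twisted factor.
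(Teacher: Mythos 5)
Your overall strategy coincides with the paper's: both reduce via Lemma \ref{SpliceSES} to the semi-direct product $\widetilde{A}_F \ltimes \prod_{i=1}^r \pi_0\Diff(C_{J_i};\d C_{J_i})$, and both then split off a rank-$(2n+1)$ summand of $\widetilde{A}_F$ on which the action is trivial. Whether one splits the lattice before applying $B(-)$ (as the paper does) or splits the torus $B(\Z^{2n+2})$ afterwards via Smith normal form (as you do) is immaterial, and your derivation of the boxed homotopy equivalence itself is sound: since $A_F$ is cyclic, a basis $v_1,\dots,v_{2n+2}$ of $\widetilde{A}_F$ with $v_1,\dots,v_{2n+1}, m v_{2n+2}$ a basis of $\Z^{2n+2}$ exists, and the deck action is trivial on the first $2n+1$ circle factors.

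The gap is in the final clause of the statement, which is a genuine part of the proposition (Theorem \ref{T:MeridiansFactor} later depends on it): that the $(S^1)^{2n+1}$ factor can be taken to contain the meridional rotations of \emph{all} of $L_0,\dots,L_n$ together with $n$ longitudinal ones. Smith normal form hands you \emph{some} basis $v_1,\dots,v_{2n+1}$ of a saturated corank-one sublattice; nothing in that construction forces the meridional Dehn twists to lie in its span, and your assertion that the generator of $A_F$ lifts to something acting only by longitudinal rotations is not justified (in general its boundary action mixes meridional and longitudinal directions). What is actually needed is that the subgroup $\langle\mu_0,\dots,\mu_n\rangle \le \widetilde{A}_F$ is saturated, i.e.\ that $\widetilde{A}_F/\langle\mu_0,\dots,\mu_n\rangle$ is torsion-free, so that the meridians extend to a basis of $\widetilde{A}_F$ whose remaining elements can be chosen among the longitudes and a preimage $\alpha$ of a generator of $A_F$. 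This is exactly where the paper invokes the proof of Proposition \ref{DiffS3LHypLink} (see also Corollary \ref{HypKnotInTorus}): if some nontrivial element of $A_F$ had a lift in $\widetilde{B}_{L,0,\dots,n}$ acting on the relevant boundary tori by purely meridional rotations, the corresponding finite-order isometry would extend to a periodic diffeomorphism of $S^3$ whose fixed-point set contains $L$, contradicting the Smith conjecture. Your "main technical point to verify" is precisely this; it does not follow from the Hatcher--McCullough analysis alone, and closing it requires that Smith-conjecture argument or an explicit appeal to Proposition \ref{DiffS3LHypLink}.
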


\begin{proof}
As in the proof of Proposition \ref{P:ConnectSum}, set $G:=\pi_0 \Diff(C_F; \d C_F)$ and $K:=\prod_{i=1}^r K_i$ where $K_i:= \pi_0 \Diff(C_{J_i}; \d C_{J_i})$.  
Lemma \ref{SpliceSES} then gives us the exact sequence 
\[
\{e\} \to K \to G \to \pi_0 \Diff(C_L;\ T_0 \sqcup \dots \sqcup T_n).
\]
By  \eqref{SequenceForHypSplicing2}, the right-hand group in this hyperbolic case is $\widetilde{B}_{L,0,\dots, n}$.  
By Lemma \ref{SpliceSES} and the definition of $A_F$, the image $H$ in that right-hand group is $\widetilde{A}_F$, and   $G \cong \widetilde{A}_F \ltimes K \cong \Z^{2n+2} \ltimes K$.

We  now claim that there is a rank-$(2n+1)$ free abelian subgroup of $\widetilde{A}_F$ which acts trivially and contains all the meridional rotations.  
Recall the short exact sequence $0 \to \Z^{2n+2} \to \widetilde{A}_F \to A_F \to 0$ induced by \eqref{SequenceForHypSplicing} and\eqref{SequenceForHypSplicing2}, where 
$A_F$ is a finite cyclic group.  
Let  $\alpha$ be the preimage of a generator of $A_F$ (which can be thought of as a fractional linear combination of images of generators of $\Z^{2n+2}$).  
By the characterization of lattice bases, we can find a basis for $\widetilde{A}_F$ by adjoining to $\alpha$ any $(2n+1)$-element subset $S$ of a basis $B$ of $\Z^{2n+2}$ such that $\alpha \notin \mathrm{span}(S)$.  Take $B$ to consist of classes of loops of meridional and longitudinal rotations for each component $L_0, \dots, L_n$.  By the proof of Proposition \ref{DiffS3LHypLink}, we can take all the $n+1$ meridional generators to lie in $S$.  
We can then take some $n$ longitudinal generators to lie in $S$.  
We finally verify the triviality of the action of each element of $S$.
Indeed, the resulting Dehn twist $\tau$ of such an element can be taken to have support in a collar neighborhood $\nu(T_i)$ of $T_i$ and to be the identity on $\d \nu(T_i)$.  Any other diffeomorphism can be taken be supported outside this collar and thus commutes with $\tau$.  Thus $\mathrm{span}(S)$ is the desired rank-$(2n+1)$ subgroup.

So now $G \cong \Z^{2n+1} \x (\Z \ltimes K)$, where the $\Z$ factor in the semi-direct product is generated by $\alpha \in \widetilde{A}_F$ as defined above.  Taking the classifying space $BG$ of $G$ and applying Lemma \ref{SemidirectLemma} with $H=\widetilde{A}_F$, $H''=A_F$, and $H' = \ker(\widetilde{A}_F \to A_F) \cong \Z^{2n+2}$ gives the claimed description of $\tL_F/SO_4$.
\end{proof}

From the decomposition in Proposition \ref{P:HypSplice}, we see that if all the $J_i$ are knots, then $\tL_F/SO_4 \simeq (S^1)^{n+1} \x \L_f/SO_4$, where each factor of $S^1$ corresponds to meridional rotation about a component of $L$.  Thus if $F=(F_0,F_1)$ corresponds to a knot in the solid torus, $\T_f \simeq S^1 \x \L_f/SO_4$, where the $S^1$ corresponds to meridional rotation of the unknot component, and $\tL_F/SO_4 \simeq S^1 \x \T_f$, where the $S^1$ corresponds to meridional rotation of the knotted component.  
(These are special cases of Theorem \ref{T:MeridiansFactor} below.)  Thus in the examples below we often describe only $\L_f/SO_4$.

\begin{example}[Whitehead double of a knot]
\label{Ex:WhiteheadDoubleInS3}
Let $J$ be a (framed) knot (in $S^3$), and let $F$ be the Whitehead double $\Wh(J):=J \bowtie \Wh$ of $J$.
Then $\tL_F/SO_4 \simeq S^1 \x (S^1 \x_{A_F} \tL_J/SO_4)$, recovering an example in \cite[Section 5]{BudneyTop}, since $\tL_F/SO_4\simeq \K_{\underline{f}}$.
Thus $\L_f/SO_4 \simeq S^1 \x_{A_F} \tL_J/SO_4$.  If $J$ is invertible, $A_F=\Z/2$ acts by inversion on $\tL_J/SO_4$ and rotation by $180^\circ$ on $S^1$.  If $J$ is non-invertible, $A_F=\{e\}$.  
If $J$ is non-invertible, the $S^1$ factor corresponds simply to reparametrization.  
If $J$ is invertible, its description is more subtle.  For example, if $J$ is a torus knot, $\L_f/SO_4 \simeq S^1 \x_{\Z/2} S^1$, the Klein bottle, so 
\[
\pi_1(\L_f/SO_4) \cong \Z \ltimes \Z \cong \Z\langle g_J \rangle \ltimes \Z \langle \mu_J^{1/2} \theta^{1/2} \lambda_0^{1/2}\rangle
\]
where $g_J$ is the Gramain loop of $J$,
 $\mu_J^{1/2}$ is a path of rotations by $180^\circ$ along the meridian of the knotted torus, 
 $\lambda_0^{1/2}$ is the path of reparametrizations of $\Wh(J)$ by translating by $\pi$,
$\theta^{1/2}$ is a path of rotations by $180^\circ$ of the picture in Figure \ref{F:WhiteheadDoubleInS3} around the $y$-axis (which is an isotopy from $J$ to its inverse $i(J)$ and which lies entirely in $SO_4$).
Modulo $SO_4$, $g_J$ is equal to a full loop $\mu_J$ of meridional rotations of a torus knotted in the shape of $J$.  
\end{example}

\begin{figure}[h!]
\includegraphics[scale=0.28]{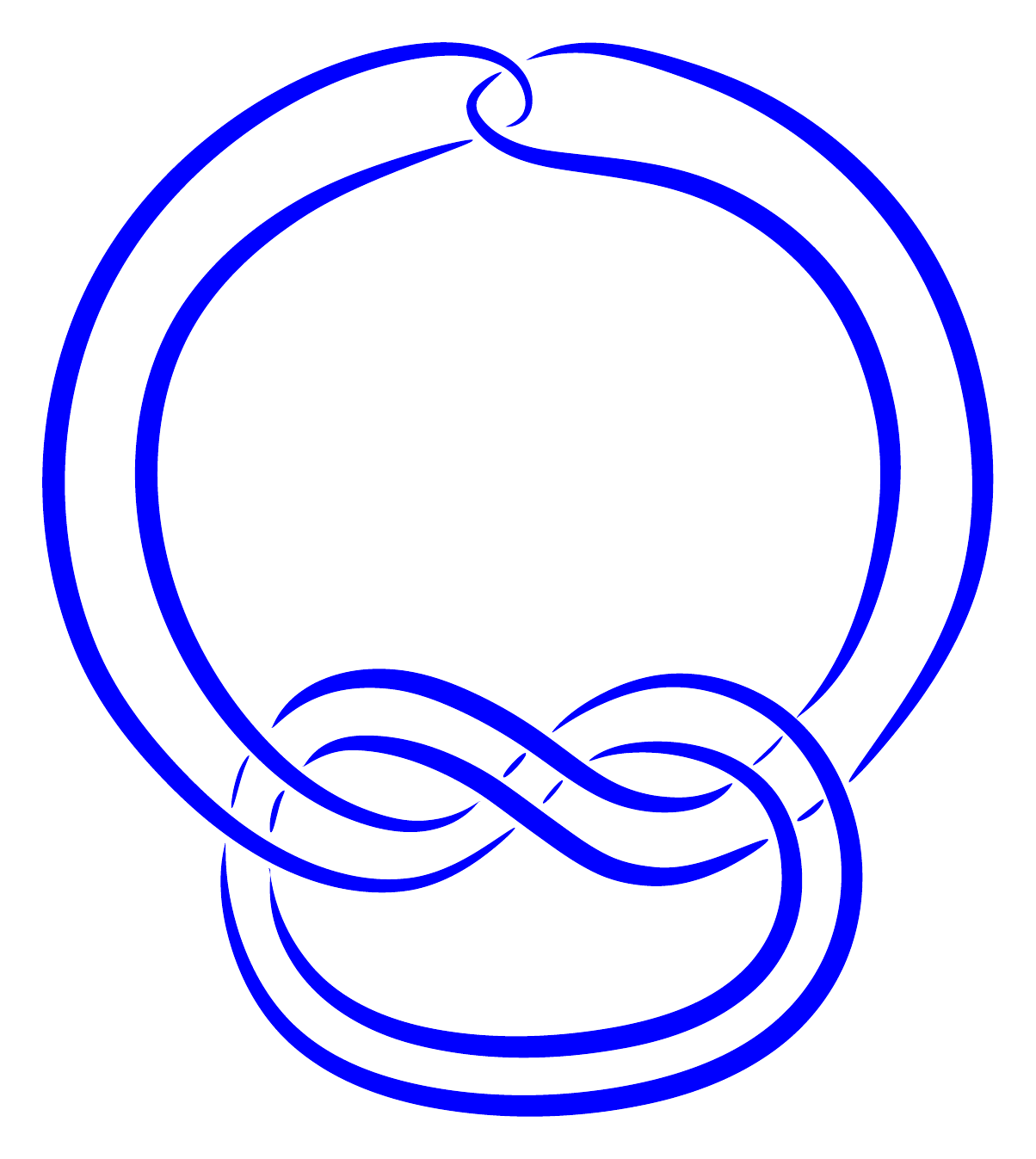}
\qquad \qquad
\begin{forest}
for tree={l sep=25pt}
[ $J$, circle, draw, 
[Wh, circle, draw, edge label={node[midway,right] {$\mu_J= g_J$ (and $\lambda_J = 1 $)}}
[ , edge label={node[midway,right] {$\lambda_0$ (and $\mu_0 = g_{\Wh(J)}$)}}] ]]
\end{forest}
\caption{Left: the Whitehead double $F$ of $J$ in Example \ref{Ex:WhiteheadDoubleInS3}, with $J$ the right-handed trefoil with  framing $+3$ (the blackboard framing of the above projection).  
Right: the tree $\mathbb{G}_F$.  Here $\pi_1(\L_f/SO_4) \cong \langle a, \,\mu_J \, | \,  \mu_J^a = \mu_J^{-1}\rangle$ where $a=\mu_J^{1/2}\lambda_0^{1/2}$, and $\pi_1(\tL_F/SO_4)$ is the product of this group with $\Z\langle \mu_0 \rangle$. }
\label{F:WhiteheadDoubleInS3}
\end{figure}

\begin{example}[Whitehead double of Whitehead link]
\label{Ex:WhSpliceWh}
Let $F = W \bowtie W$.  Then $\tL_F/SO_4 \cong (S^1)^3 \x ( S^1 \x_{\Z/2} (S^1)^2)$.  The group $\Z/2$ acts by the antipodal map on each factor of $S^1$, and those two factors correspond to the meridian and longitude of a torus embedded roughly in an $\infty$ shape in Figure \ref{F:WhSpliceWh}.  We denote the corresponding generators of $\pi_1$ by $b$ and $c$.
So $\L_f/SO_4 \cong S^1 \x ( S^1 \x_{\Z/2} (S^1)^2)$.
Although the projection in Figure \ref{F:WhSpliceWh} is not the most symmetric, we chose it so as to help visualize the blue, non-circular component as a knot in $S^1 \x D^2$.
\end{example}

%\vspace{-2pc}

\begin{figure}[h!]
\includegraphics[scale=0.3]{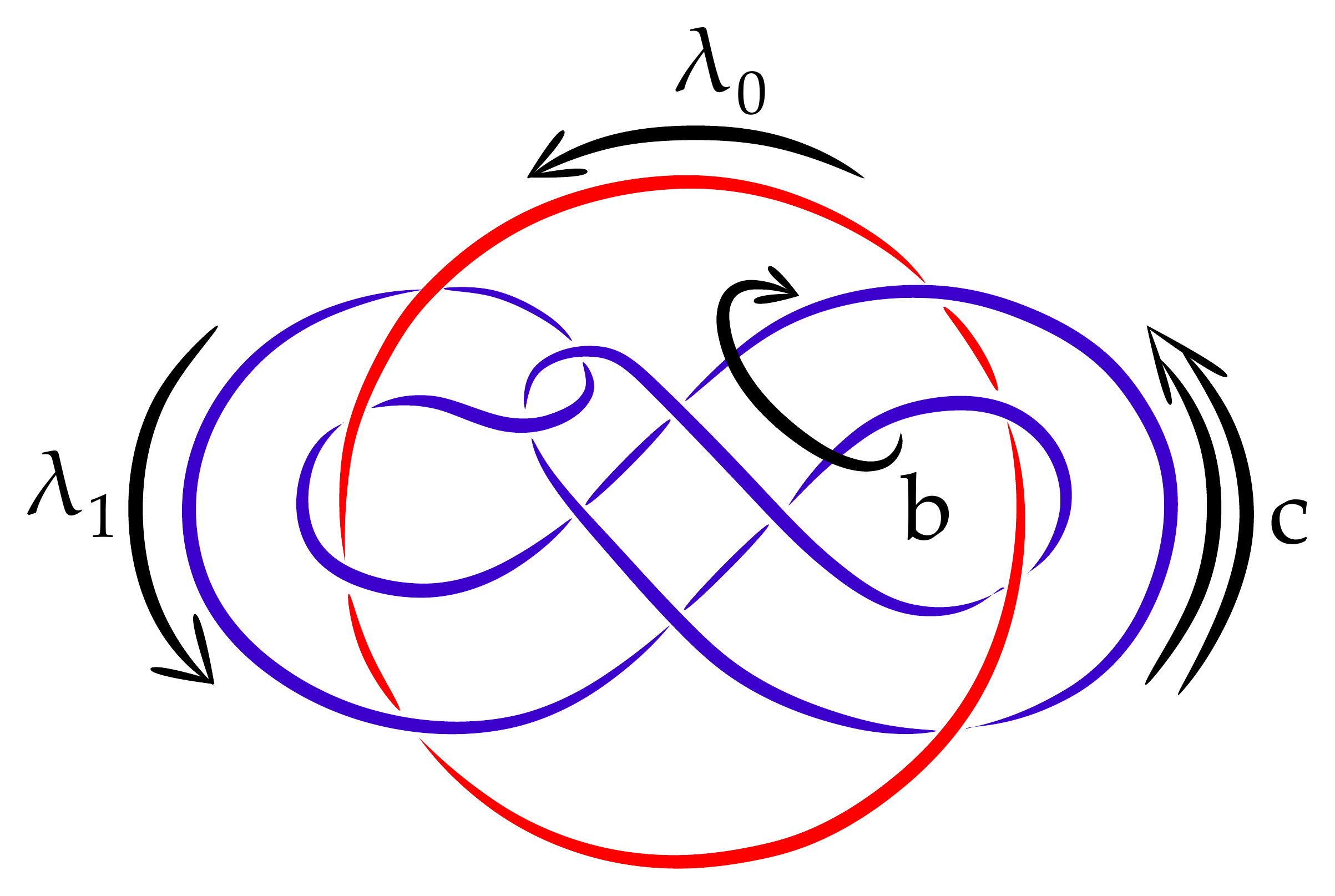}
\qquad \qquad
\begin{forest}
for tree={l sep=25pt}
[ [Wh, circle, draw, edge label={node[midway,right] {$\lambda_1$ (and $\mu_1$)}} 
[Wh, circle, draw, edge label={node[midway,right] {$b, c$}}
[ , edge label={node[midway,right] {$\lambda_0$ (and $\mu_0$)}}]
]]]
\end{forest}
\caption{The link $F = W \bowtie W$ in Example \ref{Ex:WhSpliceWh}, together with $\mathbb{G}_F$.  
Here $\pi_1(\L_f/SO_4) \cong \Z\langle \lambda_0 \rangle \x \langle a,b,c  \ | \ b^a = b^{-1}, \, c^a = c^{-1},\, [b,c]=1  \rangle $ where $a=\lambda_0^{1/2}  \lambda_1^{1/2}$.
}
\label{F:WhSpliceWh}
\end{figure}

\begin{example}[Bing double of a knot]
\label{Ex:BorrPartialSplice}
Let $\mathrm{Borr}$ be the Borromean rings, and let $F=(\varnothing, J)\bowtie \mathrm{Borr}$.
This $F$ is called the Bing double of $J$, is shown in Figure \ref{F:BorrPartialSplice}.  
Here $n=1$, $r=1$, and $B_{L,0,1}$ is trivial: the complement of the Borromean rings has many isometries (including a nontrivial one that preserves two components and the orientation of one them), but none that preserve two components and both orientations.  
Thus $\pi_1(\L_f/SO_4) \cong \Z\langle \lambda_0, \lambda_1 \rangle \x \tL_J/SO_4$.
So for example, if $J$ is a torus knot, $\pi_1(\L_f/SO_4) \cong \Z\langle \lambda_0, \, \lambda_1,\,  g_J \rangle$ and $\pi_1(\T_f) \cong \Z\langle \lambda_0, \, \lambda_1, \, \mu_1,\, g_J \rangle$.
For any knot $J$, both components are unknotted in $S^3$, and there is a symmetry interchanging the two, so either component can be taken to be the knot in the solid torus, and we get isomorphic results for $\pi_1(\T_f)$.
\end{example}

\begin{figure}[h!]
{\includegraphics[scale=0.3]{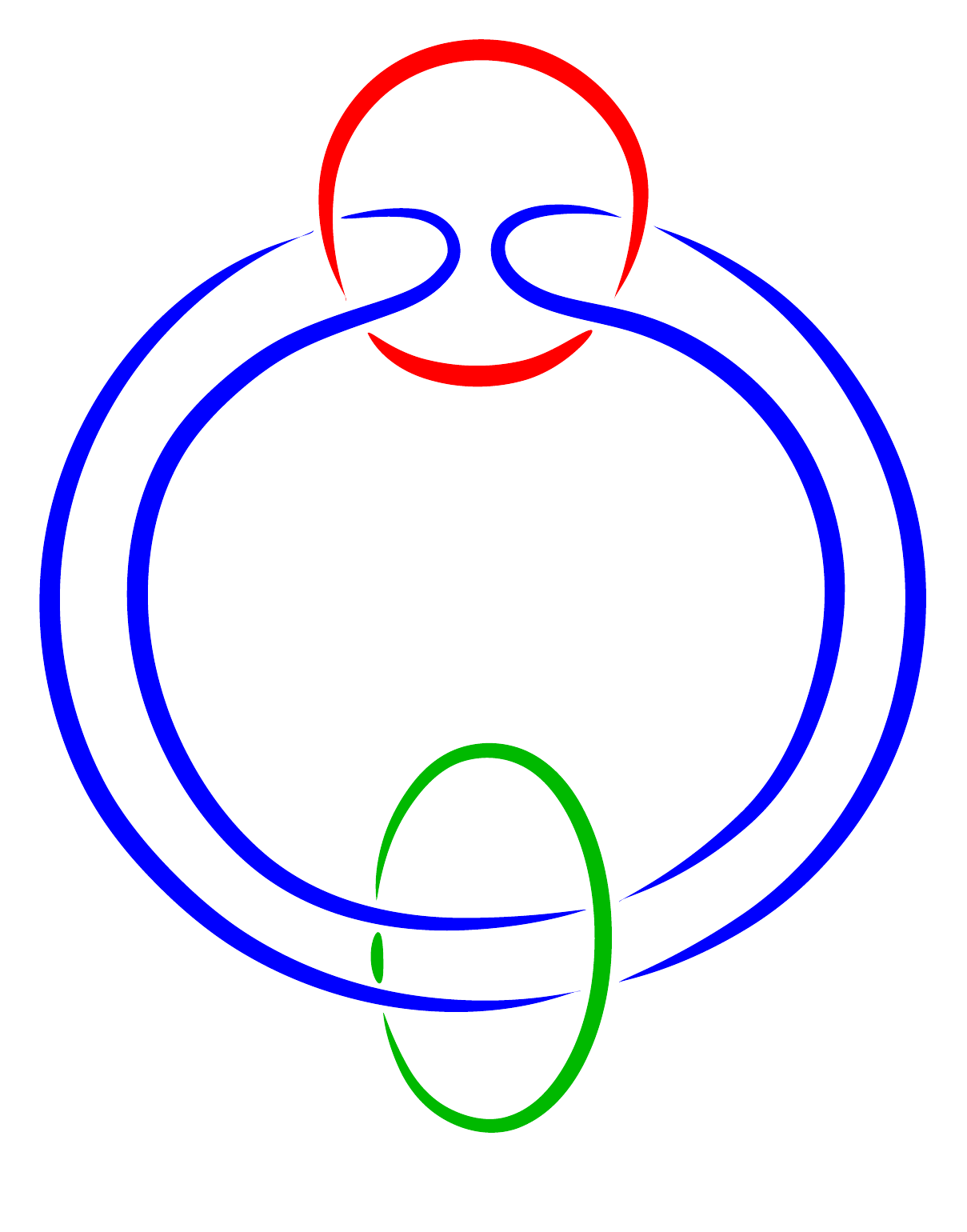}}
\qquad \qquad
\includegraphics[scale=0.3]{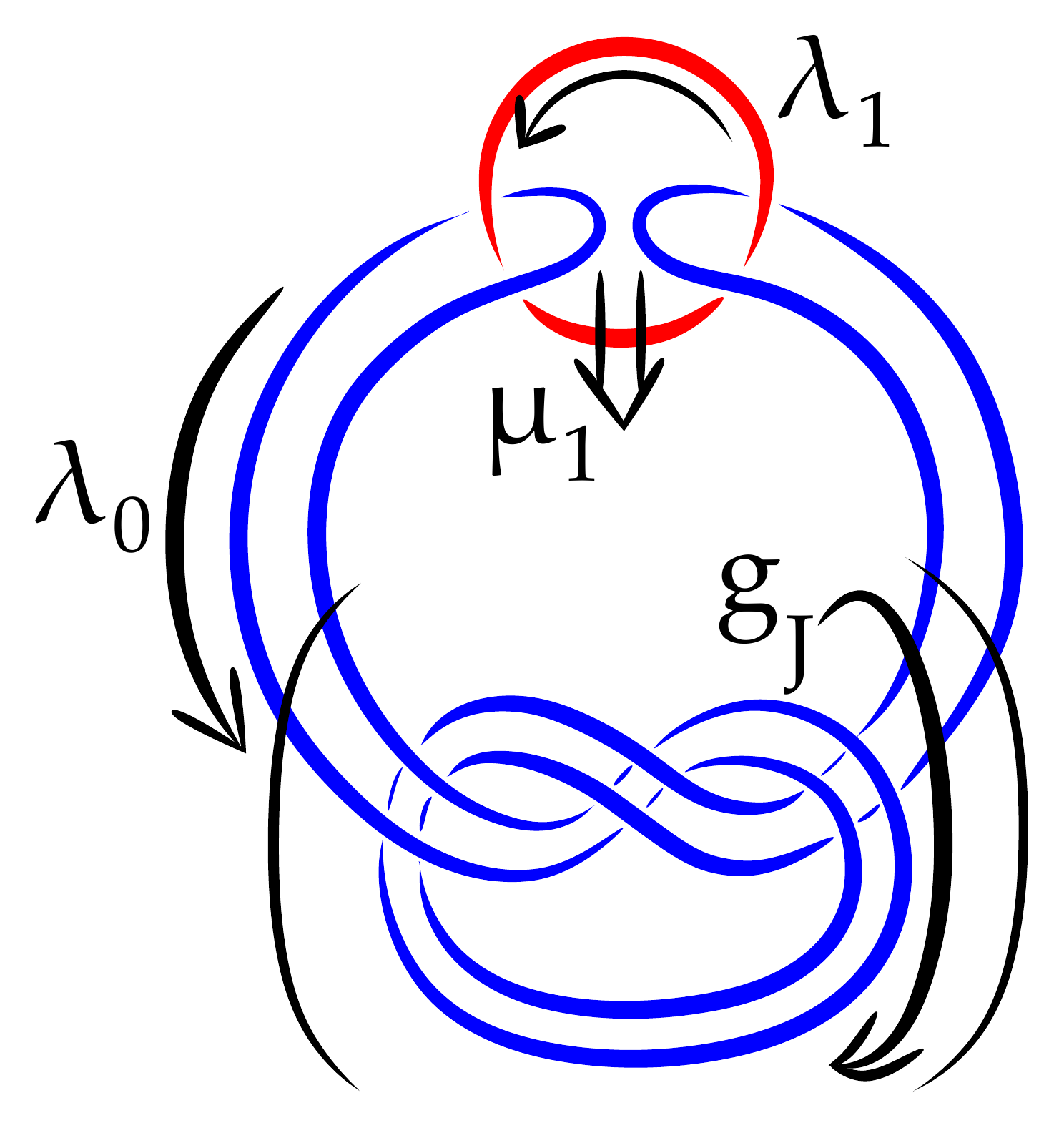}
\qquad \qquad
\begin{forest}
for tree={grow=north, l sep=25pt, s sep=35pt}
[ [ $\mathrm{Borr}$, circle, draw, edge label={node[midway,left] {$\lambda_0$ (and $\mu_0$)}}
[$J$, circle, draw, edge label={node[midway, right] {$g_J$}} ]
[ , edge label={node[midway,left] {$\lambda_1$ (and $\mu_1$)}} ]
 ]]
\end{forest}
\caption{Left: the Borromean rings.  Center: the Bing double $F=(\varnothing, J)\bowtie \mathrm{Borr}$ of the right-handed trefoil $J$ with $+3$ framing, from Example \ref{Ex:BorrPartialSplice}.  Right:  $\mathbb{G}_F$.  Here $\pi_1(\L_f/SO_4) \cong \Z\langle \lambda_0, \lambda_1, g_J \rangle$.}
\label{F:BorrPartialSplice}
\end{figure}

\begin{example}
\label{Ex:Brunnian}
Let $F=(\varnothing, \mathrm{Borr}) \bowtie  ((\varnothing, \mathrm{Borr}) \bowtie ( \dots \bowtie \mathrm{Borr}))$, where the Borromean rings $\mathrm{Borr}$ appear $n-2$ times.  Then $F$ is Milnor's $n$-component Brunnian link 
shown in Figure \ref{F:Borr-Borr-Borr} and \cite[Figure 7]{MilnorLinkGroups}.  As in Example \ref{Ex:BorrPartialSplice}, the symmetry group $B_{L,0,1}$ for $L$ the Borromean rings is trivial.  Thus $\L_f/SO_4 \cong (S^1)^{n}$ where each factor corresponds to a reparametrization of a component.  
\end{example}

\begin{example}[A link $F = (\varnothing, J) \bowtie L$ with nontrivial $A_F$ corresponding to a knot in $S^1 \x D^2$]
\label{Ex:SpliceInto3CompHypLink}
Let $L=(L_0,L_1,L_2)$ be the 3-component hyperbolic KGL shown in Figure \ref{F:SpliceInto3CompHypLink}, where $L_0$ is the red circular component, $L_1$ is the green elliptical component, and $L_2$ is the blue component in the shape of $\infty$.  
Then $B_{L,0,1}\cong \Z/2$, generated by a rotation by $\pi$ in the plane.  (Its full symmetry group is $B_L \cong \Z/2 \oplus \Z/2$ and has rotations by $\pi$ about the $x$- and  $y$-axes.)  The action of $B_{L,0,1}$ reverses the orientation of $L_2$.
Let $F$ be the result of splicing a single knot $J$ along $L_2$.  

If $J$ is invertible, $\L_f/SO_4 \simeq S^1 \x \left(S^1 \x_{\Z/2} \tL_J/SO_4 \right)$.
The left-hand factor of $S^1$ in $\L_f/SO_4$ can be taken to correspond to a full loop of reparametrizations of either component.  The remaining $S^1$ factor corresponds to the rotation by $\pi$ followed by half-loops of reparametrizations of both components $L_0$ and $L_1$.  The action of $\Z/2$ is thus given by rotation by $\pi$ on $S^1$ and by knot inversion on $\tL_J/SO_4$.  If $J$ is a torus knot, $\tL_J/SO_4 \simeq S^1$ and knot inversion corresponds to complex conjugation.
In this case $\L_f/SO_4$ is the product of a circle and a Klein bottle.  

If $J$ is not invertible, $\tL_f/SO_4$ is simply $(S^1)^2 \x  \tL_J/SO_4$.
\end{example}

\begin{figure}[h!]
\includegraphics[scale=0.16]{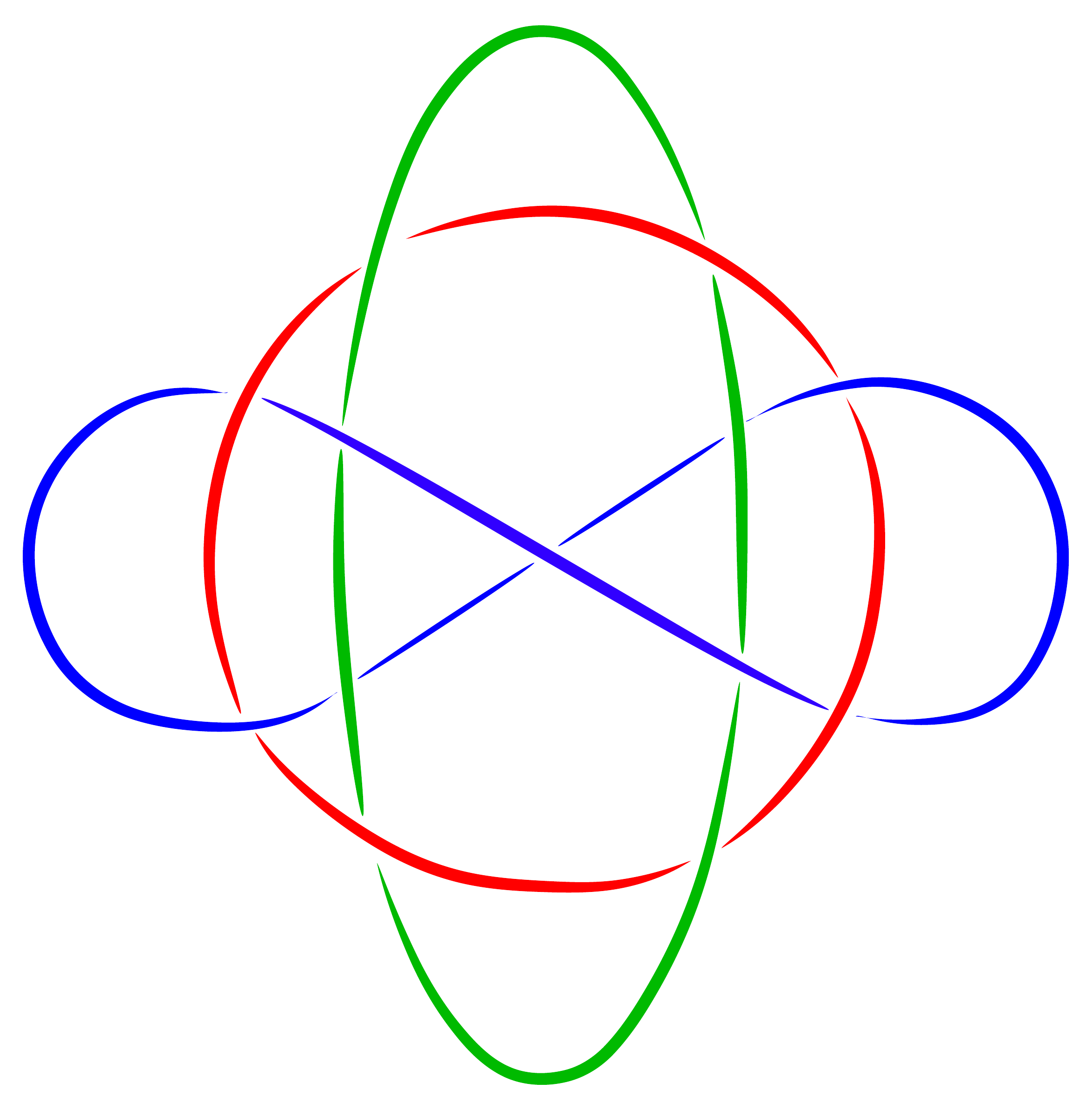}
\qquad \qquad
\begin{tikzpicture}
[grow=north, level distance=40pt]
\node[]{}
child {node[draw,circle] {$L$} 
child{node[draw,circle] {$J$}} 
child{[]}
};
\end{tikzpicture}
\caption{The link $L$ in Example \ref{Ex:SpliceInto3CompHypLink}, together with a schematic of $\mathbb{G}_F$ for $F = (\varnothing, J) \bowtie L$.  }
\label{F:SpliceInto3CompHypLink}
\end{figure}

\begin{example}[A link $F=(\varnothing, J,K,K) \bowtie L$ that gives a knot $S^1 \x D^2$ and has $A_F$ permuting the $K$'s]
\label{Ex:5CompHypLink}
Along similar lines to the previous example, let $L=(L_0,\dots, L_4)$ be either of the 5-component hyperbolic KGL's shown in Figure \ref{F:5CompHypLink}, where $L_0$ is the red circular component, $L_1$ is the green elliptical component, $L_2$ is the blue component in the shape of $\infty$, and $L_3$ and $L_4$ are the remaining components.  
As in the previous example, its full symmetry group is $B_L$, which is the Klein four group.
Let $F =   (\varnothing, J, K, K) \bowtie (L_0,\dots, L_4)$ be the result of splicing a knot $J$ along $L_2$, and a knot $K$ along both $L_3$ and $L_4$.  
If $J$ is invertible, then $A_F = B_{L,0,1} \cong \Z/2$, given by rotation by $\pi$ in the plane.

Then $\L_f/SO_4 \simeq S^1 \x \left(S^1 \x_{\Z/2} \left(\tL_J/SO_4 \x (\tL_K/SO_4)^2\right)\right)$, where the action is given by inversion on $\tL_J/SO_4$ and by swapping the two factors of $\tL_K/SO_4$.  
If $J$ is not invertible, of if we splice two different knots into $L_3$ and $L_4$, then $A_F=\{e\}$, and all the various spaces are just ordinary products.
\end{example}

\begin{figure}[h!]
\includegraphics[scale=0.18]{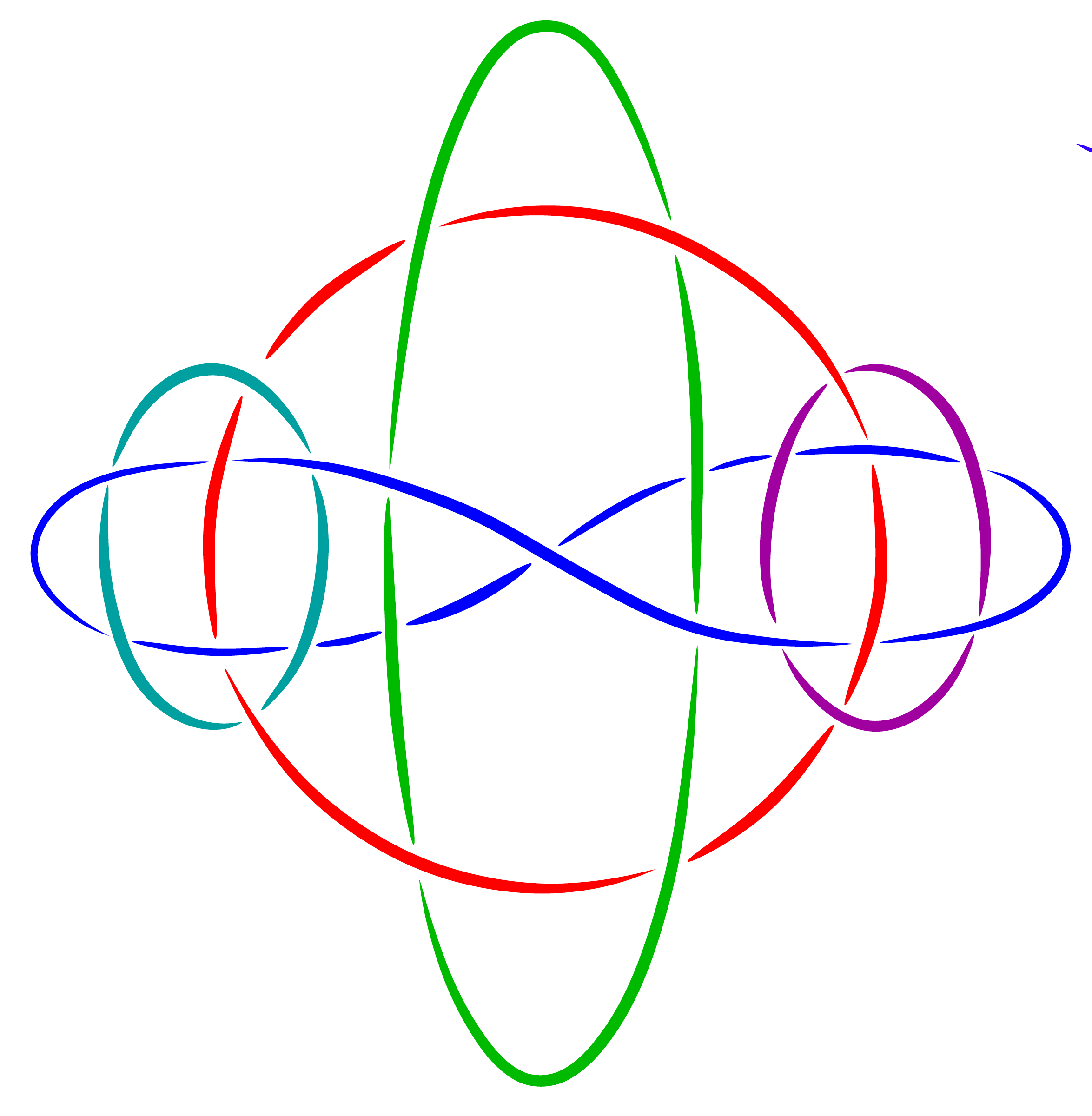}
\qquad 
\includegraphics[scale=0.18]{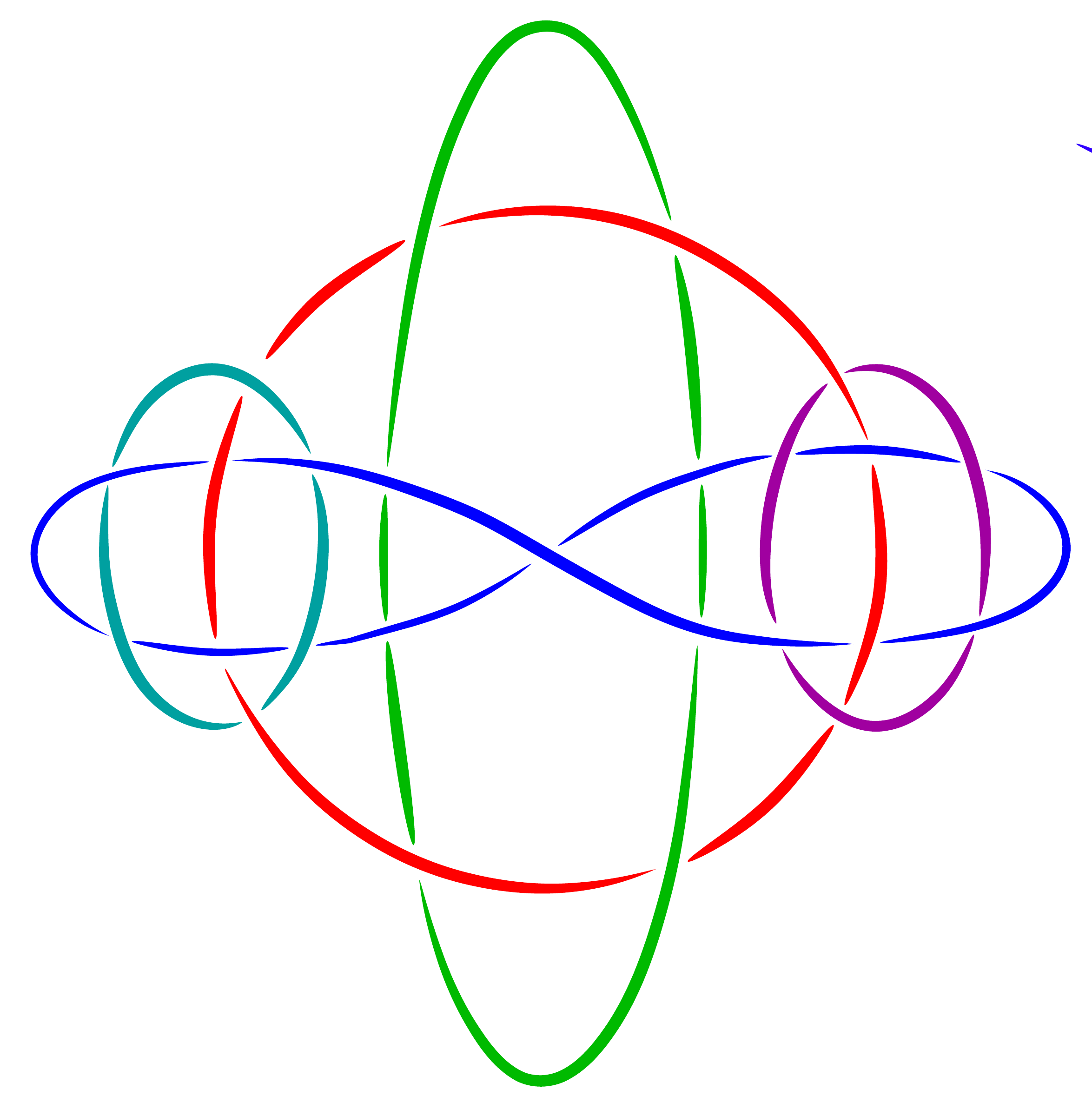}
\qquad 
\begin{tikzpicture}
[grow=north, level distance=40pt]
\node[]{}
child {node[draw,circle] {$L$} 
child{node[draw,circle] {$K$}} 
child{node[draw,circle] {$K$}} 
child{node[draw,circle] {$J$}} 
child{[]}
};
\end{tikzpicture}
\caption{Two possibilities for the link $L$ in Example \ref{Ex:5CompHypLink}, together with a schematic of $\mathbb{G}_F$ for $F = (\varnothing, J, K, K) \bowtie L$.}
\label{F:5CompHypLink}
\end{figure}

\begin{example}[Variant of an example of Stoimenow: $F$ giving a knot in $S^1 \x D^2$ where $A_F$ acts by $\Z/r$]
\label{Ex:Stoimenow}
In the previous two examples, we saw links $L$ with nontrivial representations $B_{L,0,1} \to \mathfrak{S}_r^\pm$ where $B_{L,0,1} \cong \Z/2$.  
A slight variation of an example of Stoimenow \cite[Section 5]{BudneyTop} gives a link $L$ with $B_{L,0,1}$ cyclic of any order and $B_{L,0,1} \to \mathfrak{S}_r^\pm$ faithful (i.e.~injective).  This representation is contained in $\mathfrak{S}_r < \mathfrak{S}_r^\pm$.
 Let $r\geq 3$ be any integer, and let $L=(L_0, \dots, L_{r+1})$ be the link shown in Figure \ref{F:Stoimenow}.  Let $L_0$ be the blue component which links with every other component, and let $L_1$ be the red elliptical component.
The group $B_{L,0,1}$ is $\Z/r$, generated by a rotation of $S^3$ in the $xy$-plane by $2\pi/r$.  
(The full symmetry group $\Isom(C_L)$ is $D_{2r}$, with all isometries extending to the link, and $\Isom^+(C_L) \cong D_r$.)  
The components  $L_2, \dots, L_{r+1}$ are are cyclically permuted by this symmetry.  Let $F = (\varnothing, J, \dots, J) \bowtie L$.  Then $\L_f/SO_4 \simeq S^1 \x \left(S^1 \x_{\Z/r} \left( (\tL_J/SO_4)^r \right) \right)$, where the generator of $\Z/r$ acts on the right-hand factor by the permutation $(1, 2, \dots, k)$ and on the left-hand factor by rotation by $2\pi/r$.  

\end{example}

\begin{figure}[h!]
\includegraphics[scale=0.2]{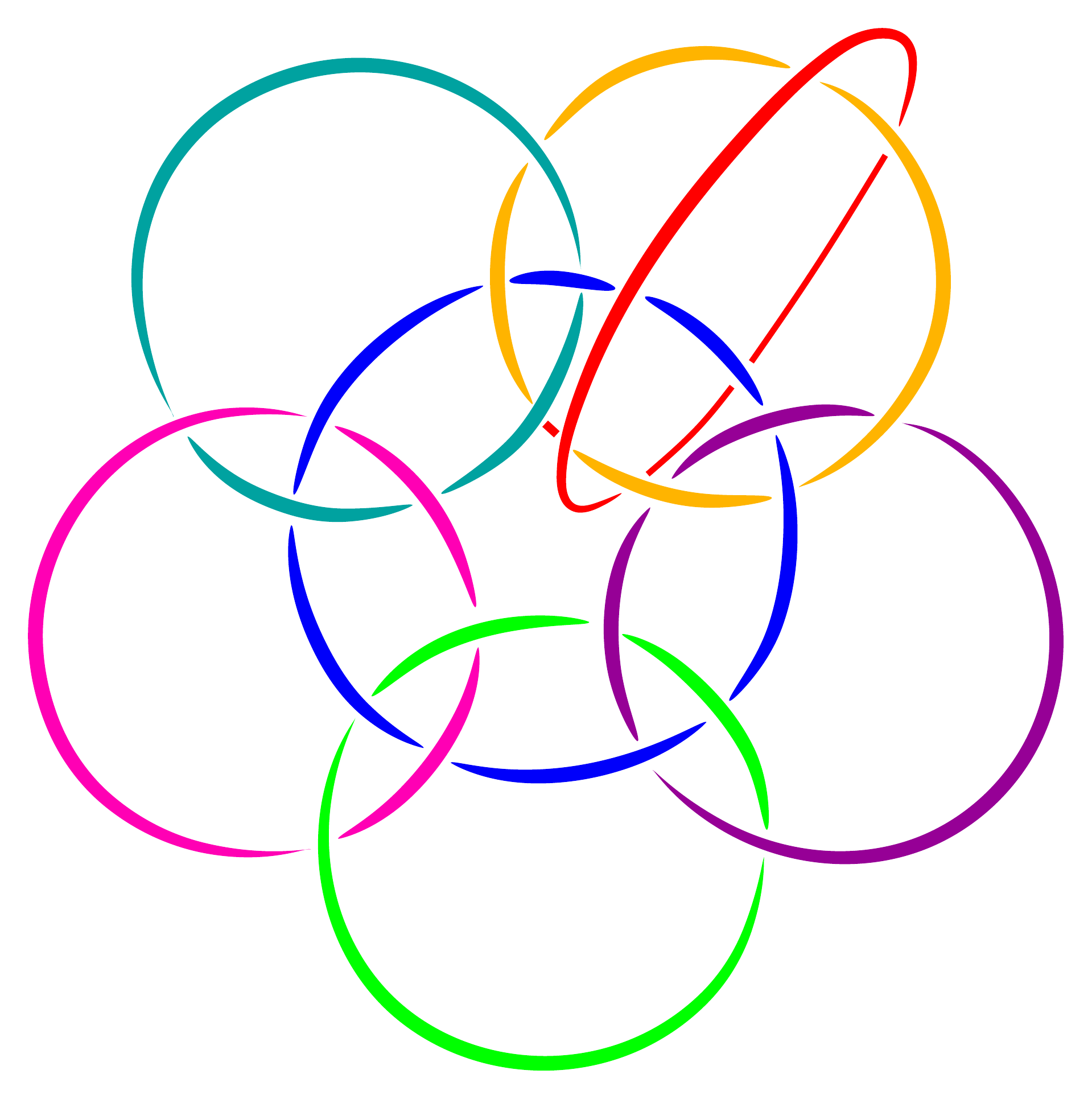} 
\qquad \qquad
\begin{tikzpicture}
[grow=north, level distance=40pt]
\node[]{}
child {node[draw,circle] {$L$}
child{node[draw,circle] {$J$}}
child { edge from parent[draw=none] node[draw=none] (ellipsis) {$\ldots$} }
child{node[draw,circle] {$J$}}
child{node[draw,circle] {$J$}}
child{[]} };
\end{tikzpicture}
\caption{
A hyperbolic KGL $L$ with $B_{L,0,1} \cong \Z/r$ that embeds in $\mathfrak{S}_r$ for $r=5$, together with a schematic of $\mathbb{G}_F$ for $F= (\varnothing, J,\dots, J) \bowtie L$.    The sublink $(L_0, \dots, L_r)$ is an example due to Stoimenow.  See Example \ref{Ex:Stoimenow}.
}
\label{F:Stoimenow}
\end{figure}

\begin{example}[Variant of an example of Sakuma: $F$ giving a knot in $S^1 \x D^2$ where $A_F$ acts by a signed cycle]
\label{Ex:Sakuma}
A slight variation on an example of Sakuma \cite{Sakuma1986}, \cite[Example 5.16]{BudneySplicing} gives a link $L$ with a faithful representation $B_{L,0,1} \to \mathfrak{S}_r^\pm$ that does not factor through $\mathfrak{S}_r$ and with $B_{L,0,1}$ cyclic of any even order.
Let $r$ be odd and let $L=(L_0, \dots, L_{r+1})$ be the link shown in Figure \ref{F:Sakuma}, where $L_0$ is the blue torus knot component and $L_1$ is the red elliptical component.  The group $B_{L,0,1}$
is $\Z/2r$, generated by a rotation of $S^3$ that rotates in the $xy$-plane by $2\pi/r$ and rotates by $\pi$ along $L_1=C_2$.
(The full symmetry group $\Isom(C_L)$ is $\Z/2 \x D_{2r}$, though only the isometries in $\Isom^+(C_L) \cong D_{2r}$ extend to the link.)  
The components  $L_2, \dots, L_{r+1}$ are are cyclically permuted by this symmetry.  Let $F = (\varnothing, J, \dots, J) \bowtie L$.  Then $\L_f/SO_4 \simeq S^1 \x \left(S^1 \x_{\Z/2r} \left( (\tL_J/SO_4)^r \right) \right)$, where the generator of $\Z/2r$ 
acts on the left-hand factor by rotation by $\pi/r$ and on the right-hand factor by the signed permutation given by the cycle $(1, 2, \dots, r)$ and a minus sign for every $i=1,\dots,r$.
\end{example}

\begin{figure}[h!]
\includegraphics[scale=0.2]{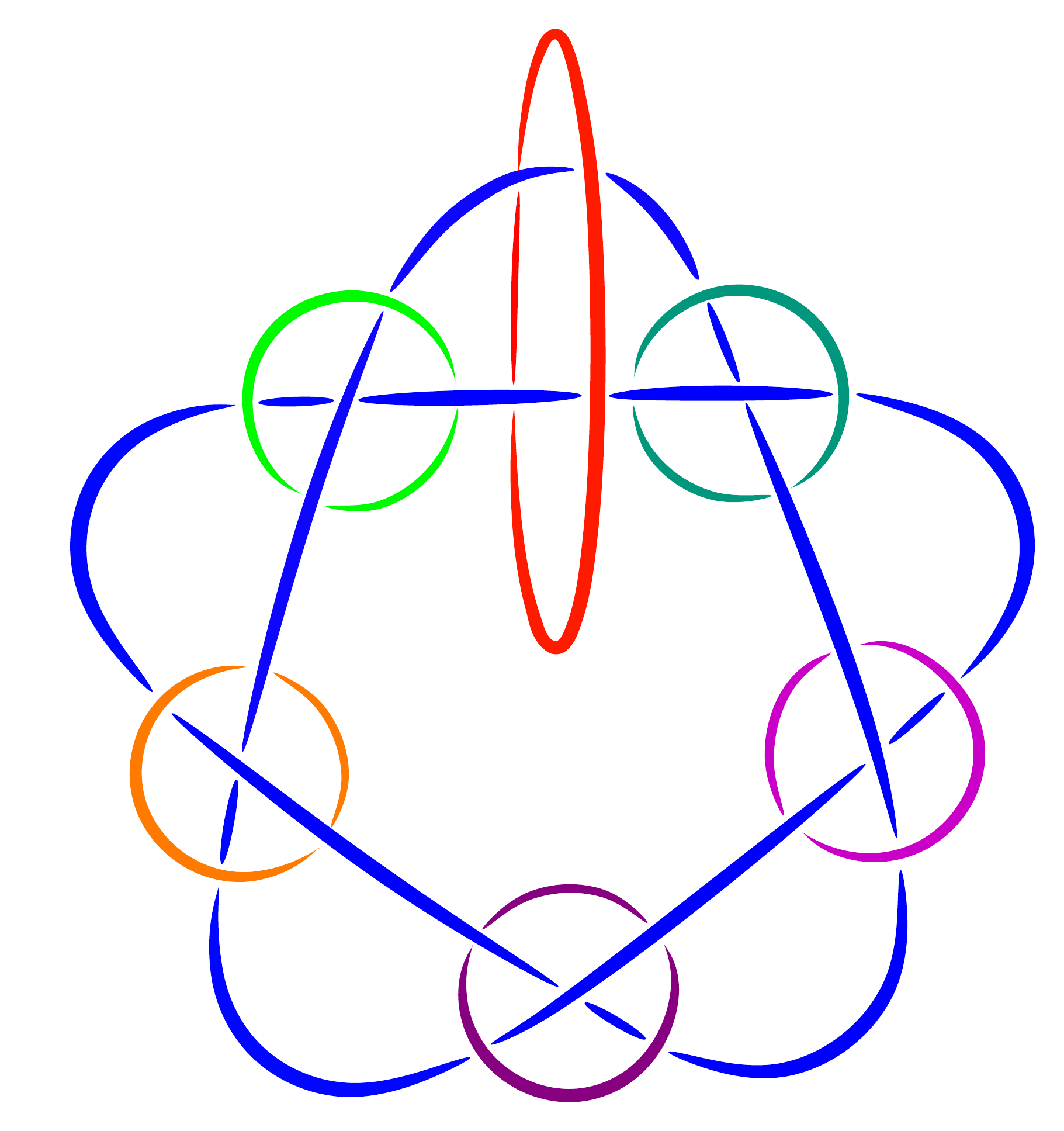} 
\qquad \qquad
\begin{tikzpicture}
[grow=north, level distance=40pt]
\node[]{}
child {node[draw,circle] {$L$}
child{node[draw,circle] {$J$}}
child { edge from parent[draw=none] node[draw=none] (ellipsis) {$\ldots$} }
child{node[draw,circle] {$J$}}
child{node[draw,circle] {$J$}}
child{[]} };
\end{tikzpicture}
\caption{
A hyperbolic KGL $L$ with $B_{L,0,1} \cong \Z/2r$ that embeds as a signed cycle in $\mathfrak{S}_r^\pm$ for $r=5$, together with a schematic of $\mathbb{G}_F$ for $F=(\varnothing, J,\dots, J) \bowtie L$.  The sublink $(L_0, \dots, L_r)$ is an example due to Sakuma.  See Example \ref{Ex:Sakuma}.
}
\label{F:Sakuma}
\end{figure}

%----------------------------------------------------------------------------------
%BEGIN EXERCISE(S)
%----------------------------------------------------------------------------------

\begin{exercise}
%[A Whitehead double in solid torus]
\label{Ex:WhiteheadDoubleInSolidTorus}
Find the companionship tree $\mathbb{G}_f$, and compute $\pi_1(\L_f/SO_4)$ for the links $f$ shown in 
(a) Figure \ref{F:WhiteheadDoubleInSolidTorus}, 
(b) Figure \ref{F:AnotherWhiteheadDouble}, and
(c) Figure \ref{F:WhSumTrefoil}.
Notice that each of these corresponds to a knot in the solid torus.  \emph{Hints}: the links are all distinct, but some of the fundamental groups may be the same.  The answers to the three parts will include both abelian and nonabelian fundamental groups.

% EXERCISE PART (a)
%Consider the link $f$ shown in Figure \ref{F:WhiteheadDoubleInSolidTorus}, which  becomes a Whitehead double upon removing the unknotted component.  
%DELETED TO LEAVE AS EXERCISE
%Splice a knot $J$ into the 3-component link $L$ that has the Whitehead link as a sublink, as shown in Figure \ref{F:WhiteheadDoubleInSolidTorus}.  The link $L$ has a satellite decomposition into the Whitehead link and the 3-component keychain link $KC_2$, where the splicing is done along the component $K_0$ of $KC_2$ and either component, say $W_1$, of the Whitehead link $\Wh=(W_0,W_1)$.  
%The group $A_F$ is  trivial for any choice of $J$.
%Indeed, the symmetry of $\Wh$ preserving $W_0$ and its orientation reverses the orientation of $W_1$ and hence the orientation of $K_0$.  Because of the remaining unknotted component in $(J, \varnothing)\bowtie KC_2$, this symmetry yields a link inequivalent to $(J, \varnothing) \bowtie KC_2$. 
%
%Thus
%$\L_f/SO_4 \simeq (S^1)^2 \x \Conf(2, \R^2) \x \tL_J/SO_4$.
%So if $J$ is a torus knot, $\pi_1(\L_f/SO_4) \cong \Z \langle \lambda_0, \lambda_1, \lambda', g_J \rangle$, where $\lambda'$ is loop of longitudinal rotations of a solid torus knotted in the shape of $J$.  
%Equivalently (mod $SO_4$), $\lambda'$ pushes the unknotted component along $J$, as seen by putting $J$ in its standard, symmetric position in $S^3$
%The loop $\mu'$ of meridional rotations of this solid torus satisfies $\mu'+ g_J=\lambda_1$.
\end{exercise}

\begin{figure}[h!]
%DELETED TO LEAVE AS EXERCISE
%\includegraphics[scale=0.25]{whitehead-sum-empty.pdf}
%\qquad \qquad
\includegraphics[scale=0.26]{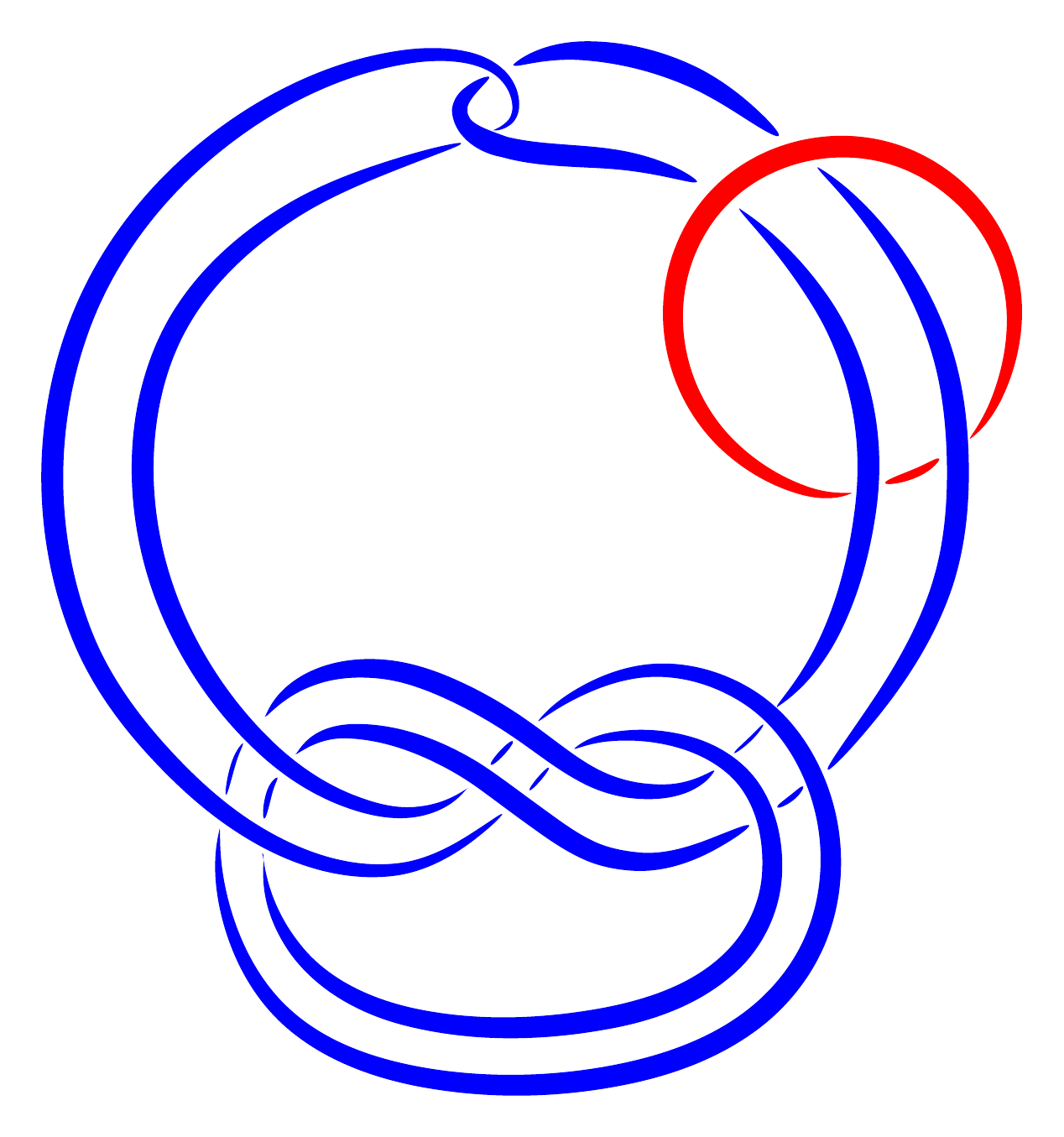}
\qquad \qquad
%DELETED TO LEAVE AS EXERCISE
%\begin{forest}
%for tree={grow=north, l sep=25pt, s sep=35pt}
%[ [Wh, circle, draw, edge label={node[midway, right] {$\lambda_0$ (and $\mu_0$)}}
%[$KC_2$, circle, draw,   edge label={node[midway, right] {$\lambda'$ (and $\mu'=\lambda_1 - g_J$)}}
%[, edge label={node[midway, right] {$\lambda_1$ (and $\mu_1$) }}] [$J$, circle, draw, edge label={node[midway, left] {$g_J$}}  ] ] 
%] ]
%\end{forest}
\caption{The link in Exercise \ref{Ex:WhiteheadDoubleInSolidTorus}, part (a).}
%DELETED TO LEAVE AS EXERCISE
%$L:= KC_2 \bowtie W$ and $F = J \bowtie L$ from Example \ref{Ex:WhiteheadDoubleInSolidTorus}, with $J$ the trefoil, together with the tree $\mathbb{G}_F$.  
%Here $\pi_1(\L_f/SO_4) \cong \Z\langle \lambda_0, \lambda_1, \lambda', g_J\rangle$.}
\label{F:WhiteheadDoubleInSolidTorus}
\end{figure}

\begin{figure}[h!]
\includegraphics[scale=0.26]{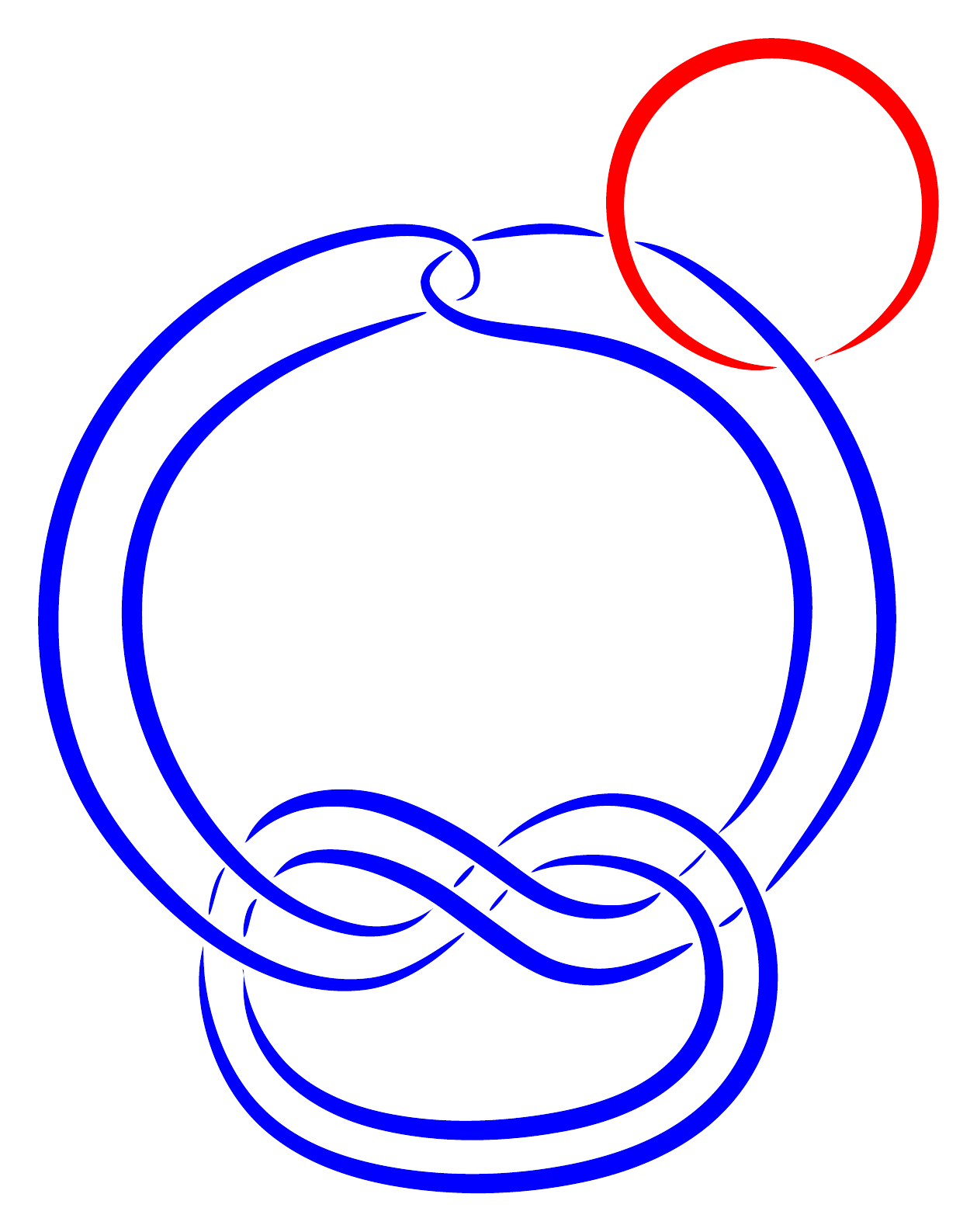}
\qquad \qquad
%DELETED TO LEAVE AS EXERCISE
%\begin{forest}
%for tree={grow=north, l sep=25pt, s sep=45pt}
%[ [$KC_2$, circle, draw, edge label={node[midway, right] {$\lambda_0$ (and $\mu_0$)}}
%[, edge label={node[midway, right] {$\lambda_1$ (and $\mu_1$) }}]
%[Wh, circle, draw,   edge label={node[midway, left] {$\lambda'$ (and $\mu' = g_{\Wh(J)} = \lambda_1 - \mu_0$)}}
% [$J$, circle, draw, edge label={node[midway, left] {$g_J=\mu_J$}}  ] ] 
%] ]
%\end{forest}
\caption{The link in Exercise \ref{Ex:WhiteheadDoubleInSolidTorus}, part (b).}
%DELETED TO LEAVE AS EXERCISE
%$F$ in Example \ref{Ex:AnotherWhiteheadDouble}, with $J$ the trefoil, together with the tree $\mathbb{G}_F$.  Here
%$\pi_1(\L_f/SO_4) \cong \Z \langle \lambda_0, \lambda_1 \rangle \x \langle a,\, g_J\  |\ g_J^a = g_J^{-1}\rangle$ where $a = (\lambda')^{1/2} \mu_J^{1/2}$.}
\label{F:AnotherWhiteheadDouble}
\end{figure}

% EXERCISE PART (c)
%DELETED TO LEAVE AS EXERCISE
%The complement of  $F=(Wh, J) \bowtie KC_2$ has a JSJ decomposition into the same three 3-manifolds as the complements of the links in Examples \ref{Ex:WhiteheadDoubleInSolidTorus} and \ref{Ex:AnotherWhiteheadDouble}.  
%This link also produces a knot in $S^1 \x D^2$, though in $S^3$ the knotted component $f_0$ is $J$, not $\Wh(J)$.  
%Thus the link $F$ is distinct from the ones in the previous examples, though we get the same homotopy type for $\L_f/SO_4$ as in  \ref{Ex:WhiteheadDoubleInSolidTorus}.  If $J$ is a torus knot, $\pi_1(\L_f/SO_4) \cong \Z\langle \lambda_0, \lambda_1, \lambda', g_J\rangle$ where $\lambda'$ can be seen in the second picture in Figure \ref{F:WhSumTrefoil} by pushing the red unknotted component along the knotted component.
%\end{exercise}

\begin{figure}[h!]
\includegraphics[scale=0.26]{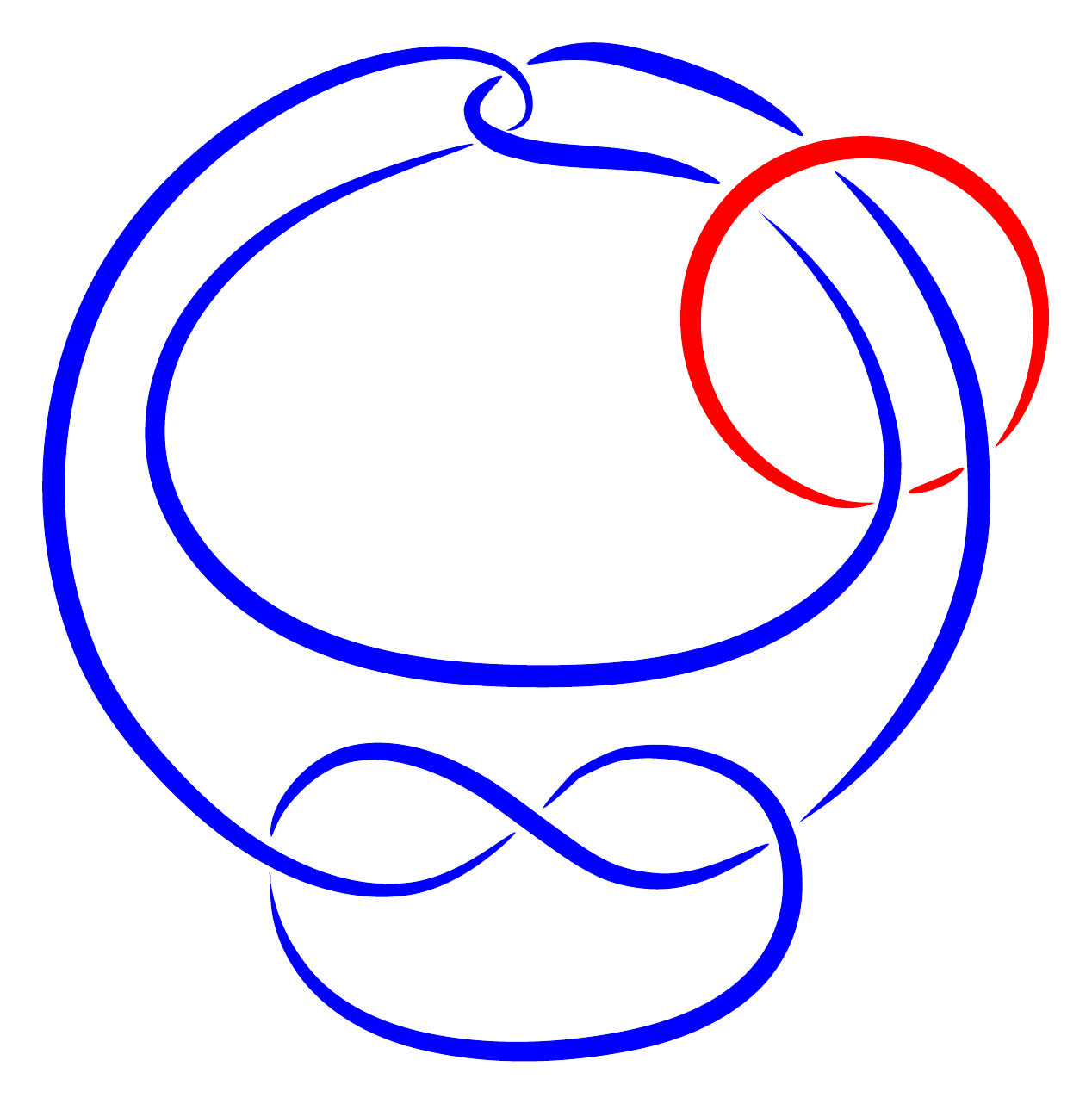}
%DELETED TO LEAVE AS EXERCISE
%\quad 
%\includegraphics[scale=0.25]{whitehead-sum-trefoil-3.pdf}
%\quad
%DELETED TO LEAVE AS EXERCISE
%\begin{forest}
%for tree={grow=north, l sep=25pt, s sep=10pt}
%[ [$KC_2$, circle, draw, edge label={node[midway, right] {$\lambda_0$ (and $\mu_0$)}}
%[$J$, circle, draw,   edge label={node[midway, right]{$g_J$ (and $\lambda_J=1$)}}]
%[Wh, circle, draw, edge label={node[midway, left]  {$\lambda'$ (and $\mu' = \lambda_J -\mu_0$)}} 
%[, edge label={node[midway, left] {$\lambda_1$ (and $\mu_1$) }}  ] ] 
%] ]
%\end{forest}
\caption{The link in Exercise \ref{Ex:WhiteheadDoubleInSolidTorus}, part (c).}
%Two planar projections of the link $F=(Wh, J) \bowtie KC_2$ in Example \ref{Ex:ThirdExample}, together with $\mathbb{G}_F$.  Here $\pi_1(\L_f/SO_4) \cong \Z\langle \lambda_0, \lambda_1, \lambda', g_J\rangle$.}
\label{F:WhSumTrefoil}
\end{figure}

\newpage

\section{Spaces of split links}
\label{S:SplitLinksTorus}

In this section, we consider spaces of framed links that are split (i.e.~reducible), as well knots in a solid torus or a thickened torus which correspond to split links.
See Figure \ref{F:SplitLinkInSolidTorus} for an example of such a knot and the associated link.
In Proposition \ref{P:SplitLinks}, which is part (B) of Theorem \ref{MainT:FramedLinks}, we give a relationship between the space $\tL_F/SO_4$ of a split framed link $F$ and the spaces of the irreducible sublinks of $F$, via a space of embeddings of a punctured 3-sphere in the complement of $F$.  
In Proposition \ref{P:SplitLinksTwoSummands}, we give a more precise description of $\tL_F/SO_4$ in the case that $F$ has just two irreducible summands.
Proposition \ref{P:SplitKnots}, which has a similar proof, describes spaces of knots contained in a 3-ball in an orientable irreducible 3-manifold.
Corollaries \ref{TfSplit} and \ref{VfSplit} gives the homotopy types of $\T_f$ and $\V_f$ of spaces in terms of spaces of knots in a 3-ball; the latter spaces can be described in terms of spaces of long knots.  
This section does not rely on any of the material after Section \ref{S:Asphericity}.

\begin{figure}[h!]
\includegraphics[scale=0.2]{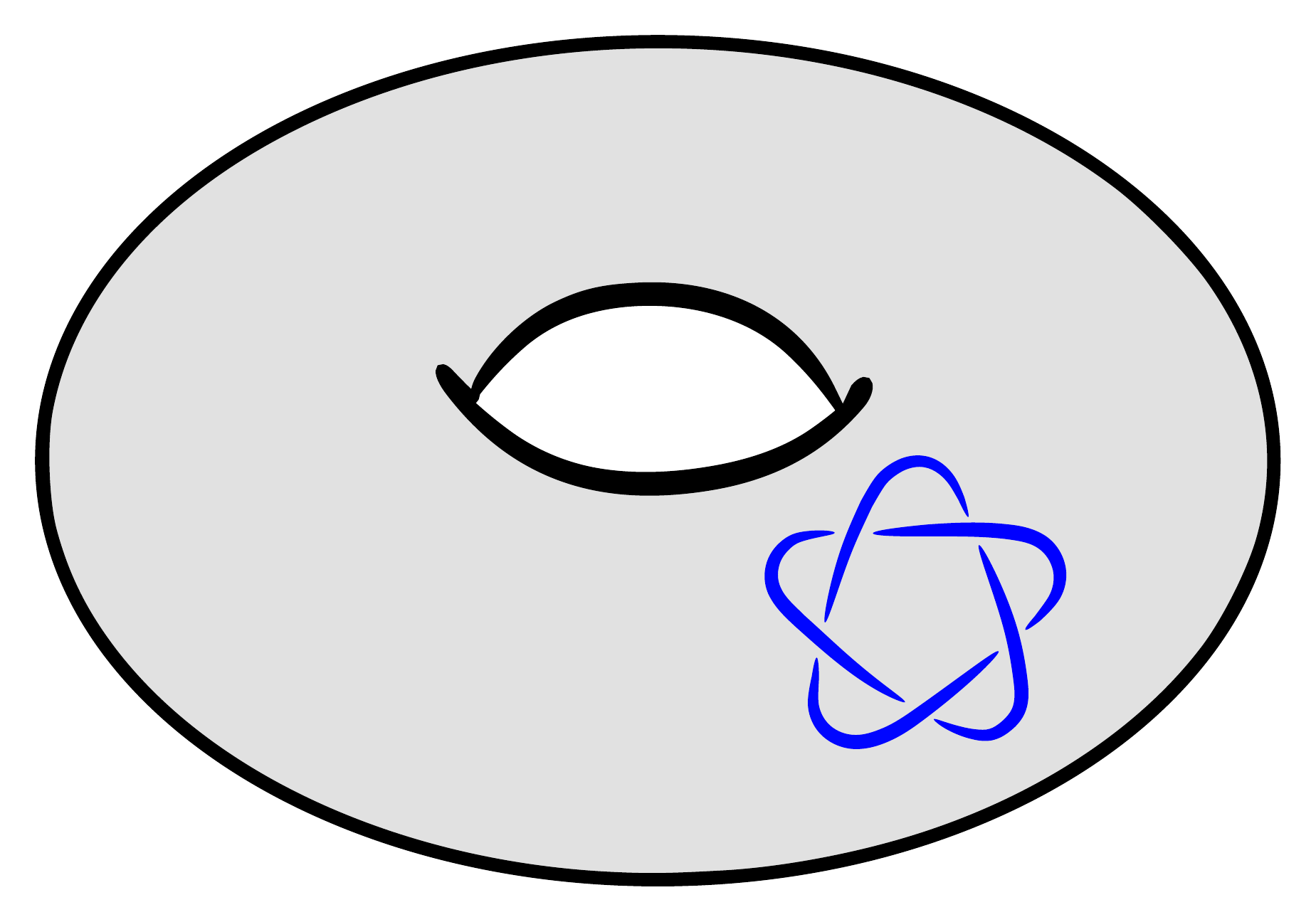}
\qquad \qquad \qquad
\raisebox{1.7pc}{\includegraphics[scale=0.4]{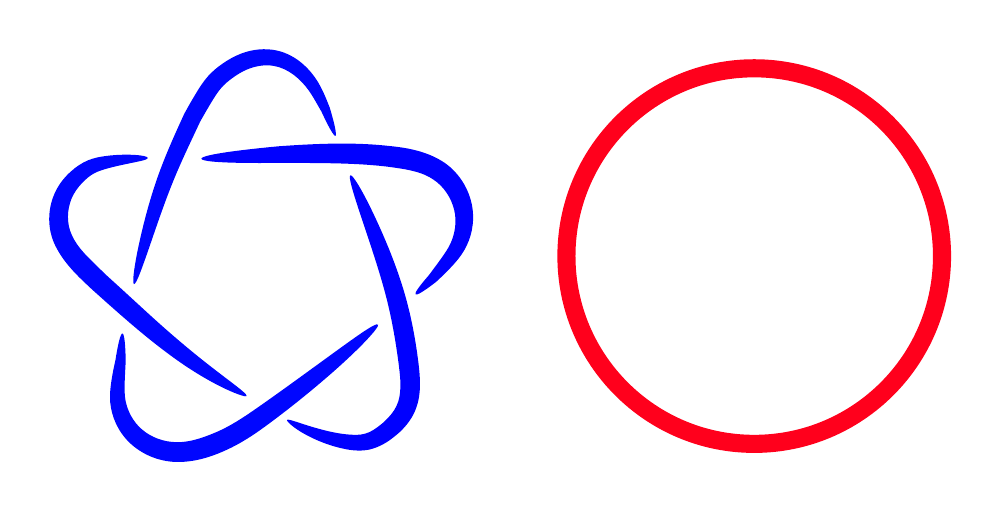}}
\caption{An example of a knot in a solid torus (left), where the associated link is a split link (right).}
\label{F:SplitLinkInSolidTorus}
\end{figure}

Write $F=F_1 \sqcup \dots \sqcup F_k$ if $F$ is a disjoint union of links $F_i$ such that there are disjoint 3-balls $B_1, \dots, B_k$ such that each $B_i$ contains $F_i$.
In this situation, the complement $C_F$ is diffeomorphic to the connected sum of complements $C_{F_1} \# \dots \# C_{F_k}$.  
Let $Q_k:=S^3 - \coprod_{i=1}^k B_i$.
Let $\Emb_0(Q_k, C_F)$ be the subspace of embeddings of $Q_k$ into $C_F$ which extend to diffeomorphisms in $\Diff(C_F; \d C_F)$.  
Below, we write $F \rtimes B$ for a fiber bundle with base $B$ and fiber $F$.  As with semi-direct products of groups, this notation may not completely describe the object (even up to homotopy equivalence).  

\begin{proposition}
\label{P:SplitLinks}
Let $F$ be a split framed link in $S^3$ with a decomposition $F = F_1 \sqcup \dots \sqcup F_k$ into irreducible framed links $F_1, \dots, F_k$.  
Let $C_F$ be the complement of $F$, and let $Q_k \subset C_F$ and $\Emb_0(Q_k, C_F)$ be as above.  
Let $\Emb_{F_i}\left(\coprod S^1 \x  D^2, D^3\right)$ be the component of a framed link in $D^3$ corresponding to $F_i$ in the space of framed links in $D^3$.
Then up to homotopy there is a fibration sequence 
\begin{equation}
\label{Eq:SplitLinksFibn}
\Emb_0 \left(Q_k, C_F\right) \to \prod_{i=1}^k \Emb_{F_i}\left(\coprod S^1 \x  D^2, D^3\right)  \to \tL_F/SO_4
%\begin{split}
%\dots & \to \pi_j\left(\Emb_0 \left(Q_k, C_F\right)\right) \to 
%\pi_j\left(\prod_{i=1}^k \Emb_{F_i}\left(\coprod S^1 \x  D^2, D^3\right) \right)  \to 
%\pi_j\left(\tL_F/SO_4\right) \to \dots \\
%\dots & \to \pi_0\left(\Emb_0 \left(Q_k, C_F\right)\right)
%\end{split}
\end{equation}
and  for $i=1,\dots,k$, there is an equivalence $\Emb_{F_i}\left(\coprod S^1 \x  D^2, D^3\right)\simeq SO_3 \x (C_{F_i} \rtimes (\tL_{F_i}/SO_4))$.
%Then
%\[
%\tL_F/SO_4 \simeq \mathcal{C}(C_F) \rtimes  \prod_{i=1}^n \tL_{F_i}/SO_4.
%\]
The map $\tL_F/SO_4 \to \prod_{i=1}^k\tL_{F_i}/SO_4$ given by restricting to the sublinks $F_1, \dots, F_n$ induces a surjection in $\pi_1$.
\end{proposition}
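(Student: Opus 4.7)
The plan is to establish all three claims together using restriction-of-diffeomorphism fibrations, and then deduce the $\pi_1$-surjectivity by factoring the restriction map through the middle term of \eqref{Eq:SplitLinksFibn}.

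First, Proposition \ref{FramedLinksKpi1} identifies $\tL_F/SO_4 \simeq B\Diff(C_F; \d C_F)$. Since $C_F = Q_k \cup \bigsqcup_i (B_i - \mathrm{int}(F_i))$, the Palais--Lima restriction fibration \eqref{DiffToEmbFibn} applied to $Q_k \incl C_F$, restricted to the component of the inclusion, yields
\[
\prod_{i=1}^k \Diff(D^3 - F_i; \d) \to \Diff(C_F; \d C_F) \to \Emb_0(Q_k, C_F).
\]
The fiber consists of diffeomorphisms fixing $Q_k$ pointwise, which decompose as a product over the pieces $B_i - \mathrm{int}(F_i) \cong D^3 - \mathrm{int}(F_i)$. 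Extending to the principal-bundle sequence $G/K \to BK \to BG$ gives \eqref{Eq:SplitLinksFibn}; here one uses the Smale conjecture $\Diff(D^3;\d) \simeq \ast$ together with the fibration $\Diff(D^3 - F_i; \d) \to \Diff(D^3;\d) \to \Emb_{F_i}(\coprod S^1 \x D^2, D^3)$ to identify $B\Diff(D^3 - F_i; \d) \simeq \Emb_{F_i}(\coprod S^1 \x D^2, D^3)$.

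To identify the middle term, I would cut a small $B^3 \subset \mathrm{int}(C_{F_i})$ and apply two successive restriction fibrations --- first restricting to a marked point in $\mathrm{int}(C_{F_i})$, then to a frame at that point --- which after the same rotation present $\Emb_{F_i}(\coprod S^1 \x D^2, D^3)$ as a principal $SO_3$-bundle over $C_{F_i} \rtimes \tL_{F_i}/SO_4$. The classifying map of this $SO_3$-bundle factors through the frame-bundle classifier $C_{F_i} \to BSO_3$, which is null-homotopic because any orientable 3-manifold is parallelizable. This trivializes the $SO_3$-factor and yields $\Emb_{F_i}(\coprod S^1 \x D^2, D^3) \simeq SO_3 \x (C_{F_i} \rtimes \tL_{F_i}/SO_4)$.

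Finally, for the surjectivity statement, I would factor the restriction map as
\[
\prod_i \Emb_{F_i}(\coprod S^1 \x D^2, D^3) \;\to\; \tL_F/SO_4 \;\to\; \prod_i \tL_{F_i}/SO_4,
\]
where the first arrow is the map from \eqref{Eq:SplitLinksFibn} (given concretely by placing each $F_i$ into its fixed ball $B_i$) and the second is the restriction to each sublink. Unwinding definitions, the composition equals the product over $i$ of the structural maps $\Emb_{F_i}(\coprod S^1 \x D^2, D^3) \to \tL_{F_i}/SO_4$ of the bundles from the previous paragraph. Each such map is a fibration with connected fiber $SO_3 \x C_{F_i}$ and is therefore surjective on $\pi_1$ by the long exact sequence, hence so is the product, and therefore so is the second arrow. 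Connectedness of $\prod_i \tL_{F_i}/SO_4$ is immediate, and asphericity (when each $F_i$ is irreducible) follows factorwise from Proposition \ref{FramedLinksKpi1}(b). The main obstacle will be cleanly justifying the $SO_3$-splitting in the second paragraph: the trivialization of $\mathrm{Fr}^+(C_{F_i})$ from parallelizability is not canonical, so one must work at the level of the classifying map into $BSO_3$ rather than exhibit an explicit section compatible with the $\Diff(C_{F_i}; \d)$-action.
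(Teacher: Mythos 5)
Your proposal is correct and follows essentially the same route as the paper: the same restriction fibration over $\Emb_0(Q_k,C_F)$, delooped to yield \eqref{Eq:SplitLinksFibn}, the same presentation of each $\Emb_{F_i}\left(\coprod S^1 \x D^2, D^3\right)$ as a fibration over $\tL_{F_i}/SO_4$ with fiber $C_{F_i}\x SO_3$, and the same factorization argument for the $\pi_1$-surjectivity. Your flagged worry about the $SO_3$-splitting is unfounded (and the phrase ``factors through the frame-bundle classifier $C_{F_i}\to BSO_3$'' does not quite parse, since the twisted product $C_{F_i}\rtimes(\tL_{F_i}/SO_4)$ has no projection to $C_{F_i}$): the relevant frames live in the tangent spaces of the ambient $D^3\subset\R^3$ (equivalently, of the parallelizable $S^3$, which is the paper's phrasing), so the $SO_3$-bundle is canonically trivial and the factor splits off directly.
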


\begin{proof}
Restricting a diffeomorphism of $C_F$ to $Q_k$ gives a 
%principal bundle  
fibration
\begin{equation}
\label{Eq:SplitPfFibn1}
\prod_{i=1}^k \Diff(C_{F_i} - D^3; \d) \to \Diff(C_F; \d C_F) \to \Emb_0(Q_k, C_F)
\end{equation}
which is surjective on path components.
%%We can apply the classifying space functor $B(-)$ to the inclusion of the fiber above
%We can extend this sequence one step to the right via the classifying map from the base space to $B(-)$ of the fiber, and then one step further by taking $B(-)$ of the inclusion of the fiber.  We thus obtain a fibration sequence 
%
By considering the homotopy fiber of the first map above, we can view it up to homotopy as a fibration
\begin{equation}
\label{Eq:SplitPfFibn2}
 \Omega \Emb_0(Q_k, C_F) \to \prod_{i=1}^k \Diff(C_{F_i} - D^3; \d) \to \Diff(C_F; \d C_F).
 \end{equation}
The classifying space functor $B(-)$ can be applied to H-spaces as well as groups, and for a connected space $X$, $B\Omega X \simeq X$.
By considering \eqref{Eq:SplitPfFibn1} one component in the base space at a time, and
then applying $B(-)$ it to the resulting sequence \eqref{Eq:SplitPfFibn2}, we obtain the fibration
%%Applying the classifying space functor $B(-)$ to this fibration and then shifting it to the left by taking $\Omega(-)$ of the base space yields 
\[
\Emb_0(Q_k, C_F) \to \prod_{i=1}^k B\Diff(C_{F_i} - D^3; \d) \to B\Diff(C_F; \d C_F).
\]
By Proposition \ref{FramedLinksKpi1}, the base space above is equivalent to $\tL_F/SO_4$.  By a similar argument using the contractibility of $\Diff(D^3; \d D^3)$, each factor in the total space is equivalent to the component $\Emb_{F_i}(\coprod S^1 \x D^2, D^3)$ corresponding to $F_i$ in the space of framed links in $D^3$.

We next check that for any framed link $J$, $\Emb_{J}\left(\coprod S^1 \x  D^2, D^3\right)\simeq SO_3 \x (C_{J} \rtimes (\tL_{J}/SO_4))$.  %If the $F_i$ are all knots, a theorem of Budney and Cohen \cite[Proposition 4.4]{Budney-Cohen} \cite[Proposition 2.2]{Budney:Family} from unframed knots to framed links and 
Notice that 
\[
\Emb\left(\coprod S^1 \x  D^2, D^3_+\right) \cong \Emb\left(D^3_- \sqcup \coprod S^1 \x  D^2, S^3\right)/\Diff^+(S^3)
\]
where $D^3_+$ and $D^3_-$ are respectively the upper and lower hemispheres of $S^3$.  Indeed, the map ``$\to$'' is given by taking fixed standard embeddings $e_+$ and $e_-$ of $D^3_+$ and $D^3_-$ into $S^3$.  The inverse ``$\leftarrow$'' is given by first applying an element of $\Diff^+(S^3)$ to get a representative where the embedding $D^3_-$ is $e_-$ and applying $e_+^{-1}$ to get a link in $D^3$.  Since $\Diff^+(S^3)\simeq SO_4$, we get a fibration 
\begin{equation}
\label{Eq:LinkInD3toLinkInS3Fibn}
\Emb(D^3, C_J) \to \Emb_J\left(D^3 \sqcup \coprod S^1 \x  D^2, S^3\right)/SO_4 \to \tL_J/SO_4
\end{equation}
where the fiber is equivalent to $C_J \x SO_3$.  In this fibration, $SO_3$ splits as a direct factor because 
$S^3$ is parallelizable.
%$C_J$ is an orientable 3-manifold and hence parallelizable.

For the last statement, consider the commutative diagram 
\[
\xymatrix{
\prod_{i=1}^k \Emb_{F_i}\left(\coprod S^1 \x  D^2, D^3\right)  \ar[rr] \ar[dr] & & \tL_F/SO_4 \ar[dl] \\
&  \prod_{i=1}^k \tL_{F_i}/SO_4 &
}
\] 
where the map from the top-left space is product of the fibrations \eqref{Eq:LinkInD3toLinkInS3Fibn} for $J=F_1,\dots,F_k$.  The long exact sequence of \eqref{Eq:LinkInD3toLinkInS3Fibn} in homotopy shows that this map is surjective in $\pi_1$, hence so is the map from $\tL_F/SO_4$.
\end{proof}

We study neither $\Emb(Q_k, C_F)$ nor the above fibration sequence in further detail.  The statement of Proposition \ref{P:SplitLinks} resembles a result of Hendriks and McCullough \cite{Hendriks-McCullough}, though its proof is independent of that work, using only restriction fibrations and the Smale conjecture.  Statements of C\'esar da S\'a and Rourke \cite{CesarDeSaRourke} and Hatcher \cite{Hatcher:Reducible3Mfds} could lead to a more effective result than Proposition \ref{P:SplitLinks}, but neither statement has a complete proof.  The (proven) closely related result of Hendriks and Laudenbach \cite{HendriksLaudenbach} applies only to 3-manifolds with a 2-sphere boundary component.
Nonetheless, in the case $k=2$, we can provide a more effective description of $\tL_F/SO_4$.

\begin{proposition}
\label{P:SplitLinksTwoSummands}
Suppose $F=F_1 \sqcup F_2$ where $F_1$ and $F_2$ are irreducible links.  Then 
\[
\tL_F/SO_4 \simeq  SO_3 \x ((C_{F_1} \x C_{F_2}) \rtimes (\tL_{F_1}/SO_4 \x \tL_{F_2}/SO_4)).
\]
\end{proposition}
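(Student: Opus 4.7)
The plan is to apply Proposition \ref{P:SplitLinks} with $k=2$ and then identify the fiber $\Emb_0(Q_2, C_F)$ as $SO_3$. In this case $Q_2 \cong S^2 \x I$, and morally an element of $\Emb_0(Q_2, C_F)$ is a choice of a separating 2-sphere together with a collar realizing the prime decomposition $C_F \cong C_{F_1} \# C_{F_2}$. If one can show $\Emb_0(Q_2, C_F) \simeq SO_3$, then the fibration from Proposition \ref{P:SplitLinks} takes the form
\[
SO_3 \to \prod_{i=1}^2 \left(SO_3 \x (C_{F_i} \rtimes (\tL_{F_i}/SO_4))\right) \to \tL_F/SO_4,
\]
and the claimed description can be extracted by analyzing this fibration.

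To identify $\Emb_0(Q_2, C_F)$, I would first restrict $e: Q_2 \to C_F$ to the core sphere $S^2 \x \{1/2\}$; uniqueness of tubular neighborhoods makes this restriction a homotopy equivalence onto $\Emb(S^2, C_F; \mathrm{sep})$, the space of embedded 2-spheres in $C_F$ separating $F_1$ from $F_2$ with a chosen side-labeling (inherited from the collar direction). Next, the reparametrization fibration $\Diff^+(S^2) \to \Emb(S^2, C_F; \mathrm{sep}) \to \mathcal{S}$ over the space $\mathcal{S}$ of unparametrized separating spheres has fiber $\Diff^+(S^2) \simeq SO_3$ by Smale's theorem. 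Composing gives $\Emb_0(Q_2, C_F) \simeq SO_3$ once one knows that $\mathcal{S}$ is contractible; this is a homotopical strengthening of the uniqueness of the prime decomposition of $C_F$, and is the main obstacle of the proof.

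Granted the computation of the fiber, I will analyze the inclusion $SO_3 \hookrightarrow \prod_{i=1}^2 SO_3$ of the fiber into the $SO_3$-factors of the total space. A rotation $g$ of the separating sphere extends to a diffeomorphism of $C_F$ that rotates each bounded side, but with opposite conventions coming from the opposite normal orientations on $\d B_1$ versus $\d B_2$; hence this inclusion is homotopic to $g \mapsto (g, g^{-1})$. The map acts trivially on the remaining factors $C_{F_i} \rtimes (\tL_{F_i}/SO_4)$, since those record data intrinsic to each irreducible summand and are unaffected by rotating the separating sphere. The multiplication $SO_3 \x SO_3 \to SO_3$, $(g_1, g_2) \mapsto g_1 g_2$, realizes the quotient by the anti-diagonal, leaving a single residual $SO_3$ factor together with the bundles $C_{F_i} \rtimes (\tL_{F_i}/SO_4)$ untouched. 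These two bundles combine into the external product bundle $(C_{F_1} \x C_{F_2}) \rtimes (\tL_{F_1}/SO_4 \x \tL_{F_2}/SO_4)$, yielding the desired formula.

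The main obstacle is the contractibility of $\mathcal{S}$, the space of unparametrized separating spheres in $C_F$ realizing the prime decomposition. The results of C\'esar da S\'a--Rourke and Hatcher on reducible 3-manifolds referenced in the remark after Proposition \ref{P:SplitLinks} would imply this, but are not fully proven. For our setting where each $C_{F_i}$ is an irreducible link complement with incompressible torus boundary, I would attempt a direct argument: cap off the boundary tori of $C_F$ with carefully chosen Haken pieces so that each prime summand becomes closed and Haken, then apply the Hatcher--Ivanov theorem on $\Diff$ of Haken manifolds together with Kneser--Milnor uniqueness of the decomposing sphere to bootstrap contractibility of $\mathcal{S}$ from contractibility of the components of $\Diff(C_F; \d C_F)/\Diff(C_F, S; \d C_F)$.
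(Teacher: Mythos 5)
Your skeleton — contractibility of the space of unparametrized separating spheres, an $SO_3$ coming from $\Diff^+(S^2)$, and an identification of the two summand $SO_3$'s along an anti-diagonal — is the same geometric content as the paper's proof, but two steps of your execution have real problems.

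First, you misplace the difficulty. The contractibility of $\mathcal{S}=\mathrm{Sph}(C_F)$ for a connected sum of \emph{two} irreducible 3-manifolds (neither of them $S^3$) is not the unproven statement alluded to after Proposition \ref{P:SplitLinks}; those remarks concern a general structure theorem for diffeomorphism groups of arbitrary reducible 3-manifolds. The two-summand contractibility follows from Hatcher's 1981 theorem together with the Smale conjecture, and the paper points to complete written proofs (Hatcher's notes on reducible 3-manifolds and Nariman's Remark 3.10). Your proposed replacement argument — capping boundary tori with Haken pieces and bootstrapping from Hatcher--Ivanov and Kneser--Milnor — is too vague to assess and is in any case unnecessary.

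Second, and more seriously, the last step does not follow as written. Knowing the fiber, the total space, and even the homotopy class of the fiber inclusion of a fibration does not determine the homotopy type of the base; the assertion that ``the multiplication realizes the quotient by the anti-diagonal'' is exactly what must be proved, not a formal consequence of the fibration from Proposition \ref{P:SplitLinks}. The paper sidesteps this by not using that fibration at all: it introduces the model $ES_F$ of pairs (a framed link isotopic to $F$, a separating sphere) modulo $\Diff(S^2)$, uses contractibility of $\mathrm{Sph}(C_F)$ to show $ES_F/\Diff^+(S^3)\simeq \tL_F/SO_4$, and then writes down an explicit homeomorphism
\[
ES_F/\Diff^+(S^3) \cong \Emb_{F_1}\left(\coprod S^1 \x D^2, D^3\right) \x_{\Diff^+(S^2)} \Emb_{F_2}\left(\coprod S^1 \x D^2, D^3\right),
\]
which is the rigorous form of your anti-diagonal quotient; the claimed formula then drops out from $\Diff^+(S^2)\simeq SO_3$ and the splitting $\Emb_{F_i}(\coprod S^1\x D^2, D^3)\simeq SO_3\x(C_{F_i}\rtimes \tL_{F_i}/SO_4)$. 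To repair your route you should replace the fibration analysis with a balanced-product model of this kind; note also that the $\Diff^+(S^2)$-action genuinely moves the $C_{F_i}\rtimes(\tL_{F_i}/SO_4)$ data and is only absorbed into the $SO_3$ factors after invoking parallelizability, so ``acts trivially on the remaining factors'' also needs justification.
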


\begin{proof}
Let $\mathrm{Sph}(C_F)$ be the space of (unparametrized) separating 2-spheres in the complement $C_F$ of $F$; that is, $\mathrm{Sph}(C_F)$ is a component of $\Emb(S^2, C_F)/\Diff(S^2)$.  
The theorem in \cite{Hatcher:1981} together with the validity of the Smale Conjecture implies that it is contractible.  (The contractibility of $\mathrm{Sph}(M)$ for $M$ a connected sum of two irreducible 3-manifolds is equivalent to the fact that $\Diff(M) \simeq Diff(M,S)$ where $S$ is a separating 2-sphere.)  Proofs of this contractibility and the latter equivalent statement can also be found in \cite{Hatcher:Reducible3Mfds} and \cite[Remark 3.10]{Nariman:Reducible3Mfds}.

Now define $ES_F$ by
\[
ES_F:= \left. \left\{ \left(J: \coprod S^1 \x D^2 \incl M, g: S^2 \incl M \right) :  
 \mbox{$J$ is isotopic to $F$ and $g$ separates $J$} \right\} \ \right/ \ 
 %(g \sim -g)
 \Diff(S^2).
\]
%where $g \sim -g$ means that we identify $g$ with the result of pre-composing by the antipodal map on $S^2$.  
The contractibility of the fiber in the fibration 
\[
\mathrm{Sph}(C_F) \to ES_F/\Diff^+(S^3) \to \Emb_F \left. \left(\coprod S^1 \x D^2, S^3 \right)\right/\Diff^+(S^3)
\]
shows that $ES_F/\Diff^+(S^3)$ is equivalent to the base space above, which is equivalent to $\tL_F/SO_4$.  

Next, we claim that there is a homeomorphism 
\begin{equation}
\label{Eq:ES_F}
ES_F/\Diff^+(S^3) \cong \Emb_{F_1}\left(\coprod S^1 \x D^2, D^3\right) \x_{\Diff^+(S^2)} \Emb_{F_2}\left(\coprod S^1 \x D^2, D^3\right).
\end{equation}
On the right-hand side, a left action of $h \in \Diff^+(S^2)$ on the second factor is given by viewing $S^2$ as $\d D^3$ and post-composing by an extension $H\in \Diff(D^3)$ of $h$.  A right action of such $h$ on the first factor is given by post-composition by $H^{-1}$.  
Indeed, the map ``$\leftarrow$'' sends a class $[(J_1, J_2)]$ on the right-hand side to $[(e_+\circ J_1, e_- \circ J_2,e)] \in ES_F/\Diff^+(S^3)$ where $e_{\pm}:D^3 \incl S^3$ are embeddings of the upper and lower hemispheres and $e:S^2 \incl S^3$ is an embedding of the equatorial 2-sphere.  Conversely, an element in $ES_F/\Diff^+(S^3)$ can be represented by $(J,e)$ by applying a diffeomorphism in $\Diff^+(S^3)$.  Then writing $J=J_1 \sqcup J_2$, we define the inverse by sending $[(J_1 \sqcup J_2,e)]$ to $[(J_1,J_2)]$.

Finally, since $\Diff^+(S^2) \simeq SO_3$, the same argument as in the proof of Proposition \ref{P:SplitLinks} shows that the right-hand side of \eqref{Eq:ES_F} is equivalent to 
\[
((C_{F_1} \rtimes \tL_{F_1}/SO_4) \x SO_3) \x_{SO_3}
(SO_3 \x (C_{F_2} \rtimes \tL_{F_1}/SO_4))
\]
which simplifies to the desired result.
\end{proof}

\begin{example}
In particular, if $F= U \sqcup J$ where $U$ is a framed unknot and $J$ is any framed knot or link, then 
\begin{equation}
\label{Eq:FramedLinkCupUnknot}
\tL_F/SO_4 \simeq SO_3 \x S^1 \x (C_{J} \rtimes  \tL_{J}/SO_4).
\end{equation}
As a further special case, if $F=U_1 \sqcup U_2$ with each $U_i$ a framed unknot, we get that the space of 2-component framed unlinks in $S^3$ modulo rotations is
\[
\tL_F/SO_4 \simeq S^1 \x SO(3) \x S^1.
\]
One can visualize this by using an element of $SO_4$ to fix $U_1$; then one factor of $S^1$ corresponds to a loop where $U_2$ passes through the first, the factor of $SO(3)$ corresponds to rotations of a 3-ball containing $U_2$, and the other factor of $S^1$ corresponds to meridional rotations of $U_2$.  
The spaces of unlinks studied by Brendle and Hatcher \cite{Brendle-Hatcher} are related but different because they study unframed unlinks in $\R^3$ rather than framed links $S^3$, and they work modulo reparametrizations rather than modulo rotations. 
%[[Mention K--Kusner?? even though it's in $\R^3$ and modulo reparametrization and not modulo $SO_4$??]]
\end{example}

To treat unframed knots in a solid torus, we could determine the quotient of \eqref{Eq:FramedLinkCupUnknot} by the meridional rotations of a knot $J$.  Instead we will apply Proposition \ref{P:SplitKnots}, which is closely related to Proposition \ref{P:SplitLinksTwoSummands}, though strictly speaking, the two are logically incomparable.

\begin{proposition}
\label{P:SplitKnots}
Suppose $M$ is an orientable, irreducible 3-manifold not homeomorphic to $S^3$.  
Let $f: S^1 \incl M$ be a knot obtained as a composition of embeddings $j:S^1 \incl D^3$ and $D^3 \incl M$.  
%Let $J: S^1 \x D^2 \incl S^3$ be a framed knot in $S^3$ corresponding to the knot $e_+ \circ j: S^1 \incl S^3$, where $e_+: D^3 \incl S^3$ is the inclusion of the upper hemisphere.
Let $\overline{j}: S^1 \incl S^3$ be the composition of $j$ with 
an embedding $e_+:D^3 \incl S^3$ of the upper hemisphere, and let $\underline{j}$ be the long knot whose closure is $\overline{j}$.
Then the component of $f$ in the space of knots in $M$ satisfies 
\begin{equation}
\label{Eq:SplitKnots}
\Emb_{f}\left(S^1, M\right)
\simeq M \x \Emb_j \left(  S^1, D^3\right)
\simeq M \x (SO_3 \x_{SO_2} (C_{\overline{j}} \rtimes \K_{\underline{j}}))
\end{equation}
where $C_J \rtimes \K_{\underline{j}} = \{ (p,g): g \mbox{ is isotopic to } \underline{j} \mbox{ and } p \in C_g \}$, $SO_2$ acts on $SO_3$ as a subgroup fixing an axis, and $SO_2$ acts on  $C_{\overline{j}} \rtimes \K_{\underline{j}}$ via the Gramain loop.
\end{proposition}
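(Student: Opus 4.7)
The plan is to establish the two equivalences in sequence. For the first equivalence $\Emb_f(S^1, M) \simeq M \x \Emb_j(S^1, D^3)$, consider the map
\[
\Phi \co \Emb^+(D^3, M) \x \Emb_j(S^1, D^3) \to \Emb_f(S^1, M), \quad (e, k') \mapsto e \circ k',
\]
which is equivariant for the diagonal action of $\Diff^+(D^3) \simeq SO_3$ (by the Smale conjecture) and therefore descends to a map
\[
\overline{\Phi} \co \Emb^+(D^3, M) \x_{\Diff^+(D^3)} \Emb_j(S^1, D^3) \to \Emb_f(S^1, M).
\]
Since $M$ is orientable and hence parallelizable, the standard identification $\Emb^+(D^3, M) \simeq M \x SO_3$ (via evaluation at $0$ and the derivative, following Palais) simplifies the Borel construction on the left to $M \x \Emb_j(S^1, D^3)$.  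I would then show $\overline{\Phi}$ is a weak equivalence by checking that its homotopy fiber over a chosen $k \in \Emb_f(S^1, M)$ is contractible.  This fiber is the space of unparametrized oriented 3-balls $B \subset M$ with $k(S^1) \subset \mathrm{int}(B)$; it is non-empty because $k$ lies in the component of $f$, and its contractibility follows from the hypothesis that $M$ is irreducible and $\neq S^3$ (so every embedded 2-sphere bounds a unique 3-ball) together with the Hatcher-style sphere-space arguments invoked in the proof of Proposition \ref{P:SplitLinksTwoSummands}.

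For the second equivalence, I would apply an analogous construction with $S^3$ in place of $M$.  Let $X := \{(k, e) \in \L_{\overline{j}} \x \Emb^+(D^3, S^3) : k(S^1) \subset e(D^3)\}$.  The assignment $(k, e) \mapsto (e, e^{-1} \circ k)$ identifies $X \cong \Emb^+(D^3, S^3) \x \Emb_j(S^1, D^3)$, while the projection $X \to \L_{\overline{j}}$ is a fibration whose fiber over $k$ is the space of oriented 3-balls in $S^3$ whose image contains $k(S^1)$.  Passing to the complementary 3-ball identifies this fiber with the space of oriented 3-balls in $C_k$, hence (by parallelizability) with $C_k \x SO_3$.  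The free action of $\Diff^+(S^3) \simeq SO_4$ on $\Emb^+(D^3, S^3)$ and on $\L_{\overline{j}}$ is compatible with this fibration; quotienting yields
\[
C_{\overline{j}} \x SO_3 \to \Emb_j(S^1, D^3) \to \L_{\overline{j}}/SO_4.
\]
By Propositions \ref{FramedKnotsInS3VsLongKnots} and \ref{LfModSO4isKpi1}, the base is equivalent to $\K_{\underline{j}}/SO_2$, where $SO_2$ acts on $\K_{\underline{j}}$ by rotation about the long axis (realizing the Gramain loop in $\pi_1$); this action is free when $\overline{j}$ is nontrivial, with the unknot case handled directly.  Pulling the fibration above back along the principal $SO_2$-bundle $\K_{\underline{j}} \to \K_{\underline{j}}/SO_2$ produces an $SO_2$-equivariant total space which I would identify with $SO_3 \x (C_{\overline{j}} \rtimes \K_{\underline{j}})$, so that retaking the $SO_2$-quotient gives $\Emb_j(S^1, D^3) \simeq SO_3 \x_{SO_2} (C_{\overline{j}} \rtimes \K_{\underline{j}})$ with the specified actions.

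The main obstacle I anticipate is the last identification of the $SO_2$-pullback with $SO_3 \x (C_{\overline{j}} \rtimes \K_{\underline{j}})$.  This requires carefully tracking how the trivializations of the $SO_3$-factor (coming from the oriented-frame bundle of each $C_k$) and the $C_{\overline{j}}$-factor (coming from complementary 3-balls) vary under the Gramain rotation, and checking that the resulting $SO_2$-action restricts to the subgroup inclusion $SO_2 \subset SO_3$ on the first factor and to the simultaneous rotation on the second, matching the description in the proposition.  A secondary technical point is the contractibility of the space of 3-balls containing a knot in an irreducible $M \neq S^3$, which though standard requires invoking the full strength of the sphere theorem together with Hatcher's theorems on spaces of 2-spheres, much as in the proof of Proposition \ref{P:SplitLinksTwoSummands}.
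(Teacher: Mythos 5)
Your argument for the first equivalence is essentially the paper's own proof in a slightly different packaging: the paper introduces the space $ES(M)$ of pairs (knot, enclosing sphere), identifies it with the Borel construction $\Emb(D^3,M)\x_{\Diff^+(D^3)}\Emb(S^1,D^3)$ by the same explicit formulas you use, and then deduces $ES_f(M)\simeq \Emb_f(S^1,M)$ from the fibration with fiber $\mathrm{Sph}(C_f)\simeq\ast$ --- which is exactly your computation of the homotopy fiber of $\overline{\Phi}$, using the same inputs (Smale conjecture, parallelizability of $M$, and Hatcher's contractibility of the space of essential spheres in a connected sum of two irreducible manifolds neither of which is $S^3$). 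The one place you diverge is the second equivalence: the paper does not prove it at all but simply cites Budney--Cohen \cite[Proposition 4.4]{Budney-Cohen} (and \cite[Proposition 2.2]{Budney:Family}), so the identification you flag as your ``main obstacle'' --- matching the $SO_2$-action on the pulled-back total space with the subgroup inclusion $SO_2\subset SO_3$ and the Gramain rotation on $C_{\overline{j}}\rtimes\K_{\underline{j}}$ --- is precisely the content of that cited result. Your fibration $C_{\overline{j}}\x SO_3\to\Emb_j(S^1,D^3)\to\L_{\overline{j}}/SO_4$ and the identification of the base with $\K_{\underline{j}}/SO_2$ are a plausible outline of how one would reprove it, but as written that half is not complete; the economical fix is to do what the paper does and cite, rather than redo the trivialization bookkeeping.
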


\begin{proof}
The exterior $C_f$ of $f$ in $M$ is the connected sum $C_{\overline{j}} \, \# M$ of irreducible 3-manifolds, neither of which is $S^3$.
Let $\mathrm{Sph}(C_f)$ be the space of essential 2-spheres in $C_f$, just like in the proof of Proposition \ref{P:SplitLinksTwoSummands}.  As before, $\mathrm{Sph}(C_f)\simeq \ast$.
%Because $M$ is irreducible and not $S^3$, 
An element of $\mathrm{Sph}(C_f)$ is precisely a 2-sphere which bounds a 3-ball containing $\im(f)$.

Define $ES(M)$ by
\begin{align*}
ES(M):= \{& (f:S^1 \incl M, g: S^2 \incl M) :  
 \mbox{$\im(g)$ bounds a 3-ball containing $\im(f)$} \} \ / \ \Diff(S^2)
\end{align*}
Let $ES_f(M)$ denote the component of $f$ in $ES(M)$.  The fibration 
\[
\mathrm{Sph}(C_f) \to ES_f(M) \to \Emb_f(S^1, M)
\]
shows that $ES_f(M) \simeq \Emb_f(S^1, M)$ for each $f$ and hence $ES(M) \simeq \Emb(S^1, M)$. 

Furthermore, we claim that there is a homeomorphism
\[
ES(M) \cong \Emb(D^3, M) \x_{\Diff^+(D^3)} \Emb(S^1, D^3)
\]
where $\Diff(D^3)$ acts on the left on the first factor by post-composition and on the right on the second factor by pre-composition.  The map from the right-hand side to the left-hand side is given on representatives by $(g,f) \mapsto (g \circ f, g|_{\d D^3})$.  Its inverse is given on representatives by $(f,g) \mapsto (h, \ h^{-1} \circ f)$, where $h$ is an extension of $g$ to an embedding $D^3\incl M$.  The map is independent of the choice of $h$, since if $h$ and $k$ are two such choices, $(k, k^{-1}f) \sim (k(k^{-1}h), (h^{-1}k) k^{-1}f) \sim (h, h^{-1}f)$.

Since $M$ is an orientable 3-manifold, it is parallelizable, so
\[
\Emb(D^3, M)\x_{\Diff^+(D^3)} \Emb(S^1, D^3) \simeq (M \x SO_3) \x_{SO_3} \Emb(S^1, D^3) 
\simeq M \x \Emb(S^1, D^3).
\]
and the first equivalence in \eqref{Eq:SplitKnots} is proven.  
The second equivalence in \eqref{Eq:SplitKnots} follows from a result of Budney on spaces of knots in $D^3$; see \cite[Proposition 4.4]{Budney-Cohen} and \cite[Proposition 2.2]{Budney:Family}.
\end{proof}

\begin{corollary}
\label{TfSplit}
If $f$ is a knot in $S^1 \x D^2$ obtained as a composition $S^1 \overset{j}{\incl} D^3 \incl S^1 \x D^2$, then 
\[
\pushQED{\qed} 
\T_f \simeq S^1 \x \Emb_f(S^1, D^3) \simeq S^1 \x (SO_3 \x_{SO_2} (C_{\overline{j}} \x \K_{\underline{j}})).
\qedhere
\popQED
\]
\end{corollary}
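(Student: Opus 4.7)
The plan is to apply Proposition \ref{P:SplitKnots} directly with $M = S^1 \x D^2$. First I would verify the hypotheses: the solid torus is orientable, is not homeomorphic to $S^3$, and is irreducible. The irreducibility is standard: $S^1 \x D^2$ is aspherical (it is a $K(\Z,1)$ since it deformation retracts onto $S^1$), so $\pi_2(S^1 \x D^2)=0$, and the Sphere Theorem (or the direct observation that any embedded $S^2$ lifts to the universal cover $\R \x D^2 \subset \R^3$ and then applying Alexander's theorem) gives that every embedded 2-sphere bounds a 3-ball. Alternatively, one can invoke Lemma \ref{DiffSolidTorus}'s proof, which already uses irreducibility implicitly.

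Second, the hypothesis on $f$ in Corollary \ref{TfSplit} — that $f$ factors as $S^1 \overset{j}{\incl} D^3 \incl S^1 \x D^2$ — matches exactly the hypothesis of Proposition \ref{P:SplitKnots}. Applying that proposition yields
\[
\T_f = \Emb_f(S^1, S^1 \x D^2) \simeq (S^1 \x D^2) \x \Emb_j(S^1, D^3) \simeq (S^1 \x D^2) \x \left(SO_3 \x_{SO_2} (C_{\overline{j}} \rtimes \K_{\underline{j}})\right).
\]
Since $S^1 \x D^2 \simeq S^1$ (by radial deformation retraction onto the core), both claimed equivalences follow.

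There is essentially no obstacle here: the result is a direct specialization of Proposition \ref{P:SplitKnots} to the case $M=S^1\x D^2$, combined with the homotopy equivalence $S^1 \x D^2 \simeq S^1$. The only small point worth flagging is that the corollary's notation $C_{\overline{j}} \x \K_{\underline{j}}$ should be interpreted as $C_{\overline{j}} \rtimes \K_{\underline{j}}$ in the sense of Proposition \ref{P:SplitKnots} — namely, the fiber bundle whose points are pairs $(p,g)$ with $g$ isotopic to $\underline{j}$ and $p\in C_g$ — since the exterior $C_g$ genuinely varies with $g \in \K_{\underline{j}}$.
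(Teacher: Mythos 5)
Your proposal is correct and matches the paper's intended argument exactly: the paper states Corollary \ref{TfSplit} with no separate proof, treating it as an immediate specialization of Proposition \ref{P:SplitKnots} to $M=S^1\x D^2$ together with the deformation retraction $S^1 \x D^2 \simeq S^1$. Your remark that the corollary's $C_{\overline{j}} \x \K_{\underline{j}}$ should be read as the twisted product $C_{\overline{j}} \rtimes \K_{\underline{j}}$ from Proposition \ref{P:SplitKnots} is a fair and accurate observation about the notation.
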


\begin{corollary}
\label{VfSplit}
If $f$ is a knot in $S^1 \x S^1 \x I$ obtained as a composition $S^1 \overset{j}{\incl} D^3 \incl S^1 \x S^1 \x I$, then 
\[
\pushQED{\qed} 
\V_f \simeq S^1 \x S^1 \x \Emb_f(S^1, D^3)
\simeq S^1 \x S^1 \x (SO_3 \x_{SO_2} (C_{\overline{j}} \x \K_{\underline{j}})).
\qedhere
\popQED
\]
\end{corollary}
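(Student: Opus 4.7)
The plan is to deduce this directly from Proposition \ref{P:SplitKnots} applied to $M = S^1 \x S^1 \x I$, mirroring how Corollary \ref{TfSplit} is obtained by applying that proposition with $M = S^1 \x D^2$. So first I would verify that $M = S^1 \x S^1 \x I$ satisfies the three hypotheses of Proposition \ref{P:SplitKnots}: orientability is immediate as a product of orientable manifolds; $M$ is clearly not diffeomorphic to $S^3$ (it has nonempty boundary, or alternatively nontrivial $\pi_1$); and irreducibility follows because the universal cover $\R \x \R \x I$ is contractible, hence $\pi_2(M) = 0$, so by the Sphere Theorem $M$ is irreducible.

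With the hypotheses verified, Proposition \ref{P:SplitKnots} gives
\[
\V_f = \Emb_f(S^1, S^1 \x S^1 \x I) \simeq (S^1 \x S^1 \x I) \x \Emb_j(S^1, D^3) \simeq (S^1 \x S^1 \x I) \x \bigl(SO_3 \x_{SO_2} (C_{\overline{j}} \rtimes \K_{\underline{j}})\bigr).
\]
Since the interval $I$ deformation retracts to a point, $S^1 \x S^1 \x I \simeq S^1 \x S^1$, which yields both of the claimed equivalences upon substitution.

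There is no real obstacle here — the statement is a direct specialization of Proposition \ref{P:SplitKnots} to a specific $M$, completely analogous to the derivation of Corollary \ref{TfSplit}. The only thing to double-check is the interpretation of $\V_f$: by definition $\V_f$ is the component of $f$ in $\Emb(S^1, S^1 \x S^1 \x I)$, so it matches the left-hand side of Proposition \ref{P:SplitKnots} with $M = S^1 \x S^1 \x I$. Thus a clean one-paragraph proof suffices, citing Proposition \ref{P:SplitKnots} and noting the homotopy equivalence $S^1 \x S^1 \x I \simeq S^1 \x S^1$.
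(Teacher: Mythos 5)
Your proposal is correct and matches the paper's intended argument: the corollary is stated with a \qed precisely because it is the immediate specialization of Proposition \ref{P:SplitKnots} to $M = S^1 \x S^1 \x I$ (just as Corollary \ref{TfSplit} specializes it to $S^1 \x D^2$), combined with the deformation retraction $S^1 \x S^1 \x I \simeq S^1 \x S^1$. Your verification of the hypotheses (orientable, irreducible via $\pi_2 = 0$, not $S^3$) is exactly the routine check the paper leaves implicit.
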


\begin{example}
\label{Ex:SplitUnknot}
Consider an unknot $f$ in a (small) 3-ball.  As explained by Hatcher \cite{HatcherSmaleConj}, the Smale Conjecture implies that $\Emb_f(S^1, D^3)$ is equivalent to the space of embeddings of Euclidean circles in $\R^3$, which in turn is equivalent to $SO_3$, by taking the tangent vector at the basepoint in $S^1$ and the normal vector to the circle.  
The equivalence to $SO_3$ also follows from the result of Budney mentioned at the end of the proof of Proposition \ref{P:SplitKnots}. 
If we turn $f$ into a knot in $S^1 \x D^2$ by embedding the 3-ball in $S^1 \x D^2$, as in Figure \ref{F:SplitUnknot}, then $\T_f \simeq S^1 \x SO_3$.  Similarly, we can turn $f$ into a knot in $S^1 \x S^1 \x I$, in which case $\V_f \simeq S^1 \x S^1 \x SO_3$.
\end{example}

\begin{figure}[h!]
\includegraphics[scale=0.2]{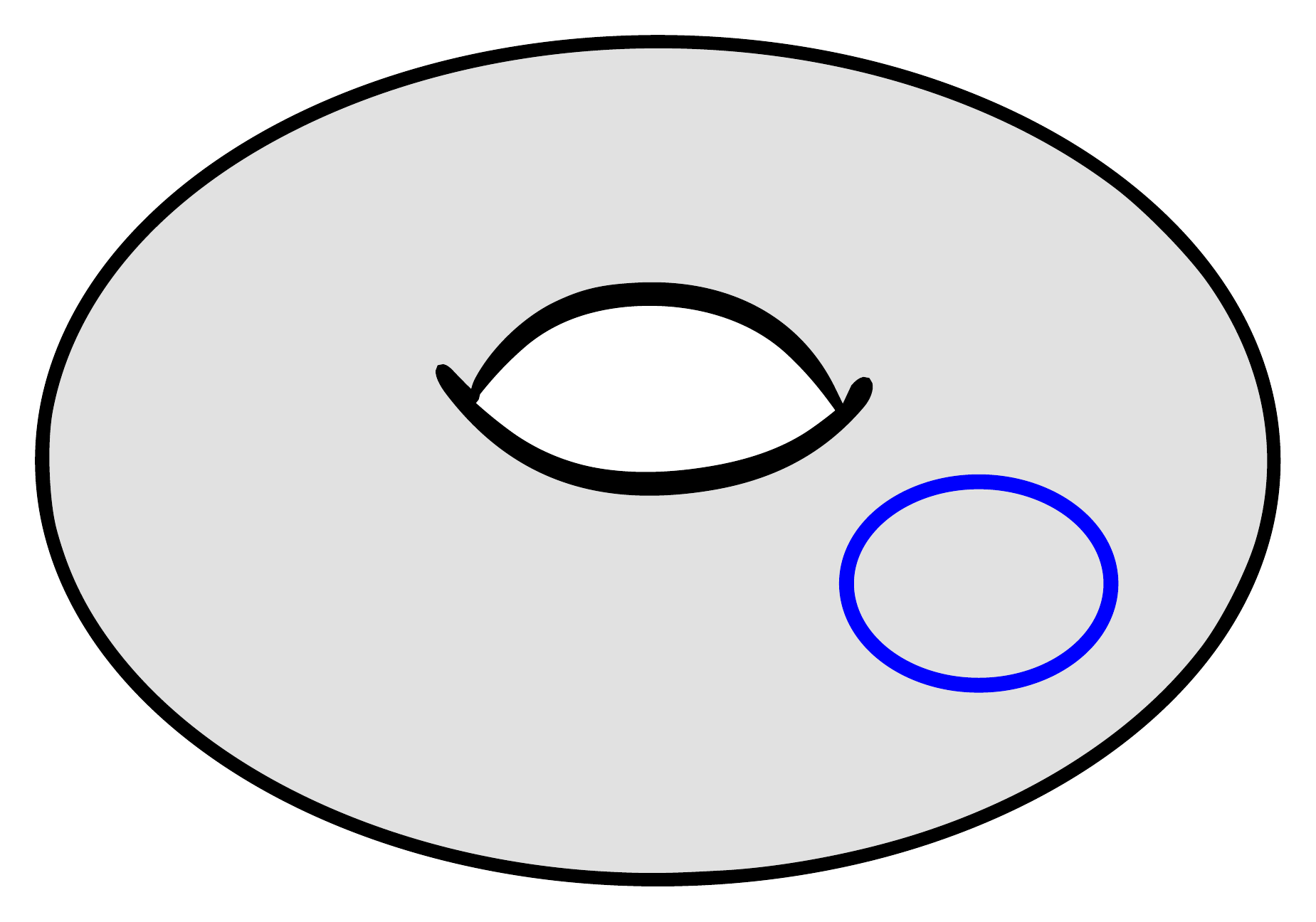}
\caption{An unknot $f$ in the solid torus corresponding to the 2-component unlink, which is a split link.  It satisfies $\T_f \simeq S^1 \x SO_3$.  For the embedding space $\V_f$ of a similar knot in the thickened torus, $\V_f \simeq S^1 \x S^1 \x SO_3$.}
\label{F:SplitUnknot}
\end{figure}

\section{Splittings of subgroups of rotations}
\label{S:Splittings}

In this section, we study certain canonical subgroups of rotations in the fundamental groups of the embedding spaces $\tL_F/SO_4$, $\L_f/SO_4$, $\T_f$, and $\V_f$.  We first deduce a splitting of the subgroup of meridional rotations in $\pi_1(\tL_F/SO_4)$ if $F$ is not a framed unknot (Theorem \ref{T:MeridiansFactor}).  We then consider the subgroup of rotations of the torus in $\pi_1(\T_f)$ (Theorem \ref{T:FactorsInTf}).  The subgroup of rotations in $\pi_1(\V_f)$ (Corollary \ref{FactorsInVf}) is easily understood via Theorem \ref{T:MeridiansFactor}.
The results below depend on analyses of the various cases treated in Sections \ref{S:Seifert}, \ref{S:Hyperbolic}, \ref{S:Splicing}, and \ref{S:SplitLinksTorus}.

\begin{theorem}
\label{T:MeridiansFactor}
Let $m \geq 0$, and let $F=(F_0, F_1,\dots, F_m)$ be an $(m+1)$-component framed link that is not a framed unknot. 
Then the loops of meridional rotations $\mu_0, \dots, \mu_m$ generate a $\Z^{m+1}$ subgroup that splits off of $\pi_1(\tL_F / SO_4)$ as a factor, i.e., $\pi_1(\tL_F / SO_4) \cong \Z^{m+1} \x G$ for some group $G$.
\end{theorem}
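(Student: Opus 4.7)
The plan is to combine a reduction for split links with induction on the companionship tree $\mathbb{G}_F$ for irreducible ones. First, for a split link $F = F_1 \sqcup \dots \sqcup F_k$ with each $F_j$ irreducible, Proposition \ref{P:SplitLinks} (i.e.~part (B) of Theorem \ref{MainT:FramedLinks}) supplies a surjection $\pi_1(\tL_F/SO_4) \twoheadrightarrow \prod_j \pi_1(\tL_{F_j}/SO_4)$ carrying each meridian of $F$ to the corresponding meridian in its factor. Composing with retractions $\pi_1(\tL_{F_j}/SO_4) \twoheadrightarrow \Z^{m_j+1}$ onto the meridional subgroups (available by induction) yields the desired retraction for $F$. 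So it suffices to prove the theorem for irreducible $F$ by induction on the number of vertices of $\mathbb{G}_F$.

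For the base case $|\mathbb{G}_F|=1$, the link $F$ is either hyperbolic or Seifert-fibered. In the hyperbolic case, Corollary \ref{DiffHyp} gives $\pi_1(\tL_F/SO_4) \cong \Z^{2(m+1)}$ with the meridians as $m+1$ of the standard generators (via the Hatcher--McCullough description in Proposition \ref{HMProp}), so the splitting is automatic. In the Seifert-fibered case, I would use Proposition \ref{DiffSeifertFramed} together with the identifications of meridians in the proof of Proposition \ref{DiffSeifertUnframed}: for keychains $KC_m$, $\pi_0\Diff \cong \Z^m \x \Z^m \x \PB_m$ with $m$ meridians spanning the left $\Z^m$ and the special meridian appearing as the diagonal of the right $\Z^m$, so a change of basis in that factor exhibits the meridional $\Z^{m+1}$ as a direct summand; for non-keychain Seifert-fibered links, $m$ meridians lie in the $\Z^{2m}$ factor and the remaining meridian generates $Z(\PB_{m+r})$, which is itself a direct factor of $\PB_{m+r}$ (since $\PB_n \cong Z(\PB_n) \x \mathrm{PMod}(\Sigma_{0,n+1})$ for $n\geq 3$, with the cases $m+r\leq 2$ handled by inspection).

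For the inductive step, I would write $F = (\varnothing,\dots,\varnothing, J_1,\dots, J_r) \bowtie L$ with $L$ labeling the root vertex. The leaf half-edges of $\mathbb{G}_F$ split into those of $L$ (the components $L_0,\dots, L_n$) and, for each $i$, the non-distinguished leaves of $\mathbb{G}_{J_i}$; correspondingly the meridians of $F$ partition into $(\mu_{L_j})_{j=0}^n$ and the non-distinguished meridians of the $J_i$. In each of the descriptions of $\pi_1(\tL_F/SO_4)$ in Propositions \ref{P:Cable}--\ref{P:HypSplice}, this fundamental group is an ``external'' factor acting (semi-)directly on $\prod_i \pi_1(\tL_{J_i}/SO_4)$. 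The meridians $\mu_{L_0},\dots,\mu_{L_n}$ live in the external factor, whose structure matches that of a Seifert-fibered or hyperbolic link complement, so the base-case argument produces a retraction of the external factor onto the $\Z^{n+1}$ summand spanned by them. By induction, each $\pi_1(\tL_{J_i}/SO_4)$ retracts onto its meridional $\Z^{|J_i|}$; post-composing with the coordinate projection that kills the meridian of the distinguished component gives a retraction onto $\Z^{|J_i|-1}$ picking out the non-distinguished meridians of $J_i$. Assembling the external retraction with the product of these gives a homomorphism $\pi_1(\tL_F/SO_4) \twoheadrightarrow \Z^{(n+1)+\sum_i(|J_i|-1)} = \Z^{m+1}$ which is the identity on the meridional subgroup.

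The main obstacle will be the well-definedness of this product retraction as a homomorphism in the semi-direct product cases (Propositions \ref{P:ConnectSum} and \ref{P:HypSplice}): the external factor acts on $\prod_i \pi_1(\tL_{J_i}/SO_4)$ by permutations of isomorphic $J_i$'s and, in the hyperbolic splicing case, by possible orientation-reversals of distinguished components. The product retraction descends to the semi-direct product precisely when it is equivariant under these actions, which amounts to choosing the inductive retractions compatibly for isomorphic $J_i$'s and verifying that the coordinate projection killing the distinguished meridian is unaffected by orientation reversal of that component. Both compatibilities should follow from the canonicity of the meridional subgroup under link symmetries (any automorphism sends meridians to meridians, up to sign, and collapsing the distinguished meridian is insensitive to signs), so the retraction will be well defined and the theorem will follow.
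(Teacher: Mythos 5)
Your overall strategy --- reduce split links via the restriction surjection, then induct on the companionship tree with Seifert-fibered and hyperbolic base cases --- is the same as the paper's, and your split-link reduction and base cases are essentially right. (One small gloss: in the hyperbolic base case the splitting is not quite ``automatic'' from Proposition \ref{HMProp}, since the meridional Dehn twists a priori generate only a finite-index sublattice of $\pi_0\Diff(C_F;\d C_F)\cong\Z^{2(m+1)}$; one needs Proposition \ref{DiffS3LHypLink} --- ultimately the Smith conjecture --- to see the quotient is torsion-free. But that result is in the paper and is exactly what its proof cites.)

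The genuine gap is in the inductive step, in the connected-sum case $F=(\varnothing,\dots,\varnothing,J_1,\dots,J_r)\bowtie KC_{n+r}$ with the distinguished component $F_0$ corresponding to the \emph{special} component $L_0$ of the keychain. Your step rests on the claim that ``the meridians $\mu_{L_0},\dots,\mu_{L_n}$ live in the external factor,'' and that fails here: the meridian of the special component of a keychain is the Seifert fiber, so $\mu_0$ is the diagonal of all the fiber-direction Dehn twists, \emph{including} those along the splicing tori, and the latter become the distinguished (Gramain) meridians of the summands once the $J_i$ are glued in. Concretely (Example \ref{Ex:MoreGeneralConnectSum}), $\mu_0=\lambda_1\cdots\lambda_n\,g_{J_1,0}\cdots g_{J_r,0}$, with the $g_{J_i,0}$ living in the internal factors $\pi_1(\tL_{J_i}/SO_4)$. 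So the external factor $\pi_0\Diff(C_L;T_0\cup\dots\cup T_n)$ contains only a $\Z^n$ of root meridians, not a $\Z^{n+1}$, and your assembled retraction --- which by construction kills the distinguished meridians $g_{J_i,0}$ --- sends $\mu_0$ to the image of $\lambda_1\cdots\lambda_n$ rather than to $\mu_0$; it is not a retraction on $\langle\mu_0\rangle$. The paper's fix is a diagonal argument: the diagonal $\Delta$ of the meridional $\Z^{r-q}$ coming from the knot summands is primitive and is fixed by the $\B_F$-action (which factors through $\mathfrak{S}_r$, not $\mathfrak{S}_r^\pm$), so it splits off, and $\mu_0$ is then the diagonal of the free abelian direct factor $\Z\langle\lambda_1,\dots,\lambda_n\rangle\x\Delta$, hence itself generates a direct factor. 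Relatedly, in the other keychain subcase (splicing along the special component) $\mu_0$ is the diagonal in $\Z^n\x Z(\B_F)$, and $Z(\B_F)$ need not split off a non-pure braid group $\B_F$ (compare the paper's remark that $Z(\B_{1,2})$ does not split off $\B_{1,2}$); the paper escapes by re-rooting $\mathbb{G}_F$ at a different component. Your proposal addresses neither point, and these are precisely the places in the induction where the splitting is not formal.
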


\begin{proof}
By Proposition \ref{LfModSO4isKpi1}, the meridional rotations $\mu_0,\dots, \mu_m$ generate a normal subgroup $N \cong \Z^{m+1}$.
Viewing $\pi_1(\tL_F/SO_4)$ as $\pi_0\Diff(C_F; \d C_F)$, we also see that each $\mu_i$ is central, since the corresponding diffeomorphism can be taken to have support in a small neighborhood of the $i$-th boundary component.  Obtaining the splitting requires a more careful analysis. 
We start with irreducible links, treating the Seifert-fibered, hyperbolic, and satellite cases separately.  

If $F$ is Seifert-fibered, then by Corollary \ref{SeifertFramedLinks}, $\pi_1(\tL_F/SO_4) \cong \Z^{2m} \x \PB_{m+r}$ for some $r\in \{0,1,2\}$.  If $F=KC_m$, then $N$ is  a factor $\Z^{m+1} < \Z^{2m}$.  Otherwise, $N$ is  $\Z^{m} \x Z(\PB_{m+r})$ (where the $\Z^{m}$ is a factor of $\Z^{2m}$), and the center of the pure braid group $Z(\PB_{m+r})$ is a direct factor of $\PB_{m+r}$.  

If $F$ is hyperbolic, then $N$ is the kernel in the short exact sequence \eqref{HypLinkSESMeridians}, which 
by Proposition 
%\ref{HMProp} and 
\ref{DiffS3LHypLink} can be written as $0 \to N \to \Z^{2(m+1)} \to \Z^{m+1} \to 0$.  As a short exact sequence of free abelian groups, it splits as a direct product.

If $F$ has a nontrivial companionship tree, we proceed as in the proof of Theorem \ref{MainT:FramedLinks} part (A), outlined at the beginning in Section \ref{S:Splicing}. 
We arbitrarily choose a distinguished component $F_0$ of $F$, thus determining a root vertex $v_R$ in $\mathbb{G}_F$.
We can then write $F=(\varnothing, \dots, \varnothing, J_1,\dots, J_r)\bowtie L$ where $L=(L_0,\dots,L_{n+r})$ is the link $\mathbb{G}_F(v_R)$, and where $n\geq 0$ and $r \geq 1$.
We proceed by induction on the maximum distance $d$ of a vertex from $v_R$, counted by the number of edges.  The basis case $d=0$ is covered by the previous two cases.  For the induction step, there are different cases according to whether $L$ is  $S_{p,q}$, $R_{p,q}$, $KC_{n+r}$ (which has two subcases), or a hyperbolic link.  The induction hypothesis is that the theorem holds for each $J_i$.

Suppose $F= (\varnothing, \dots, \varnothing, J_1,\dots, J_r) \bowtie S_{p,q}$ where $\gcd(p,q)=n+r$.
Let $|J_i|$ denote the number of components of $J_i$, and let $\ell:=\sum_{i=1}^r (|J_i|-1)$.
Then $m+1=n + \ell + 1$.  
By Proposition \ref{P:Cable},  $\pi_1(\tL_F/SO_4) \cong \PB_{n+r+1} \x \Z^{2n} \x \prod_{j=1}^r\pi_1(\tL_{J_i}/SO_4)$, and the subgroup $N \cong \Z^{m+1}$ of meridional rotations is the product of two subgroups, one a subgroup of $\PB_{n+r+1} \x \Z^{2n}$ the other a subgroup of $\prod_{j=1}^r\pi_1(\tL_{J_i}/SO_4)$.  The first factor is the subgroup $Z(\PB_{n+r+1}) \x \Z^n <  \PB_{n+r+1} \x \Z^{2n}$ where $\Z^n= \Z\langle \mu_1, \dots, \mu_n \rangle < \Z \langle \mu_1, \dots, \mu_n, \lambda_1, \dots, \lambda_n\rangle =\Z^{2n}$.
The second factor is the subgroup $\Z^{\ell} < \prod_{j=1}^r\pi_1(\tL_{J_i}/SO_4)$ generated by meridional rotations along all components of the $J_i$ except for the $J_{i,0}$ (along which the splicing is done).
The result is proven in this case because the center of $\PB_{n+2}$ splits as a direct factor, and $\Z^\ell$ is a factor of a factor $\Z^{\ell+r} < \prod_{j=1}^r\pi_1(\tL_{J_i}/SO_4)$ guaranteed by the induction hypothesis.

If $F = J \bowtie R_{p,q}$ with $\gcd(p,q)=n+r-1$, then Proposition \ref{P:SpliceIntoRpq} shows that a similar analysis as for $S_{p,q}$ applies, just with $\PB_{n+r+1}$ replaced by $\PB^{n+r}$ in the argument above.  (With our indexing for $R_{p,q}$, $r=1$ implies $n\geq 1$, even though for $S_{p,q}$ above, $r=1$ and $n=0$ is possible.)

We will treat the case $L=KC_{n+r}$ last, so suppose now that $F = (\varnothing, \dots, \varnothing, J_1, \dots, J_r) \bowtie L$ for some hyperbolic link $L=(L_0,\dots, L_{n+r})$.
Then by Proposition \ref{P:HypSplice}, 
\begin{equation}
\label{E:HypSplicePi1}
\pi_1(\tL_F/SO_4) \cong \Z^{2n+1} \x \left(\Z \ltimes \prod_{i=1}^r \pi_1(\L_{J_i}/SO_4) \right)
\end{equation}
The first $n+1$ meridional rotations $\mu_0, \dots, \mu_{n}$ generate a subgroup $N_0 =\Z^{n+1} < \Z^{2n+1}$ that is a factor of $\pi_1(\tL_F/SO_4)$.  
For $i=1,\dots, r$, let $N_i < \pi_1(\L_{J_i}/SO_4)$ be the subgroup generated by meridional rotations around the components $J_{i,1}, J_{i,2}, \dots$ along which no splicing is done.  Then $N_i \cong \Z^{|J_i|-1}$ and by the induction hypothesis, it is a factor  of $\pi_1(\L_{J_i}/SO_4)$.  
Recall from the proof of Proposition \ref{P:HypSplice} that the copy of $\Z$ in \eqref{E:HypSplicePi1} is a subgroup of the image $H\cong \widetilde{A}_F$ from the group $\pi_0 \Diff(C_F; \d C_F) (\cong \pi_1(\tL_F/SO_4))$ in \eqref{Eq:SpliceSES}.
While it may act nontrivially on each factor $\pi_1(\L_{J_i}/SO_4)$ (e.g., as in the case of $\Wh \bowtie \Wh$ in Example \ref{Ex:WhSpliceWh}), it acts trivially on each subgroup $N_i$, since elements of $N_i$ are rotations of components of $F$.
Since $N=\prod_{i=0}^r N_i$, we conclude that $N$ is a factor.  (In the special case that all the $J_i$ are knots, $N=N_0$ is simply the factor $\Z^{n+1} < \Z^{2n+1}$.)

Next suppose $F= (\varnothing, \dots, \varnothing, J_1, \dots, J_r) \bowtie KC_{n+r}$ where no splicing is done along the special component of $KC_{n+r}$.  
After possibly rechoosing $F_0$ from among the leaf half-edges on $KC_{n+r}$, we may assume $L_0$ is the special component.
If necessary, reorder the $J_i$ so that $J_1, \dots, J_q$ are links with multiple components and $J_{q+1},\dots, J_r$ are knots; thus $0 \leq q \leq r$.   
By Proposition \ref{P:ConnectSum},
\begin{equation}
\label{PiLF-KCSplice}
\pi_1(\tL_F/SO_4) \cong \Z\langle \mu_1, \dots, \mu_n, \lambda_1, \dots, \lambda_n \rangle \x \prod_{i=1}^q \pi_1(\tL_{J_i}/SO_4) \x \left(\B_F \ltimes \prod_{i=q+1}^r \pi_1(\tL_{J_i}/SO_4) \right)
\end{equation}
since $\B_F$ acts trivially on the multiple-component links $J_1, \dots, J_q$,
We first consider the subgroup $N_0 := \langle \mu_0 \rangle$.
By the induction hypothesis, $\prod_{i=q+1}^r \pi_1(\tL_{J_i}/SO_4)$  has a direct factor of $\Z^{r-q}$ generated by meridional rotations of $J_{q+1}, \dots, J_r$.  
Since the action of $\B_F$ factors through $\mathfrak{S}_r < \mathfrak{S}_r^\pm$, it is trivial on the diagonal $\Delta$ in $\Z^{r-q}$, so $\Delta$ splits as a direct factor of $\pi_1(\tL_F/SO_4)$.
Now notice that $N_0$ is the diagonal (and thus a direct factor) in $\Z\langle \lambda_1, \dots, \lambda_n \rangle \x  \Delta \cong \Z^{n+1}$.
Hence $N_0$ is a direct factor of $\pi_1(\tL_F/SO_4)$.  
Let $N_1$ be the subgroup generated by $\mu_1, \dots, \mu_m$.  Then $N_1$ is the product of the factor $\Z\langle \mu_1, \dots, \mu_n\rangle$ with a direct factor $\Z^{m-n} < \prod_{i=1}^q \pi_1(\tL_{J_i}/SO_4)$ by the induction hypothesis.  
The subgroup $N$ of all meridional rotations is then the direct factor $N_0 \x N_1$.  (In the special case where all the $J_i$ are knots, $N=N_0$ is just the diagonal in $\Z^n \x \Z^r < \Z^{2n}  \x \B_F \ltimes \prod_{i=1}^r \pi_1(\tL_{J_i}/SO_4)$.)

Now suppose $F= (\varnothing, \dots, \varnothing, J_0, \dots, J_{r-1}) \bowtie KC_{n+r}$ where the special component of $KC_{n+r}$  is (without loss of generality) $L_{n+1}$.  
In this case the roles of $L_0, \dots, L_n$ are essentially interchangeable like boundary components of the surface $P_n$, but we view $L_0$ as corresponding to the outer boundary in $P_n$.  
Then the isomorphism \eqref{PiLF-KCSplice} still holds, and $N_1$ is as described in the previous case.  However, $N_0$ 
is now the diagonal in $\Z\langle \lambda_1, \dots, \lambda_n \rangle \x \langle \tau \rangle$ where $\tau$ is the full twist in $\B_F$.  Note that $\langle \tau \rangle$ is the center $Z(\B_F)$, since $Z(\PB_{n+r})=\langle \tau \rangle=Z(\B_{n+r})$.
If none of $J_1,\dots, J_{r-1}$ is a knot, then we are done because $\B_F$ lies in $\PB_{n+r}$, which implies $Z(\B_F)$ splits as a direct factor of $\B_F$.
If some of the $J_1,\dots, J_{r-1}$ are knots, then there are no knots 
in the subtree of $\mathbb{G}_F$ attached to the half-edge corresponding to the special component of $KC_{n+r}$, 
by Alexander's theorem on embedded tori in $S^3$.  Then by rechoosing the distinguished component $F_0$ of $F$ to lie in this subtree, we are in one of the previous cases.
This completes the induction step and thus the proof in for an arbitrary irreducible link.

Finally, suppose $F$ is a split link consisting of irreducible sublinks $F_1, \dots, F_k$.  For $i=1,\dots, k$, let $N_i$ be the subgroup of meridional rotations of the components in $F_i$.  Thus $N=N_1 \x \dots \x N_k$.  Consider the composition 
\[
N_i \incl \pi_1(\tL_F/SO_4) \to \pi_1(\tL_{F_i}/SO_4) \twoheadrightarrow N_i
\] 
where the second map is the restriction to the sublink $F_i$ and where the projection onto $N_i$ exists because we have established the Theorem for the irreducible link $F_i$.  Since this composition is the identity, each $N_i$ is a direct factor of $\pi_1(\tL_F/SO_4)$ and thus so is $N$.
\end{proof}

\begin{theorem}
\label{T:FactorsInTf}
Let $f=(f_0,f_1)$ be any 2-component link corresponding to a knot in the solid torus.  Then $\mu_1$, the longitudinal rotation of the solid torus, generates a copy of $\Z$ that splits as a direct factor of $\pi_1(\T_f)$.
If $f$ is irreducible and not the Hopf link, then $\lambda_1$, the meridional rotation of the solid torus, generates another direct factor of $\Z$ in $\pi_1(\T_f)$.  
If $f$ is a split link, then $\lambda_1$ generates a central subgroup isomorphic to $\Z/2$.
\end{theorem}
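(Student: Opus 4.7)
The plan is to prove the three claims separately, combining Theorem \ref{T:MeridiansFactor}, Proposition \ref{TfisKpi1}, the case analysis behind Theorem \ref{MainT:FramedLinks}(A), and Corollary \ref{TfSplit}. The $\mu_1$ claim is immediate: by Proposition \ref{TfisKpi1}, $\pi_1(\T_f) \cong \pi_1(\tL_F/SO_4)/\langle \mu_0 \rangle$ for any framing $F$ of $f$, and Theorem \ref{T:MeridiansFactor} applied with $m=1$ gives $\pi_1(\tL_F/SO_4) \cong \Z\langle \mu_0 \rangle \times \Z\langle \mu_1 \rangle \times G$ for some group $G$; quotienting by $\mu_0$ yields $\pi_1(\T_f) \cong \Z\langle \mu_1 \rangle \times G$, valid in both the irreducible and the split cases.

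For the second claim, I would refine the case analysis used to prove Theorem \ref{T:MeridiansFactor} so as to also track $\lambda_1$. The rotations $\mu_1$ and $\lambda_1$ correspond to meridional and longitudinal Dehn twists supported near $\partial\nu(F_1)$, and in each irreducible subcase of Theorem \ref{MainT:FramedLinks}(A) they sit inside the explicitly identified $\Z^{2n}$ (or $\Z^{2n+1}$) factor, with $n=1$. When $F$ is Seifert-fibered and not the Hopf link, Proposition \ref{SeifertFiberedLinks} forces $F = S_{p,q}$; then $\pi_1(\tL_F/SO_4) \cong \Z^2 \times \PB_2$, with $\mu_1, \lambda_1$ a basis for the $\Z^2$ factor and $\mu_0$ generating $Z(\PB_2) = \PB_2$, so the quotient by $\mu_0$ is $\Z\langle\mu_1\rangle \times \Z\langle\lambda_1\rangle$. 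When $F$ is hyperbolic, $\pi_1(\tL_F/SO_4) \cong \Z^4$ with free basis $\mu_0, \lambda_0, \mu_1, \lambda_1$. In each splice subcase (iii)--(vii), $\mu_1, \lambda_1$ again lie in the $(S^1)^{2n}$ or $(S^1)^{2n+1}$ factor; in the hyperbolic splice case (vii), the proof of Proposition \ref{P:HypSplice} permits one to select any $n$ of the $n+1$ longitudes as basis elements of $\widetilde{A}_F$, so I would choose a basis containing $\lambda_1$ (equivalently, omitting $\lambda_0$). Passing to the quotient by $\mu_0$ then exhibits $\Z\langle\lambda_1\rangle$ as a further direct factor of $G$.

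For the third claim, Corollary \ref{TfSplit} gives $\T_f \simeq S^1 \times \Emb_f(S^1, D^3) \simeq S^1 \times (SO_3 \times_{SO_2} (C_{\overline{j}} \rtimes \K_{\underline{j}}))$, where $\underline{j}$ is the long knot associated to $f_0$. After isotoping $f_0$ into a ball $D^3$ centered on a point of the core of $S^1 \times D^2$, the ambient meridional rotation restricts to rotation of $D^3$ about a diameter, giving a 1-parameter subgroup of $SO_3$ acting on the ball. Thus $\lambda_1$ is the image of the generator of $\pi_1(SO_3) = \Z/2$ under the map $\pi_1(SO_3) \to \pi_1(\Emb_f(S^1, D^3))$ induced by the $SO_3$-action. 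This image is automatically central, since for any topological group $G$ acting on a space $X$ the image of $\pi_1(G) \to \pi_1(X)$ lies in the center of $\pi_1(X)$ (realized by the homotopy $(s,t) \mapsto \gamma(s)\cdot\alpha(t)$). To see that $\lambda_1$ has order exactly $2$, I would use the long exact sequence of the fiber bundle $C_{\overline{j}} \rtimes \K_{\underline{j}} \to SO_3 \times_{SO_2} (C_{\overline{j}} \rtimes \K_{\underline{j}}) \to S^2$: the connecting map $\partial \colon \pi_2(S^2) \cong \Z \to \pi_1(C_{\overline{j}} \rtimes \K_{\underline{j}})$ sends a generator to twice the $SO_2$-monodromy $\gamma$, reflecting that the principal $SO_2$-bundle $SO_3 \to S^2$ has Euler number $2$. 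Hence $\pi_1(SO_3 \times_{SO_2} (C_{\overline{j}} \rtimes \K_{\underline{j}})) \cong \pi_1(C_{\overline{j}} \rtimes \K_{\underline{j}})/\langle 2\gamma \rangle$, and $\lambda_1 = [\gamma]$ has order exactly $2$ since $\gamma$ is non-torsion in this torsion-free group.

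The main obstacle is the case-by-case verification in the second paragraph: each splice subcase of Theorem \ref{MainT:FramedLinks}(A)(iii)--(vii) must be inspected individually to confirm that a basis containing $\lambda_1$ can be chosen, with (vii) the most delicate because the omitted longitude is constrained by the hyperbolic isometry group $A_F$ and one must verify that the constraint $\alpha \notin \mathrm{span}(S)$ from the proof of Proposition \ref{P:HypSplice} remains compatible with keeping $\lambda_1 \in S$. A secondary concern is the computation of $\partial$ as twice the monodromy in the split case, which requires care with the Hopf-like fibration $SO_2 \to SO_3 \to S^2$ (where $\partial \colon \pi_2(S^2) \to \pi_1(SO_2)$ is multiplication by $2$) and the naturality of the connecting map for associated bundles.
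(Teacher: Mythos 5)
Your first and third paragraphs are essentially sound, but the second paragraph has a genuine gap. You assume that in the root decomposition $F=(\varnothing,\dots,\varnothing,J_1,\dots,J_r)\bowtie L$ the unknotted component $F_1$ always corresponds to a leaf half-edge on the root vertex, so that $\mu_1,\lambda_1$ "sit inside the explicitly identified $\Z^{2n}$ (or $\Z^{2n+1}$) factor, with $n=1$." That is false in general: the half-edge of $F_1$ can be attached to a vertex arbitrarily deep in $\mathbb{G}_F$, in which case $n=0$ at the root and $\mu_1,\lambda_1$ live inside $\pi_1(\tL_{J_1}/SO_4)$ for the (necessarily 2-component) companion $J_1$ containing that half-edge. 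This happens already for $\Wh\bowtie S_{p,q}$ (Example \ref{Ex:CableWh}), $\Wh\bowtie\Wh$ (Example \ref{Ex:WhSpliceWh}), and the Whitehead doubles of Exercise \ref{Ex:WhiteheadDoubleInSolidTorus}. The paper's proof handles exactly this by inducting on the distance in $\mathbb{G}_F$ from the half-edge of $F_1$ to the root: since $J_1$ has two components, Lemma \ref{SpliceSES} forces the symmetry/braid action to fix the index $1$ (so $\pi_1(\tL_{J_1}/SO_4)$ is a genuine direct factor of $H\ltimes K$), and the induction hypothesis applied to $J_1$ supplies the factor $\Z\langle\mu_1,\lambda_1\rangle$ intersecting $\langle\mu_0\rangle$ trivially. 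Your proposal needs this inductive step; checking the root subcases (i)--(vii) alone does not suffice.

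Two secondary points. In the hyperbolic basis case, $\{\mu_0,\lambda_0,\mu_1,\lambda_1\}$ is generally only a basis of the finite-index subgroup $\Z^{2n+2}\lhd\widetilde{B}_{L,0,\dots,n}\cong\pi_1(\tL_F/SO_4)$, not a free basis of the whole group (see Example \ref{Ex:HypLinkFromFig8}, where $\tfrac12(\lambda_0+\lambda_1)$ is a generator); the direct-factor claim for $\langle\mu_1,\lambda_1\rangle$ still requires the lattice argument of Proposition \ref{P:HypSplice} together with a Smith-theory argument ruling out an isometry acting trivially on $T_0$ -- you correctly flag this as delicate. In the split case your fibration-over-$S^2$ computation is a legitimate variant of the paper's argument (which instead uses the sequence $\pi_1(SO_2)\to\pi_1(SO_3\x(C_{\overline{j}}\rtimes\K_{\underline{j}}))\to\pi_1(\T_f)$), but "$\gamma$ is non-torsion in this torsion-free group" is not automatic: you must rule out $\gamma=e$, which is precisely where the nontriviality of the Gramain loop (for knotted $\overline{j}$) or of the meridian class in $\pi_1(C_{\overline{j}})$ (for the unknot) enters.
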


\begin{proof}
We separately treat the cases of irreducible and split links.

\textbf{Case 1: $f$ is an irreducible link}.
Since $\pi_1(\T_f) \cong \pi_1(\tL_F/SO_4)\,/\,\langle \mu_0\rangle$, where $\mu_0$ is the meridional rotation around the knotted component, the claim about $\mu_1$ is immediate from Theorem \ref{T:MeridiansFactor} in this case.  
So we may suppose $f$ is not the Hopf link.  
We must show that the image of $\langle \mu_1, \lambda_1 \rangle$ in $\pi_1(\T_f)$ is a direct factor isomorphic to $\Z^2$.  
Equivalently, we must show that the subgroup $\langle \mu_1, \lambda_1 \rangle$  of $ \pi_1(\tL_F/SO_4)$ is a factor isomorphic to $\Z^2$ which intersects $\langle \mu_0 \rangle$ trivially.

We consider the companionship tree $\mathbb{G}_F$ of $F$ and proceed by induction on the distance $d$, counted by the number of vertices, from the half-edge corresponding to $F_1$ to the root half-edge corresponding to $F_0$.  
The basis case $d=1$ is when $\mu_1$ and $\mu_0$ both correspond to half-edges on the root vertex $v_R$.  There are three subcases depending on the type of the corresponding link $\mathbb{G}_F(v_R)$:
\begin{itemize}[leftmargin=0.25in]
\item
If $\mathbb{G}_F(v_R)$ is  $S_{p,q}$, then $F=S_{p,q}$, and we saw that we can take $\mu_1$ and $\lambda_1$ to generate $\pi_1(\T_f) \cong \Z^2$.
\item
If $\mathbb{G}_F(v_R)$ is $KC_{1+r}$, then $F = (\varnothing, J_1, \dots, J_r) \bowtie KC_{1+r}$ for some $r \geq 1$ where all the $J_i$ are knots and $L_0$ is the component of $KC_{1+r}$ that links nontrivially with the other components.  
Then $\pi_1(\T_f) \cong (\Z^2 \x (\B_F \ltimes \prod \pi_1(\tL_{J_i}/SO_4)) / \langle \mu_0\rangle$, and $\langle \mu_0\rangle$ is the diagonal in a free abelian factor {properly} containing the factor $\Z^2$.  
Thus the image of this factor in the quotient is again $\Z^2$, and it is generated by $\mu_1$ and $\lambda_1$.  
\item
If $\mathbb{G}_F(v_R)$ is a hyperbolic link $L=(L_0,\dots, L_{1+r})$, then $F = (\varnothing, J_1, \dots, J_r) \bowtie L$ where all the $J_i$ are knots.
Then $\pi_1(\T_f) \cong \Z^{2} \x (\Z \ltimes \prod \pi_1(\tL_{J_i}/SO_4)$, and the factor $\Z^2$ is generated by $\mu_1$ and $\lambda_1$.  
\end{itemize}
This completes the proof of the basis case.
\smallskip

Now suppose we have proven the statement for all $F$ where $d=n-1$, and that $\mathbb{G}_F$ has $d=n\geq 2$ vertices between the two half-edges.  
Logically, we need not consider separate cases, but we observe that $F$ must be of one of the following three forms: 
\begin{itemize}[leftmargin=0.25in]
\item $J_1 \bowtie S_{p,q}$ (with $\gcd(p,q)=1$), 
\item $(J_1,\dots, J_r) \bowtie KC_r$ (where $L_0$ is the component of $KC_r$ linking nontrivially with $L_1, \dots, L_r$), or 
\item  $(J_1,\dots, J_r) \bowtie L$ for a hyperbolic KGL $L=(L_0, \dots, L_r)$.  
\end{itemize}
In each case exactly one $J_i$ is a 2-component link, while the other $J_i$ are knots.  Without loss of generality, $J_1$ is the 2-component link.  (The component $J_{1,1}$ along which no splicing is done must be the unknot.)
By Lemma \ref{SpliceSES}, $\pi_1(\tL_F/SO_4) \cong H \ltimes K$ for a certain group $H$, where $K=\prod_{i=1}^r \pi_1(\tL_{J_i}/SO_4)$.  The action $H \to \mathfrak{S}_r^\pm \to K$ corresponds to a signed permutation $\sigma\in \mathfrak{S}_r^\pm$ with $\sigma(1)=1+$ because $J_1$ has multiple components.
Thus $\pi_1(\tL_{J_1}/SO_4)$ is a direct factor of $\pi_1(\tL_F/SO_4)$.
By the induction hypothesis, $\pi_1(\tL_{J_1}/SO_4)$ has a direct factor $\Z\langle \mu_1, \lambda_1\rangle$, which is then a direct factor of $\pi_1(\tL_F/SO_4)$ which intersects $\langle \mu_0\rangle$ trivially.
This completes the induction step and the proof for irreducible $f$.

\smallskip

\textbf{Case 2: $f$ is a split link}.  By Proposition \ref{P:SplitKnots}, $\T_f \simeq (S^1 \x D^2) \x \Emb_{f_1}(S^1, D^3)$, so $\pi_1(\T_f) \cong \Z \x \pi_1\Emb_{f_1}(S^1, D^3)$.  The factor of $\Z$ is generated by $\mu_1$, so the claim about $\mu_1$ is now proven in all cases.  

It remains to prove the claim about $\lambda_1$.  
By a result of Budney and Cohen \cite[Proposition 4.4]{Budney-Cohen}, 
\[
\Emb_{f_1}(S^1, D^3) \simeq SO_3 \x_{SO_2} (C_{f_1} \rtimes \K_{f_1}).
\]
Here $\K_{f_1}$ is the component in $\K$ of the long knot associated to the closed knot $f_1$, and $C_{f_1} \rtimes \K_{f_1}$ is the total space of a fibration with a section, where the base is $\K_{f_1}$ and the fiber is $C_{f_1}$.  It is defined by $C_{f_1} \rtimes \K_{f_1} := \{(g, p): g \in \K_{f_1}, p \in C_g\}$.  The group $SO_2$ acts on $SO_3$  via a standard inclusion, and it acts on $C_{f_1} \rtimes \K_{f_1}$ diagonally, by rotating around the long axis.

The element $\lambda_1$ is the image of the generator $\theta \in \pi_1(SO_3) \cong \Z/2$ under the projection  
\begin{equation}
\label{EmbS1D3Projection}
SO_3 \x (C_{f_1} \rtimes \K_{f_1}) \to
SO_3 \x_{SO_2} (C_{f_1} \rtimes \K_{f_1}).
\end{equation}
Thus $\langle \lambda_1\rangle \cap \langle \mu_1\rangle = \{e\}$.  
We will now show that $\lambda_1$ is nontrivial.  The fibration \eqref{EmbS1D3Projection} yields the exact sequence
\[
\pi_1(SO_2) \to \pi_1(SO_3) \x (\pi_1(C_{f_1}) \rtimes \pi_1(\K_{f_1})) \to \pi_1(SO_3 \x_{SO_2} (C_{f_1} \rtimes \K_{f_1})) \to \{e\}
\]
and we claim the first map is injective.  The Gramain loop provides a $\Z$ factor in $\pi_1(\K_{f_1})$ (by Proposition \ref{FramedKnotsInS3VsLongKnots} and Theorem \ref{T:MeridiansFactor}, as mentioned in Remark \ref{R:GramainSplitting}).  If $f_1$ is not the unknot, the image of the generator $\tau \in \pi_1(SO_2)$ in $\pi_1(\K_{f_1})$ is a generator of this $\Z$.   If $f_1$ is the unknot, $\tau$ maps to the generator of $\pi_1(C_{f_1} \; \{e\}) \cong \Z$.  This establishes the desired injectivity.  Thus the kernel of 
\begin{equation}
\label{EmbS1D3ProjectionPi1}
\pi_1(SO_3 \x (C_{f_1} \rtimes \K_{f_1})) \twoheadrightarrow
\pi_1(SO_3 \x_{SO_2} (C_{f_1} \rtimes \K_{f_1}))
\end{equation}
 is free abelian of rank one.  Hence the image $\lambda_1$ of the generator $\theta \in \pi_1(SO_3)$ under this map is nontrivial, as claimed.
The subgroup $\langle \lambda_1 \rangle \cong \Z/2$ is central, as the image of a central subgroup under the quotient map \eqref{EmbS1D3ProjectionPi1}.
\end{proof}

\begin{remark}
A shorter argument proves a weaker version of Theorem \ref{T:FactorsInTf} for irreducible $f$.  
Consider the fibration $\Diff(M; \d \nu(f)) \to \Diff(S^1 \x D^2) \to \widetilde{\T}_F$, where $\widetilde{\T}_F \simeq \tL_F/SO_4$ is the space of the framed knot $F$ in the solid torus and $M = S^1 \x D^2 - \nu(f)$ is the exterior of $f$.
Since $f$ is irreducible,  $M$ is Haken.
Hence the fiber is a $K(\pi,0)$ by Hatcher and Ivanov's result (which actually applies to spaces of diffeomorphisms fixing any nonempty union of boundary components).  
Thus $\Z\langle \lambda_1, \mu_1\rangle \cong \pi_1 \Diff(S^1 \x D^2)$ is a subgroup of $\pi_1\widetilde{\T}_f$.  
By viewing $\pi_1\widetilde{\T}_f$ as $\pi_0 \Diff(M; \d M)$ and taking the diffeomorphism corresponding to $\lambda_1$ or $\mu_1$ to be supported near a component of $\d M$, we see that this subgroup is central.  
However, to reach the same conclusion about $\T_f$, one still needs to check that this subgroup intersects $\Z\langle\mu_0\rangle < \pi_1(\tL_F/SO_4)$ trivially.  This argument also does not  give the splitting.  
\end{remark}

\begin{corollary}
\label{TfisS1xLf}
If $f=(f_0,f_1)$ is a link with $f_1$ the unknot, then $\T_f \simeq S^1 \x (\L_f/SO_4)$ where the factor of $S^1$ corresponds to meridional rotation around $f_1$ (or longitudinal rotation of the solid torus).
\qed
\end{corollary}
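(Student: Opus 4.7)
The plan is to combine the short exact sequence describing $\pi_1(\T_f)$ with the $\mu_1$-splitting from Theorem~\ref{T:FactorsInTf}, and then promote the resulting algebraic isomorphism to a homotopy equivalence using the higher-homotopy matching already recorded in Corollary~\ref{TfVsLf}.

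First I would record the short exact sequence
\[
1 \to \Z\langle \mu_1\rangle \to \pi_1(\T_f) \to \pi_1(\L_f/SO_4) \to 1
\]
obtained by comparing the descriptions $\pi_1(\T_f) \cong \pi_1(\tL_F/SO_4)/\langle \mu_0\rangle$ and $\pi_1(\L_f/SO_4)\cong \pi_1(\tL_F/SO_4)/\langle \mu_0,\mu_1\rangle$ from Propositions~\ref{TfisKpi1} and~\ref{LfModSO4isKpi1}. Theorem~\ref{T:FactorsInTf} provides a direct-product decomposition $\pi_1(\T_f)\cong \Z\langle \mu_1\rangle \x G$, and splicing this with the sequence above identifies $G \cong \pi_1(\L_f/SO_4)$. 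Note that this covers both cases of Theorem~\ref{T:FactorsInTf} uniformly: even when $f$ is split and $\lambda_1$ only generates a $\Z/2$, the loop $\mu_1$ still splits off as a free $\Z$-factor.

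Next I would realize the splitting at the space level. Since $\mu_1$ is central in $\pi_0\Diff(C_F;\d C_F)$ (the corresponding diffeomorphism can be taken with support in a collar of the boundary torus of $F_1$) and stays central after quotienting by $\langle \mu_0\rangle$, the projection $\pi_1(\T_f)\to \Z$ onto the $\mu_1$-factor is a homomorphism. It represents a class in $H^1(\T_f;\Z)$, hence comes from a map $\T_f \to K(\Z,1)\simeq S^1$. Combined with the bundle projection $\T_f \to \L_f/SO_4$ of Corollary~\ref{TfVsLf}, this yields
\[
\Phi \colon \T_f \longrightarrow S^1 \x (\L_f/SO_4).
\]

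Finally I would verify that $\Phi$ is a weak equivalence and invoke Whitehead's theorem. On $\pi_1$, $\Phi$ realizes by construction the product decomposition $\Z\langle \mu_1\rangle \x \pi_1(\L_f/SO_4)$ and is therefore an isomorphism. For $i\geq 2$ the $S^1$-factor contributes nothing, while $\T_f \to \L_f/SO_4$ is already an isomorphism on $\pi_i$ by Corollary~\ref{TfVsLf}, so $\Phi$ is an isomorphism on all homotopy groups. Since spaces of smooth embeddings of compact manifolds are dominated by CW complexes, Whitehead's theorem upgrades $\Phi$ to a homotopy equivalence. I do not anticipate a serious obstacle here: the entire geometric content of the statement is already encoded in Theorem~\ref{T:FactorsInTf}, and the remaining argument is a formal assembly of pieces from Section~\ref{S:Asphericity}.
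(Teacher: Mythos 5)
Your proposal is correct and is exactly the argument the paper intends: the corollary is stated with an immediate \qed because it follows from Theorem \ref{T:FactorsInTf} together with the exact sequence and higher-homotopy identifications of Corollary \ref{TfVsLf}, and your writeup just makes explicit the standard step of realizing the $\pi_1$-splitting by a map to $S^1\x(\L_f/SO_4)$ and applying Whitehead. The only point worth flagging is that you should name the actual map $\T_f\to\L_f/SO_4$ (e.g.\ the one induced by extending diffeomorphisms of $(S^1\x D^2; f\cup\d)$ to $\Diff^+(S^3;f)$, or equivalently the quotient asserted in the series $\tL_F/SO_4\twoheadrightarrow\T_f\twoheadrightarrow\L_f/SO_4$), since Corollary \ref{TfVsLf} only compares homotopy groups and does not itself exhibit a bundle projection.
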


\begin{remark}
By a similar case-by-case analysis, one can show that the loop $\rho$ given by reparametrizing the knot is always nontrivial in $\pi_1(\T_f)$.  However, it is not always independent of $\mu_1$ and $\lambda_1$, as shown by the case where $f$ corresponds to a $(p,q)$-Seifert link.  In cases where it is independent of $\mu_1$ and $\lambda_1$, it is not always true that there is a generating set containing all three of $\mu_1, \lambda_1$, and $\rho$; see for instance the hyperbolic link in Example \ref{Ex:HypLinkFromFig8}.  Nor does the subgroup $\Z\langle \rho\rangle$ always split as a factor.  Indeed, if $F=(\varnothing, J, J) \bowtie KC_3$ where $J$ is a knot, then $\pi_1(\tL_F/SO_4) \cong \Z^2 \x \B_{1,2} \ltimes \pi_1(\tL_J/SO_4)^2$, and $\rho$ corresponds to a full twist of the disk, which 
generates the center $Z(\B_{1,2})$ of $\B_{1,2}$ (since it generates the centers of $\B_3$ and $\PB_3$).  But this center does not split, since the abelianization of $\B_{1,2}$ is $\Z$, whereas the abelianization of $\B_{1,2}/Z(\B_{1,2})$ is $\Z \x \Z/2$.
\end{remark}

The next result is a corollary to Theorem \ref{T:MeridiansFactor} and the analogue of Theorem \ref{T:FactorsInTf} with $S^1 \x D^2$ replaced by $S^1 \x S^1 \x I$.

\begin{corollary}
\label{FactorsInVf}
Let $f$ be any 2-component knot in the thickened torus.  Then $\mu_1=\lambda_2$ and $\lambda_1=\mu_2$ generate a copy of $\Z^2$ that  splits as a direct factor of $\pi_1(\V_f)$.
\end{corollary}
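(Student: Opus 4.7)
The plan is to exhibit the two rotations of the thickened torus as a central $\Z^2$ subgroup of $\pi_1(\V_f)$ admitting a splitting via evaluation at a basepoint, making it a direct factor.

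First, I would set up the action of $T^2 = S^1 \x S^1$ on $V = S^1 \x S^1 \x I$ by rotations of the two $S^1$-factors. Composition with embeddings yields an action on $\V$, and since $T^2$ is connected the orbit map $\mathrm{orb}_f : T^2 \to \V_f$, $\alpha \mapsto \alpha \circ f$, is a well-defined based map whose image in $\pi_1(\V_f)$ is generated by the two torus rotations. For centrality, given any $\alpha \in \pi_1(T^2)$ and any based loop $\gamma$ in $\V_f$, the square $[0,1]^2 \to \V_f$, $(s,t) \mapsto \alpha_t \cdot \gamma(s)$, has boundary the commutator $[\alpha][\gamma][\alpha]^{-1}[\gamma]^{-1}$, which is therefore trivial; thus the image of $\pi_1(T^2)$ is central in $\pi_1(\V_f)$.

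For the splitting, I would fix a basepoint $s_0 \in S^1$ and consider the composition
\[
T^2 \xrightarrow{\mathrm{orb}_f} \V_f \xrightarrow{\mathrm{ev}_{s_0}} V \xrightarrow{p} T^2,
\]
where $\mathrm{ev}_{s_0}(g) = g(s_0)$ and $p : V = T^2 \x I \to T^2$ is the projection. This composition sends $\alpha$ to $\alpha + p(f(s_0))$, a translation of $T^2$. Since translations of $T^2$ are unbased-homotopic to the identity and $\pi_1(T^2) \cong \Z^2$ is abelian, the induced map on $\pi_1$ is the identity. Combined with the centrality, $\pi_1(\mathrm{orb}_f) : \Z^2 \hookrightarrow \pi_1(\V_f)$ exhibits $\Z^2$ as a direct factor.

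To identify this $\Z^2$ with the subgroup $\langle \mu_1 = \lambda_2,\, \lambda_1 = \mu_2 \rangle$, I would use the Hopf link geometry. Each rotation of $V$ extends to an element of the maximal torus $SO_2 \x SO_2 \subset SO_4$ along one of the Hopf cores $C_i$, and such a rigid rotation is null-homotopic in $\tL_F/SO_4$. Decomposing its effect on $F = (F_0, F_1, F_2)$, the rotation along $C_i$ simultaneously acts as the meridional rotation $\mu_i$ on $F_i$ and the longitudinal reparametrization $\lambda_{3-i}$ on $F_{3-i}$, producing the two-name identifications. Passing through $\pi_1(\V_f) \cong \pi_1(\tL_F/SO_4)/\langle \mu_0, \mu_1, \mu_2 \rangle$ via Proposition \ref{VfIsLfModSO4} and Proposition \ref{LfModSO4isKpi1} then matches the two torus rotations with $\lambda_1$ (equivalently $\mu_2$) and $\lambda_2$ (equivalently $\mu_1$). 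The main technical point will be the splitting step: the composition $T^2 \to T^2$ is not basepoint-preserving, but since $T^2$ has abelian fundamental group, the unbased identity-homotopy still yields the identity on $\pi_1$.
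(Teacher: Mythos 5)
Your argument is correct, and it takes a genuinely different and more elementary route than the paper's. The paper proves this corollary by cases: for irreducible $f$ it appeals to Proposition \ref{VfIsLfModSO4} together with Theorem \ref{T:MeridiansFactor} (itself a long induction over the companionship tree), and for split $f$ it invokes Proposition \ref{P:SplitKnots} to write $\V_f \simeq (S^1 \x S^1 \x I) \x \Emb_f(S^1, D^3)$. Your proof is uniform in $f$: the orbit map of the $T^2$-action furnishes a central $\Z^2$ (the image of $\pi_1$ of a topological group acting on a space is always central), and evaluation at $s_0$ followed by projection to $T^2$ furnishes a retraction; a central subgroup admitting a retraction is a direct factor, and the translation computation correctly handles the basepoint issue since $\pi_1(T^2)$ is abelian. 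This sidesteps the JSJ machinery and the irreducible/split dichotomy entirely, and it generalizes at once to links in $S^1 \x S^1 \x I$ with more components or to other ambient manifolds with torus actions. What the paper's route buys is integration with its broader program: Theorem \ref{T:MeridiansFactor} is needed anyway, locates the generators inside the explicit presentations computed in Sections \ref{S:Seifert}--\ref{S:Splicing}, and powers the genuinely harder Theorem \ref{T:FactorsInTf} for $\T_f$, where your evaluation trick detects only the longitudinal circle of $S^1 \x D^2$ (the meridional direction dies in $\pi_1(S^1\x D^2)$, and correspondingly $\lambda_1$ really does degenerate to $\Z/2$ for split links --- so the torus case is special in that both rotation directions survive in $\pi_1$ of the ambient manifold). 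One small labeling point in your final paragraph: with the paper's convention that ``rotation along $C_i$'' translates $C_i$ along itself, such a rotation acts as the reparametrization $\lambda_i$ on $F_i$ and as the meridional rotation $\mu_{3-i}$ on $F_{3-i}$, the opposite indexing from what you wrote; this affects only which rotation carries the name $\mu_1=\lambda_2$ versus $\lambda_1=\mu_2$, not the substance of the argument.
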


\begin{proof}
If $f$ corresponds to an irreducible 3-component link, this is immediate from Proposition \ref{VfIsLfModSO4} and Theorem \ref{T:MeridiansFactor} by viewing the two loops as $\mu_1$ and $\mu_2$.

If $f$ corresponds to a split 3-component link, then we may think of $f$ as a knot contained in some 3-ball in $S^1 \x S^1 \x I$.  
By Proposition \ref{P:SplitKnots}, $\V_f \simeq (S^1 \x S^1 \x I) \x \Emb_f (S^1, D^3)$,  so $\pi_1(\V_f) \cong \Z^2 \x \pi_1\Emb_f (S^1, D^3)$, where $\mu_1$ and $\lambda_1$ generate the factor of $\Z^2$.
\end{proof}

\section{Tables summarizing examples}
\label{S:Tables}

Tables  \ref{T:ExamplesSolidTorus} and \ref{T:ExamplesThickenedTorus} summarize our examples of 2-component links corresponding to knots in a solid torus $U$ and 3-component links corresponding to knots in a thickened torus $\nu(T)=\nu(\d U)$ respectively.
We list the homotopy types of the embedding spaces and their fundamental groups, with generators.  For simplicity, generic knots $J$ or $K$ are taken to be torus knots.  
The loop $\mu_1$ is a meridional rotation around the unknot (or longitudinal rotation $\lambda$ of $U$), while $\lambda_1$ is a longitudinal rotation along (i.e.~reparametrization of) the unknot (or meridional rotation $\mu$ of $U$).  
Similarly, $\lambda_0$ is reparametrization of the knotted component.  
Rotations along an incompressible, non-boundary torus are in some cases denoted $\mu', \lambda'$.  
The Gramain loop of  $J$ is denoted $g_J$.
We abbreviate $C_n(\R^2):=\Conf(n,\R^2)$, and $\beta_{ij}$ (respectively $p_{ij}$) stand for generators of the braid (respectively pure braid) group.  The group $\B_{1,n-1} < \B_{n}$ consists of braids whose permutations fix the first strand and can be viewed as the annular $(n-1)$-strand braid group $\mathcal{CB}_{n-1}$.  
The links are grouped as split links, Seifert-fibered links, hyperbolic links, splices into Seifert-fibered KGL's, and splices involving hyperbolic KGL's.

%\newpage
\newgeometry{margin=1cm} 
% modify this if you need even more space
\begin{landscape}

%\noindent
%\textbf{Table}.  
%
%\bigskip

\begin{table}
\caption{Summary of examples of links corresponding to knots in a solid torus}

\centering
%\begin{center}
\begin{tabular}{|c| c| c| c| c|}
\hline 
\rule{0pt}{1.0\normalbaselineskip}
Example $\#$ & 2-comp.~link $f$  & 
$\T_f$ & 
$\pi_1(\T_f)$ &  generators of $\pi_1(\T_f)$ \\[0.5ex] 
\hline 
\hline
\rule{0pt}{1.0\normalbaselineskip}
Example \ref{Ex:SplitUnknot} & unlink & $S^1 \x SO_3$ & $\Z \x \Z/2$ & $\mu_1, \lambda_0$ \\[0.5ex]
\hline 
\hline
\rule{0pt}{1.0\normalbaselineskip}
- & Hopf link & $S^1$ & $\Z$ & $\mu_1 = \lambda_0$ \\[0.5ex]
\hline 
\rule{0pt}{1.0\normalbaselineskip}
Figs.~\ref{F:3-4-SeifertLink} and \ref{F:2-5-SeifertLink} & Seifert link $S_{p,q}$, $(p,q)=1$ & $(S^1)^2$ & $\Z^2$ & $\mu_1,\ \lambda_1$ \\[0.5ex]
\hline 
\hline
\rule{0pt}{1.0\normalbaselineskip}
Example \ref{Ex:WhiteheadLink} & Whitehead link $\Wh$ & $(S^1)^3$ &  $\Z^3$ & $\mu_1,\ \lambda_1,\ \lambda_0$ \\[0.5ex]
\hline 
\rule{0pt}{1.0\normalbaselineskip}
Example \ref{Ex:HypLinkFromFig8} & hyp.~link L8n1=$8^2_{16}$ (from $4_1$) &  $(S^1)^3$ & $\Z^3$ & $\mu_1,\ \lambda_1,\  \frac{1}{2}(\lambda_0 +\lambda_1)$ \\[0.5ex]
\hline 
\rule{0pt}{1.0\normalbaselineskip}
Example \ref{Ex:HypLinkFromTrefoil} & hyp.~link from $T_{2,k}$, $k$ odd  & $(S^1)^3$  & $\Z^3$ & $\mu_1,\ \lambda_1,\ 
 \frac{1}{k}(\lambda_0 + \lambda_1)$ \\[0.5ex]
\hline 
\rule{0pt}{1.0\normalbaselineskip}
Example \ref{Ex:HypLinkFrom8-18} & hyp.~link from $8_{18}$  & $(S^1)^3$  & $\Z^3$ & 
$\mu_1,\ \lambda_1,\ \frac{1}{4}(\lambda_0 + \lambda_1)$ \\[0.5ex]
\hline 
\hline
\rule{0pt}{1.0\normalbaselineskip}
Example \ref{Ex:CableWh} & $\Wh \bowtie S_{p,q}$  & $(S^1)^4$  & $\Z^4$ & $\mu_1,\ \lambda_1,\ \mu', \ \lambda'$ \\[0.5ex]
\hline 
\rule{0pt}{1.0\normalbaselineskip}
Example \ref{Ex:ConnectSumJPlusEmpty} & $(\varnothing, J) \bowtie KC_2$ & $(S^1)^3$  & $\Z^3$ & $\mu_1,\ \lambda_1,\ \lambda_0$ \\[0.5ex]
\hline 
\rule{0pt}{1.0\normalbaselineskip}
Example \ref{Ex:MoreGeneralConnectSum} & $(\varnothing, J, J) \bowtie KC_3$ & $((S^1) \x (C_3(\R^2) \x_{\mathfrak{S}_2} (S^1)^2)$ & 
$\Z \x (\B_{1,2} \ltimes \Z^2)$ & $\mu_1,\, \beta_{23},\, p_{12},\, p_{13}$,  \\[0.5ex]
Fig.~\ref{F:SumTrefoils} (a) & & $\mathfrak{S}_2 \cong \mathrm{Aut}\{2,3\}$ & & $g_{J,1}, \ g_{J,2}$ \\[0.5ex]
%
%\hline 
%\rule{0pt}{1.0\normalbaselineskip}
% & $(\varnothing, J, J,J) \bowtie KC_4$ & $((S^1)^2 \x (C_4(\R^2) \x_{\mathfrak{S}_3} (S^1)^3))/S^1$ & $(\Z^2 \x (\mathcal{B}_3 \ltimes \Z^3))/\langle\ ((0,1),(1,1,1))\ \rangle \cong$ & $\mu_1,\, \beta_{23},\, \beta_{34},$ \\[0.5ex]
%& &  & $\Z \x (\mathcal{B}_3 \ltimes \Z^3) \cong \Z^2 \x (\mathcal{B}_3 \ltimes \Z^2)$
%& $\, g_{J,1},\, g_{J,2}$, and $g_{J,3}$ or $\lambda_1$ \\[0.5ex]
\hline
\hline
\rule{0pt}{1.0\normalbaselineskip}
Example \ref{Ex:WhSpliceWh} & $\Wh \bowtie \Wh$ & $(S^1)^2 \x (S^1 \x_{\Z/2} (S^1)^2)$ & 
$\Z^2 \x \langle a,b,c \ |\ [b,c]=1, b^a=b^{-1}, c^a=c^{-1} \rangle$ & 
$\mu_1, \ \lambda_0, \ a=\lambda_0^{1/2}  \lambda_1^{1/2}, \ b, \ c$ \\[0.5ex]
\hline
\rule{0pt}{1.0\normalbaselineskip}
Example \ref{Ex:BorrPartialSplice} & $(\varnothing, J) \bowtie \mathrm{Borr}$  & $(S^1)^4$  & $\Z^4$ & $\mu_1,\ \lambda_1,\ \lambda_0$, \ $g_J$ \\[0.5ex]
\hline
\rule{0pt}{1.0\normalbaselineskip}
Example \ref{Ex:SpliceInto3CompHypLink} & $(\varnothing, J) \bowtie (L_0,L_1,L_2)$ & $(S^1)^2 \x (S^1 \x_{\Z/2} S^1)$  &  $\Z^2 \x \langle a,b \ | \ b^{a} = b^{-1}\rangle$ & $\mu_1,\ \ \lambda_1,$ \\
 & $L$ shown in Fig.~\ref{F:SpliceInto3CompHypLink} & & & 
 $a=\lambda_0^{1/2} \lambda_1^{1/2},\  b=g_J$ \\[0.5ex]
\hline
\rule{0pt}{1.0\normalbaselineskip}
Example \ref{Ex:5CompHypLink} & $(\varnothing, J, K,K) \bowtie (L_0,\dots,L_4)$ & 
$(S^1)^2 \x \left(S^1 \x_{\Z/2} \left(S^1 \x (S^1)^2\right)\right)$ &  
$\Z^2 \x \langle a,b,c,d \ | \ [b,c]=[c,d]=[b,d]=1,$  & $\mu_1,\ \ \lambda_1, \ a=\lambda_1^{1/2} \lambda_0^{1/2},$ \\
 & $L$ shown in Fig.~\ref{F:5CompHypLink} & & $b^a=b^{-1}, c^a=d, d^a=c \rangle$ & 
 $b=g_J,\ c=g_{K,1},\ d=g_{K,2}$  \\[0.5ex]
\hline
\rule{0pt}{1.0\normalbaselineskip}
Example \ref{Ex:Stoimenow}, & $(\varnothing, J, \dots, J) \bowtie (L_0, \dots, L_{r+1})$ & 
$(S^1)^2 \x \left(S^1 \x_{\Z/r} (S^1)^r \right)$ &
$\Z^2 \x \langle a, b_1,\dots, b_r \ | \ b_i^a = b_{i+1}\rangle$ & $\mu_1,\ \ \lambda_1, \ a=\lambda_0^{1/r},$ \\
Fig.~\ref{F:Stoimenow} & $(L_0, \dots, L_r) = L_{\mathrm{Stoimenow}}$ & & indices taken mod $r$ &
$b_1=g_{J,1},\dots, b_r=g_{J,r}$ \\[0.5ex]
\hline
\rule{0pt}{1.0\normalbaselineskip}
Example \ref{Ex:Sakuma}, & $(\varnothing, J, \dots, J)  \bowtie (L_0, \dots, L_{r+1})$ & 
$(S^1)^2 \x \left(S^1 \x_{\Z/2r} (S^1)^r \right)$ & 
$\Z^2 \x \langle a, b_1,\dots, b_r \ | \ b_i^a = b_{i+1}^{-1}\rangle$ & $\mu_1,\ \ \lambda_1, \ a=\lambda_0^{1/r}\lambda_1^{1/2},$ \\
Fig.~\ref{F:Sakuma} & $(L_0, \dots,L_r) = L_{\mathrm{Sakuma}}$& & indices taken mod $r$ & 
$b_1= g_{J,1},\dots, b_k=g_{J,r}$ \\[0.5ex]
\hline
\rule{0pt}{1.0\normalbaselineskip}
%(exercises) & & & & \\[0.5ex]
%\hline
%\rule{0pt}{1.0\normalbaselineskip}
Exercise \ref{Ex:WhiteheadDoubleInSolidTorus} (a) 
& %$((\varnothing, J) \bowtie KC_2) \bowtie \Wh$ 
& %$(S^1)^5$ 
& %$\Z^5$ 
& %$\mu_1,\ \lambda_1,\ \lambda_0, \  \lambda', \ g_J$ 
\\[0.5ex]
\hline
\rule{0pt}{1.0\normalbaselineskip}
%Ex.~\ref{Ex:AnotherWhiteheadDouble} 
Exercise \ref{Ex:WhiteheadDoubleInSolidTorus} (b) 
& %$(\varnothing, J \bowtie \Wh) \bowtie KC_2$ 
& % $(S^1)^3 \x (S^1 \x_{\Z/2} S^1) \simeq$ 
& %$\Z^3 \x \langle a,b \ | \ b^a = b^{-1}\rangle$ 
& %$\mu_1,\ \ \lambda_1 = g_{\Wh(J)} = \mu', \ \lambda_0$ 
 \\[0.5ex]
% & &  %$(S^1) \x \tL_{\Wh(J)}/SO_4$ 
% & & %$a=(\lambda')^{1/2} \mu_J^{1/2},\ \ g_J=\mu_J$  
% \\[0.5ex]
\hline
%NO ROOM FOR MORE
\rule{0pt}{1.0\normalbaselineskip}
%Ex.~\ref{Ex:ThirdExample} 
Exercise \ref{Ex:WhiteheadDoubleInSolidTorus} (c) 
& %$(\Wh, J) \bowtie KC_2$ 
& %$(S^1)^5$ 
& %$\Z^5$ 
& %$\mu_1,\ \lambda_1,\ \lambda_0, \  \lambda', \ g_J$ 
\\[0.5ex]
\hline
%\rule{0pt}{1.0\normalbaselineskip}
\end{tabular}
%\end{center}

\label{T:ExamplesSolidTorus}
\end{table}

\end{landscape}
\restoregeometry

\begin{table}[h!]
\caption{Summary of examples of links corresponding to knots in a thickened torus}

\centering
%\begin{center}
\begin{tabular}{|c| c| c| c| c|}
\hline 
\rule{0pt}{1.0\normalbaselineskip}
Example $\#$ & 3-comp.~link $f$  & $\V_f$ &  $\pi_1(\V_f)$ &  generators of $\pi_1(\V_f)$ \\[0.5ex] 
\hline 
\hline
\rule{0pt}{1.0\normalbaselineskip}
Ex.~\ref{Ex:SplitUnknot} & Hopf link $\sqcup$ unknot & $(S^1)^2 \x SO_3$ & $\Z^2 \x \Z/2$ & $\mu, \lambda, \lambda_0$ \\[0.5ex]
\hline
\rule{0pt}{1.0\normalbaselineskip}
Fig.~\ref{F:3-4-SeifertLink} (c) ($(p,q)=(3,4)$),  & $R_{p,q}$ & $(S^1)^2$ & $\Z^2 $ & $\mu, \lambda$ \\[0.5ex]
\rule{0pt}{1.0\normalbaselineskip}
Ex.~\ref{Ex:T_3,3} ($(p,q)=(1,1)$)  &  &  &  &  \\[0.5ex]
\hline
\rule{0pt}{1.0\normalbaselineskip}
Ex.~\ref{Ex:KC2}  & $KC_2$ & $(S^1)^2$ & $\Z^2 $ & $\mu, \lambda$ \\[0.5ex]
\hline
\hline
\rule{0pt}{1.0\normalbaselineskip}
Ex.~\ref{Ex:L6a5} & hyp.~link L6a5$=6^3_1$ & $(S^1)^3$ & $\Z^3 $ & $\mu, \lambda, \lambda_0$ \\[0.5ex]
\hline
\hline
\rule{0pt}{1.0\normalbaselineskip}
Ex.~\ref{Ex:WhSpliceRpq} & $(\varnothing, \Wh)\bowtie R_{p,q}$ & $(S^1)^4$ & $\Z^4 $ & $\mu, \lambda, \lambda_0, \mu'$ \\[0.5ex]
\hline
%\rule{0pt}{1.0\normalbaselineskip}
%
%\hline
\end{tabular}
%\end{center}

\label{T:ExamplesThickenedTorus}
\end{table}

%\vspace{-2pc}

     \bibliographystyle{alpha}    
    \bibliography{refs}

\end{document}